\documentclass[11pt]{article}

\usepackage[utf8]{inputenc}
\usepackage{amsmath}
\usepackage[english]{babel}
\usepackage{amssymb}
\usepackage{amsthm}
\usepackage{mathtools}
\usepackage{esdiff}
\usepackage{microtype}
\usepackage{paralist}
\usepackage{skak}
\usepackage{bbm}
\usepackage[paper=a4paper,left=25mm,right=25mm,top=25mm,bottom=25mm]{geometry}
\usepackage{stmaryrd}
\usepackage{hyperref}
\usepackage{color} 
\usepackage{fancyhdr}
\usepackage{nomencl}
\usepackage{cite}
\usepackage{longtable}

\usepackage{thmtools}
\declaretheoremstyle[headfont=\normalfont\bfseries]{bfthmstyle}

\declaretheorem[sharenumber=Theorem,style=bfthmstyle]{Lemma}
\declaretheorem[sharenumber=Theorem,style=bfthmstyle]{Proposition}
\declaretheorem[sharenumber=Theorem,style=bfthmstyle]{Corollary}


\declaretheoremstyle[headfont=\normalfont\bfseries,qed={$\diamond$}]{bfdefstyle}
\declaretheorem[sharenumber=Theorem,style=bfdefstyle]{Definition}
\declaretheorem[sharenumber=Theorem,style=bfdefstyle]{Definition-Proposition}
\declaretheorem[sharenumber=Theorem,style=bfdefstyle]{Example}

\newtheorem{definition-proposition}[Theorem]{Definition-Proposition} 

\declaretheoremstyle[headfont=\normalfont\bfseries,qed={$\diamond$}]{bfremstyle}
\declaretheorem[sharenumber=Theorem,style=bfdefstyle]{Remark}


\newcommand{\dom}{\mathrm{dom}\,}
\newcommand{\cl}{\mathrm{cl}\,}
\newcommand{\rk}{\mathrm{rk}\,}
\newcommand{\im}{\mathrm{im}\,}
\newcommand{\id}{\mathrm{id}\,}
\newcommand{\Res}{\mathrm{Res}}

\newcommand{\norm}[1]{\left\Vert #1 \right\Vert}
\newcommand{\abs}[1]{\left\vert #1 \right\vert}

\newcommand{\set}[1]{\left\lbrace #1 \right\rbrace}

\newcommand{\sign}{\text{sign }}
\newcommand{\scprod}[2]{ \langle #1, #2 \rangle }
\newcommand{\setdef}[2]{\left\{\ #1\ \left|\ \vphantom{#1} #2 \right.\right\}}

\renewcommand{\exp}[1]{e^{#1}}

\renewcommand{\phi}{\varphi}
\renewcommand{\Re}{\mathrm{Re}\,}
\renewcommand{\Im}{\mathrm{Im}\,}

 \numberwithin{equation}{section}

\makenomenclature

\author{Jonas Kirchhoff}
\title{Linear differential-algebraic systems are generically controllable}
\date{\today}

\begin{document}
\setlength{\parindent}{0em}

\begin{titlepage}
	\begin{flushright}
	\includegraphics[scale=1]{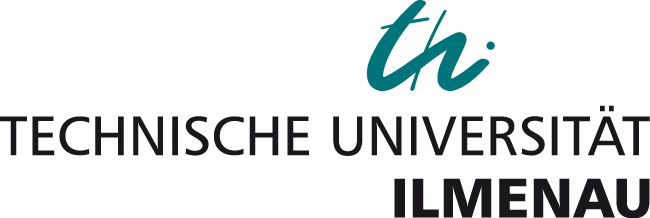}
	\end{flushright}
	\vspace{2cm}
	\centering
	{ Bachelor thesis\par}
	\vspace{1.5cm}
	{\huge\bfseries Linear differential-algebraic systems\\ are generically controllable\par}
	\vspace{1cm}
	{\Large revised edition\par}
	\vspace{2cm}
	{\Large Jonas Kirchhoff\par}
	\vspace{2cm}
	Advisor:\par
	{\Large Prof. Dr. Achim Ilchmann\par}
	\vspace{3.5cm}
	{\Large Fakultät für Mathematik und Naturwissenschaften\\ Fachgebiet für Analysis und Systemtheorie\par}

	\vfill

	{\large \today\par}
\end{titlepage}

\begin{titlepage}
\textcolor{white}{blindtext}
\end{titlepage}

\newpage

\begin{titlepage}
\begin{Large}
\textbf{Erklärung}
\end{Large}
\newline
\vspace*{1.5cm}
\newline
{Ich versichere hiermit, dass die vorliegende Bachelorarbeit selbständig verfasst wurde und keine weiteren als die angegebenen Hilfsmittel und Quellen verwendet wurden. Alle Stellen der Arbeit, die anderen Werken dem Sinn oder dem Wortlaut nach entnommen sind, wurden durch Angabe der Quellen sichtbar gemacht.}
\newline
\vspace*{1.5cm}{}
\newline
\begin{tabular}{c}\hline
\hspace{0.5cm}Ort, Datum\hspace{0.5cm}
\end{tabular}
\hspace{\fill}
\begin{tabular}{c}\hline
\hspace{0.5cm}Jonas Kirchhoff\hspace{0.5cm}
\end{tabular}
\end{titlepage}

\begin{titlepage}
\textcolor{white}{blindtext}
\end{titlepage}

\begin{titlepage}
\begin{Large}
\textbf{Acknowledgements}
\end{Large}
\newline
\vspace*{1.5cm}
\newline
{First of all I would like to thank my advisor, Prof. Achim Ilchmann, for his patience in the countless iterations of this thesis. I would also like to thank Prof. Thomas Hotz, whose valuable advice has been of enormous benefit to the work.}
\newline
\vspace*{1.5cm}{}
\end{titlepage}

\begin{titlepage}
\textcolor{white}{blindtext}
\end{titlepage}

\newpage

\tableofcontents

\newpage

\textbf{\Large{Nomenclature}}
\newline\newline

\begin{longtable}{lp{13cm}}
{$0_n$} & {$:= (\underbrace{0,\ldots,0}_{n~\text{zeros}})\in\mathbb{R}^n.$ If the dimension is clear from context, we will write~$0.$ In any case~$0 = 0_1$}\\[0.2\normalbaselineskip]
{$[[a,b]]$} & {$:= \setdef{a+t(b-a)}{t\in [0,1]}$, the \textit{straight line} between~$a,b\in X$ for a real vector space~$X$}\\[0.2\normalbaselineskip]
{$\norm{x}_2$} & {$:= \sqrt{x_1^2+\cdots +x_n^2}$, the \textit{Euclidean norm} of~$x = (x_1,\ldots,x_n)\in\mathbb{R}^n$}\\[0.2\normalbaselineskip]
{$\norm{x}_\infty$} & {$:= \max\set{\abs{x_1},\ldots,\abs{x_n}}$, the maximum norm of~$x = (x_1,\ldots,x_n)\in\mathbb{R}^n$}\\[0.2\normalbaselineskip]
{$\abs{A}$} & {the cardinality of a set~$A$}\\[0.2\normalbaselineskip]
{$\mathcal{AC}(I,\mathbb{R}^n)$} & {the set of absolutely continuous functions on the interval~$I\subseteq\mathbb{R}$ and values in~$\mathbb{R}^n$}\\[0.2\normalbaselineskip]
{$\lambda^d$} & {the~$d$-dimensional \textit{Lebesgue measure}}\\[0.2\normalbaselineskip]
{$\chi_A$} & {the \textit{characteristic function} of a subset~$A$ of a set~$X$; if~$x\in A$, then~$\chi_A(x) = 1$, and~$\chi_A(x) = 0$ else}\\[0.2\normalbaselineskip]
{$\mathbb{B}_d(x_0,\varepsilon)$} & {$:= \setdef{x\in X}{ d(x,x_0)<\varepsilon}$, the \textit{open ball} with center~$x_0\in X$ and radius~$\varepsilon\in~[0,\infty]$ for a metric space~$(X,d).$ If the metric is clear, we write~$\mathbb{B}(x_0,\varepsilon)$}\\[0.2\normalbaselineskip]
{$\mathbb{B}_\infty(x_0,\varepsilon)$} & {$:= \set{x\in \mathbb{R}^n: \norm{x-x_0}_\infty<\varepsilon}$, the open ball with respect to the maximum norm}\\[0.2\normalbaselineskip]
{$\overline{\mathbb{B}_d}(x_0,\varepsilon)$} & {$:= \set{x\in X: d(x,x_0)\leq\varepsilon}$, the \textit{closed ball} with center~$x_0\in X$ and radius~$\varepsilon\in~[0,\infty]$ for a metric space~$(X,d)$}\\[0.2\normalbaselineskip]
{$\mathbb{N}$} & {$:= \set{0,1,2,\ldots}$}\\[0.2\normalbaselineskip]
{$\mathbb{N}^*$} & {$:= \set{1,2,\ldots} = \mathbb{N}\setminus\set{0}$}\\[0.2\normalbaselineskip]
{$\underline j$} & {$:= \set{1,\ldots,j},~j\in\mathbb{N}^*.$ We set~$\underline 0 = \emptyset$}\\[0.2\normalbaselineskip]
{$\mathbb{R}$} & {the field of the real numbers}\\[0.2\normalbaselineskip]
{$\mathbb{C}$} & {the field of the complex numbers}\\[0.2\normalbaselineskip]
{$\mathbb{F}$} & {either~$\mathbb{R}$ or~$\mathbb{C}$}\\[0.2\normalbaselineskip]
{$\text{Im}\,z$} & {the imaginary part of the complex number~$z\in\mathbb{C}$}\\[0.2\normalbaselineskip]
{$\Re z$} & {the real part of the complex number~$z\in\mathbb{C}$}\\[0.2\normalbaselineskip]
{$\overset{\circ}{\mathbb{C}}_-$} & {$:=\set{z\in\mathbb{C}\,\big\vert \Re z < 0}$, the open left halfplane}\\[0.2\normalbaselineskip]
{$\overline{\mathbb{C}}_+$} & {$:=\set{z\in\mathbb{C}\,\big\vert\,\Re z\geq 0}$, the closed right halfplane}\\[0.2\normalbaselineskip]
{$\cl(A)$} & {$:= \bigcap\set{O^c: O\in\mathcal{O}, O\subseteq X\setminus A}$, the \textit{closure} of a subset~$A$ of a topological space~$(X,\mathcal{O})$}\\[0.2\normalbaselineskip]
{$\exists !$} & {this is an abbreviation for ``there is some unique''}\\[0.2\normalbaselineskip]
{$e_i$} & {the~$i$-th standard unit vector in~$\mathbb{R}^n$}\\[0.2\normalbaselineskip]
{$f^{-1}(A)$} & {$:= \set{x\in X: f(x)\in A}$, the \textit{preimage} of the set~$A\subseteq Y$ under the function~$f: X\to Y$}\\[0.2\normalbaselineskip]
{$\mathbb{F}[x_1,\ldots,x_n]$} & {$:= \set{\sum_{k = 0}^\ell a_k x_1^{\nu_{k,1}}\cdots x_n^{\nu_{k,n}}\,\Big\vert\, \ell\in\mathbb{N},a_k\in\mathbb{F},\nu_{k,j}\in\mathbb{N}}$, the ring of \textit{(real or complex) polynomials} in~$n$ indeterminants}\\[0.2\normalbaselineskip]
{$\mathbb{F}(x_1,\ldots,x_n)$} & {$:=\set{\frac{p}{q}\,\Big\vert\,p = p(x_1,\ldots,x_n),q = q(x_1,\ldots,x_n)\in\mathbb{F}[x_1,\ldots,x_n],q\neq 0}$}\\[0.2\normalbaselineskip]
{$R^{k\times\ell}$} & {the vector space of all~$k\times\ell$ matrices with entries in a ring~$R.$}\\[0.2\normalbaselineskip]
{$\rk_F$} & {the rank of a matrix with entries in the field $F$.}\\[0.2\normalbaselineskip]

{$M_{i,\cdot\,(}M_{\cdot,j})$} & {the~$i$th row (column) for a matrix~$M\in\mathbb{F}^{k\times\ell}$ and~$i\in\underline{k}$ ($j\in\underline{\ell}$)}\\[0.2\normalbaselineskip]
{$M_{i,j}$} & {the entry~$(i,j)$ of a matrix~$M\in\mathbb{F}^{k\times \ell}$ and~$i\in\underline{k},\,j\in\underline{\ell}$}\\[0.2\normalbaselineskip]
{$\mathcal{GL}(\mathbb{F}^n)$} & {the group of all invertible matrices~$M\in\mathbb{F}^{n\times n}$}\\[0.2\normalbaselineskip]
{$I_n$} & {the \textit{identity matrix} in~$\mathbb{R}^{n\times n}$}\\[0.2\normalbaselineskip]
{$m_{\sigma,\pi}$} & {the submatrix induced by the mappings~$\sigma$ and~$\pi$, see p.\,15}\\[0.2\normalbaselineskip]
{$M_{\sigma,\pi}$} & {the minor induced by the mappings~$\sigma$ and~$\pi$, see p.\,15}\\[0.2\normalbaselineskip]
{$\mathcal{L}^1_{\text{loc}}(I,\mathbb{R}^m)$} & {the set of locally integrable functions over the interval~$I\subseteq\mathbb{R}$}\\[0.2\normalbaselineskip]
{$\mathfrak{P}(M)$} & {$:=\set{A\,\big\vert\,\subseteq M}$, the power set of a set~$M$}\\[0.2\normalbaselineskip]
{$\mathfrak{P}_n(M)$} & {$:=\set{A\in\mathfrak{P}(M)\,\big\vert\abs{A} = n}$}\\[0.2\normalbaselineskip]
{$S_d$} & {the set of all permutations of a~$d$-element family}\\[0.2\normalbaselineskip]
{$\Sigma_{\ell,n,m}$} & {$:= \mathbb{R}^{\ell\times n}\times\mathbb{R}^{\ell\times n}\times\mathbb{R}^{\ell\times m}$}\\[0.2\normalbaselineskip]
{$\Sigma_{n,m}$} & {$:= \mathbb{R}^{n\times n}\times\mathbb{R}^{n\times m}$}\\[0.2\normalbaselineskip]
{$\text{span}(S)$} & {$:= \bigcap\set{U\subseteq V\,\big\vert\,U~\text{linear~subspace~with}~S\subseteq U}$, the linear span of the subset~$S$ of the vector space~$V$; if~$S = \set{s}$, then we write~$\text{span}(s)$ instead}\\[0.2\normalbaselineskip]
{$U^\bot$} & {$:=\set{v\in\mathbb{R}^n\,\big\vert\,\forall\, u\in U: \scprod{u}{v} = 0}$, the orthogonal complement of a subspace~$U$ of a Hilbert space~$(H,\scprod{\cdot}{\cdot})$}\\[0.2\normalbaselineskip]
{$\mathcal{V}_n(\mathbb{F})$} & {$:= \set{\mathbb{V}\subseteq\mathbb{F}^n\,\Big\vert\,\exists\, q_1(\cdot),\ldots,q_k(\cdot)\in\mathbb{F}[x_1,\ldots, x_n]: \bigcap_{i=1}^k q^{-1}(\set{0}) = \mathbb{V}}$, the set of algebraic varieties in~$\mathbb{F}^n$, see p.\,4}\\[0.2\normalbaselineskip]
{$\mathcal{V}_n^{\text{prop}}(\mathbb{F})$} & {$:= \mathcal{V}_n(\mathbb{F})\setminus\set{\mathbb{F}^n}$ see p.\,4}\\[0.2\normalbaselineskip]
{$\mathcal{W}^{1,1}_{\text{loc}}(I,\mathbb{R}^n)$} & {$:=\set{\varphi\in\mathcal{L}^1_\text{loc}(I,\mathbb{R}^n)\,\big\vert\,\varphi~\text{weakly differentiable}},$ for some interval~$I\subseteq\mathbb{R}$}\\[0.2\normalbaselineskip]
{$\text{ess\,sup} f$} & {$:=\inf\set{\sup f(\dom f\setminus N)\,\big\vert\,N~\text{Lebesgue~nullset}},$ the essential supremum of a function~$f$}\\[0.2\normalbaselineskip]
\end{longtable}

\newpage\textcolor{white}{blindtext}\newpage
\section{Introduction}

In this thesis we investigate linear differential-algebraic equations (DAEs) with constant real coefficients of the form
\begin{align}\label{eq:intro_1}
\tfrac{\mathrm{d}}{\mathrm{d}t}(Ex) = Ax + Bu
\end{align}
with constant real coefficients $(E,A,B)\in\Sigma_{\ell,n,m} = \mathbb{R}^{\ell\times n}\times\mathbb{R}^{\ell\times n}\times\mathbb{R}^{\ell\times m}$. A locally integrable control $u\in L^1_{\text{loc}}(\mathbb{R}_{\geq 0},\mathbb{R}^m)$ and trajectory $x\in L^1_{\text{loc}}(\mathbb{R}_{\geq 0},\mathbb{R}^n)$ so that $Ex\in AC(\mathbb{R}_{\geq 0},\mathbb{R}^{\ell})$ is almost everywhere differentiable is called a solution of~\eqref{eq:intro_1}, if~\eqref{eq:intro_1} is almost everywhere fullfilled. By studying the set of all solutions $(x,u)$ of~\eqref{eq:intro_1} it is possible to formulate some controllability concepts for DAEs of the form~\eqref{eq:intro_1}. We are interested in topological properties of the set
\begin{align*}
S := \set{(E,A,B)\in\Sigma_{\ell,n,m}\,\big\vert\,(E,A,B)~\text{``controllable''}},
\end{align*}
where ``controllable'' stands for these different controllability concepts. \cite{DAEs} have shown that~$S$ can be described by certain algebraic conditions. Wonham proved in~\cite[Theorem 1.3, p.\,44]{Wonham} that the set of controllable linear ordinary differential equations (ODEs) of the form
\begin{align}\label{eq:intro_ODE}
\tfrac{\mathrm{d}}{\mathrm{d}t} x = Ax + Bu,\quad (A,B)\in\mathbb{R}^{n\times n}\times\mathbb{R}^{n\times m},~u\in L^1_{\text{loc}}(\mathbb{R}_{\geq 0},\mathbb{R}^m)
\end{align}
is generic using an algebraic condition, namely the well-known Kalman criterion. We take this result as a motivation and show that~$S$ is generic under certain conditions on~$\ell,n,m.$

At first we consider genericity as defined in~\cite[p.\,28]{Wonham} in the second section. Unlike Wonham we consider the space~$\mathbb{F}^n$ with~$\mathbb{F} = \mathbb{R}$ or~$\mathbb{F} = \mathbb{C}$. We collect the properties of generic sets, i.e. they are closed under~$\cap$ and~$\cup$ and they are dense. Further we briefly discuss the advantage of the usage of the Zariski-topology in the context of generic sets.

In the third section we study polynomial block matrices of the form
\begin{align*}
P(x) = \begin{bmatrix}P^{1,1}(x) & \cdots & P^{1,q}(x)\\\vdots & \ddots & \vdots\\ P^{p,1}(x) & \cdots & P^{p,q}(x)\end{bmatrix},\quad\begin{array}{l}
P^{i,j}(x)\in\mathbb{F}[x]^{n_{i,j}\times m_{i,j}},\\
\mathbb{F} = \mathbb{R}~\text{or}~\mathbb{F} = \mathbb{C},\\
n_{i,j} = n_{k,j},\\
m_{i,j} = m_{i,k}
\end{array}
\end{align*}
whose degree is bounded from above by some constant. The vector space of these polynomial matrices can be identified with~$\mathbb{F}^n$ for some~$n\in\mathbb{N}$ and hence we can apply the concept of generic sets. We look at the rank of these matrices, consider the three conditions
\begin{align}
\rk_{\mathbb{F}[x]} P(x)  & \geq d \label{eq:no_1}\\
\forall \lambda\in\mathbb{C}: \rk_{\mathbb{C}} P(\lambda) & \geq d\label{eq:no_2}\\
\forall \lambda\in\mathbb{C}~\text{with}~\Re\lambda\geq 0: \rk_{\mathbb{C}} P(\lambda) & \geq d\label{eq:no_3}
\end{align}
and prove that there are conditions on~$d$ such that the set of polynomial matrices which fullfill~\eqref{eq:no_1}, or~\eqref{eq:no_2} or~\eqref{eq:no_3} resp., is generic.

In the fourth section we write down the proof that the set of controllable ODEs of the form~\eqref{eq:intro_ODE} is generic as an example for the application of the propositions from the third section to differential equations.

In the fifth and sixth section we turn our attention to DAEs and consider the concepts of freely initializable, impulse controllable, completely controllable (stabilizable), strongly controllable (stabilizable) and in the behavioural sense controllable (stabilizable) systems. Using the results of the third section we derive necessary and sufficient conditions on~$\ell,n,m$ so that~$S$ is generic.

To the best of my knowledge, the paper~\cite{Belur} by Belur und Shankar is the only one which treats genericity of controllability of DAEs, namely the case of impulse controllable systems. We will later discuss the differences between their result and the result of this thesis.

\newpage
\section{Genericity: definitions and elementary properties}

For the following observations let~$n\in\mathbb{N}^*$,~$\mathbb{F} = \mathbb{R}$ or~$\mathbb{C}$ and provide~$\mathbb{F}^n$ with the Euclidean norm~$\norm{\cdot}_2.$ Further, we identify any polynomial
\begin{align}
    p(x) = p(x_1,\ldots,x_n) = \sum_{k = 0}^\ell a_k x_1^{\nu_{k,1}}\cdots x_n^{\nu_{k,n}}\in \mathbb{F}[x_1,\ldots,x_n]
\end{align}
with its polynomial function
\begin{align}
    p(\cdot): \mathbb{F}^n\to\mathbb{F},\quad x = (x_1,\ldots,x_n) \mapsto p(x) =  \sum_{k = 0}^\ell a_k x_1^{\nu_{k,1}}\cdots x_n^{\nu_{k,n}}.
\end{align}

\begin{Definition}[{Algebraic variety, genericity, see~\cite[p.\,28]{Wonham} and~\cite[p.\,50]{UndAlgGeom}}]~\label{def:gen_set}
A set~$ \mathbb{V} \subseteq \mathbb{F}^n$ is called an \emph{algebraic variety}\footnote{The notion of \textit{algebraic varieties} differs from source to source. What~\cite{Wonham} calls algebraic variety (in fact he only calls them variety, but as suggested in~\cite[p.\,240]{GeomMeasTheory} there is an analytic analogon) is called \textit{algebraic set} or \textit{affine variety} in algebraic geometry books (e.g.~\cite{AlgVar,UndAlgGeom}). Since we are interested in applications in mathematical systems theory, we will stick to Wonhams nomenclature.}, if there exist finitely many polynomials
\begin{align*}
    p_1(x_1,\ldots, x_n),\ldots,p_k(x_1,\ldots, x_n) \in \mathbb{F}[x_1,\ldots, x_n]
\end{align*}{}
such that~$\mathbb{V}$ is the locus of their zeros, i.e.
\begin{align}
\mathbb{V} = \set{x\in \mathbb{F}^n\,\big\vert\, \forall\, i\in\underline{k}: p_i(x)=0} = \bigcap_{i=1}^k p_i^{-1}(\set{0}).
\end{align}
An algebraic variety~$ \mathbb{V}$ is called \emph{proper} if ~$ \mathbb{V} \subsetneq \mathbb{F}^n$, and \emph{nontrivial} if~$\mathbb{V} \neq \emptyset.$ The set of all algebraic varieties in~$\mathbb{F}^n$ is denoted as
\begin{align}
    \mathcal{V}_n(\mathbb{F}) := \set{\mathbb{V}\subseteq\mathbb{F}^n\,\Big\vert\,\exists\, q_1(\cdot),\ldots,q_k(\cdot)\in\mathbb{F}[x_1,\ldots, x_n]: \bigcap_{i=1}^k q^{-1}(\set{0}) = \mathbb{V}}
\end{align}
and the set of all proper algebraic varieties as
\begin{align}\label{eq:def_prop_var}
    \mathcal{V}_n^{\text{prop}}(\mathbb{F}) := \mathcal{V}_n(\mathbb{F})\setminus\set{\mathbb{F}^n}.
\end{align}

A set~$S\subseteq\mathbb{F}^n$ is called \textit{generic}, if there exist a proper algebaric variety $\mathbb{V}\in\mathcal{V}_n^{\text{prop}}(\mathbb{F})$ so that $S^c\subseteq\mathbb{V}$. If the algebraic variety $\mathbb{V}$ is known, then we call $S$ \textit{generic with respect to (w.r.t.)~$\mathbb{V}$.}
\end{Definition}

Wonham does not talk about generic sets in~\cite{Wonham} but about \textit{properties}, which are functions of the form~$\Pi : \mathbb{F}^n \to \set{0,1}.$ Such a property~$\Pi$ is called generic if 
\begin{align*}
\exists\,\mathbb{V}\in\mathcal{V}_n^{\text{prop}}(\mathbb{F})~\forall x\in\mathbb{V}^c: \Pi(x) = 1.
\end{align*}
It is evident that  there is this 1-1-connection between generic sets and generic properties, the latter being characteristic functions of the former.

We will use the term of generic sets and not of generic properties throughout this thesis. 

In a next step we briefly discuss the correspondence between algebraic varieties and ideals of $\mathbb{F}[x_1,\ldots,x_n]$. First we recall the definition of an ideal in a commutative ring.

\begin{Definition}[{Ideal, see~\cite[p.\,86]{Alg}}]
Let~$R$ be a commutative ring with 1. A set~$I\subseteq R$ is called an \textit{ideal} of~$R$, if
\begin{enumerate}[(i)]
\item~$\forall x,y\in I: x+y\in I$ and
\item~$\forall\,x\in I~\forall\,r\in R: rx \in I$.
\end{enumerate}
For any set~$S\subseteq\mathbb{R}$, the set
\begin{align*}
(S)_R := \bigcap\set{I\subseteq R\,\big\vert\, S\subseteq I, I~\text{an~ideal}}
\end{align*}
is called the \textit{ideal generated by~$S$}; $S$ is the \textit{generator} of $(S)_R$.
\end{Definition}

\begin{Remark}[{Correspondence between algebraic varieties and ideals, see~\cite[p.\,50]{UndAlgGeom}}]\label{Rem:Corr_alg_var_ideal}
Let
\begin{align*}
I := (\set{p_1,\ldots,p_n})_{\mathbb{F}[x_1,\ldots,x_n]}
\end{align*}
be the ideal generated by~$p_1,\ldots,p_k\in\mathbb{F}[x_1,\ldots,x_n].$ Then by~\cite[p.\,86]{Alg} and commutativity of~$\mathbb{F}[x_1,\ldots,x_n]$ we have
\begin{align*}
I = \set{\left.\sum_{j = 1}^k\alpha_jp_j\,\right\vert\,\alpha_1,\ldots,\alpha_k\in\mathbb{F}[x_1,\ldots,x_n]}
\end{align*}
and hence
\begin{align*}
\mathbb{V}(I) & := \set{x\in\mathbb{F}^n\,\big\vert\,\forall p\in I: p(x) = 0}\\
& = \set{x\in\mathbb{F}^n\,\big\vert\,\forall i\in\underline{k}: p_i(x) = 0}\\
& = \bigcap_{i = 1}^kp_i^{-1}(\set{0})\in\mathcal{V}_n(\mathbb{F}).
\end{align*}
This yields the inclusion 
\begin{align}\label{eq:corr_alg_var_ideal}
\mathcal{V}_n(\mathbb{F})\subseteq\set{\mathbb{V}(I)\,\big\vert\, I\subseteq\mathbb{F}[x_1,\ldots,x_n]~\text{an~ideal}}.
\end{align}

We recall that a commutative ring with 1 is called \textit{Noetherian}, if any ideal has a finite generator (see~\cite[Proposition-Definition 3.1(i),~p.\,48]{UndAlgGeom}). Since any field is a Noetherian ring (indeed, the only ideals are the whole field, $\set{0}$ and the empty set, which are generated by 1, 0 and $\emptyset$, resp.) and the isomorphy
\begin{align*}
\mathbb{F}[x_1,\ldots,x_m,x_{m+1}] \cong \big(\mathbb{F}[x_1,\ldots,x_m]\big)[x_{m+1}]
\end{align*}
holds true, the Hilbert Basis Theorem (see~\cite[Theorem 3.3,~p.\,49]{UndAlgGeom}) yields that~$\mathbb{F}[x_1,\ldots,x_n]$ is a Noetherian ring. This yields that for any ideal~$I\subseteq\mathbb{F}[x_1,\ldots,x_n]$ there are~$p_1,\ldots,p_k\in\mathbb{F}[x_1,\ldots,x_n]$ such that~$I = (\set{p_1,\ldots,p_k})_{\mathbb{F}[x_1,\ldots,x_n]}$. Then we find
\begin{align*}
\mathbb{V}(I) = \bigcap_{i = 1}^k p_i^{-1}(\set{0})\in\mathcal{V}_n(\mathbb{F})
\end{align*}
and hence we have equality in~\eqref{eq:corr_alg_var_ideal}. With the Hilbert basis theorem it is evident that
\begin{align*}
\mathcal{V}^{\text{prop}}_n(\mathbb{F}) = \set{\mathbb{V}(I)\,\big\vert\, I\subseteq\mathbb{F}[x_1,\ldots,x_n]~\text{an~ideal},~I\neq\set{0}}.
\end{align*}
\end{Remark}

Although~$\mathbb{F}[x_1,\ldots,x_n]$ is not a principal ring for~$n\geq 2$ (see~\cite[p.\,113]{Alg}), it is well-known that any algebraic variety~$\mathbb{V}\subseteq\mathbb{R}^n$ is generated by a principal ideal, i.e. an ideal that is generated by a singleton set.

\begin{Lemma}\label{lem:MostBoringResult}
Any set~$\mathbb{V}\subseteq\mathbb{R}^n$ is an algebraic variety if, and only if, there is a polynomial~$p\in\mathbb{R}[x_1,\ldots,x_n]$ so that~$\mathbb{V} = p^{-1}(\set{0}).$
\end{Lemma}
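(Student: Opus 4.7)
My plan is to prove the two implications separately, with the forward direction being the only one that requires any work.

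The backward direction is immediate from Definition~\ref{def:gen_set}: if $\mathbb{V} = p^{-1}(\set{0})$ for some single polynomial $p\in\mathbb{R}[x_1,\ldots,x_n]$, then $\mathbb{V}$ is the zero locus of the finite family $\set{p}$, which makes it an algebraic variety by definition.

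For the forward direction, suppose $\mathbb{V}\subseteq\mathbb{R}^n$ is an algebraic variety, so that there exist polynomials $p_1,\ldots,p_k\in\mathbb{R}[x_1,\ldots,x_n]$ with
\begin{align*}
\mathbb{V} = \bigcap_{i=1}^k p_i^{-1}(\set{0}).
\end{align*}
I would then define
\begin{align*}
p := \sum_{i=1}^k p_i^2 \in\mathbb{R}[x_1,\ldots,x_n]
\end{align*}
and claim that $\mathbb{V} = p^{-1}(\set{0})$. The inclusion $\mathbb{V}\subseteq p^{-1}(\set{0})$ is clear: if all $p_i(x)=0$, then $p(x)=0$. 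The reverse inclusion is where the field $\mathbb{R}$ matters. For any $x\in\mathbb{R}^n$, each $p_i(x)^2$ is a nonnegative real number, so their sum vanishes if and only if every summand vanishes, which forces $p_i(x)=0$ for each $i\in\underline{k}$ and hence $x\in\mathbb{V}$.

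There is essentially no obstacle here; the whole point of the lemma is to record that over $\mathbb{R}$ (and crucially \emph{not} over $\mathbb{C}$, where a sum of squares can vanish without each term vanishing) the intersection of finitely many hypersurfaces is again a single hypersurface. In subsequent applications this will allow us to describe any proper algebraic variety, and hence the complement of any generic set, as the zero set of a single polynomial $p\neq 0$, which is technically more convenient than carrying around a finite family.
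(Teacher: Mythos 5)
Your proof is correct and follows the same route as the paper: take $p := \sum_{i=1}^k p_i^2$ and observe that over $\mathbb{R}$ a sum of squares vanishes precisely when each summand does. You merely spell out the nonnegativity argument for the reverse inclusion, which the paper leaves implicit.
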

\begin{proof}
\begin{enumerate}
    \item[$\implies$] Let~$\mathbb{V} = \bigcap_{i = 1}^kp_i^{-1}(\set{0}).$ Then
    \begin{align*}
        \mathbb{V} = \Bigg(\underbrace{\sum_{i = 1}^kp_i^2}_{=:p(\cdot)}\Bigg)^{-1}(\set{0}).
    \end{align*}
    \item[$\impliedby$] This is trivial.
\end{enumerate}
\end{proof}

It is notable that this proof does not work for $\mathbb{F} = \mathbb{C}$. As an example consider the constant polynomials
\begin{align*}
p_1 & : \mathbb{C}^n\to\mathbb{C}, z\mapsto 1,\\
p_2 & : \mathbb{C}^n\to\mathbb{C}, z\mapsto i.
\end{align*}
Then $p_1^{-1}(\set{0}) = p_2^{-1}(\set{0}) = \emptyset$ and 
\begin{align*}
(p_1^{2}+p_2^2)^{-1}(\set{0}) = \mathbb{C}^n.
\end{align*}
Of course, in this case $p_1^{-1}(\set{0})\cap p_2^{-1}(\set{0})$ is generated by a principal ideal and hence this is not a counterexample against Lemma~\ref{lem:MostBoringResult} for $\mathbb{C}$ instead of $\mathbb{R}$. However, there is the following well-known counterexample.

\begin{Lemma}
For $n\geq 2$ there exists a proper algebraic variety $\mathbb{V}\in\mathcal{V}_n^{\text{prop}}(\mathbb{C})$ so that
\begin{align*}
\forall p\in\mathbb{C}[x_1,\ldots,x_n]: \mathbb{V}\neq p^{-1}(\set{0}).
\end{align*}
\end{Lemma}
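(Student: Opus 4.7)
The plan is to exhibit the single point $\mathbb{V} = \{0\}\subseteq\mathbb{C}^n$ as the counterexample. This is clearly a proper algebraic variety, since
\[
\set{0} = \bigcap_{i = 1}^n x_i^{-1}(\set{0}),
\]
so it belongs to $\mathcal{V}_n^{\text{prop}}(\mathbb{C})$. To show it is not the zero set of a single polynomial, I would argue by contradiction: assume $p\in\mathbb{C}[x_1,\ldots,x_n]$ satisfies $p^{-1}(\set{0}) = \set{0}$. Since $p^{-1}(\set{0})$ is neither empty nor all of $\mathbb{C}^n$, the polynomial $p$ must be non-constant.

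The core of the argument is then the classical fact that, over $\mathbb{C}$ and in $n\geq 2$ variables, the zero set of any non-constant polynomial is infinite. I would prove this by induction on $n$. For the inductive step I would write
\[
p(x_1,\ldots,x_n) = \sum_{i = 0}^d a_i(x_2,\ldots,x_n)\, x_1^i
\]
with $a_d\not\equiv 0$. If $d\geq 1$, then since $a_d\in\mathbb{C}[x_2,\ldots,x_n]\setminus\set{0}$, the set $a_d^{-1}(\mathbb{C}\setminus\set{0})$ is infinite (for $n-1 = 1$ the zero set of $a_d$ is finite as a single-variable polynomial has finitely many roots; for $n-1\geq 2$ use the inductive hypothesis on $\mathbb{C}^n\setminus a_d^{-1}(\set{0})$ being non-empty and then Zariski denseness/direct counting), and for each such $(x_2,\ldots,x_n)$ the polynomial $p(\,\cdot\,,x_2,\ldots,x_n)\in\mathbb{C}[x_1]$ is non-constant, hence has a root in $\mathbb{C}$ by the Fundamental Theorem of Algebra. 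This produces infinitely many distinct zeros of $p$. If $d = 0$, then $p\in\mathbb{C}[x_2,\ldots,x_n]$ is non-constant in $n-1\geq 1$ variables, so by the inductive hypothesis (or the Fundamental Theorem of Algebra for the base case $n-1=1$) it has at least one zero $(x_2^*,\ldots,x_n^*)$, and then $\mathbb{C}\times\set{(x_2^*,\ldots,x_n^*)}\subseteq p^{-1}(\set{0})$ is infinite.

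Combining: the hypothetical $p$ would be a non-constant polynomial in $n\geq 2$ complex variables with only the single zero $0$, contradicting the infiniteness just established. Hence no such $p$ exists, and $\mathbb{V} = \set{0}$ proves the claim. The main obstacle, and the only non-trivial piece, is the inductive lemma about infiniteness of complex hypersurfaces; everything else is bookkeeping. An alternative would be to cite Hilbert's Nullstellensatz, which implies that a non-constant $p$ defines a hypersurface of complex dimension $n-1\geq 1$ and is therefore infinite, but the direct induction above keeps the proof self-contained.
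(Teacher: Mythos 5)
Your proof is correct, and the overall strategy (slice the polynomial in one variable, invoke the Fundamental Theorem of Algebra on the slices where the leading coefficient does not vanish, and handle the remaining case separately) is the same mechanism the paper uses. The notable difference is in how the two proofs organise the argument and which variety they exhibit. The paper fixes $\mathbb{V}=\{(0,0)\}\subseteq\mathbb{C}^2$, proves the claim for $n=2$ directly, and then for $n>2$ takes $\mathbb{V}\times\mathbb{C}^{n-2}$ and observes that specialising the last $n-2$ coordinates of a hypothetical single defining polynomial would produce a polynomial in $\mathbb{C}[x_1,x_2]$ with zero set $\{(0,0)\}$, contradicting the $n=2$ case. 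You instead take the minimal variety $\{0\}\subseteq\mathbb{C}^n$ uniformly in $n$ and prove once and for all, by induction on $n$, the self-contained lemma that a non-constant complex polynomial in $n\geq 2$ variables has infinitely many zeros. Your route is a bit more work up front but yields a cleaner, reusable statement; the paper's route is shorter because the $n>2$ reduction to $n=2$ is essentially free. One small wrinkle in your write-up: where you argue that $a_d^{-1}(\mathbb{C}\setminus\{0\})$ is infinite for $n-1\geq 2$, appealing to the inductive hypothesis is unnecessary and slightly off target (the hypothesis concerns zero sets, not their complements). The cleaner justification, valid uniformly for $n-1\geq 1$, is that $a_d\not\equiv 0$ gives a point where $a_d\neq 0$, hence by continuity an open neighbourhood on which $a_d\neq 0$, and any nonempty open subset of $\mathbb{C}^{n-1}$ is infinite. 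With that repair the proof is complete.
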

\begin{proof}
Consider first $n = 2$ and the set
\begin{align*}
\mathbb{V} := \set{(0,0)}\subseteq\mathbb{C}^2.
\end{align*}
With
\begin{align*}
p:\mathbb{C}^2\to\mathbb{C}, (x,y)\mapsto x
\end{align*}
and
\begin{align*}
q:\mathbb{C}^2\to\mathbb{C}, (x,y)\mapsto y
\end{align*}
we see that $\mathbb{V} = q^{-1}(\set{0})\cap p^{-1}(\set{0})\in\mathcal{V}_2^{\text{prop}}(\mathbb{C}).$ Assume, there exists a polynomial 
\begin{align*}
\widehat{p}(x,y) = \sum_{i,j=0}^k a_{i,j}x^iy^j = \sum_{i = 0}^k\underbrace{\left(\sum_{j=0}^{k_i}a_{i,j}x^i\right)}_{=:p_i(x)}y^j\in\mathbb{C}[x,y]
\end{align*}
with $\mathbb{V} = \widehat{p}^{-1}(\set{0}).$ If $k = 0$, then we would find either $\widehat{p}^{-1}(\set{0}) = \emptyset$ or $\abs{\widehat{p}^{-1}(\set{0})} = \infty$, which contradicts our assumption. Hence we can w.l.o.g. assume $k\geq 1$ and $p_k\neq 0_{\mathbb{C}[x]}$. Then we find
\begin{align*}
\forall x\in\mathbb{C}\setminus p_k^{-1}(\set{0}):\quad \widehat{p}(x,\cdot)\in\mathbb{C}[y]\quad\wedge\quad\deg\widehat{p}(x,\cdot) = k\geq 1.
\end{align*}
Since $\mathbb{C}$ is algebraically closed, we find further
\begin{align*}
\forall x\in\mathbb{C}\setminus p_k^{-1}(\set{0})\exists y\in\mathbb{C}: \widehat{p}(x,y) = 0.
\end{align*}
By our assumption we have $\widehat{p}^{-1}(\set{0}) = \set{(0,0)}$ and thus we find
\begin{align*}
\mathbb{C}\setminus p_k^{-1}(\set{0}) = \set{0}
\end{align*}
which contradicts the fact, that any complex polynomial has at most finitely many zeros. Hence there is no polynomial $\widehat{p}\in\mathbb{C}[x,y]$ so that $\mathbb{V} = \widehat{p}^{-1}(\set{0}).$

Let $n>2$. Then $\mathbb{V}\times\mathbb{C}^{n-2}$ is an algebraic variety. Assume that there exists a polynomial $\widehat{q}\in\mathbb{C}[x_1,\ldots,x_n]$ so that $\mathbb{V}\times\mathbb{C}^{n-2} = \widehat{q}^{-1}(\set{0})$. Then
\begin{align*}
\forall (y_3,\ldots,y_n)\in\mathbb{C}^{n-2}:\quad \widehat{q}(\cdot,\cdot,y_3,\ldots,y_n)\in\mathbb{C}[x_1,x_2]\quad\wedge\quad\big(\widehat{q}(\cdot,\cdot,y_3,\ldots,y_n)\big)^{-1}(\set{0}) = \mathbb{V},
\end{align*}
in contrary to our result that $\mathbb{V}$ is not the set of zeros of a single polynomial.
\end{proof}

A very useful property of generic sets for later is the following.

\begin{Remark}\label{Rem:InclusionOfGenericSets}
If~$S\subseteq\mathbb{F}^n$ is generic w.r.t.~$\mathbb{V}\in\mathcal{V}_n^{\text{prop}}$, then any set~$\widetilde{S}\supseteq S$ is generic w.r.t.~$\mathbb{V}$. This follows from~$\widetilde{S}^c\subseteq S^c\subseteq\mathbb{V}.$
\end{Remark}

We have given the definition of generic sets and algebraic varieties. Now we shall give a small example and draw a picture.

\begin{Example}
The algebraic variety induced by the polynomial~$$p:\mathbb{R}^2\to\mathbb{R},\quad (x,y)\mapsto x^3-xy^2+5$$ is the blue and red line depicted in Figure~\ref{fig:fig_1}.
\begin{figure}[h]
    \centering
    \includegraphics[scale = 0.45]{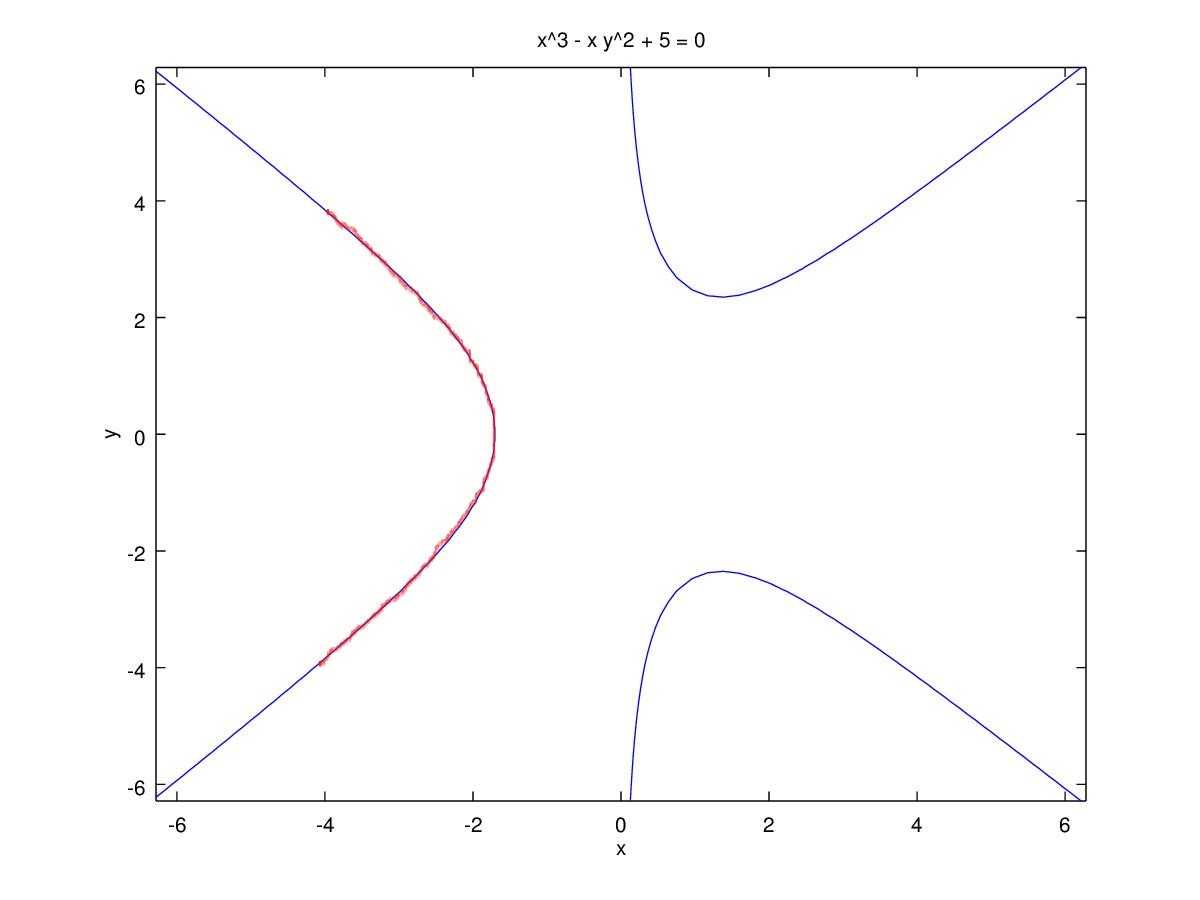}
    \caption{Algebraic variety induced by~$(x,y)\mapsto p(x,y) = x^3-xy^2+5$; the red line being the complement of the generic set $S$.}
    \label{fig:fig_1}
\end{figure}\newline
Thus the set
\begin{align*}
S := \set{(x,y)\in\mathbb{R}^2\,\big\vert\,x\leq-4\vee x\geq 0\vee p(x,y)\neq 0}
\end{align*}
is generic w.r.t. to~$p^{-1}(\set{0}).$ The complement of $S$ is the red line depicted in Figure~\ref{fig:fig_1}.
\end{Example}

Reid proves in~\cite[Proposition-Definition 3.5, p.\,50]{UndAlgGeom} some properties of~$\mathcal{V}_n(\mathbb{F})$. We recall the properties which we will use later.

\begin{Lemma}[{Properties of algebraic varieties, see~\cite[Proposition-Definition 3.5, p.\,50]{UndAlgGeom}}]~\label{Rem:Zariski}
\begin{enumerate}[(i)]
    \item Any algebraic variety~$\mathbb{V} = \bigcap_{i=0}^k p_i^{-1}(\set{0})\subseteq\mathbb{F}^n$ is closed.
    \item Let~$(\mathbb{V}_i)_{i\in I}\in\big(\mathcal{V}_n(\mathbb{F})\big)^I$ a family of algebraic varieties for an arbitrary index set $I$. Then~$\bigcap_{i\in I} \mathbb{V}_i\in\mathcal{V}_n(\mathbb{F}).$ If~$\mathbb{V}_i$ is proper for some~$i\in I,$ then~$\bigcap_{i\in I} \mathbb{V}_i$ is proper. This implies that $\mathcal{V}_n^{\text{prop}}(\mathbb{F})$ and $\mathcal{V}_n(\mathbb{F})$ are closed under~$\cap$.
    \item~$\mathcal{V}_n(\mathbb{F})$ is closed under~$\cup$.
\end{enumerate}
\end{Lemma}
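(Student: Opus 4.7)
For part (i), the plan is short: every polynomial function $p_i:\mathbb{F}^n\to\mathbb{F}$ is continuous with respect to the Euclidean topology, hence $p_i^{-1}(\set{0})$ is closed as the preimage of the closed singleton $\set{0}$. A finite intersection of closed sets is closed, so $\mathbb{V}=\bigcap_{i=1}^k p_i^{-1}(\set{0})$ is closed.

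For part (ii), I would reduce the arbitrary intersection to a single finitely-generated ideal using Remark~\ref{Rem:Corr_alg_var_ideal}. Write each $\mathbb{V}_i = \mathbb{V}(I_i)$ for an ideal $I_i\subseteq\mathbb{F}[x_1,\ldots,x_n]$, set $J:=\sum_{i\in I}I_i$ (the smallest ideal containing $\bigcup_{i\in I}I_i$), and observe directly from the definition that $\bigcap_{i\in I}\mathbb{V}(I_i)=\mathbb{V}(J)$: a point $x$ annihilates every element of $J$ iff it annihilates every element of each $I_i$. Since $\mathbb{F}[x_1,\ldots,x_n]$ is Noetherian (as noted after Remark~\ref{Rem:Corr_alg_var_ideal}), the ideal $J$ is finitely generated, say by $p_1,\ldots,p_k$; then $\bigcap_{i\in I}\mathbb{V}_i=\bigcap_{j=1}^kp_j^{-1}(\set{0})\in\mathcal{V}_n(\mathbb{F})$. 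The properness claim is immediate: if $\mathbb{V}_{i_0}\subsetneq\mathbb{F}^n$ for some index $i_0$, then $\bigcap_{i\in I}\mathbb{V}_i\subseteq\mathbb{V}_{i_0}\subsetneq\mathbb{F}^n$, which also settles the $\cap$-closedness of $\mathcal{V}_n^{\text{prop}}(\mathbb{F})$.

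For part (iii), since the $\cup$-closedness of $\mathcal{V}_n(\mathbb{F})$ is only claimed for two sets (and the general finite case then follows by induction), I would take
\begin{align*}
\mathbb{V}_1=\bigcap_{i=1}^k p_i^{-1}(\set{0}),\qquad \mathbb{V}_2=\bigcap_{j=1}^\ell q_j^{-1}(\set{0})
\end{align*}
and form the $k\ell$ products $r_{i,j}:=p_iq_j\in\mathbb{F}[x_1,\ldots,x_n]$. I would then check the set equality
\begin{align*}
\mathbb{V}_1\cup\mathbb{V}_2=\bigcap_{(i,j)\in\underline{k}\times\underline{\ell}}r_{i,j}^{-1}(\set{0})
\end{align*}
by the usual two-sided argument: the inclusion $\subseteq$ is trivial because vanishing of some $p_i$ (resp. $q_j$) makes all products with that $p_i$ (resp. $q_j$) vanish; for $\supseteq$, if $x$ lies in the right-hand side but $x\notin\mathbb{V}_1$, then some $p_{i_0}(x)\neq 0$, and $p_{i_0}(x)q_j(x)=0$ for every $j$ forces $q_j(x)=0$ for every $j$, i.e. $x\in\mathbb{V}_2$. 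This exhibits $\mathbb{V}_1\cup\mathbb{V}_2$ as an algebraic variety.

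The only step that requires any real care is the intersection part of (ii); the main obstacle there is that the index set $I$ is allowed to be infinite, so one cannot simply collect finitely many defining polynomials by hand. This is handled precisely by invoking the Hilbert Basis Theorem via Remark~\ref{Rem:Corr_alg_var_ideal}, after which the statement reduces to the elementary ideal-vs-variety correspondence.
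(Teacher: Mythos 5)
Your proof is correct and follows essentially the same route as the paper: continuity of polynomial functions for (i), the ideal correspondence plus the Hilbert Basis Theorem for (ii), and products of the defining polynomials for (iii). The only cosmetic differences are that you phrase (ii) via the sum of ideals rather than the ideal generated by the union of defining polynomials (these coincide), and that you treat (iii) for two varieties and invoke induction, where the paper writes out the general finite union with multi-indexed products; both give the same argument.
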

\begin{proof}
\begin{enumerate}[(i)]
    \item Since any multivariate polynomial is continuous and each singleton set is closed, the statement follows from continuity .
    \item Let~$\mathbb{V}_i := \bigcap_{\ell = 1}^{k_i} \big(p_\ell^i\big)^{-1}(\set{0})$ for any~$i\in I$ and define the ideal
    \begin{align*}
    J := \left(\set{p_\ell^i\,\big\vert\,i\in I, \ell\in\underline{k_i}}\right)_{\mathbb{F}[x_1,\ldots,x_n]}.
    \end{align*}
    By Remark~\ref{Rem:Corr_alg_var_ideal}, we find~$\bigcap_{i \in I}\mathbb{V}_i = \mathbb{V}(J)$. Since~$\mathbb{F}[x_1,\ldots,x_n]$ is a Noetherian ring, we find~$q_1,\ldots,q_k\in\mathbb{F}[x_1,\ldots,x_n]$ so that~$J = \left(\set{q_1,\ldots,q_k}\right)_{\mathbb{F}[x_1,\ldots,x_n]}$ and hence
    \begin{align*}
    \mathbb{V}(J) = \bigcap_{i = 1}^kq_i^{-1}(\set{0})\in\mathcal{V}_n(\mathbb{F}).
    \end{align*}
    If~$\mathbb{V}_i$ is proper, then
    \begin{align*}
    \bigcap_{j\in I}\mathbb{V}_j\subseteq\mathbb{V}_i\subsetneq\mathbb{F}
    \end{align*}
    and hence~$\bigcap_{i\in I} \mathbb{V}_i$ is proper.
    \item Let~$\mathbb{V}^1,\ldots,\mathbb{V}^\ell\in\mathcal{V}_n(\mathbb{F})$ be finitely many algebraic varieties with
    \begin{align*}
        \forall\, i\in\underline{\ell}: \mathbb{V}^i = \bigcap_{j=1}^{k_i}(p_j^i)^{-1}(\set{0}).
    \end{align*}
    It is evident that
    \begin{align*}
        \bigcup_{i=1}^\ell \mathbb{V}^i = \bigcap_{m_1 = 1}^{k_1}\cdots\bigcap_{m_\ell = 1}^{k_\ell}\left(\prod_{i=1}^\ell p^i_{m_i}\right)^{-1}(\set{0})\in\mathcal{V}_n(\mathbb{F}).
    \end{align*}
\end{enumerate}
\end{proof}

\begin{Remark}[{Zariski topology, see~\cite[p.\,50]{UndAlgGeom}}]\label{rem:Zariski_topology}
The \textit{Zariski topology} on $\mathbb{F}^n$ is defined as
\begin{align*}
\mathcal{ZT} := \set{\mathbb{F}^n\setminus\mathbb{V}\,\big\vert\,\mathbb{V}\in\mathcal{V}_n(\mathbb{F})}.
\end{align*}
Since, by Lemma~\ref{Rem:Zariski}\,(ii) and (iii)
\begin{itemize}
\item $\mathbb{F}^n = \mathbb{F}^n\setminus (1_{\mathbb{F}[x_1,\ldots,x_n]})^{-1}(\set{0})$, where $1_{\mathbb{F}[x_1,\ldots,x_n]}$ is the constant mapping $x\mapsto 1$, and $\emptyset = \mathbb{F}^n\setminus (0_{\mathbb{F}[x_1,\ldots,x_n]})^{-1}(\set{0})$,\\
\item if $(\mathbb{F}^n\setminus\mathbb{V}_i)_{i\in I}\in\mathcal{ZT}^I,$ then $\bigcup_{i\in I}(\mathbb{F}^n\setminus\mathbb{V}_i) = \mathbb{F}^n\setminus\bigcap_{i\in I}\mathbb{V}_i\in\mathcal{ZT},$
\item if $\mathbb{F}^n\setminus\mathbb{V}_1,\mathbb{F}^n\setminus\mathbb{V}_2\in\mathcal{ZT}$, then $\mathbb{F}^n\setminus\mathbb{V}_1\cap\mathbb{F}^n\setminus\mathbb{V}_2 = \mathbb{F}^n\setminus(\mathbb{V}_1\cup\mathbb{V}_2) \in\mathcal{ZT}$,
\end{itemize}
$\mathcal{ZT}$ is indeed a topology. For shortness, we call $O\in\mathcal{ZT}$ \textit{Zariski-open}. Then we find for any $O\subseteq\mathbb{F}^n$
\begin{align*}
O~\text{is~Zariski-open}~\begin{array}{c}
\Rightarrow\\
\overset{\text{i.g.}}{\nLeftarrow}
\end{array}~O~\text{is~open~in}~\mathbb{F}^n.
\end{align*}
The direction ``$\implies$'' follows from the definition of $\mathcal{ZT}$ and Lemma~\ref{Rem:Zariski}\,(i). To see that the converse is in general not true, consider the upper halfplane~$\set{z\in\mathbb{C}\,\big\vert\,\Im z > 0}$ which is open in the Euclidean topology, but by the identity theorem for analytic functions (see~\cite[Theorem V.3.13, p.\,406]{AE_I}) not open in the Zariski topology. Hence the Zariski topology is strictly coarser than the Euclidean topology. We will later see that the Zariski open sets are either empty or dense, which illustrates this point.

Although Reid writes ``The Zariski topology may cause trouble to some students; since it is only being used as a language, and has almost no content [\ldots]''\cite[p.\,51]{UndAlgGeom}, the Zariski topology is an advantageous choice for the investigation of genericity. Indeed, we can rewrite Definition~\ref{def:gen_set} as:~$S\subseteq\mathbb{F}^n$ is generic, if~$S$ contains a nonempty Zariski-open set.

However, we won't use the notion of Zariski-open sets any further in this thesis.
\end{Remark}

With the properties of algebraic varieties proven in Lemma~\ref{Rem:Zariski} we can prove the well-known fact that the locus of zeros of a finite or infinite polynomial sequence is an algebraic variety.

\begin{Lemma}\label{Cor:some_more_varieties}
For any polynomial sequence~$P(x) = (p_i(x))_{i\in K}\in\left(\mathbb{F}[x_1,\ldots,x_n]\right)^K$ with $K\in\set{\underline{k}\,\big\vert\,k\in\mathbb{N}^*}\cup\set{\mathbb{N}}$, the set~$$P^{-1}(\set{0_{\mathbb{F}^K}})\subseteq\mathbb{F}^n$$ is an algebraic variety, and it is proper if, and only if, at least one~$p_i^{-1}(\set{0})$ is proper.
\end{Lemma}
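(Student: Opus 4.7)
My plan is to reduce the statement to Lemma~\ref{Rem:Zariski}(ii). Observe that, regarding $P$ as a function $\mathbb{F}^n \to \mathbb{F}^K$ via $x \mapsto (p_i(x))_{i\in K}$, we have the identity
\begin{align*}
P^{-1}(\set{0_{\mathbb{F}^K}}) \;=\; \setdef{x\in\mathbb{F}^n}{\forall\, i\in K:\, p_i(x) = 0} \;=\; \bigcap_{i\in K} p_i^{-1}(\set{0}).
\end{align*}
Each individual set $p_i^{-1}(\set{0})$ is trivially an algebraic variety, being the zero locus of the single polynomial $p_i$.

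If $K = \underline{k}$ is finite, the claim that $P^{-1}(\set{0_{\mathbb{F}^K}})\in\mathcal{V}_n(\mathbb{F})$ is immediate from Definition~\ref{def:gen_set}. For the interesting case $K = \mathbb{N}$, I would apply Lemma~\ref{Rem:Zariski}(ii) with index set $I = \mathbb{N}$, which asserts that \emph{arbitrary} intersections of algebraic varieties are again algebraic varieties. The content there (via the Hilbert Basis Theorem / Noetherianity of $\mathbb{F}[x_1,\ldots,x_n]$) is precisely what rescues the infinite case: the ideal $(\set{p_i\,|\,i\in\mathbb{N}})_{\mathbb{F}[x_1,\ldots,x_n]}$ is finitely generated, so the intersection collapses to that of finitely many zero loci.

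For the ``proper'' equivalence, the implication $\Leftarrow$ is just monotonicity: if some $p_{i_0}^{-1}(\set{0})$ is proper, then
\begin{align*}
P^{-1}(\set{0_{\mathbb{F}^K}}) \;\subseteq\; p_{i_0}^{-1}(\set{0}) \;\subsetneq\; \mathbb{F}^n,
\end{align*}
so $P^{-1}(\set{0_{\mathbb{F}^K}})$ is proper. For the converse I would argue by contraposition: if no $p_i^{-1}(\set{0})$ is proper, then $p_i^{-1}(\set{0}) = \mathbb{F}^n$ for every $i\in K$, which forces $p_i$ to vanish identically on $\mathbb{F}^n$; since $\mathbb{F}\in\set{\mathbb{R},\mathbb{C}}$ is an infinite field, a standard induction on $n$ shows that the only polynomial vanishing on all of $\mathbb{F}^n$ is $0_{\mathbb{F}[x_1,\ldots,x_n]}$, so each $p_i$ is the zero polynomial, and consequently $P^{-1}(\set{0_{\mathbb{F}^K}}) = \mathbb{F}^n$ is not proper.

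I do not expect any serious obstacle: the only non-trivial ingredient is Lemma~\ref{Rem:Zariski}(ii) for the infinite index set, and that has already been established. The small pedagogical point worth spelling out is the passage ``$p_i$ vanishes on all of $\mathbb{F}^n$ implies $p_i \equiv 0$'', which fails for finite fields but holds here because $\mathbb{F}$ is infinite.
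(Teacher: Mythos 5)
Your proposal is correct and follows exactly the paper's route: rewrite $P^{-1}(\set{0_{\mathbb{F}^K}})$ as $\bigcap_{i\in K}p_i^{-1}(\set{0})$, invoke Lemma~\ref{Rem:Zariski}(ii) (which handles $K=\mathbb{N}$ via Noetherianity), and observe the properness equivalence. The only cosmetic overcomplication is the contrapositive of ``$\Rightarrow$'': the detour through ``$p_i$ vanishes on $\mathbb{F}^n$ hence is the zero polynomial'' is unnecessary, since $p_i^{-1}(\set{0})=\mathbb{F}^n$ for every $i$ already forces the intersection to equal $\mathbb{F}^n$.
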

\begin{proof}
Since
\begin{align*}
	P^{-1}(\set{0_{\mathbb{F}^K}}) & = \set{x\in\mathbb{F}^n\,\left\vert\,\forall\,{i\in K}:p_i(x) = 0\right.} = \bigcap_{i\in K}p_i^{-1}(\set{0}),
\end{align*}
$P^{-1}(\set{0_{\mathbb{F}^K}})$ is an algebraic variety by Lemma~\ref{Rem:Zariski}\,(ii). It is evident that~$P^{-1}(\set{0_{\mathbb{F}^K}})$ is proper if, and only if,
\begin{align*}                    
	\exists\,i\in K: p_i^{-1}(\set{0})~\text{is~proper}.
\end{align*}
\end{proof}

A useful property of algebraic varieties is the well-known fact that we can move an algebraic variety by some constant vector and the resulting set is also an algebraic variety.

\begin{Lemma}\label{lem:affine_variety}
For any~$\mathbb{V}\in\mathcal{V}_n^{\text{prop}}(\mathbb{F})$ and~$v\in\mathbb{F}^n$, the set~$v+\mathbb{V} := \set{v+u\,\big\vert\, u\in\mathbb{V}}$ is a proper algebraic variety.
\end{Lemma}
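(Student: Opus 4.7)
The plan is to translate each generating polynomial of $\mathbb{V}$ by $-v$ and verify that the resulting polynomials cut out exactly $v+\mathbb{V}$. By definition of $\mathcal{V}_n^{\text{prop}}(\mathbb{F})$ there exist polynomials $p_1,\ldots,p_k\in\mathbb{F}[x_1,\ldots,x_n]$ with
\[
\mathbb{V} \;=\; \bigcap_{i=1}^k p_i^{-1}(\set{0}).
\]
For each $i\in\underline{k}$ define $\widetilde{p}_i\in\mathbb{F}[x_1,\ldots,x_n]$ by $\widetilde{p}_i(x) := p_i(x - v)$, where $x - v = (x_1-v_1,\ldots,x_n-v_n)$. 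The key observation is that $\widetilde{p}_i$ is actually a polynomial: substituting the degree-one polynomials $x_j - v_j$ into $p_i$ and expanding (e.g.\ by the binomial/multinomial formula applied to each monomial of $p_i$) produces a finite $\mathbb{F}$-linear combination of monomials in $x_1,\ldots,x_n$.

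Next I would show the set equality $v + \mathbb{V} = \bigcap_{i=1}^k \widetilde{p}_i^{-1}(\set{0})$. Indeed, for any $x\in\mathbb{F}^n$,
\[
x \in \bigcap_{i=1}^k \widetilde{p}_i^{-1}(\set{0}) \;\Longleftrightarrow\; \forall i\in\underline{k}: p_i(x-v) = 0 \;\Longleftrightarrow\; x-v\in\mathbb{V} \;\Longleftrightarrow\; x\in v+\mathbb{V}.
\]
Hence $v+\mathbb{V}\in\mathcal{V}_n(\mathbb{F})$ by the defining condition of algebraic varieties.

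It remains to show properness. Since $\mathbb{V}$ is proper, there exists some $w\in\mathbb{F}^n\setminus\mathbb{V}$. Then $v+w\in\mathbb{F}^n$ but $v+w\notin v+\mathbb{V}$, because otherwise $w = (v+w)-v\in\mathbb{V}$, contradicting the choice of $w$. Therefore $v+\mathbb{V}\subsetneq\mathbb{F}^n$, so $v+\mathbb{V}\in\mathcal{V}_n^{\text{prop}}(\mathbb{F})$. There is no serious obstacle here; the only subtle point is the formal verification that the shifted object $p_i(x-v)$ really lies in $\mathbb{F}[x_1,\ldots,x_n]$, but this is immediate from closure of the polynomial ring under polynomial substitutions.
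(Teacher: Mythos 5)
Your proof is correct and follows essentially the same route as the paper: substitute $x-v$ into the generators $p_i$ (the paper writes this as composing with the degree-one polynomials $r_i(x)=x_i-v_i$), verify the resulting composite is still a polynomial, and observe that translating by $v$ preserves properness. The only difference is the order in which you treat properness and the variety structure, which is immaterial.
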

\begin{proof}
    Let~$\mathbb{V}\in\mathcal{V}_n^{\text{prop}}(\mathbb{F})$ with~$\mathbb{V} = \bigcap_{i=1}^kp_i^{-1}(\set{0})$ for some~$p_1,\ldots,p_k\in\mathbb{F}[x_1,\ldots,x_n]$ and~$v\in\mathbb{F}^n$ be arbitrary. For any~$x\in\mathbb{V}^c$ we find~$v+x\in\big(v+\mathbb{V}\big)^c.$ Thus~$v+\mathbb{V}$ is proper and it suffices to show that~$v+\mathbb{V}$ is an algebraic variety. It is well-known that, for any~$p,q_1,\ldots,q_n\in\mathbb{F}[x_1,\ldots,x_n]$, the mapping
    \begin{align*}
        P:\mathbb{F}^n\to\mathbb{F},\quad x\mapsto p(q_1(x),\ldots,q_n(x))
    \end{align*}
    is also a polynomial in~$n$ indeterminants. Since, for any~$i\in\underline{n}$, the mapping
    \begin{align*}
        r_i:\mathbb{F}^n\to\mathbb{F},\quad x\mapsto x_i-v_i
    \end{align*}
    is a polynomial, we get
    \begin{align*}
        v+\mathbb{V} = \bigcap_{i = 1}^k\big(p_i(r_1(\cdot),\ldots,r_n(\cdot))\big)^{-1}(\set{0}),
    \end{align*}
    which is an algebraic variety.
\end{proof}

We give a well-known example of an algebraic variety, which we do not immediately identify as the graph of some polynomial, but as the image of a polynomial vector. 

\begin{Proposition}\label{ex:LinSubspAlgVar}
Any affine linear subspace~$U\subseteq\mathbb{F}^n$ with dimension~$\dim U\leq n-1$ is a proper algebraic variety.
\end{Proposition}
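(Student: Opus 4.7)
The plan is to reduce to the case of a linear subspace via Lemma~\ref{lem:affine_variety} and then exhibit the linear subspace as the common zero set of finitely many \emph{linear} polynomials, which is evidently an algebraic variety.

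\medskip

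First I would write $U = v + W$, where $v\in\mathbb{F}^n$ and $W$ is a linear subspace with $\dim W = \dim U \leq n-1$. By Lemma~\ref{lem:affine_variety}, it suffices to show that $W$ is a proper algebraic variety.

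\medskip

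Next, choose a basis $b_1,\ldots,b_d$ of $W$ with $d\leq n-1$ and extend it to a basis $b_1,\ldots,b_n$ of $\mathbb{F}^n$. Let $\phi_{d+1},\ldots,\phi_n\in\mathbb{F}^n\to\mathbb{F}$ denote the corresponding dual functionals, i.e.\ $\phi_j(b_i) = \delta_{i,j}$. Each $\phi_j$ has the form $\phi_j(x) = c_{j,1}x_1 + \cdots + c_{j,n}x_n$ with constants $c_{j,i}\in\mathbb{F}$, and is in particular an element of $\mathbb{F}[x_1,\ldots,x_n]$. By construction,
\begin{align*}
W = \set{x\in\mathbb{F}^n\,\big\vert\,\forall j\in\set{d+1,\ldots,n}:\phi_j(x) = 0} = \bigcap_{j = d+1}^n \phi_j^{-1}(\set{0}),
\end{align*}
so $W\in\mathcal{V}_n(\mathbb{F})$. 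Since $\phi_n(b_n) = 1\neq 0$, the set $\phi_n^{-1}(\set{0})$ is a proper subset of $\mathbb{F}^n$; hence by Lemma~\ref{Cor:some_more_varieties} the intersection $W$ is a proper algebraic variety. Applying Lemma~\ref{lem:affine_variety} to $W$ and the translation vector $v$ then yields that $U = v + W\in\mathcal{V}_n^{\text{prop}}(\mathbb{F})$.

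\medskip

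There is no real obstacle here; the only thing to watch is making sure one really ends up with a \emph{proper} variety. This is handled by the observation that the dimension assumption $\dim U \leq n-1$ forces at least one of the defining linear polynomials $\phi_j$ to be nonconstant (and not identically zero), so that at least one $\phi_j^{-1}(\set{0})$ is proper and Lemma~\ref{Cor:some_more_varieties} applies.
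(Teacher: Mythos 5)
Your proof is correct and follows essentially the same strategy as the paper's: reduce to the linear case via Lemma~\ref{lem:affine_variety}, and then realize the linear subspace as the common zero set of finitely many linear polynomials. The only cosmetic difference is that you produce these linear forms from a dual basis after extending a basis of $W$, whereas the paper takes a matrix $W$ whose columns span the orthogonal complement and uses the functionals $x\mapsto W_{\cdot,j}^\top x$; both are standard ways of exhibiting the same annihilator, so the two arguments are interchangeable. (You also explicitly invoke Lemma~\ref{Cor:some_more_varieties} to get properness, while the paper notes more directly that $\dim W < n$ already gives $W\subsetneq\mathbb{F}^n$ — either observation suffices.)
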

\begin{proof}
Let~$U = u+V$ for some linear subspace~$V\subseteq\mathbb{F}^n$ and~$u\in\mathbb{F}^n.$ In view of Lemma~\ref{lem:affine_variety} it is sufficient to prove that~$V$ is a proper algebraic variety. 

Since~$\dim V = \dim U\leq n-1$ and hence~$V\neq\mathbb{F}^n,$ it suffices to show that~$V$ is an algebraic variety. Note that, for any~$c\in\mathbb{F}^n$, the mapping
\begin{align*}
    p_c: \mathbb{F}^n\to\mathbb{F}, x\mapsto c^\top x,
\end{align*}
is a polynomial,~$p_c\in\mathbb{F}[x_1,\ldots,x_n].$ Let~$m:=n-\dim V$ and~$W\in\mathbb{F}^{n\times m}$ so that~$\im W = V^\bot.$ As indicated in the nomenclature,~$W_{\cdot,j}\in\mathbb{F}^n$ denotes, for any~$j\in\underline{m}$, the~$j$-th column of the matrix~$W.$ Then 
\begin{align*}
    v\in V\iff W^Tv = 0\iff v\in \bigcap_{j=1}^{m} p_{W_{\cdot,j}}^{-1}(\set{0})\in\mathcal{V}_n(\mathbb{F}).
\end{align*}
\end{proof}

\begin{Remark}
At this point we need the fact that the dualspace of $\mathbb{F}^n$ can be embedded into $\mathbb{F}[x_1,\ldots,x_n]$. This holds true for arbitrary finite dimensional vectorspaces $V$ over a field $F$. In this case the dualspace of $V$, $V'$, has the same dimension as $V$ (see~\cite[Satz II.2.6, p.\,69]{FunkAny}) and hence there exists a basis $(\beta_1,\ldots,\beta_n)$ of $V'$, $n$ being the dimension of $V$. Let $(b_1,\ldots,b_n)$ be a basis of $V$ and $\gamma\in V'$ be an arbitrary mapping from the dual space. Then we find
\begin{align*}
\exists!\, g_1,\ldots,g_n\in K: \gamma = \sum_{i = 1}^ng_i\beta_i.
\end{align*}
Since $(b_1,\ldots,b_n)$ is a basis of $V$ we find for any $x\in V$ unique $x_1,\ldots,x_n\in K$ with 
\begin{align*}
x = \sum_{i= 1}^n x_ib_i.
\end{align*}
Hence
\begin{align*}
\gamma(x) = \sum_{i = 1}^ng_i\beta_i(b_i) x_i.
\end{align*}
If we call $\gamma_i := g_i\beta_i(b_i)$, then we find
\begin{align*}
\gamma(x) = \sum_{i = 1}^n \gamma_i x_i.
\end{align*}
Since the mappings
\begin{align*}
E_1: V\to K^n, \quad x\mapsto (x_1,\ldots,x_n)
\end{align*}
and
\begin{align*}
E_2: V'\to K^n, \quad \gamma\mapsto (g_1,\ldots,g_n) = (E_2(\gamma)_1,\ldots,E_2(\gamma)_n)
\end{align*}
are isomorphisms we find the embedding
\begin{align*}
E: V'\to K[x_1,\ldots,x_n], \quad \gamma\mapsto \Big((x_1,\ldots,x_n)\mapsto \sum_{i = 1}^n E_2(\gamma)_i\beta_i(b_i)x_i\Big)
\end{align*}
with
\begin{align*}
\forall\,\gamma\in V'~\forall\,x\in V: \gamma(x) = E(\gamma)(E_1(x)).
\end{align*}
If $V = F^n$, then $E_1 = I$, $I$ being the identity mapping. If we extend Definition~\ref{def:gen_set} to arbitrary field, then we find that Proposition~\ref{ex:LinSubspAlgVar} holds true for affine linear subspaces of $F^n$ with dimension at most $n-1$. 
\end{Remark}

We show that proper real algebraic varieties are Lebesgue nullsets. Federer proves in~\cite[p.\,240]{GeomMeasTheory} that for any analytic function~$f:A\to\mathbb{R}$ with an open and connected domain~$A\subseteq\mathbb{R}^n$ the preimage of zero under~$f$ is either~$A$ or has Lebesgue measure zero. For this he proves that the set
\begin{align*}
W := \set{x\in A\,\Bigg\vert\,\forall i\in\mathbb{N}: \diff{^i}{x^i} f(x) = 0}
\end{align*}
contains~$f^{-1}(\set{0})\setminus N$ for some Lebesgue nullset~$N$ and that~$W$ is either~$A$ or a Lebesgue nullset. Since any polynomial is an analytic funcion with open and connected domain~$\mathbb{R}$, this proves the statement. 

However, we do not use Federer's method. Instead we prove Proposition~\ref{alg_var_nullset} by induction on~$n.$ This proof does not work for all analytic functions, but it is good enough for our purposes.

\begin{Proposition}\label{alg_var_nullset}
Let~$\mathbb{V}\subseteq\mathbb{R}^n$ be an algebraic variety. Then~$\mathbb{V}$ is proper if, and only if,~$\mathbb{V}$ is a closed Lebesgue nullset.
\end{Proposition}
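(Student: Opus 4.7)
The direction ``$\Leftarrow$'' is immediate: if $\mathbb{V}$ is a Lebesgue nullset then $\mathbb{V}\neq\mathbb{R}^n$ since $\lambda^n(\mathbb{R}^n)=\infty$, so $\mathbb{V}$ is proper. For ``$\Rightarrow$'' closedness is already contained in Lemma~\ref{Rem:Zariski}\,(i), so the whole content is to show that a proper real algebraic variety has $\lambda^n$-measure zero. By Lemma~\ref{lem:MostBoringResult} we may replace the defining family by a single polynomial and assume $\mathbb{V}=p^{-1}(\set{0})$ for some $p\in\mathbb{R}[x_1,\ldots,x_n]$; the assumption $\mathbb{V}\subsetneq\mathbb{R}^n$ is exactly $p\neq 0$ in $\mathbb{R}[x_1,\ldots,x_n]$. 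It therefore suffices to prove: if $p\in\mathbb{R}[x_1,\ldots,x_n]\setminus\set{0}$ then $\lambda^n(p^{-1}(\set{0}))=0$. I would prove this by induction on $n\in\mathbb{N}^*$.

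For the base case $n=1$ any nonzero univariate real polynomial has at most $\deg p<\infty$ roots, so $p^{-1}(\set{0})$ is a finite set and hence a $\lambda^1$-nullset. For the induction step, assume the statement for $n$ and let $p\in\mathbb{R}[x_1,\ldots,x_{n+1}]\setminus\set{0}$. Collect powers of the last variable:
\begin{align*}
p(x_1,\ldots,x_{n+1})=\sum_{k=0}^d p_k(x_1,\ldots,x_n)\,x_{n+1}^k,\qquad p_k\in\mathbb{R}[x_1,\ldots,x_n],
\end{align*}
where $d\geq 0$ is chosen minimal so that $p_d\neq 0$. If $d=0$ then $p(x_1,\ldots,x_{n+1})=p_0(x_1,\ldots,x_n)$ with $p_0\neq 0$, hence $p^{-1}(\set{0})=p_0^{-1}(\set{0})\times\mathbb{R}$; writing $\mathbb{R}=\bigcup_{k\in\mathbb{Z}}[k,k+1]$ and using the induction hypothesis on $p_0$ together with the product structure of $\lambda^{n+1}$ (countable union of sets of measure $0\cdot 1=0$) gives $\lambda^{n+1}(p^{-1}(\set{0}))=0$.

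If $d\geq 1$ I would use Tonelli. Let $E:=p_d^{-1}(\set{0})\subseteq\mathbb{R}^n$, which by the induction hypothesis satisfies $\lambda^n(E)=0$. For every fixed $x\in\mathbb{R}^n\setminus E$ the univariate polynomial $p(x,\cdot)\in\mathbb{R}[x_{n+1}]$ has degree exactly $d\geq 1$, hence at most $d$ zeros, so
\begin{align*}
\lambda^1\!\left(\set{t\in\mathbb{R}\,\big\vert\,p(x,t)=0}\right)=0.
\end{align*}
Applying Tonelli to the characteristic function $\chi_{p^{-1}(\set{0})}$ on $\mathbb{R}^n\times\mathbb{R}$ and splitting the outer integral over $E$ and $\mathbb{R}^n\setminus E$ yields $\lambda^{n+1}(p^{-1}(\set{0}))=0$, which closes the induction.

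The only genuine obstacle is the induction step, and within it the technical point that one must not invoke Fubini on $p_0^{-1}(\set{0})\times\mathbb{R}$ with the factor $\lambda^1(\mathbb{R})=\infty$ directly; this is why I would either $\sigma$-exhaust $\mathbb{R}$ in the $d=0$ case, or phrase the argument via Tonelli applied to the characteristic function (which is $[0,\infty]$-valued and therefore causes no integrability issue). Everything else is bookkeeping.
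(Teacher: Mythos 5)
Your proof is correct and follows the paper's own route: reduce to a single defining polynomial via Lemma~\ref{lem:MostBoringResult}, handle closedness by Lemma~\ref{Rem:Zariski}\,(i), and then induct on $n$ with Tonelli applied to the slice measure. One wording slip: $d$ should be chosen \emph{maximal} with $p_d\neq 0$ (i.e.\ $d=\deg_{x_{n+1}}p$), not ``minimal,'' since your $d\geq 1$ branch relies on $p_d$ being the leading coefficient so that the slice $p(x,\cdot)$ has degree exactly $d$ for $x\notin E$.
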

\begin{proof}
Since~$\mathbb{R}^n$ is not a Lebesgue nullset, it suffices to show that any proper algebraic variety is a Lebesgue nullset. By Lemma~\ref{lem:MostBoringResult}, it suffices to consider algebraic varieties which are the preimage of zeros under one polynomial. Let~$\mathbb{V}:=p^{-1}(\set{0})$ for some~$$p(x_1,\ldots,x_n)\in\mathbb{R}[x_1,\ldots,x_n]\setminus\set{0}.$$ Since~$\mathbb{V}$ is closed, it is Lebesgue-Borel-measureable. It remains to prove that~$\lambda^n(\mathbb{V}) = 0$, which is shown by induction on~$n.$ If~$n = 1$, then~$\mathbb{V}\subseteq\mathbb{R}$ is finite and thus~$\lambda^1(\mathbb{V}) = 0.$ Suppose that
\begin{align*}\label{eq:proof_alg_var_nullset_2}
    \forall\, q\in\mathbb{R}[x_1,\ldots,x_{n-1}]\setminus\set{0}: \lambda^{n-1}(q^{-1}(\set{0})) = 0.\tag{$\star$}
\end{align*}
Since~$\mathbb{R}[x_1,\ldots,x_{n}]\cong\mathbb{R}[x_1,\ldots,x_{n-1}][x_n]$, 
we may consider
\begin{align*}
    p_n(x_1,\ldots,x_n) = \sum_{i=0}^\alpha q_i(x_1,\ldots,x_{n-1}) x_n^i
\end{align*}
for some~$\alpha\in\mathbb{N}$ and~$q_i\in\mathbb{R}[x_1,\ldots,x_{n-1}]$ for~$i\in\underline{\alpha}.$
Hence, for any fixed~$(z_1,\ldots,z_{n-1})\in\mathbb{R}^{n-1}$,~$p_n(z_1,\ldots,z_{n-1},x_n)$ is a polynomial in one variable, namely~$x_n$, and thus the induction assumption for~$n=1$ yields
\begin{align*}\label{eq:proof_alg_var_nullset_1}
    \lambda^1\left(p_n(z_1,\ldots,z_{n-1},\cdot)^{-1}(\set{0})\right) = 0\iff \exists\, j\in\set{0,\ldots,\alpha}: q_j(z_1,\ldots,z_{n-1})\neq 0.\tag{$\star_2$}
\end{align*}
Since~$p_n\not\equiv 0$ by assumption, we obtain
\begin{align*}
    \exists\,\ell\in\set{0,\ldots,\alpha}: q_\ell\in\mathbb{R}[x_1,\ldots,x_{n-1}]\setminus\set{0_{\mathbb{R}[x_1,\ldots,x_{n-1}]}}.
\end{align*}
Let~$\mathbb{V}_{n-1} := q_\ell^{-1}(\set{0})$ be the algebraic variety induced by this~$q_\ell.$ Then
\begin{align*}
    \widetilde{\mathbb{V}}_{n-1}:=\set{z=(z_1,\ldots,z_{n-1})\in\mathbb{R}^{n-1}\,\big\vert\,\forall\, i\in\set{1,\ldots,\alpha}: q_i(z) = 0} = \bigcap_{i=1}^\alpha q_i^{-1}(\set{0}) \subseteq \mathbb{V}_{n-1},
\end{align*}
and \eqref{eq:proof_alg_var_nullset_2} yields that~$\lambda^{n-1}\left(\mathbb{V}_{n-1}\right) = 0$, and thus~$\lambda^{n-1}\left(\widetilde{\mathbb{V}}_{n-1}\right)= 0.$ Hence by~\eqref{eq:proof_alg_var_nullset_1} the mapping
\begin{align*}
    \varphi: \mathbb{R}^{n-1}\to\mathbb{R}_{\geq 0}\cup\set{\infty},~(z_1,\ldots,z_{n-1})\mapsto \lambda^1(p_n(z_1,\ldots,z_{n-1},\cdot)^{-1}(\set{0}))
\end{align*}
vanishes on~$\widetilde{\mathbb{V}}_{n-1}^c$ and therefore we find~$\varphi = 0$ almost everywhere and especially~$\varphi(\cdot)\in\mathcal{L}^1(\mathbb{R}^{n-1},\mathbb{R}).$
Now we can use Tonelli's Theorem and arrive at
\begin{align*}
\lambda(\mathbb{V}_n) = \int_{\mathbb{R}^{n-1}\times\mathbb{R}}\chi_{\set{p_n(z,y) = 0}}(z,y)\,\mathrm{d}\lambda^n(z,y) & = \int_{\mathbb{R}^{n-1}}\underbrace{\int_\mathbb{R}\chi_{\set{p_n(z,y) = 0}}(z,y)\,\mathrm{d}\lambda^1(y)}_{=\varphi(z)}\,\mathrm{d}\lambda^{n-1}(z)\\
& = 
\int_{\mathbb{R}^{n-1}}\varphi(z)\,\mathrm{d}\lambda^{n-1}(z) = 0.
\end{align*}
\end{proof}

In Lemma~\ref{Rem:Zariski} we have shown that~$\mathcal{V}_n(\mathbb{F})$ is~closed under~$\cup$. With help of Proposition~\ref{alg_var_nullset} we can now show that~$\mathcal{V}_n^{\text{prop}}(\mathbb{F})$ is~closed under~$\cup$ if~$\mathbb{F} = \mathbb{R}.$ With some work we will prove in Lemma~\ref{cor:intersection_union} that this holds true for $\mathbb{F} = \mathbb{C}.$

\begin{Corollary}\label{cor:intersection_union1}
Let~$S_1,S_2\subseteq\mathbb{R}^n$ be generic sets. Then~$S_1^c$ is a Lebesgue nullset and~$S_1\cap S_2$,~$S_1\cup S_2$ are generic sets.
\end{Corollary}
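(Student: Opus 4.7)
The plan is to unwind the definitions and apply Proposition~\ref{alg_var_nullset} together with Lemma~\ref{Rem:Zariski}\,(iii). Since $S_1,S_2\subseteq\mathbb{R}^n$ are generic, by Definition~\ref{def:gen_set} there exist $\mathbb{V}_1,\mathbb{V}_2\in\mathcal{V}_n^{\text{prop}}(\mathbb{R})$ with $S_1^c\subseteq\mathbb{V}_1$ and $S_2^c\subseteq\mathbb{V}_2$. The first claim is immediate: Proposition~\ref{alg_var_nullset} says $\mathbb{V}_1$ is a closed Lebesgue nullset, and by completeness of the Lebesgue measure every subset of a Lebesgue nullset is Lebesgue measurable with measure zero, so $S_1^c$ is a Lebesgue nullset.

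For $S_1\cup S_2$, I would simply use Remark~\ref{Rem:InclusionOfGenericSets}: since $S_1\subseteq S_1\cup S_2$ and $S_1$ is generic w.r.t.\ $\mathbb{V}_1$, the superset $S_1\cup S_2$ is generic w.r.t.\ the same $\mathbb{V}_1$.

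For $S_1\cap S_2$, the plan is to observe
\begin{align*}
(S_1\cap S_2)^c = S_1^c\cup S_2^c\subseteq\mathbb{V}_1\cup\mathbb{V}_2,
\end{align*}
and then to show that $\mathbb{V}_1\cup\mathbb{V}_2\in\mathcal{V}_n^{\text{prop}}(\mathbb{R})$. That $\mathbb{V}_1\cup\mathbb{V}_2$ is an algebraic variety follows from Lemma~\ref{Rem:Zariski}\,(iii). The only nontrivial point is properness, which is the main obstacle here: the union of two proper algebraic varieties need not obviously be proper from the definition alone. This is where Proposition~\ref{alg_var_nullset} does the real work; since $\mathbb{V}_1$ and $\mathbb{V}_2$ are Lebesgue nullsets, so is $\mathbb{V}_1\cup\mathbb{V}_2$, whence $\mathbb{V}_1\cup\mathbb{V}_2\neq\mathbb{R}^n$. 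Thus $\mathbb{V}_1\cup\mathbb{V}_2$ is a proper algebraic variety and $S_1\cap S_2$ is generic w.r.t.\ it.

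I would note in passing that this argument is exactly the reason why the real case is cleaner than the complex case: Proposition~\ref{alg_var_nullset} is only formulated over $\mathbb{R}$, so for $\mathbb{F}=\mathbb{C}$ the shortcut ``proper $+$ proper $=$ proper'' via Lebesgue measure is unavailable, and the forthcoming Lemma~\ref{cor:intersection_union} has to work harder.
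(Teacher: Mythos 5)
Your proof is correct and follows essentially the same route as the paper: both pass through the inclusions $S_1^c\subseteq\mathbb{V}_1$, $(S_1\cap S_2)^c\subseteq\mathbb{V}_1\cup\mathbb{V}_2$ and invoke Lemma~\ref{Rem:Zariski}\,(iii) for the variety structure and Proposition~\ref{alg_var_nullset} for properness via the nullset argument. The only cosmetic difference is that for $S_1\cup S_2$ you shortcut directly through Remark~\ref{Rem:InclusionOfGenericSets}, whereas the paper instead observes $(S_1\cup S_2)^c\subseteq\mathbb{V}_1\cap\mathbb{V}_2$ and treats both $\cap$ and $\cup$ uniformly; both are equally valid.
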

\begin{proof}
By Definition~\ref{def:gen_set} of genericity, we find
\begin{align*}
    \forall\,{i\in\set{1,2}}~\exists\,\mathbb{V}_i\in\mathcal{V}_n^{\text{prop}}(\mathbb{R}): S_i^c\subseteq\mathbb{V}_i.
\end{align*}
Hence
\begin{align*}
    (S_1\cup S_2)^c = S_1^c\cap S_2^c\subseteq \mathbb{V}_1\cap\mathbb{V}_2\qquad\wedge\qquad(S_1\cap S_2)^c = S_1^c\cup S_2^c\subseteq \mathbb{V}_1\cup\mathbb{V}_2.
\end{align*}
In Lemma~\ref{Rem:Zariski} we have seen that~$\mathbb{V}_1\cap\mathbb{V}_2,\mathbb{V}_1\cup\mathbb{V}_2\in\mathcal{V}_n(\mathbb{R}).$ Since the union as well as the intersection of Lebesgue nullsets are Lebesgue nullsets, Proposition~\ref{alg_var_nullset} implies that the sets~$\mathbb{V}_1\cap\mathbb{V}_2,\mathbb{V}_1\cup\mathbb{V}_2\in\mathcal{V}^{\text{prop}}_n(\mathbb{R}).$ Hence the first part of the corollary is proved. Also Proposition~\ref{alg_var_nullset} yields that~$S_1^c$ is a Lebesgue nullset since~$\mathbb{V}_1$ is a Lebesgue nullset and the Lebesgue measure is complete (see e.g.~\cite[p.\,54]{MassIntTheorie}).
\end{proof}

Another nice property of generic real sets is that they have ``full measure'', i.e. their Lebesgue measure is infinite.

\begin{Corollary}\label{Cor:GenSetsInfiniteMeasure}
If~$\lambda^n(S)<\infty$ for some set~$S\subseteq\mathbb{R}^n$, then~$S$ is not generic.
\end{Corollary}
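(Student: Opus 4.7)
The plan is to prove the contrapositive: if $S\subseteq\mathbb{R}^n$ is generic, then $\lambda^n(S) = \infty$, so in particular $\lambda^n(S)$ is not finite.

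First I would unfold the definition of genericity: by Definition~\ref{def:gen_set} there is some proper algebraic variety $\mathbb{V}\in\mathcal{V}_n^{\text{prop}}(\mathbb{R})$ with $S^c\subseteq\mathbb{V}$. Next I would apply Proposition~\ref{alg_var_nullset} to conclude that $\mathbb{V}$ is a Lebesgue nullset. Since $S^c\subseteq\mathbb{V}$ and the Lebesgue measure is complete, $S^c$ is itself a Lebesgue nullset, i.e. $S^c$ is Lebesgue measurable with $\lambda^n(S^c)=0$. In particular $S=\mathbb{R}^n\setminus S^c$ is also Lebesgue measurable.

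Then I would finish by a one-line measure-theoretic computation using countable additivity:
\begin{align*}
\lambda^n(S) = \lambda^n(\mathbb{R}^n) - \lambda^n(S^c) = \infty - 0 = \infty,
\end{align*}
which contradicts the hypothesis $\lambda^n(S)<\infty$. Equivalently, one can use subadditivity: $\infty = \lambda^n(\mathbb{R}^n)\leq\lambda^n(S)+\lambda^n(S^c) = \lambda^n(S)$.

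There is no real obstacle here; this is essentially a two-line corollary to Proposition~\ref{alg_var_nullset}. The only point worth paying attention to is that $S$ need not be assumed measurable a priori, but this is handled automatically once we know $S^c$ is contained in a nullset, because the Lebesgue $\sigma$-algebra is complete and hence contains every subset of $\mathbb{V}$.
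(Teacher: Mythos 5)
Your proof is correct and follows essentially the same route as the paper: show that $S^c$ is a Lebesgue nullset and then invoke additivity of $\lambda^n$ to force $\lambda^n(S)=\infty$. The only cosmetic difference is that the paper gets the nullset claim directly from Corollary~\ref{cor:intersection_union1}, whereas you unroll that reference one level by citing Proposition~\ref{alg_var_nullset} together with completeness of the Lebesgue measure; the underlying reasoning is identical.
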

\begin{proof}
    Seeking a contradiction, assume that~$S\subseteq\mathbb{R}^n$ is a generic set with~$\lambda^n(S)<\infty.$ Then~$S^c$ is by Corollary~\ref{cor:intersection_union1} a Lebesgue nullset. The additivity of the Lebesgue measure implies
    \begin{align*}
        \lambda^n(\mathbb{R}^n) = \lambda^n(S\cup S^c) = \lambda^n(S) + \lambda^n(S^c) = \lambda^n(S)<\infty,
    \end{align*}{}
    which is a contradiction, since~$\lambda^n(\mathbb{R}^n) = \infty.$
\end{proof}

In the next lemma, we show the well-known fact that algebraic varieties can simply be embedded into higher dimensional spaces (see~\cite[Exercise 4.11, p.\,78]{UndAlgGeom}).

\begin{Lemma}\label{lem:Product_of_varieties}
If~$\mathbb{V} = \bigcap_{i=1}^kp_i^{-1}(\set{0})\in\mathcal{V}_n^{\text{prop}}(\mathbb{F})$ and~$m\in\mathbb{N}^*$, then~$\mathbb{V}\times\mathbb{F}^m\in\mathcal{V}_{n+m}^{\text{prop}}(\mathbb{F}).$
\end{Lemma}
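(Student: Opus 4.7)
The plan is to explicitly produce a defining polynomial family for $\mathbb{V} \times \mathbb{F}^m$ inside $\mathbb{F}^{n+m}$ and to verify properness separately.

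First I would lift the given polynomials $p_1,\ldots,p_k\in\mathbb{F}[x_1,\ldots,x_n]$ to polynomials $\widetilde p_1,\ldots,\widetilde p_k\in\mathbb{F}[x_1,\ldots,x_{n+m}]$ by simply ignoring the last $m$ variables, i.e. setting
\begin{align*}
\widetilde p_i(x_1,\ldots,x_n,x_{n+1},\ldots,x_{n+m}) := p_i(x_1,\ldots,x_n).
\end{align*}
This is legitimate because the canonical inclusion $\mathbb{F}[x_1,\ldots,x_n]\hookrightarrow\mathbb{F}[x_1,\ldots,x_{n+m}]$ sends a polynomial to a polynomial.

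Next I would check the set-theoretic equality
\begin{align*}
\bigcap_{i=1}^k \widetilde p_i^{-1}(\{0\}) = \mathbb{V}\times\mathbb{F}^m.
\end{align*}
For the inclusion ``$\subseteq$'', any $(x,y)\in\mathbb{F}^n\times\mathbb{F}^m$ lying in every $\widetilde p_i^{-1}(\{0\})$ satisfies $p_i(x)=\widetilde p_i(x,y)=0$ for all $i\in\underline k$, hence $x\in\mathbb{V}$. For ``$\supseteq$'', if $x\in\mathbb{V}$ then $\widetilde p_i(x,y)=p_i(x)=0$ for every $y\in\mathbb{F}^m$ and every $i$. By Definition~\ref{def:gen_set} this already shows $\mathbb{V}\times\mathbb{F}^m\in\mathcal{V}_{n+m}(\mathbb{F})$.

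Finally I would verify properness. Since $\mathbb{V}\in\mathcal{V}_n^{\text{prop}}(\mathbb{F})$, we have $\mathbb{V}\subsetneq\mathbb{F}^n$, so there exists $x_0\in\mathbb{F}^n\setminus\mathbb{V}$. Picking any $y_0\in\mathbb{F}^m$ (which exists because $m\geq 1$), the point $(x_0,y_0)$ lies in $\mathbb{F}^{n+m}\setminus(\mathbb{V}\times\mathbb{F}^m)$, so $\mathbb{V}\times\mathbb{F}^m\subsetneq\mathbb{F}^{n+m}$, i.e. $\mathbb{V}\times\mathbb{F}^m\in\mathcal{V}_{n+m}^{\text{prop}}(\mathbb{F})$.

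There is no real obstacle here; the only subtlety is the observation that embedding $\mathbb{F}[x_1,\ldots,x_n]$ into $\mathbb{F}[x_1,\ldots,x_{n+m}]$ by treating the extra variables as dummies is precisely what turns the zero set of each $p_i$ in $\mathbb{F}^n$ into the ``cylinder'' $p_i^{-1}(\{0\})\times\mathbb{F}^m$ in $\mathbb{F}^{n+m}$, and intersecting these cylinders gives the product $\mathbb{V}\times\mathbb{F}^m$.
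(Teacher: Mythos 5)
Your proof is correct and follows essentially the same route as the paper: both lift each $p_i$ to a polynomial in $\mathbb{F}[x_1,\ldots,x_{n+m}]$ by ignoring the last $m$ variables, identify $\bigcap_i \widetilde p_i^{-1}(\{0\})$ with $\mathbb{V}\times\mathbb{F}^m$, and note properness by exhibiting a point outside the product.
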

\begin{proof}
Since~$\left(\mathbb{V}\times\mathbb{F}^m\right)^c = \mathbb{V}^c\times\mathbb{F}\neq\emptyset$, we have that~$\mathbb{V}\times\mathbb{F}^m$ is proper. We identify~$\mathbb{F}^{n+m}$ and~$\mathbb{F}^{n}\times\mathbb{F}^{m}$ and define the mappings
\begin{align*}
\forall i\in\underline{k}: P_i:\mathbb{F}^{n+m}\to\mathbb{F},\qquad\mathbb{F}^{n}\times\mathbb{F}^{m}\ni(x,y)\mapsto p_i(x).
\end{align*}
It is evident that for any~$i\in\underline{k}$ we have~$P_i\in\mathbb{F}[x_1,\ldots,x_{n+m}]$ and~$\bigcap_{i=1}^kP_i^{-1}(\set{0}) = \mathbb{V}\times\mathbb{F}^m.$
\end{proof}

We see immideately that the same statement holds true for generic sets.

\begin{Corollary}\label{lem:product_of_generic_sets}
Let~$m\in\mathbb{N}^*$ and~$S'\subseteq\mathbb{F}^n$ be a generic set. Then the set~$S = S'\times \mathbb{F}^m\subseteq\mathbb{F}^{n+m}$ is generic.
\end{Corollary}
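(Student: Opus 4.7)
The plan is to reduce this directly to Lemma~\ref{lem:Product_of_varieties}. Since $S'\subseteq\mathbb{F}^n$ is generic, Definition~\ref{def:gen_set} supplies a proper algebraic variety $\mathbb{V}\in\mathcal{V}_n^{\text{prop}}(\mathbb{F})$ with $(S')^c\subseteq\mathbb{V}$. The natural candidate for an ``enclosing'' variety in $\mathbb{F}^{n+m}$ is $\mathbb{V}\times\mathbb{F}^m$, which by Lemma~\ref{lem:Product_of_varieties} lies in $\mathcal{V}_{n+m}^{\text{prop}}(\mathbb{F})$.

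The key step is the set-theoretic identity
\begin{align*}
S^c = (S'\times\mathbb{F}^m)^c = (S')^c\times\mathbb{F}^m,
\end{align*}
which is immediate upon identifying $\mathbb{F}^{n+m}$ with $\mathbb{F}^n\times\mathbb{F}^m$: a point $(x,y)$ fails to lie in $S'\times\mathbb{F}^m$ precisely when $x\notin S'$, since the second coordinate is unrestricted. Combining this with $(S')^c\subseteq\mathbb{V}$ yields
\begin{align*}
S^c = (S')^c\times\mathbb{F}^m\subseteq \mathbb{V}\times\mathbb{F}^m,
\end{align*}
so $S$ is generic with respect to $\mathbb{V}\times\mathbb{F}^m$.

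There is no serious obstacle here: Lemma~\ref{lem:Product_of_varieties} already carries out the only nontrivial work, namely verifying that $\mathbb{V}\times\mathbb{F}^m$ is a proper algebraic variety by lifting the defining polynomials $p_i\in\mathbb{F}[x_1,\ldots,x_n]$ to $P_i\in\mathbb{F}[x_1,\ldots,x_{n+m}]$ via $P_i(x,y):=p_i(x)$. The present corollary is essentially the translation of that statement from varieties to generic sets, using the monotonicity property recorded in Remark~\ref{Rem:InclusionOfGenericSets}.
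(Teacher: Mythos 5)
Your proof is correct and matches the paper's argument exactly: both take a proper variety $\mathbb{V}$ containing $(S')^c$, observe $S^c=(S')^c\times\mathbb{F}^m\subseteq\mathbb{V}\times\mathbb{F}^m$, and invoke Lemma~\ref{lem:Product_of_varieties} to conclude. The reference to Remark~\ref{Rem:InclusionOfGenericSets} at the end is superfluous, since the displayed inclusion already establishes genericity directly from Definition~\ref{def:gen_set}.
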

\begin{proof}
    Let~$\mathbb{V}'\in\mathcal{V}_n^{\text{prop}}(\mathbb{F})$ be a proper algbraic variety with~$(S')^c\subseteq\mathbb{V}'.$
    The inclusion
    \begin{align*}
        S^c = (S')^c\times \mathbb{F}^m\subseteq\mathbb{V}'\times \mathbb{F}^m
    \end{align*}
    and Lemma~\ref{lem:Product_of_varieties} yield that~$S$ is generic.
\end{proof}{}

In the following lemma we show that the complement of any proper real algebraic variety is dense and open. However, the converse is in general not true and therefore the notions of a generic set and an open and dense set differ considerably.

\begin{Lemma}\label{lem:some_uninteresting_lemma}
\begin{enumerate}[(i)]
    \item If~$\mathbb{V}\subseteq\mathbb{R}^n$ is a proper algebraic variety, then~$\mathbb{V}^c$ is dense and open.
    \item If~$V^c\subseteq\mathbb{R}^n$ is a dense and open set, then~$V$ is not necessary a proper algebraic variety.
\end{enumerate}
\end{Lemma}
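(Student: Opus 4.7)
For part (i) the two properties decouple cleanly. Openness is immediate from Lemma~\ref{Rem:Zariski}\,(i), which says every algebraic variety is closed in the Euclidean topology, so its complement is open. For density, the idea is to combine Proposition~\ref{alg_var_nullset} with the basic measure-theoretic fact that every nonempty open ball in $\mathbb{R}^n$ has strictly positive Lebesgue measure. More precisely, I would argue by contradiction: if $\mathbb{V}^c$ were not dense, some nonempty open ball $\mathbb{B}(x_0,\varepsilon)$ would be disjoint from $\mathbb{V}^c$, hence contained in $\mathbb{V}$; but then $0<\lambda^n(\mathbb{B}(x_0,\varepsilon))\leq\lambda^n(\mathbb{V})$, contradicting $\lambda^n(\mathbb{V})=0$ from Proposition~\ref{alg_var_nullset}. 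This step is essentially routine and I do not expect any real obstacle.

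For part (ii) I only need to exhibit one counterexample. The simplest one lives already in $\mathbb{R}$: take $V:=\mathbb{Z}\subseteq\mathbb{R}$. Then $V$ is closed, so $V^c=\mathbb{R}\setminus\mathbb{Z}$ is open; it is dense because every nonempty open interval contains non-integer points. To see that $V$ is not a proper algebraic variety, I would invoke Lemma~\ref{lem:MostBoringResult}: any real algebraic variety in $\mathbb{R}$ is of the form $p^{-1}(\set{0})$ for some $p\in\mathbb{R}[x]$, so it is either all of $\mathbb{R}$ (if $p$ is the zero polynomial) or finite (if $p\neq 0$, by the fundamental theorem of algebra applied over $\mathbb{C}$, or just the elementary fact that a nonzero real polynomial has only finitely many real zeros). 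Since $\mathbb{Z}$ is an infinite proper subset of $\mathbb{R}$, it fits into neither case, so $V$ is not an algebraic variety at all, let alone a proper one. To cover all $n\in\mathbb{N}^*$, I could instead take $V:=\mathbb{Z}^n$ (or note that the one-dimensional case already establishes the ``not necessary'' claim, which is how the statement is phrased).

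The only mild care needed is to be precise that ``not necessarily a proper algebraic variety'' is being interpreted as ``there exists a dense open set whose complement is not a proper algebraic variety,'' which the counterexample above witnesses. I do not expect this lemma to contain any substantive difficulty; the work is really in Proposition~\ref{alg_var_nullset} and Lemma~\ref{lem:MostBoringResult}, both of which are already established.
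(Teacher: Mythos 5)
Part (i) matches the paper's argument exactly: openness from Lemma~\ref{Rem:Zariski}\,(i), density by combining a nonempty interior ball with Proposition~\ref{alg_var_nullset}.

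For part (ii) you take a genuinely different route. The paper's counterexample is a ``fat'' dense open set $D=\bigcup_{i\in\mathbb{N}}\mathbb{B}_\infty(\varphi(i),42^{-i})\supseteq\mathbb{Q}^n$ of \emph{finite} Lebesgue measure; $D^c$ then has positive measure and Proposition~\ref{alg_var_nullset} (via Corollary~\ref{Cor:GenSetsInfiniteMeasure}) rules it out as a proper algebraic variety uniformly in $n$, with no algebra needed. Your counterexample $V=\mathbb{Z}^n$ is also correct but is a Lebesgue nullset, so the measure-theoretic disqualification is unavailable and you must fall back on the algebraic fact that the only polynomial vanishing on all of $\mathbb{Z}^n$ is the zero polynomial. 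That is a nice extra observation (it shows ``nullset with dense open complement'' still does not force the complement to be an algebraic variety), but your writeup only establishes it for $n=1$, and the parenthetical suggestion that the one-dimensional case ``already establishes'' the claim is not right: the lemma is stated for a fixed but arbitrary $n\in\mathbb{N}^*$, so each $n$ needs its own witness. The gap is easy to close — e.g.\ if $\mathbb{Z}^n=\bigcap_i p_i^{-1}(\{0\})$, restrict to the line $t\mapsto(t,0,\dots,0)$ to conclude $\mathbb{Z}=\bigcap_i q_i^{-1}(\{0\})$ with $q_i(t):=p_i(t,0,\dots,0)\in\mathbb{R}[t]$, contradicting your $n=1$ argument — but as written the justification for $n>1$ is missing, whereas the paper's finite-measure construction sidesteps the issue entirely.
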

\begin{proof}
\begin{enumerate}[(i)]
    \item Openess follows from Lemma~\ref{Rem:Zariski}\,(i). Seeking a contradiction, suppose that~$\mathbb{V}^c$ is not dense or, equivalently,~$\mathbb{V}$ has at least one inner point. Then~$\lambda^n(\mathbb{V})>0$ and~$\mathbb{V}$ cannot be a proper algebraic variety by Proposition~\ref{alg_var_nullset}.
    \item It is well known (see e.g. \cite[Bemerkung 2.47]{Hot}) that there exists a set~$D\subseteq\mathbb{R}^n$ which is dense, open and~$\lambda^n(D^c)>0.$ Consider for example
    \begin{align*}
        D := \bigcup_{i\in\mathbb{N}}\mathbb{B}_\infty\left(\varphi(i),\frac{1}{42^i}\right)\supseteq\mathbb{Q}^n
    \end{align*}
    where~$\varphi: \mathbb{N}\to\mathbb{Q}^n$ is a bijection. Then~$D$ is open, dense and
    \begin{align*}
        \lambda^n(D) \leq\sum_{i\in\mathbb{N}}\frac{2^n}{42^{in}}\leq 2^n\sum_{i\in\mathbb{N}}\frac{1}{42^i} = 2^n \frac{42}{41}<\infty.
    \end{align*}
    By Corollary~\ref{Cor:GenSetsInfiniteMeasure}, $D^c$ is not an algebraic variety.
\end{enumerate}
\end{proof}

In Corollary~\ref{cor:intersection_union1} we have seen that~$\mathcal{V}_n(\mathbb{R})$ is~closed under~$\cup$. We want to show that this holds true for~$\mathcal{V}_n(\mathbb{C})$ and start with the fact that for any complex algebraic variety~$\mathbb{V}\in\mathcal{V}_n(\mathbb{C})$ the set~$\mathbb{V}\cap\mathbb{R}^n$ is a real algebraic variety. 

\begin{Lemma}\label{lem:ComplexPolynomialsInduceRealVarieties}
Let~$p\in\mathbb{C}[x_1,\ldots,x_n]$ be a complex polynomial. Then the set
\begin{align*}
    \mathbb{V} = \set{x\in\mathbb{R}^n\,\big\vert\,p(x) = 0}\subseteq\mathbb{R}^n
\end{align*}
is a proper algebraic variety, i.e. in $\mathcal{V}_n^{\text{prop}}(\mathbb{R})$ if, and only if,~$p\neq 0.$
\end{Lemma}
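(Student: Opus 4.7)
The natural strategy is to reduce the complex polynomial to a pair of real polynomials by separating real and imaginary parts of its coefficients. Concretely, I would write
\[
p(x) = \sum_{k=0}^{\ell} a_k x_1^{\nu_{k,1}}\cdots x_n^{\nu_{k,n}}, \qquad a_k = \alpha_k + i\beta_k,\quad \alpha_k,\beta_k\in\mathbb{R},
\]
and define the two real polynomials $p_1(x) := \sum_k \alpha_k x_1^{\nu_{k,1}}\cdots x_n^{\nu_{k,n}}$ and $p_2(x) := \sum_k \beta_k x_1^{\nu_{k,1}}\cdots x_n^{\nu_{k,n}}$ in $\mathbb{R}[x_1,\ldots,x_n]$. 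For $x\in\mathbb{R}^n$ every monomial evaluates to a real number, so $p(x) = p_1(x) + i p_2(x)$ with $p_1(x), p_2(x)\in\mathbb{R}$, and hence
\[
p(x) = 0 \iff p_1(x) = 0 \wedge p_2(x) = 0.
\]
This yields the identity $\mathbb{V} = p_1^{-1}(\{0\}) \cap p_2^{-1}(\{0\})$, and by Lemma~\ref{Rem:Zariski}\,(ii) the right-hand side is a real algebraic variety in $\mathcal{V}_n(\mathbb{R})$.

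For the direction ``$\mathbb{V}\in\mathcal{V}_n^{\text{prop}}(\mathbb{R}) \Rightarrow p\neq 0$'' I would argue by contraposition: if $p = 0_{\mathbb{C}[x_1,\ldots,x_n]}$, then every $x\in\mathbb{R}^n$ satisfies $p(x)=0$, so $\mathbb{V} = \mathbb{R}^n$, which is not proper. Conversely, suppose $p\neq 0$. Then at least one coefficient $a_k$ is nonzero, hence either some $\alpha_k$ or some $\beta_k$ is nonzero, so at least one of $p_1, p_2$ is a nonzero element of $\mathbb{R}[x_1,\ldots,x_n]$; say $p_1\neq 0$ without loss of generality. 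By Proposition~\ref{alg_var_nullset} the set $p_1^{-1}(\{0\})$ is a proper real algebraic variety, and since $\mathbb{V}\subseteq p_1^{-1}(\{0\})$, Lemma~\ref{Rem:Zariski}\,(ii) (the fact that intersections inherit properness from any proper factor) yields $\mathbb{V}\in\mathcal{V}_n^{\text{prop}}(\mathbb{R})$.

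There is really no main obstacle here; the only subtlety worth checking is that $p_1$ and $p_2$ are genuine \emph{real} polynomials (this is immediate from the definition, since the coefficients $\alpha_k,\beta_k$ are real and the monomial exponents are unchanged) and that $p\neq 0$ in $\mathbb{C}[x_1,\ldots,x_n]$ forces $(p_1,p_2)\neq(0,0)$ in $\mathbb{R}[x_1,\ldots,x_n]^2$, which follows from the fact that the coefficients of $p$ are exactly $\alpha_k+i\beta_k$. Everything else is a direct appeal to results already established in this section.
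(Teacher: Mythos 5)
Your proof is correct and follows essentially the same route as the paper: split $p$ into real and imaginary parts, write $\mathbb{V}=p_1^{-1}(\{0\})\cap p_2^{-1}(\{0\})$, and use Lemma~\ref{Rem:Zariski}\,(ii) to get an algebraic variety which inherits properness from whichever of $p_1,p_2$ is nonzero. The only cosmetic difference is that you explicitly invoke Proposition~\ref{alg_var_nullset} to see that a nonzero real polynomial yields a proper variety, where the paper leaves that step implicit; both are fine.
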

\begin{proof}
For any~$y\in\mathbb{R}^n$ we have~$p(y) = 0$ if, and only if,~$\Re p(y) = 0$ and~$\text{Im}\, p(y) = 0.$ If we put
\begin{align*}
    p(x) & = \sum_{i=1}^k p_i\prod_{j=1}^nx_j^{\nu_{i,j}},\\
    (\Re p)(x) & = \sum_{i=1}^k \Re p_i\prod_{j=1}^nx_j^{\nu_{i,j}}\in\mathbb{R}[x_1,\ldots,x_n]~\text{and}\\
    (\text{Im}\,p)(x) & = \sum_{i=1}^k \text{Im}\, p_i\prod_{j=1}^nx_j^{\nu_{i,j}}\in\mathbb{R}[x_1,\ldots,x_n],
\end{align*}
then~$\text{Im}\, p(y) = (\text{Im}\, p)(y)$ and~$\Re p(y) = (\Re p)(y)$, and so
\begin{align*}
    \mathbb{V} = (\text{Im}\, p)^{-1}(\set{0})\cap (\Re p)^{-1}(\set{0})\in\mathcal{V}_n(\mathbb{R}).
\end{align*}
Clearly, if $\mathbb{V}$ is proper, then $p\neq 0$. Conversely, if $p\neq 0_{\mathbb{C}[x_1,\ldots,x_n]}$, then $\Re p\neq 0_{\mathbb{R}[x_1,\ldots,x_n]}$ or $\Im p\neq 0_{\mathbb{R}[x_1,\ldots,x_n]}$. Hence $(\Re p)^{-1}(\set{0})\in\mathcal{V}_n(\mathbb{R})$ or $(\Im p)^{-1}(\set{0})\in\mathcal{V}_n(\mathbb{R})$ is a proper algebraic variety. By Lemma~\ref{Rem:Zariski}(ii), we get that $\mathbb{V}$ is proper. Thsi proves the second part of the Lemma.
\end{proof}

As an application of this lemma we conclude from Proposition~\ref{ex:LinSubspAlgVar} the following.

\begin{Corollary}\label{Cor:LinSubspaceReal}
Let~$V\subseteq\mathbb{F}^n$ be a linear subspace with~$\dim V\leq n-1.$ Then~$V\cap\mathbb{R}^n$ is a proper algebraic variety.
\end{Corollary}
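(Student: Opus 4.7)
The plan is to split into the two cases $\mathbb{F}=\mathbb{R}$ and $\mathbb{F}=\mathbb{C}$. The real case is immediate: if $\mathbb{F}=\mathbb{R}$, then $V\cap\mathbb{R}^n = V$, which is a proper algebraic variety by Proposition~\ref{ex:LinSubspAlgVar} directly. So the interesting case is $\mathbb{F}=\mathbb{C}$.

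For $\mathbb{F}=\mathbb{C}$, I would use the explicit description produced in the proof of Proposition~\ref{ex:LinSubspAlgVar}. Writing $m := n - \dim_{\mathbb{C}} V \geq 1$, there exists a matrix $W\in\mathbb{C}^{n\times m}$ whose columns span $V^\bot$, so that
\begin{align*}
V = \bigcap_{j=1}^{m} p_{W_{\cdot,j}}^{-1}(\set{0}), \qquad p_{W_{\cdot,j}}(x) = W_{\cdot,j}^\top x \in \mathbb{C}[x_1,\ldots,x_n].
\end{align*}
Intersecting with $\mathbb{R}^n$ yields
\begin{align*}
V\cap\mathbb{R}^n \;=\; \bigcap_{j=1}^{m}\set{x\in\mathbb{R}^n\,\big\vert\,W_{\cdot,j}^\top x = 0},
\end{align*}
and each factor on the right is a real algebraic variety by the first part of Lemma~\ref{lem:ComplexPolynomialsInduceRealVarieties}. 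An application of Lemma~\ref{Rem:Zariski}\,(ii) then gives that $V\cap\mathbb{R}^n\in\mathcal{V}_n(\mathbb{R})$.

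It remains to show properness, i.e.\ $V\cap\mathbb{R}^n\neq\mathbb{R}^n$; this is the one point where the argument is not purely formal. I would argue by contradiction: if $\mathbb{R}^n\subseteq V$, then $V$ contains the standard basis $e_1,\ldots,e_n$ of $\mathbb{C}^n$. Because $V$ is a $\mathbb{C}$-linear subspace, it then contains $\mathrm{span}_{\mathbb{C}}(e_1,\ldots,e_n)=\mathbb{C}^n$, forcing $\dim_{\mathbb{C}} V = n$, contradicting the assumption $\dim V\leq n-1$. Hence $V\cap\mathbb{R}^n\subsetneq\mathbb{R}^n$, which combined with the preceding paragraph gives $V\cap\mathbb{R}^n\in\mathcal{V}_n^{\text{prop}}(\mathbb{R})$. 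The main (minor) obstacle is precisely this properness step, where one has to exploit that the real subspace $\mathbb{R}^n$ generates $\mathbb{C}^n$ over $\mathbb{C}$ in order to pull the complex dimension bound on $V$ down to a proper inclusion after intersecting with $\mathbb{R}^n$.
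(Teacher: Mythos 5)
Your proposal is correct and follows essentially the same route as the paper: reuse the linear polynomials $p_{W_{\cdot,j}}$ constructed in the proof of Proposition~\ref{ex:LinSubspAlgVar}, observe that they are complex polynomials, and apply Lemma~\ref{lem:ComplexPolynomialsInduceRealVarieties} to the real trace of their zero loci. The one place you deviate is in establishing properness: you argue directly that $\mathbb{R}^n\subseteq V$ would force $V=\mathbb{C}^n$ (a clean dimension argument), whereas the paper gets properness for free by invoking the full ``if and only if $p\neq 0$'' content of Lemma~\ref{lem:ComplexPolynomialsInduceRealVarieties} on at least one nonzero $p_{W_{\cdot,j}}$ (guaranteed since $\dim V^\bot=n-\dim V\geq 1$) together with Lemma~\ref{Rem:Zariski}\,(ii). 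Both are fine; the paper's route is marginally shorter because it leverages the lemma more fully, while yours is a touch more self-contained. Your explicit case split on $\mathbb{F}$ is harmless but unnecessary, since $\mathbb{R}[x_1,\ldots,x_n]\subseteq\mathbb{C}[x_1,\ldots,x_n]$ makes the argument uniform.
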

\begin{proof}
The polynomials constructed in the proof of Proposition~\ref{ex:LinSubspAlgVar} are complex polynomials. Now the statement follows from Lemma~\ref{lem:ComplexPolynomialsInduceRealVarieties}.
\end{proof}

We conclude from Lemma~\ref{lem:ComplexPolynomialsInduceRealVarieties} another property of complex generic sets.

\begin{Lemma}\label{Cor:GenSetReal}
Let~$S\subseteq\mathbb{F}^n$ be generic w.r.t.~$\mathbb{V} = \bigcap_{i = 1}^kp_i^{-1}(\set{0})\in\mathcal{V}_n^{\text{prop}}(\mathbb{F}).$ Then~$S\cap \mathbb{R}^n$ is generic.
\end{Lemma}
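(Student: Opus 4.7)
The plan is to reduce everything to the two previous lemmas (Lemma~\ref{Rem:Zariski}\,(ii) and Lemma~\ref{lem:ComplexPolynomialsInduceRealVarieties}) by exhibiting a proper real algebraic variety that contains the complement of $S\cap\mathbb{R}^n$ inside $\mathbb{R}^n$. First, I would dispatch the case $\mathbb{F}=\mathbb{R}$ immediately: then $S\cap\mathbb{R}^n=S$, and there is nothing to prove. So the interesting case is $\mathbb{F}=\mathbb{C}$, and my candidate variety will be $\mathbb{V}\cap\mathbb{R}^n$.

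The first step is the straightforward set-theoretic observation that the complement of $S\cap\mathbb{R}^n$ inside $\mathbb{R}^n$ equals $\mathbb{R}^n\setminus S = \mathbb{R}^n\cap S^c$, which by genericity of $S$ is contained in $\mathbb{R}^n\cap\mathbb{V} = \bigcap_{i=1}^k\bigl(\mathbb{R}^n\cap p_i^{-1}(\{0\})\bigr)$. By Lemma~\ref{lem:ComplexPolynomialsInduceRealVarieties}, each factor $\mathbb{R}^n\cap p_i^{-1}(\{0\})$ is a real algebraic variety (proper precisely when $p_i\neq 0$), so by Lemma~\ref{Rem:Zariski}\,(ii) the intersection $\mathbb{R}^n\cap\mathbb{V}$ lies in $\mathcal{V}_n(\mathbb{R})$.

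The only nontrivial step is to show that $\mathbb{R}^n\cap\mathbb{V}$ is proper as a real algebraic variety, i.e. $\mathbb{R}^n\cap\mathbb{V}\neq\mathbb{R}^n$. My argument is that at least one of the defining polynomials $p_i$ must be nonzero: otherwise every $p_i$ would be the zero polynomial and $\mathbb{V}=\mathbb{C}^n$, contradicting $\mathbb{V}\in\mathcal{V}_n^{\text{prop}}(\mathbb{C})$. Picking such an $i_0$ with $p_{i_0}\neq 0$, Lemma~\ref{lem:ComplexPolynomialsInduceRealVarieties} gives $\mathbb{R}^n\cap p_{i_0}^{-1}(\{0\})\in\mathcal{V}_n^{\text{prop}}(\mathbb{R})$, and from the inclusion $\mathbb{R}^n\cap\mathbb{V}\subseteq\mathbb{R}^n\cap p_{i_0}^{-1}(\{0\})$ together with Lemma~\ref{Rem:Zariski}\,(ii) I conclude that $\mathbb{R}^n\cap\mathbb{V}$ is also proper.

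Combining the two inclusions, $(S\cap\mathbb{R}^n)^c_{\mathbb{R}^n}\subseteq\mathbb{R}^n\cap\mathbb{V}\in\mathcal{V}_n^{\text{prop}}(\mathbb{R})$, so $S\cap\mathbb{R}^n$ is generic by Definition~\ref{def:gen_set}. I do not anticipate any real obstacle in this proof: the only mildly subtle point is the observation that properness in $\mathbb{C}^n$ forces some $p_i$ to be nonzero, which in turn forces properness of the real trace $\mathbb{R}^n\cap p_i^{-1}(\{0\})$ via Lemma~\ref{lem:ComplexPolynomialsInduceRealVarieties}.
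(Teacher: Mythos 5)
Your proof is correct and follows essentially the same route as the paper's: reduce to showing that $\mathbb{V}\cap\mathbb{R}^n$ is a proper real algebraic variety, apply Lemma~\ref{lem:ComplexPolynomialsInduceRealVarieties} to each $p_i$ and intersect via Lemma~\ref{Rem:Zariski}\,(ii), and use properness of $\mathbb{V}$ to extract a nonzero $p_{i_0}$ giving properness of the real trace. You are merely a bit more explicit than the paper, e.g.\ in separating off the trivial case $\mathbb{F}=\mathbb{R}$ and in spelling out the factor-by-factor application of the two lemmas.
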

\begin{proof}
    It is an elementary property that
    \begin{align*}
        \mathbb{R}^n\setminus(S\cap\mathbb{R}^n) = S^c\cap\mathbb{R}^n\subseteq\mathbb{V}\cap\mathbb{R}^n
    \end{align*}
    and it remains to investigate, whether~$\mathbb{V}\cap\mathbb{R}^n$ is a proper algebraic variety. From Lemma~\ref{lem:ComplexPolynomialsInduceRealVarieties} we obtain that~$\mathbb{V}\cap\mathbb{R}^n = \set{x\in\mathbb{R}^n\,\big\vert\,\forall i\in\underline{k}:p_i(x) = 0}$ is an algebraic variety. Since~$\mathbb{V}$ is proper there exists some~$i\in\underline{k}$ so that~$p_i\neq 0_{\mathbb{F}[x_1,\ldots,x_n]}.$ Hence~$\mathbb{V}\cap\mathbb{R}^n$ is proper.
\end{proof}

Another property for later use is that $\mathcal{V}_n^{\text{prop}}(\mathbb{F})$ is invariant under regular transformations.

\begin{Lemma}\label{Lem:transformgenericsets}
Let~$S\subseteq\mathbb{F}^n$,~$T\in\mathcal{GL}(\mathbb{F}^n)$,~$p\in\mathbb{F}[x_1,\ldots,x_n]$, and~$\mathbb{V}\in\mathcal{V}_n^{\text{prop}}(\mathbb{F}).$ Then
\begin{enumerate}[(i)]
    \item~$(TS)^c = TS^c$,
    \item there exists a unique~$q\in\mathbb{F}[x_1,\ldots,x_n]$ such that
    \begin{align*}
        \forall x\in\mathbb{F}^n: p(Tx) = q(x),
    \end{align*}
\item~$T\mathbb{V}\in\mathcal{V}_n^{\text{prop}}(\mathbb{F})$,
\item~$S$ is generic w.r.t.~$\mathbb{V}$ if, and only if,~$TS$ is generic w.r.t.~$T\mathbb{V}.$
\end{enumerate}
\end{Lemma}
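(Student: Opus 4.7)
Here is my plan for proving the four parts in order.

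Part (i) is purely set-theoretic and uses only that $T$ is a bijection on $\mathbb{F}^n$. I would unwind the definitions: $y\in(TS)^c$ means $T^{-1}y\notin S$, which is equivalent to $T^{-1}y\in S^c$, i.e.\ $y\in TS^c$. No polynomial structure is used. For part (ii), I would observe that each coordinate function $x\mapsto (Tx)_i = \sum_{j=1}^n T_{i,j}x_j$ is a polynomial in $\mathbb{F}[x_1,\ldots,x_n]$. Since $\mathbb{F}[x_1,\ldots,x_n]$ is closed under substitution of polynomials for indeterminates (the composition of polynomials is a polynomial), $q(x):=p(Tx)$ is a polynomial. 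Uniqueness follows from the fact that $\mathbb{F}\in\{\mathbb{R},\mathbb{C}\}$ is infinite, so two polynomials in $\mathbb{F}[x_1,\ldots,x_n]$ that agree as functions on $\mathbb{F}^n$ must coincide as polynomials (standard: a nonzero polynomial over an infinite field has a nonzero value somewhere, by induction on $n$, or alternatively via Proposition~\ref{alg_var_nullset} in the real case).

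For part (iii), write $\mathbb{V}=\bigcap_{i=1}^k p_i^{-1}(\{0\})$. Applying (ii) to $T^{-1}\in\mathcal{GL}(\mathbb{F}^n)$ yields unique polynomials $q_1,\ldots,q_k\in\mathbb{F}[x_1,\ldots,x_n]$ with $q_i(x)=p_i(T^{-1}x)$. Then, for any $y\in\mathbb{F}^n$,
\begin{align*}
y\in T\mathbb{V}\iff T^{-1}y\in\mathbb{V}\iff \forall i\in\underline k: p_i(T^{-1}y)=0\iff \forall i\in\underline k: q_i(y)=0,
\end{align*}
so $T\mathbb{V}=\bigcap_{i=1}^k q_i^{-1}(\{0\})\in\mathcal{V}_n(\mathbb{F})$. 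Properness is immediate: if $T\mathbb{V}=\mathbb{F}^n$, bijectivity of $T$ gives $\mathbb{V}=T^{-1}\mathbb{F}^n=\mathbb{F}^n$, contradicting $\mathbb{V}\in\mathcal{V}_n^{\text{prop}}(\mathbb{F})$.

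Part (iv) then becomes bookkeeping. Assume $S$ is generic w.r.t.\ $\mathbb{V}$, i.e.\ $S^c\subseteq\mathbb{V}$. Applying $T$ and using (i) gives $(TS)^c=TS^c\subseteq T\mathbb{V}$, and by (iii) $T\mathbb{V}\in\mathcal{V}_n^{\text{prop}}(\mathbb{F})$, so $TS$ is generic w.r.t.\ $T\mathbb{V}$. The converse is obtained by replacing $T$ with $T^{-1}$ and $(S,\mathbb{V})$ with $(TS,T\mathbb{V})$, noting that $T^{-1}(T\mathbb{V})=\mathbb{V}$ and $T^{-1}(TS)=S$.

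The only step with any subtlety is uniqueness in (ii), which rests on $\mathbb{F}$ being infinite; everything else is a direct application of bijectivity of $T$ and the substitution stability of $\mathbb{F}[x_1,\ldots,x_n]$. I do not expect a genuine obstacle.
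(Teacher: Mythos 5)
Your proof follows essentially the same route as the paper's: (i) by bijectivity of~$T$, (ii) by substitution-closure of $\mathbb{F}[x_1,\ldots,x_n]$, (iii) by applying (ii) to $T^{-1}$, and (iv) by assembling (i)--(iii). In fact your write-up is slightly more complete than the paper's, which only remarks that $q$ in (ii) \emph{exists}; you additionally justify uniqueness via the infinitude of $\mathbb{F}$ (two polynomials over an infinite field agreeing as functions must coincide), which is needed since the statement explicitly claims uniqueness.
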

\begin{proof}
\begin{enumerate}[(i)]
\item This is an elementary property of a bijective mapping.
\item Let~$p(x)$ be given as in (1.1). Then
\begin{align*}
p(Tx) = \sum_{k=0}^\ell a_k(Tx)_1^{\nu_{k,1}}\cdots(Tx)_n^{\nu_{k,n}}
\end{align*}
and since
\begin{align*}
\forall i\in\underline{n}: (Tx)_i = \sum_{j=1}^nT_{i,j}x_j
\end{align*}
it follows that~$q$ exists.
\item Let~$\mathbb{V} = \bigcap_{i=1}^kp_i^{-1}(\set{0}).$ Then the equivalence
\begin{align*}
x\in\mathbb{V} & \iff \forall i\in\underline{k}: 0 = p_i(x) = p_i(T^{-1}Tx)
\end{align*}
and (ii) imply that~$T\mathbb{V} = \bigcap_{i=1}^k\left(p_i(T^{-1})\right)^{-1}(\set{0})$ is an algebraic variety and proper since~$T$ is bijective.
\item From (i) and~$T\in\mathcal{GL}(\mathbb{F}^n)$ we conclude
\begin{align*}
S^c\subseteq\mathbb{V} \iff (TS)^c = TS^c\subseteq T\mathbb{V}.
\end{align*}
(iii) implies that~$T\mathbb{V}$ is a proper algebraic variety if, and only if~$\mathbb{V}$ is an algebraic variety. 
\end{enumerate}
This completes the proof of the lemma.
\end{proof}

A next step to prove that $\mathcal{V}_n^{\text{prop}}(\mathbb{C})$ is closed under~$\cup$ is the embedding into $\mathcal{V}_{2n}^{\text{prop}}(\mathbb{R}).$

\begin{Lemma}\label{lem:complexvarietieshochziehen}
Consider the $\mathbb{R}$-vector space isomorphism
\begin{align*}
R: \mathbb{C}^n\to\mathbb{R}^{2n}, z = \Re z + i\,\Im z\mapsto Rz := \begin{pmatrix}\Re z\\\Im z\end{pmatrix}
\end{align*}
and let~$\mathbb{V}\in\mathcal{V}_n(\mathbb{C}).$
Then~$R\mathbb V$ is a proper algebraic variety if, and only if,~$\mathbb{V}$ is proper.
\end{Lemma}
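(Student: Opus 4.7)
The plan is to first show that $R\mathbb{V}$ is a real algebraic variety in $\mathbb{R}^{2n}$, from which the properness assertion follows immediately since $R$ is a bijection.

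Write $\mathbb{V} = \bigcap_{i=1}^k p_i^{-1}(\set{0})$ for suitable $p_1,\ldots,p_k \in \mathbb{C}[x_1,\ldots,x_n]$, which is possible by definition of an algebraic variety. For each $i$, I would substitute $x_j = u_j + iv_j$ with real $u_j,v_j$ and expand $p_i(u+iv)$ as a polynomial in the $2n$ real variables $u_1,\ldots,u_n,v_1,\ldots,v_n$ with complex coefficients. Splitting each coefficient into its real and imaginary parts produces a decomposition
\[
p_i(u+iv) = A_i(u,v) + i\,B_i(u,v)
\]
with $A_i,B_i \in \mathbb{R}[u_1,\ldots,u_n,v_1,\ldots,v_n]$. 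Since for $z = u+iv$ we have $p_i(z) = 0$ if and only if $A_i(u,v)=0$ and $B_i(u,v)=0$, this gives
\[
R\mathbb{V} \;=\; \bigcap_{i=1}^k \bigl(A_i^{-1}(\set{0}) \cap B_i^{-1}(\set{0})\bigr) \;\in\; \mathcal{V}_{2n}(\mathbb{R}).
\]

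The main obstacle is the careful bookkeeping in the substitution step: one must verify that substituting $x_j \mapsto u_j + iv_j$ into a multivariate polynomial and then extracting real and imaginary parts really yields well-defined \emph{real} polynomials in $2n$ variables. This can be done either by induction on the total degree of $p_i$ (using that $(u_j+iv_j)^k$ expands by the binomial theorem into a sum of monomials whose coefficients are integers times powers of $i$), or more conceptually by noting that the substitution induces a ring homomorphism $\mathbb{C}[x_1,\ldots,x_n] \to \mathbb{C}[u_1,\ldots,u_n,v_1,\ldots,v_n]$ and then using the $\mathbb{R}$-vector space decomposition
\[
\mathbb{C}[u_1,\ldots,u_n,v_1,\ldots,v_n] \;=\; \mathbb{R}[u_1,\ldots,u_n,v_1,\ldots,v_n] \;\oplus\; i\,\mathbb{R}[u_1,\ldots,u_n,v_1,\ldots,v_n].
\]
The same real/imaginary decomposition trick was already used for real arguments in Lemma~\ref{lem:ComplexPolynomialsInduceRealVarieties}; here it is applied after a change of variables.

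For the equivalence of properness, since $R$ is a bijection we have $R\mathbb{V} = \mathbb{R}^{2n}$ if and only if $\mathbb{V} = \mathbb{C}^n$. Hence $R\mathbb{V}$ is proper if and only if $\mathbb{V}$ is proper, completing the argument.
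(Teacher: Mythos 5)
Your proof is correct and takes essentially the same approach as the paper: substitute $x_j = u_j + iv_j$ into each $p_i$ and split into real and imaginary parts, then note properness transfers because $R$ is a bijection. The only cosmetic difference is that the paper routes the real/imaginary decomposition through a citation to Lemma~\ref{lem:ComplexPolynomialsInduceRealVarieties}, whereas you carry it out inline.
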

\begin{proof}
Since~$R$ is an isomorphism, the equivalence 
\begin{align*}
\forall\,\mathbb{V}\subseteq\mathbb{C}^n: R\mathbb{V} = \mathbb{R}^{2n}\iff\mathbb{V} = \mathbb{C}^n
\end{align*}
holds true. Let~$\mathbb{V} = \bigcap_{i=1}^k p_i^{-1}(\set{0})\in\mathcal{V}_n(\mathbb{C})$ be an algebraic variety with
\begin{align*}
p_i(x) = \sum_{j=1}^{k_i}p_i^jx_1^{\nu_{j,1}}\cdots x_n^{\nu_{j,n}}.
\end{align*}
Then we find
\begin{align*}
\forall\,z\in\mathbb{C}~\forall\,\ell\in\underline{k}: p_\ell(z) & = \sum_{j=1}^{k_\ell}p_\ell^jz_1^{\nu_{j,1}}\cdots z_n^{\nu_{j,n}}\\
& = \sum_{j=1}^{k_\ell}p_\ell^j((Rz)_1+i(Rz)_{n+1})^{\nu_{j,1}}\cdots ((Rz)_n+i(Rz)_{2n})^{\nu_{j,n}}\\
& =: \sum_{r = 1}^{m_\ell}q_\ell^r(Rz)_1^{\mu_{r,1}}\cdots (Rz)_{2n}^{\mu_{r,2n}} =: q_\ell(Rz).
\end{align*}
Thus
\begin{align*}
z\in\mathbb{V}\iff Rz\in\bigcap_{i=1}^k q_i^{-1}(\set{0})\cap\mathbb{R}^{2n}.
\end{align*}
We conlcude with Lemma~\ref{lem:ComplexPolynomialsInduceRealVarieties} that~$R\mathbb{V}$ is an algebraic variety.
\end{proof}

Finally we prove that $\mathcal{V}_n^{\text{prop}}(\mathbb{F})$ is closed under~$\cap$ and $\cup$ not only for $\mathbb{F} = \mathbb{R}$, which was proven in Corollary~\ref{cor:intersection_union1}, but also for $\mathbb{F} = \mathbb{C}.$

\begin{Lemma}\label{cor:intersection_union}
Let~$S_1,S_2\subseteq\mathbb{C}^n$ be generic sets. Then~$S_1\cap S_2$ and~$S_1\cup S_2$ are generic sets.
\end{Lemma}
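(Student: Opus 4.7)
The plan is to reduce the complex case to the real case via the $\mathbb{R}$-linear isomorphism $R:\mathbb{C}^n\to\mathbb{R}^{2n}$ from Lemma~\ref{lem:complexvarietieshochziehen}. First I would pick, by Definition~\ref{def:gen_set}, proper complex algebraic varieties $\mathbb{V}_1,\mathbb{V}_2\in\mathcal{V}_n^{\text{prop}}(\mathbb{C})$ with $S_1^c\subseteq\mathbb{V}_1$ and $S_2^c\subseteq\mathbb{V}_2$, and observe the set-theoretic inclusions
\begin{align*}
(S_1\cap S_2)^c = S_1^c\cup S_2^c\subseteq\mathbb{V}_1\cup\mathbb{V}_2,\qquad (S_1\cup S_2)^c = S_1^c\cap S_2^c\subseteq\mathbb{V}_1\cap\mathbb{V}_2.
\end{align*}
Thus it suffices to show that both $\mathbb{V}_1\cap\mathbb{V}_2$ and $\mathbb{V}_1\cup\mathbb{V}_2$ lie in $\mathcal{V}_n^{\text{prop}}(\mathbb{C})$.

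The intersection case is immediate: Lemma~\ref{Rem:Zariski}(ii) already gives both that $\mathbb{V}_1\cap\mathbb{V}_2\in\mathcal{V}_n(\mathbb{C})$ and that it is proper, since $\mathbb{V}_1$ is.

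The union is the actual obstacle, because the argument used in Corollary~\ref{cor:intersection_union1} relied on Proposition~\ref{alg_var_nullset} (proper real algebraic varieties are Lebesgue nullsets), which is not available over $\mathbb{C}$. Lemma~\ref{Rem:Zariski}(iii) supplies that $\mathbb{V}_1\cup\mathbb{V}_2\in\mathcal{V}_n(\mathbb{C})$, so the only point to verify is $\mathbb{V}_1\cup\mathbb{V}_2\neq\mathbb{C}^n$. For this I would transport the problem to $\mathbb{R}^{2n}$. By Lemma~\ref{lem:complexvarietieshochziehen}, both $R\mathbb{V}_1$ and $R\mathbb{V}_2$ are proper real algebraic varieties in $\mathcal{V}_{2n}^{\text{prop}}(\mathbb{R})$. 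Corollary~\ref{cor:intersection_union1}, applied to the generic real sets $(R\mathbb{V}_1)^c$ and $(R\mathbb{V}_2)^c$ (whose complements $R\mathbb{V}_1,R\mathbb{V}_2$ are proper), shows that $R\mathbb{V}_1\cup R\mathbb{V}_2$ is a proper real algebraic variety, so in particular $R\mathbb{V}_1\cup R\mathbb{V}_2\neq\mathbb{R}^{2n}$.

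Finally, since $R$ is a bijection, $R(\mathbb{V}_1\cup\mathbb{V}_2)=R\mathbb{V}_1\cup R\mathbb{V}_2\neq\mathbb{R}^{2n}$ forces $\mathbb{V}_1\cup\mathbb{V}_2\neq\mathbb{C}^n$, so $\mathbb{V}_1\cup\mathbb{V}_2\in\mathcal{V}_n^{\text{prop}}(\mathbb{C})$. Combined with the inclusions above, this proves that $S_1\cap S_2$ is generic w.r.t.\ $\mathbb{V}_1\cup\mathbb{V}_2$ and $S_1\cup S_2$ is generic w.r.t.\ $\mathbb{V}_1\cap\mathbb{V}_2$.
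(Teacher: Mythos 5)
Your proposal is correct and takes essentially the same route as the paper: both reduce the only nontrivial point (that $\mathbb{V}_1\cup\mathbb{V}_2\neq\mathbb{C}^n$) to the real case via the isomorphism $R$ of Lemma~\ref{lem:complexvarietieshochziehen}, and both ultimately rest on the Lebesgue-nullset fact of Proposition~\ref{alg_var_nullset}. The only cosmetic difference is that the paper argues by contradiction and cites Proposition~\ref{alg_var_nullset} directly, while you argue directly and invoke Corollary~\ref{cor:intersection_union1} as a packaged form of that same fact.
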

\begin{proof}
In the proof of Corollary~\ref{cor:intersection_union1} we have seen that this statement is equivalent to
\begin{align*}
\mathcal{V}_n^{\text{prop}}(\mathbb{C})~\text{is}~\text{closed under~$\cap$~and~$\cup$}.
    \end{align*}
With Lemma~\ref{Rem:Zariski}(ii) it remains to show that~$\mathcal{V}_n^{\text{prop}}(\mathbb{C})$ is~closed under~$\cup$. Since~$\mathcal{V}_n(\mathbb{C})$ is, by Lemma~\ref{Rem:Zariski},~closed under~$\cup$, we have to show that~$\mathbb{C}^n$ can not be partitioned into two proper algebraic varieties. Seeking a contradiction assume
\begin{align*}
\exists\,\mathbb{V}_1,\mathbb{V}_2\in \mathcal{V}_n^{\text{prop}}(\mathbb{C}): \mathbb{V}_1\cup\mathbb{V}_2 = \mathbb{C}^n.
\end{align*}
Let~$R$ be the mapping from Lemma~\ref{lem:complexvarietieshochziehen}. Then we find with
\begin{align*}
R\mathbb{V}_1\cup R\mathbb{V}_2 = R(\mathbb{V}_1\cup\mathbb{V}_2) = R\mathbb{C}^n = \mathbb{R}^{2n}
\end{align*}
and Lemma~\ref{lem:complexvarietieshochziehen} a partition of~$\mathbb{R}^{2n}$ into two proper algebraic varieties, namely~$R\mathbb{V}_1$ and~$R\mathbb{V}_2.$ With Proposition~\ref{alg_var_nullset} this means that~$\mathbb{R}^{2n}$ can be partitionated into finitely many Lebesgue nullsets, which is a contradiction to the fact that~$\mathbb{R}^{2n}$ is not a Lebesgue nullset.
\end{proof}

As a corollary we see that the preimage of any finite set under a polynomial is an algebraic variety.

\begin{Corollary}
For any non-constant~$p\in\mathbb{F}[x_1,\ldots,x_n]$ and~$z_0,\ldots,z_k\in\mathbb{F},~k\in\mathbb{N}$, the sets
\begin{itemize}
\item~$p^{-1}(\set{z_0})$
and
\item~$p^{-1}(\set{z_0,\ldots,z_k}).$
\end{itemize}
are proper algebraic varieties. If~$\mathbb{F} = \mathbb{R}$, then the sets
\begin{itemize}
\item~$\set{x\in\mathbb{R}^n\,\big\vert\,\abs{p(x)} = \abs{z_1}}$ and
\item~$\set{x\in\mathbb{R}^n\,\big\vert\,\abs{p(x)}\in\set{\abs{z_1},\ldots,\abs{z_k}}}$
\end{itemize}
are also proper algebraic varieties.
\end{Corollary}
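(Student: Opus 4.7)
The strategy is to exhibit, for each of the four sets, a single polynomial whose zero locus equals the set, and then to argue that this polynomial is non-zero so that the resulting variety is proper. This reduces everything to Lemma~\ref{Cor:some_more_varieties} together with the standard fact that a non-zero polynomial in $\mathbb{F}[x_1,\ldots,x_n]$ cannot vanish identically on $\mathbb{F}^n$. The latter is an easy induction on $n$ (the base case is that a non-zero univariate polynomial has only finitely many roots), and is essentially the content of the induction step in the proof of Proposition~\ref{alg_var_nullset}.

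For $p^{-1}(\set{z_i})$ I would simply observe that $p^{-1}(\set{z_i}) = (p - z_i)^{-1}(\set{0})$; since $p$ is non-constant, $p - z_i$ is a non-zero polynomial, so its zero set is a proper algebraic variety. For the finite preimage I would use that $\mathbb{F}[x_1,\ldots,x_n]$ is an integral domain to write
\begin{align*}
p^{-1}(\set{z_0,\ldots,z_k}) = \bigcup_{i=0}^k (p-z_i)^{-1}(\set{0}) = \left(\prod_{i=0}^k (p-z_i)\right)^{-1}(\set{0}),
\end{align*}
where the displayed product is non-zero by the integral-domain property. Alternatively, one may combine the single-point case with Lemma~\ref{Rem:Zariski}(iii), Corollary~\ref{cor:intersection_union1} and Lemma~\ref{cor:intersection_union} (closure of $\mathcal{V}_n^{\text{prop}}(\mathbb{F})$ under finite unions).

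For the two real statements, the key step is the identity $\abs{z_i}^2 = z_i^2$ valid for $z_i\in\mathbb{R}$, from which
\begin{align*}
\abs{p(x)} = \abs{z_i}\iff p(x)^2 - z_i^2 = 0\iff (p(x)-z_i)(p(x)+z_i) = 0.
\end{align*}
Consequently
\begin{align*}
\set{x\in\mathbb{R}^n\,\big\vert\,\abs{p(x)}=\abs{z_i}} = (p^2 - z_i^2)^{-1}(\set{0}),
\end{align*}
and the same product trick handles the finite-value case via $\bigl(\prod_{i=1}^k(p^2-z_i^2)\bigr)^{-1}(\set{0})$. Since $p$ is non-constant, so is $p^2$, whence each factor $p^2 - z_i^2$ is non-zero and the integral-domain argument applies.

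The only point that really deserves care is the non-vanishing of the constructed polynomials (equivalently, the properness of the resulting varieties), which is the one place where non-constancy of $p$ is used; everything else is routine. I therefore expect no substantial obstacle.
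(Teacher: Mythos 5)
Your proof is correct, and the core reductions are identical to the paper's: $p^{-1}(\{z_0\}) = (p-z_0)^{-1}(\{0\})$ and, over $\mathbb{R}$, $|p(x)|=|z_i| \iff p(x)^2 - z_i^2 = 0$. The one place where you genuinely diverge is the treatment of the finite preimage $p^{-1}(\{z_0,\ldots,z_k\})$: the paper writes it as the union $\bigcup_i p^{-1}(\{z_i\})$ and then invokes Lemma~\ref{cor:intersection_union} (closure of $\mathcal{V}_n^{\text{prop}}(\mathbb{F})$ under finite unions), whereas you collapse the union into the zero set of the single product polynomial $\prod_i (p - z_i)$ and use that $\mathbb{F}[x_1,\ldots,x_n]$ is an integral domain to see this product is non-zero. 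Both are sound; the product route has the advantage of producing an explicit single defining polynomial (in line with Lemma~\ref{lem:MostBoringResult}) and avoiding the somewhat heavier machinery behind closure of proper varieties under unions, at the cost of needing the integral-domain observation, while the paper's route is a one-line appeal to an already-established lemma. You also make the properness step explicit (non-constancy of $p$ forces each factor to be a non-zero polynomial, hence a proper variety), which the paper leaves implicit — a small but worthwhile clarification, since properness is exactly what the corollary asserts. One minor caveat: attributing the fact ``a non-zero polynomial does not vanish identically on $\mathbb{F}^n$'' to the induction step of Proposition~\ref{alg_var_nullset} is only fair for $\mathbb{F}=\mathbb{R}$; for $\mathbb{F}=\mathbb{C}$ the paper routes this through Lemma~\ref{lem:complexvarietieshochziehen}, though of course the underlying infinite-field induction is elementary either way.
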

\begin{proof}
This follows from Lemma~\ref{cor:intersection_union} and the following calculations:
\begin{itemize}
\item~$p^{-1}(\set{z_0}) = (p-z_0)^{-1}(\set{0})$,
\item~$p^{-1}(\set{z_0,\ldots,z_k}) = \bigcup_{i=0}^np^{-1}(\set{z_i})$
\end{itemize}
for $\mathbb{F} = \mathbb{R}$ or $\mathbb{F} = \mathbb{C}$ and
\begin{itemize}
\item~$\set{x\in\mathbb{R}^n\,\big\vert\,\abs{p(x)}\in\set{\abs{z_1}}} = (p^2-z_0^2)^{-1}(\set{0})$ and
\item~$\set{x\in\mathbb{R}^n\,\big\vert\,\abs{p(x)}\in\set{\abs{z_1},\ldots,\abs{z_k}}} = \bigcup_{i=0}^np^{-1}(\set{\abs{z_i}})$
\end{itemize}
for $\mathbb{F} = \mathbb{R}$.
\end{proof}

In a next step we want to talk about rational mappings. The ring of rational functions over $\mathbb{F}$ in $n$ indeterminants is the set
\begin{align*}
\mathbb{F}(x_1,\ldots,x_n) := \set{\frac{p}{q}\,\Bigg\vert\,p,q\in\mathbb{F}[x_1,\ldots,x_n],~q\neq 0_{\mathbb{F}[x_1,\ldots,x_n]}}.
\end{align*}
If we want to consider a rational function $\frac{p}{q}\in\mathbb{F}(x_1,\ldots,x_n)$ as a mapping on a subset of $\mathbb{F}^n$, then there are two possibilities. At first we can put
\begin{align*}
\frac{p}{q}: \mathbb{F}^n\setminus q^{-1}(\set{0}), \quad x\mapsto\frac{p(x)}{q(x)}.
\end{align*}
The second possibility is using reduction. If $p$ and $q$ have the common zeros $y_1,\ldots,y_k\in\mathbb{F}$, then we find $\widehat{p}\in\mathbb{F}[x_1,\ldots,x_n]$ and $\widehat{q}\in\mathbb{F}[x_1,\ldots,x_n]$ so that $\widehat{p}$ and $\widehat{q}$ have no common zeros and
\begin{align*}
p(x) = \widehat{p}(x)\prod_{j = 1}^k(x-y_j) \qquad\wedge\qquad q(x) = \widehat{q}(x)\prod_{j = 1}^k(x-y_j).
\end{align*}
We can then put
\begin{align*}
\frac{p}{q}: \mathbb{F}^n\setminus \widehat{q}^{-1}(\set{0}), \quad x\mapsto\frac{\widehat{p}(x)}{\widehat{q}(x)}.
\end{align*}
It is notable that the second option is a continuous extension of the first option.
 
However, for simplicity we will use the first option and take rational functions as mappings with domain
\begin{align*}
\dom \frac{p}{q} := \mathbb{F}^n\setminus q^{-1}(\set{0}).
\end{align*}

With this agreement we see immideately that, in addition to polynomials and polynomial vectors, the preimage of zero under rational functions is an algebraic variety.

\begin{Corollary}~\label{cor:rational_functions}
For~$q(x) \in\mathbb{F}(x_1,\ldots,x_n)$ the following statements hold:
\begin{enumerate}[(i)]
    \item~$(\dom q)^c\subseteq\mathbb{F}^n$ is a proper algebraic variety and
    \item~$q^{-1}(\set{0})\cup(\dom q)^c$ is an algebraic variety and proper if, and only if,~$q\neq 0_{\mathbb{F}(x_1,\ldots,x_n)}.$
\end{enumerate}
\end{Corollary}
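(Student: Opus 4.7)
The plan is to fix a representation $q = p/r$ with $p, r \in \mathbb{F}[x_1,\ldots,x_n]$ and $r \neq 0_{\mathbb{F}[x_1,\ldots,x_n]}$. Under the convention agreed immediately before the corollary we have $\dom q = \mathbb{F}^n \setminus r^{-1}(\set{0})$, so $(\dom q)^c = r^{-1}(\set{0})$ is the locus of zeros of a single polynomial and hence an algebraic variety. Since $r$ is a nonzero polynomial, the set $r^{-1}(\set{0})$ is a proper subset of $\mathbb{F}^n$: for $\mathbb{F} = \mathbb{R}$ this follows directly from Proposition~\ref{alg_var_nullset}, and for $\mathbb{F} = \mathbb{C}$ it follows by applying Lemma~\ref{lem:complexvarietieshochziehen} to reduce to a proper real variety (alternatively, restrict $r$ to $\mathbb{R}^n$, invoke Lemma~\ref{lem:ComplexPolynomialsInduceRealVarieties} together with Proposition~\ref{alg_var_nullset}, and then reinsert $\mathbb{C}$-scaling). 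This yields (i).

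For (ii), the key step is the elementary set identity
\begin{align*}
q^{-1}(\set{0}) \cup (\dom q)^c \;=\; p^{-1}(\set{0}) \cup r^{-1}(\set{0}),
\end{align*}
which is immediate from the observation that on $\dom q$, $q(x) = 0$ precisely when $p(x) = 0$ (while $r(x) \neq 0$), so adding $r^{-1}(\set{0})$ to both sides contributes the remaining points of $p^{-1}(\set{0})$. The right-hand side is a finite union of algebraic varieties and thus an algebraic variety by Lemma~\ref{Rem:Zariski}\,(iii). This proves the first assertion of (ii).

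It remains to characterize properness. If $q = 0_{\mathbb{F}(x_1,\ldots,x_n)}$, then $p = 0_{\mathbb{F}[x_1,\ldots,x_n]}$, so $p^{-1}(\set{0}) = \mathbb{F}^n$ and the union is all of $\mathbb{F}^n$, i.e.\ not proper. Conversely, if $q \neq 0$, then $p \neq 0$, so by the argument used in (i) both $p^{-1}(\set{0})$ and $r^{-1}(\set{0})$ are proper algebraic varieties. By Corollary~\ref{cor:intersection_union1} in the case $\mathbb{F} = \mathbb{R}$ and by Lemma~\ref{cor:intersection_union} in the case $\mathbb{F} = \mathbb{C}$, the class $\mathcal{V}_n^{\text{prop}}(\mathbb{F})$ is closed under finite unions, so $p^{-1}(\set{0}) \cup r^{-1}(\set{0})$ is proper, which completes (ii).

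The only technical subtlety I anticipate is the set identity, which must be written out carefully because $q^{-1}(\set{0})$ is defined only on $\dom q$; everything else is a direct citation of previously proved closure properties of (proper) algebraic varieties.
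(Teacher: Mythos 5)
Your proof follows essentially the same route as the paper's: fix a representation $q = p/r$, observe $(\dom q)^c = r^{-1}(\set{0})$, establish the set identity $q^{-1}(\set{0}) \cup (\dom q)^c = p^{-1}(\set{0}) \cup r^{-1}(\set{0})$, and then reduce properness to $p \neq 0$ via closure of $\mathcal{V}_n^{\text{prop}}(\mathbb{F})$ under finite unions. The only difference is that you spell out the justification that $r^{-1}(\set{0})$ is proper (via Proposition~\ref{alg_var_nullset} resp.\ Lemma~\ref{lem:complexvarietieshochziehen}) a bit more explicitly than the paper does, which is fine.
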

\begin{proof} Let~$q = \frac{p_1}{p_2}$ with~$p_1,p_2\in\mathbb{F}[x_1,\ldots,x_n]$ and~$p_2\neq 0.$ Then we find:
\begin{enumerate}[(i)]
    \item We use the definition $\dom q = \mathbb{F}^n\setminus p_2^{-1}(\set{0})$ and hence it is evident that $(\dom q)^c = p_2^{-1}(\set{0})$ is a proper algebraic variety.
    \item By definition we have~$q^{-1}(\set{0}) = p_1^{-1}(\set{0})\cap\dom q.$ Hence the set
    \begin{align*}
        q^{-1}(\set{0})\cup(\dom~q)^c & = (p_1^{-1}(\set{0})\cap\dom~q)\cup(\dom~q)^c\\
        & = p_1^{-1}(\set{0})\cup(\dom~q)^c\\
        & = p_1^{-1}(\set{0})\cup p_2^{-1}(\set{0})
    \end{align*}
    is an algebraic variety. Since $p_2^{-1}(\set{0})$ is a proper algebraic variety, $q^{-1}(\set{0})$ is by Lemma~\ref{cor:intersection_union} proper if, and only if, $p_1^{-1}(\set{0})$ is proper. This is the case if, and only if, $p_1\neq 0$ or, equivalentely, $q\neq 0$.
\end{enumerate}
\end{proof}

With the help of Lemma~\ref{cor:intersection_union} we conclude from Proposition~\ref{alg_var_nullset} that $\mathbb{F}^n$ can not be partitionated into more than one generic set.

\begin{Corollary}\label{Cor:PartitionInGenericSets}
$\mathbb{F}^n$ can not be partitioned into two generic sets.
\end{Corollary}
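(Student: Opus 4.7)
The plan is to argue by contradiction, reducing the statement directly to the closure of $\mathcal{V}_n^{\text{prop}}(\mathbb{F})$ under $\cup$, which has already been established in Corollary~\ref{cor:intersection_union1} (for $\mathbb{F} = \mathbb{R}$) and Lemma~\ref{cor:intersection_union} (for $\mathbb{F} = \mathbb{C}$).

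Assume for contradiction that there exist generic sets $S_1, S_2 \subseteq \mathbb{F}^n$ with $S_1 \cap S_2 = \emptyset$ and $S_1 \cup S_2 = \mathbb{F}^n$. By Definition~\ref{def:gen_set}, there exist proper algebraic varieties $\mathbb{V}_1, \mathbb{V}_2 \in \mathcal{V}_n^{\text{prop}}(\mathbb{F})$ such that $S_1^c \subseteq \mathbb{V}_1$ and $S_2^c \subseteq \mathbb{V}_2$. The partition hypothesis gives $S_2 = S_1^c$ and $S_1 = S_2^c$, so
\begin{align*}
\mathbb{F}^n = S_1 \cup S_2 = S_2^c \cup S_1^c \subseteq \mathbb{V}_2 \cup \mathbb{V}_1,
\end{align*}
that is, $\mathbb{V}_1 \cup \mathbb{V}_2 = \mathbb{F}^n$.

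However, Corollary~\ref{cor:intersection_union1} (for $\mathbb{F} = \mathbb{R}$) and Lemma~\ref{cor:intersection_union} (for $\mathbb{F} = \mathbb{C}$) together state that $\mathcal{V}_n^{\text{prop}}(\mathbb{F})$ is closed under $\cup$. Hence $\mathbb{V}_1 \cup \mathbb{V}_2 \in \mathcal{V}_n^{\text{prop}}(\mathbb{F})$, so in particular $\mathbb{V}_1 \cup \mathbb{V}_2 \subsetneq \mathbb{F}^n$, contradicting the displayed identity. This contradiction shows that no such partition exists.

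The argument is essentially bookkeeping: there is no real obstacle, since all the work has already been done in the two preceding results. The only thing worth double-checking is that the statement is interpreted as a genuine (disjoint) partition $\mathbb{F}^n = S_1 \sqcup S_2$; if one only required $S_1 \cup S_2 = \mathbb{F}^n$ without disjointness the same proof still goes through verbatim, since we only use $S_i = S_j^c$ via the inclusion $S_1 \cup S_2 \subseteq S_2^c \cup S_1^c$, which follows from $S_1 \cup S_2 = \mathbb{F}^n$ alone.
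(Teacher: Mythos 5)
Your main argument is correct and is essentially the paper's own proof: assume a partition $\mathbb{F}^n = S_1 \sqcup S_2$, use genericity to get $S_i^c \subseteq \mathbb{V}_i \in \mathcal{V}_n^{\text{prop}}(\mathbb{F})$, note $S_2 = S_1^c$ and $S_1 = S_2^c$ by the partition hypothesis, conclude $\mathbb{V}_1 \cup \mathbb{V}_2 = \mathbb{F}^n$, and contradict closure of $\mathcal{V}_n^{\text{prop}}(\mathbb{F})$ under $\cup$ (Corollary~\ref{cor:intersection_union1} / Lemma~\ref{cor:intersection_union}).

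However, your closing remark is wrong. You claim that the inclusion $S_1 \cup S_2 \subseteq S_1^c \cup S_2^c$ ``follows from $S_1 \cup S_2 = \mathbb{F}^n$ alone'', so that disjointness could be dropped. But $S_1 \cup S_2 \subseteq S_1^c \cup S_2^c = (S_1 \cap S_2)^c$ together with $S_1 \cup S_2 = \mathbb{F}^n$ forces $S_1 \cap S_2 = \emptyset$; that is, the inclusion is \emph{equivalent} to disjointness given exhaustiveness, not a consequence of exhaustiveness. And indeed the strengthened statement is false: take $S_1 = S_2 = \mathbb{F}^n$ (or $S_1 = \mathbb{F}^n$ and $S_2$ any generic set); both are generic, their union is $\mathbb{F}^n$, and nothing is contradicted. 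The disjointness hypothesis is genuinely needed, exactly as the paper uses it.
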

\begin{proof}
Seeking a contradiction assume that~$S_1,S_2\subseteq\mathbb{F}^n$ is a partition of~$\mathbb{F}^n$, i.e.~$S_1,S_2$ are nonempty disjoint sets with~$S_1\cup S_2 = \mathbb{F}^n.$ Let~$S_i$ be generic w.r.t.~$\mathbb{V}_i\in\mathcal{V}_n^{\text{prop}}(\mathbb{F}).$ Then~$S_2 = S_1^c\subseteq\mathbb{V}_1$ and~$S_1 = S_2^c\subseteq\mathbb{V}_2.$ This yields~$\mathbb{V}_1\cup\mathbb{V}_2 = \mathbb{F}^n$, which is a contradiction to Lemma~\ref{cor:intersection_union}.
\end{proof}

Another implication from Lemma~\ref{cor:intersection_union} is that the Cartesian product of finitely many generic sets is generic.

\begin{Lemma}
Let~$m\in\mathbb{N}^*$ and~$S'\subseteq\mathbb{F}^n,\widehat{S}\subseteq\mathbb{F}^{m}$ be generic sets. Then the set~$S = S'\times \widehat{S}\subseteq\mathbb{R}^{n+m}$ is generic.
\end{Lemma}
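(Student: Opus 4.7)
The plan is to reduce the statement to the fact that a union of two proper algebraic varieties is again a proper algebraic variety, which we have already proved.

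First I would unpack the hypothesis. By Definition~\ref{def:gen_set}, there exist proper algebraic varieties $\mathbb{V}'\in\mathcal{V}_n^{\text{prop}}(\mathbb{F})$ and $\widehat{\mathbb{V}}\in\mathcal{V}_m^{\text{prop}}(\mathbb{F})$ with $(S')^c\subseteq\mathbb{V}'$ and $\widehat{S}^c\subseteq\widehat{\mathbb{V}}$. The basic set-theoretic identity
\begin{align*}
(S'\times\widehat{S})^c = \big((S')^c\times\mathbb{F}^m\big)\cup\big(\mathbb{F}^n\times\widehat{S}^c\big)
\end{align*}
then shows that $S^c\subseteq(\mathbb{V}'\times\mathbb{F}^m)\cup(\mathbb{F}^n\times\widehat{\mathbb{V}})$, so by Remark~\ref{Rem:InclusionOfGenericSets} it suffices to prove that this union is a proper algebraic variety in $\mathbb{F}^{n+m}$.

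Next I would invoke Lemma~\ref{lem:Product_of_varieties} to conclude that $\mathbb{V}'\times\mathbb{F}^m\in\mathcal{V}_{n+m}^{\text{prop}}(\mathbb{F})$. The set $\mathbb{F}^n\times\widehat{\mathbb{V}}$ is not literally of the form treated in Lemma~\ref{lem:Product_of_varieties}, but an entirely analogous argument works: writing $\widehat{\mathbb{V}}=\bigcap_{j=1}^\ell \widehat{p}_j^{-1}(\{0\})$ with $\widehat{p}_j\in\mathbb{F}[y_1,\dots,y_m]$, the mappings
\begin{align*}
\widehat{P}_j:\mathbb{F}^{n+m}\to\mathbb{F},\quad (x,y)\mapsto \widehat{p}_j(y),
\end{align*}
lie in $\mathbb{F}[x_1,\dots,x_n,y_1,\dots,y_m]$ and satisfy $\bigcap_{j=1}^\ell \widehat{P}_j^{-1}(\{0\}) = \mathbb{F}^n\times\widehat{\mathbb{V}}$, which is proper since $\widehat{\mathbb{V}}\subsetneq\mathbb{F}^m$.

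Finally I would close the argument by applying Lemma~\ref{cor:intersection_union} (in the complex case) or Corollary~\ref{cor:intersection_union1} (in the real case), which together say that $\mathcal{V}_k^{\text{prop}}(\mathbb{F})$ is closed under finite unions. This gives $(\mathbb{V}'\times\mathbb{F}^m)\cup(\mathbb{F}^n\times\widehat{\mathbb{V}})\in\mathcal{V}_{n+m}^{\text{prop}}(\mathbb{F})$, and therefore $S$ is generic w.r.t.\ this variety. No real obstacle is expected; the only minor subtlety is the symmetric variant of Lemma~\ref{lem:Product_of_varieties} for the second factor, which is an immediate repetition of that proof.
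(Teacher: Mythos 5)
Your proof is correct and, after complementing, is essentially the paper's argument: the paper writes $S = (S'\times\mathbb{F}^m)\cap(\mathbb{F}^n\times\widehat{S})$ and invokes Corollary~\ref{lem:product_of_generic_sets} together with Lemma~\ref{cor:intersection_union}, which is precisely the dual of your decomposition $S^c = ((S')^c\times\mathbb{F}^m)\cup(\mathbb{F}^n\times\widehat{S}^c)$ at the level of varieties. Your variant simply unfolds one more layer, replacing the citation of Corollary~\ref{lem:product_of_generic_sets} by Lemma~\ref{lem:Product_of_varieties} and replacing closure of generic sets under $\cap$ by the (equivalent, and established inside the proofs of Corollary~\ref{cor:intersection_union1} and Lemma~\ref{cor:intersection_union}) closure of $\mathcal{V}_{n+m}^{\text{prop}}(\mathbb{F})$ under $\cup$; both versions need the same symmetric cylinder observation for the second factor.
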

\begin{proof}
This follows from Corollary~\ref{lem:product_of_generic_sets}, Lemma~\ref{cor:intersection_union} and~$S = \big(S'\times\mathbb{F}^m\big)\cap\big(\mathbb{F}^n\times\widehat{S}\big).$
\end{proof}

In Proposition~\ref{alg_var_nullset} we have seen that the complement of a real generic set is a Lebesgue nullset. Now we state that generic sets are dense.

\begin{Corollary}\label{cor:some_even_more_boring_corollary}
Any generic set~$S\subseteq\mathbb{F}^n$ is dense.
\end{Corollary}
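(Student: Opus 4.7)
The plan is to reduce density of $S$ to density of $\mathbb{V}^c$ for the associated proper algebraic variety $\mathbb{V}$ with $S^c \subseteq \mathbb{V}$, since $\mathbb{V}^c \subseteq S$. So the whole statement amounts to: the complement of any proper algebraic variety is dense in $\mathbb{F}^n$.

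For $\mathbb{F} = \mathbb{R}$, this is immediate from Lemma~\ref{lem:some_uninteresting_lemma}(i), which states exactly that $\mathbb{V}^c$ is open and dense for any proper real algebraic variety. Alternatively one can cite Proposition~\ref{alg_var_nullset}: $\mathbb{V}$ is a Lebesgue nullset, so it cannot contain any nonempty open ball, hence its complement meets every such ball.

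For $\mathbb{F} = \mathbb{C}$, I would transfer the problem to $\mathbb{R}^{2n}$ via the $\mathbb{R}$-vector space isomorphism $R : \mathbb{C}^n \to \mathbb{R}^{2n}$ from Lemma~\ref{lem:complexvarietieshochziehen}. Given $\mathbb{V}\in\mathcal{V}_n^{\text{prop}}(\mathbb{C})$ with $S^c\subseteq\mathbb{V}$, that lemma says $R\mathbb{V}\in\mathcal{V}_{2n}^{\text{prop}}(\mathbb{R})$, and Proposition~\ref{alg_var_nullset} then gives $\lambda^{2n}(R\mathbb{V}) = 0$. In particular $(R\mathbb{V})^c = R(\mathbb{V}^c)$ is dense in $\mathbb{R}^{2n}$. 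Since $R$ is a homeomorphism when $\mathbb{C}^n$ is identified with $\mathbb{R}^{2n}$ via its Euclidean structure, density transfers back: $\mathbb{V}^c$ is dense in $\mathbb{C}^n$, and therefore so is the larger set $S$.

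There is no real obstacle here; the statement is a short corollary once the machinery of Proposition~\ref{alg_var_nullset} and Lemma~\ref{lem:complexvarietieshochziehen} is in place. The only mild subtlety is that the complex case is not purely topological within $\mathbb{C}^n$ — one needs the real embedding to convert "proper complex variety" into a measure-theoretic nullset statement — but this conversion has already been done in Lemma~\ref{lem:complexvarietieshochziehen}, so the proof collapses to a one-line application of the two earlier results.
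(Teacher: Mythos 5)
Your proof is correct and follows essentially the same route as the paper: reduce to the real case via the $\mathbb{R}$-linear homeomorphism $R$ and Lemma~\ref{lem:complexvarietieshochziehen}, then invoke the density of complements of proper real algebraic varieties (Lemma~\ref{lem:some_uninteresting_lemma}(i), itself resting on Proposition~\ref{alg_var_nullset}). You are slightly more explicit than the paper in spelling out that $R\mathbb{V}$ is a proper real variety, which is a welcome clarification of a step the paper leaves implicit.
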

\begin{proof}
If~$\mathbb{F} = \mathbb{C}$, then we use the mapping~$R$ from Lemma~\ref{lem:complexvarietieshochziehen}, which is bounded and has bounded inverse. Thus any set~$S\subseteq\mathbb{C}^n$ is dense if, and only if,~$RS\subseteq\mathbb{R}^{2n}$ is dense. Hence it suffices to show the statement for the case~$\mathbb{F}=\mathbb{R}.$

Let~$S\subseteq\mathbb{R}^n$ be a generic set. Then there exists some proper algebraic variety~$\mathbb{V}\in\mathcal{V}_n^{\text{prop}}(\mathbb{F})$ with~$\mathbb{V}^c\subseteq S.$ In Lemma~\ref{lem:some_uninteresting_lemma} we have shown that~$\mathbb{V}^c$ is dense and hence~$S$ is also dense.
\end{proof}

A useful property of generic sets is the next Lemma.

\begin{Lemma}\label{lem:generic_simplification}
Let~$S_1\subseteq\mathbb{F}^n$ be generic and~$S_2,S\subseteq\mathbb{F}^n$ with~$S\cap S_1 = S_2\cap S_1$. Then~$S$ is generic if, and only if,~$S_2$ is generic.
\end{Lemma}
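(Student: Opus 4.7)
The statement is symmetric in $S$ and $S_2$, so it suffices to prove one direction, say that if $S$ is generic then $S_2$ is generic (the other direction follows by interchanging $S$ and $S_2$ in the argument). My plan is to exploit the decomposition $S_2^c = (S_2^c \cap S_1) \cup (S_2^c \cap S_1^c)$ and control each piece by a proper algebraic variety, then invoke the fact that $\mathcal{V}_n^{\text{prop}}(\mathbb{F})$ is closed under finite unions.

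First I would fix proper algebraic varieties $\mathbb{V}_1, \mathbb{V} \in \mathcal{V}_n^{\text{prop}}(\mathbb{F})$ with $S_1^c \subseteq \mathbb{V}_1$ and $S^c \subseteq \mathbb{V}$, which exist by Definition~\ref{def:gen_set} applied to the assumed generic sets $S_1$ and $S$. The key observation is that the hypothesis $S \cap S_1 = S_2 \cap S_1$ immediately gives $S^c \cap S_1 = S_2^c \cap S_1$, so
\begin{align*}
S_2^c \cap S_1 = S^c \cap S_1 \subseteq S^c \subseteq \mathbb{V},
\end{align*}
while trivially $S_2^c \cap S_1^c \subseteq S_1^c \subseteq \mathbb{V}_1$. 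Hence
\begin{align*}
S_2^c = (S_2^c \cap S_1) \cup (S_2^c \cap S_1^c) \subseteq \mathbb{V} \cup \mathbb{V}_1.
\end{align*}

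To conclude, I would appeal to Corollary~\ref{cor:intersection_union1} (for $\mathbb{F} = \mathbb{R}$) and Lemma~\ref{cor:intersection_union} (for $\mathbb{F} = \mathbb{C}$), both of which assert that $\mathcal{V}_n^{\text{prop}}(\mathbb{F})$ is closed under finite unions; together with Lemma~\ref{Rem:Zariski}(iii), this gives $\mathbb{V} \cup \mathbb{V}_1 \in \mathcal{V}_n^{\text{prop}}(\mathbb{F})$. Thus $S_2$ is generic w.r.t. $\mathbb{V} \cup \mathbb{V}_1$, finishing the direction. The reverse direction is verbatim the same argument with $S$ and $S_2$ interchanged. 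I do not expect any step here to be a real obstacle; the only subtlety is remembering that closedness of $\mathcal{V}_n^{\text{prop}}(\mathbb{F})$ under $\cup$ is nontrivial in the complex case (Lemma~\ref{cor:intersection_union}) and must not be replaced by the weaker Lemma~\ref{Rem:Zariski}(iii), which only yields membership in $\mathcal{V}_n(\mathbb{F})$ without properness.
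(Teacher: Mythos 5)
Your proof is correct and rests on the same key ingredient as the paper's, namely closedness of $\mathcal{V}_n^{\text{prop}}(\mathbb{F})$ under finite unions (Corollary~\ref{cor:intersection_union1} / Lemma~\ref{cor:intersection_union}). The paper packages this more compactly: it observes that $S\cap S_1 = S_2\cap S_1$ is generic as an intersection of two generic sets, and then invokes Remark~\ref{Rem:InclusionOfGenericSets} since $S_2\cap S_1\subseteq S_2$; your argument is the same reasoning unfolded at the level of covering varieties.
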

\begin{proof}
Since the statement is symmetric, only one direction has to be shown. Suppose~$S$ is generic. Then Lemma~\ref{cor:intersection_union} yields that~$S\cap S_1 = S_2\cap S_1$ is generic. Since~$S_2\cap S_1\subseteq S_2$, genericity of~$S_2$ follows from Remark~\ref{Rem:InclusionOfGenericSets}.
\end{proof}

\newpage

\section{Rank properties}

In the remainder of this section and any further section, let 
\[
\ell,n,m,p,q,n_{1,1},\ldots,n_{p,q},m_{1,1},\ldots,m_{p,q}\in\mathbb{N}^*\quad\text{and}\quad d,g,g_{1,1},\ldots,g_{p,q}\in\mathbb{N},
\]
if not said otherwise

It is well-known that various controllability concepts for linear ordinary differential equations (ODEs) and differential-algebraic equations (DAEs) can be characterized by rank properties of certain matrices. Thus if we show genericity of those rank properties, we have genericity of the controllability concepts. These rank properties are the properties of block matrices. Hence we want to study block matrices of the form
\begin{align*}
M = \begin{bmatrix}M_{1,1} & \cdots & M_{1,q}\\\vdots & \ddots & \vdots\\ M_{p,1} & \cdots & M_{p,q}\end{bmatrix}
\end{align*}
with
\begin{align*}
\forall\,i\in\underline{p}~\forall\,j\in\underline{q}: M_{i,j}\in\mathbb{F}^{n_{i,j}\times m_{i,j}}.
\end{align*}
We need some conditions on $m_{i,j}$ and $n_{i,j}$ to guarantee that $M\in\mathbb{F}^{n\times m}.$ At first we need to guarantee that the block matrices of the rows,~$M_{i,1},\ldots,M_{i,q}, i\in\underline{p}$, have a consistent format of rows. This will be done by
\begin{align*}
\forall\, i\in\underline{p}~\forall\, j,k\in\underline{q}: \big(n_{i,j} = n_{i,k}\big)\,\wedge\,\left(\sum_{\alpha=1}^pn_{\alpha,k} = n\right).\tag{3.0a}
\end{align*}
Next we must guarantee that the block matrices of the columns,~$M_{1,j},\ldots,M_{q,j}, j\in\underline{q}$ have a consistent format of columns, which we do with
\begin{align*}
\forall\, i,j\in\underline{p}~\forall\, k\in\underline{q}: \big(m_{i,k} = m_{j,k}\big)\,\wedge\,\left(\sum_{\beta=1}^qm_{j,\beta} = m\right),\tag{3.0b}
\end{align*}
Finally, both conditions guarantee that~$M\in\mathbb{F}^{n\times m}.$
%

\begin{Remark}\label{Rem:IsomorphismBlockMatrix}
The first step to talk about genericity of sets of matrices is to identify any matrix space of the form~$
\left(\mathbb{F}^{n_{1,1}\times m_{1,1}}\right)^{g_{1,1}}\times\cdots\times\left(\mathbb{F}^{n_{p,q}\times m_{p,q}}\right)^{g_{p,q}}
$
with~$\mathbb{F}^G$, where~$G$ is given in (3.0c). We define the operator
\begin{align*}
& T: \left(\mathbb{F}^{n_{1,1}\times m_{1,1}}\right)^{g_{1,1}}\times\cdots\times\left(\mathbb{F}^{n_{p,q}\times m_{p,q}}\right)^{g_{p,q}}\to\mathbb{F}^G\\
& \Big(\big(P_0^{1,1},\ldots,P^{1,1}_{g_{1,1}}\big),\ldots,\big(P_0^{p,q},\ldots,P^{p,q}_{g_{p,q}}\big)\Big)\mapsto \Big(\big(P_0^{1,1}\big)_{1,1},\ldots, \big(P_0^{1,1}\big)_{n_{1,1},m_{1,1}},\ldots,\big(P_{g_{p,q}}^{p,q}\big)_{n_{p,q},m_{p,q}}\Big).
\end{align*}
Clearly~$T$ is a well-defined isomorphism. Any polynomial~$p\in\mathbb{F}[x_1,\ldots,x_G]$ can be interpreted as a mapping from~$\left(\mathbb{F}^{n_{1,1}\times m_{1,1}}\right)^{g_{1,1}}\times\cdots\times\left(\mathbb{F}^{n_{p,q}\times m_{p,q}}\right)^{g_{p,q}}$ to~$\mathbb{F}$, if we consider~$p\circ T.$ Hereafter we will write~$p$ instead of~$p\circ T$ for shortness.
\end{Remark}

An important part of the study of the rank of some matrix is the concept of a minor.

\begin{Definition}[Submatrix, minor]~\label{def:submatrix}
Let~$s,t\in\mathbb{N}^*$ fulfill the inequalities
\[
s\leq n,~t\leq m\qquad\mathrm{and}\qquad d\leq \min\set{s,t},
\]
$\sigma: \underline {s}\to\underline n$ and~$\pi: \underline {t}\to\underline m$ be injective. Then we define the \textit{submatrix induced by~$\sigma$ and~$\pi$} as
\[
   m_{\sigma,\pi}:\mathbb{F}^{n\times m}\to \mathbb{F}^{s\times t},\quad A\mapsto \left[A_{\sigma(i),\pi(j)}\right]_{i\in\underline{n},j\in\underline{m}}
\]
If~$d\geq 1$,~$\sigma$ and~$\pi$ are monotonically increasing\footnote{A minor could also be defined using both monotonically increasing and decreasing mappings, in which case any minor would appear up to four times since determinants do not change, if the rows and columns are interchanged. The use of monotonically increasing mappings is an agreement to simplify matters.} and~$s = t = d$, then the mapping
\[
M_{\sigma,\pi}: \mathbb{F}^{n\times m}\to \mathbb{F},\quad A\mapsto \det m_{\sigma,\pi}(A)
\]
is called \textit{minor of degree~$d$} (w.r.t.~$\mathbb{F}^{n\times m}$). A \textit{minoe of degree~$0$} (w.r.t.~$\mathbb{F}^{n\times m}$) is the constant mapping
\begin{align*}
M_0: \mathbb{F}^{n\times m}\to\mathbb{F},\quad A\mapsto 1.
\end{align*}
\end{Definition}

We collect the following facts.

\begin{Remark}\label{rem:submatrix_rank}
\begin{enumerate}[(i)]
\item If~$\sigma$ and~$\pi$ in Definition~\ref{def:submatrix} are injective, then 
\begin{align*}
\forall\, A\in{\mathbb{F}}^{n\times m}:\rk m_{\sigma,\pi}(A)\leq\rk A.
\end{align*}
\item Let~$M_{\sigma,\pi}$ be a minor. The Leibniz formula implies
\begin{align*}
\forall\, A\in \mathbb{F}^{n\times m}: M_{\sigma,\pi}(A) = \sum_{\tau\in S_d}\sign\,\tau\prod_{i = 1}^dA_{\sigma(i),\pi(\tau(i))},
\end{align*}
and hence~$M_{\sigma,\pi}$ is a polynomial in the entries of the matrix and we can write
\[
M_{\sigma,\pi}\in \mathbb{F}[x_1,\ldots,x_{nm}].
\]
\end{enumerate}
\end{Remark}

We investigate how some matrices and some transformations interact in the sense of the submatrix.

\begin{Lemma}\label{lem:TransformingMinors}
Let~$\pi:\underline{t}\to\underline{m}$ for~$1\leq t\leq m$ be injective and~$T\in \mathbb{F}^{n\times n}$ be an arbitrary matrix. Then
\begin{align*}
\forall\, M\in {\mathbb{F}}^{n\times m}: m_{\id,\pi}(TM) = Tm_{\id,\pi}(M).
\end{align*}
\end{Lemma}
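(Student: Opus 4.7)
The claim is essentially that left multiplication by $T$ acts column-by-column, and so it commutes with the operation of selecting a subset of columns (which is exactly what $m_{\id,\pi}$ does when the row index is untouched).

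My plan is to verify the identity entrywise. Fix $i\in\underline n$ and $j\in\underline t$. Unfolding the left-hand side according to Definition~\ref{def:submatrix},
\[
\bigl(m_{\id,\pi}(TM)\bigr)_{i,j} \;=\; (TM)_{i,\pi(j)} \;=\; \sum_{k=1}^n T_{i,k}\,M_{k,\pi(j)}.
\]
For the right-hand side, I use the definition of matrix multiplication followed by the defining formula of $m_{\id,\pi}$:
\[
\bigl(T\,m_{\id,\pi}(M)\bigr)_{i,j} \;=\; \sum_{k=1}^n T_{i,k}\,\bigl(m_{\id,\pi}(M)\bigr)_{k,j} \;=\; \sum_{k=1}^n T_{i,k}\,M_{k,\pi(j)}.
\]
Since $i$ and $j$ were arbitrary, the two matrices agree.

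There is no real obstacle; the key observation is simply that $m_{\id,\pi}(M)$ is the matrix whose $j$-th column equals the $\pi(j)$-th column of $M$, and multiplication by $T$ on the left is carried out column-wise. Note that injectivity of $\pi$ is not even needed for this argument — it is required only for $m_{\id,\pi}$ to match the definition of a submatrix, but the entrywise identity above holds for any map $\pi:\underline t\to\underline m$.
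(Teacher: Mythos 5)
Your proof is correct and rests on the same key observation as the paper's one-line argument, namely that left multiplication by $T$ acts column-by-column so that selecting columns via $\pi$ commutes with multiplication by $T$; you simply carry out the verification entrywise rather than stating it at the level of columns. Your remark that injectivity of $\pi$ is not needed for the identity itself is a fair observation, though the paper imposes it only so that $m_{\id,\pi}$ fits Definition~\ref{def:submatrix}.
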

\begin{proof}
Let~$M\in{\mathbb{F}}^{n\times m}$ be arbitrary. Then we get
\begin{align*}
\forall\, i\in\underline{m}: (TM)_{\cdot,i} = T M_{\cdot,i},
\end{align*}
which proves the lemma.
\end{proof}

The next proposition shows that the set of matrices with full rank is generic. This implies especially that a random quadratic matrix is almost surely invertible. For~$m = n = 1$ this is clear since~$\set{0}$ is a Lebesgue nullset, but for higher dimensions it is not that evident.

\begin{Proposition}[Generic full rank property for block matrices]\label{prop:full_rank_property_block}
Let
\begin{align*}
\mathfrak{F} := \mathbb{F}^{n_{1,1}\times m_{1,1}}\times\cdots\times\mathbb{F}^{n_{p,q}\times m_{p,q}}.
\end{align*}
The set
\begin{align*}
S^c = \set{ \big(A_{1,1},\ldots,A_{p,q}\big)\in \mathfrak{F}\,\left\vert\,\rk\underbrace{\begin{bmatrix}A_{1,1} & \cdots & A_{1,q}\\\vdots & \ddots & \vdots\\ A_{p,1} & \cdots & A_{p,q}\end{bmatrix}}_{\in \mathbb{F}^{n\times m}}<d \right.}
\end{align*}
is a proper algebraic variety if, and only if,~$d\leq\min\set{n,m}.$ Hence~$S$ is a generic set if, and only if,~$d\leq\min\set{n,m}.$
\end{Proposition}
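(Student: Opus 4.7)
The plan is to reduce the whole statement to the standard characterisation of rank by minors. First I would use the isomorphism $T$ from Remark~\ref{Rem:IsomorphismBlockMatrix} (with all $g_{i,j}=1$) to identify $\mathfrak F$ with $\mathbb F^{nm}$, so that every entry $M_{k,\ell}$ of the assembled block matrix $M$ becomes one of the coordinate functions on $\mathfrak F$. This is what allows us to speak of polynomials in the entries as polynomials on $\mathfrak F$ and hence of algebraic varieties in $\mathfrak F$.

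The case $d=0$ is trivial because $\rk M\geq 0$ always, so $S^c=\emptyset$, a proper algebraic variety. For $d\geq 1$ I would invoke the classical fact that $\rk M<d$ holds if and only if every minor of $M$ of degree~$d$ vanishes. By Remark~\ref{rem:submatrix_rank}\,(ii), each such minor is a polynomial $M_{\sigma,\pi}\in\mathbb F[x_1,\ldots,x_{nm}]$ in the entries of $M$. Consequently
\begin{align*}
S^c=\bigcap_{\sigma,\pi}M_{\sigma,\pi}^{-1}(\set{0}),
\end{align*}
where the intersection ranges over all monotonically increasing injections $\sigma\colon\underline d\to\underline n$ and $\pi\colon\underline d\to\underline m$, is an algebraic variety by Lemma~\ref{Rem:Zariski}\,(ii).

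For the equivalence concerning properness I would argue by cases. If $d>\min\set{n,m}$, then every $M\in\mathbb F^{n\times m}$ satisfies $\rk M\leq\min\set{n,m}<d$, so $S^c=\mathfrak F$ is not proper. Conversely, if $1\leq d\leq\min\set{n,m}$, I would exhibit a single tuple in $S$ to conclude $S^c\subsetneq\mathfrak F$: let $\widetilde M\in\mathbb F^{n\times m}$ be the matrix with ones on its first $d$ diagonal entries and zeros elsewhere, and take each block $A_{i,j}$ to be the corresponding submatrix of $\widetilde M$ under the partitions of $\underline n$ and $\underline m$ prescribed by $(n_{i,j})$ and $(m_{i,j})$. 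Since $\rk\widetilde M=d$, this tuple lies in $S$. The final genericity statement then follows immediately from Definition~\ref{def:gen_set}.

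The only mildly delicate step will be the block-realisation at the end: one has to confirm that $\widetilde M$ really does admit a decomposition into blocks of the shapes prescribed by (3.0a) and (3.0b). This is purely combinatorial bookkeeping, however, because those conditions simply encode fixed partitions of the index sets $\underline n$ and $\underline m$, from which each $A_{i,j}$ is recovered by restriction of $\widetilde M$. Hence the main intellectual content sits entirely in the minor-characterisation of rank together with Lemma~\ref{Rem:Zariski}\,(ii).
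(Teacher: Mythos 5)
Your proposal is correct and follows essentially the same route as the paper: express $S^c$ as the common zero locus of all degree-$d$ minors (which are polynomials in the entries by the Leibniz formula), observe that $S^c=\mathfrak F$ when $d>\min\{n,m\}$, and for $d\leq\min\{n,m\}$ exhibit an explicit block-partitioned matrix of rank $\geq d$ to show properness. Your separate treatment of $d=0$ is actually a small improvement over the paper, whose opening remark that $S^c\neq\emptyset$ is false in that trivial case (though harmless, since $\emptyset$ is still a proper variety).
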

\begin{proof}
It is evident that~$S^c\neq\emptyset$, since~$\big(0_{m_{1,1}\times m_{1,1}},\ldots,0_{m_{p,q}\times m_{p,q}}\big)\in S^c.$ Hence it suffices to investigate whether~$S^c$ is a proper algebraic variety.
\begin{enumerate}
\item[$\implies$] If~$d>\min\set{n,m}$ then~$S^c = \mathfrak{F}$,
which is an algebraic variety but not proper.
\item[$\impliedby$] Let~$\widetilde{M}_1(\cdot),\ldots,\widetilde{M}_r(\cdot)$ be all minors of order~$d$ w.r.t.~$\mathbb{F}^{n\times m}$ and define
\begin{align*}
\forall\, i\in\underline{r}: M_i: \mathfrak{F}\to\mathbb{F},\quad A\mapsto \widetilde{M}_i\begin{bmatrix}A_{1,1} & \cdots & A_{1,q}\\\vdots & \ddots & \vdots\\ A_{p,1} & \cdots & A_{p,q}\end{bmatrix}.
\end{align*}
Then
\begin{align*}
S^c = \bigcap_{i=1}^r \left(M_i\right)^{-1}(\set{0})
\end{align*}{}
is a algebraic variety by Lemma~\ref{Rem:Zariski}\,(ii). It remains to prove that $S^c$ is proper. Since $d\leq\min\set{n,m}$, there exists some $M\in\mathbb{F}^{n\times m}$ with $\rk M = \min\set{n,m}\geq d$. Define, for any $(i,j)\in\underline{p}\times\underline{q}$ the mappings
\begin{align*}
\sigma_{i,j}: \underline{n_{i,j}}\to\underline{n},\quad k\mapsto \sum_{\ell = 1}^{i-1} n_{i,1}+k\\
\pi_{i,j}: \underline{m_{i,j}}\to\underline{m},\quad k\mapsto \sum_{\ell = 1}^{j-1} m_{1,j}+k
\end{align*}
and with the mapping $m_{\sigma,\pi}$ from Definition~\ref{def:submatrix} the matrices
\begin{align*}
A_{i,j} := m_{\sigma_{i,j},\pi_{i,j}}(M),\quad i\in\underline{p},j\in\underline{q}.
\end{align*}
Then we have $(A_{1,1},\ldots,A_{p,q})\in\mathfrak{F}$ and 
\begin{align*}
\rk\begin{bmatrix}A_{1,1} & \cdots & A_{1,q}\\\vdots & \ddots & \vdots\\ A_{p,1} & \cdots & A_{p,q}\end{bmatrix} = \rk M\geq d.
\end{align*}
Hence $(A_{1,1},\ldots,A_{p,q})\in S$ and $S^c$ is a proper algebraic variety. This completes the proof of the proposition.
\end{enumerate}
\end{proof}

For the remainder of the section we turn our attention to polynomial block matrices.

\begin{Remark}[Polynomial matrices]\label{Rem:IsomorphismPolBlockMatrix}
The linear operator
\begin{align*}
\widehat{T}_g:\left(\mathbb{F}^{n\times m}\right)^{g+1}\to\mathbb{F}[x]^{n\times m},\quad (P_0,\ldots P_g)\mapsto \sum_{i = 0}^g P_ix^i
\end{align*}
satisfies
\begin{align*}
\im\widehat{T}_g = \set{P(x)\in\mathbb{F}[x]^{n\times m}\Bigg\vert \max_{(i,j)\in\underline{n}\times\underline{m}}\deg\,\big(P(x)\big)_{i,j}\leq g}.
\end{align*}
Hereafter we will write~$P(x)$ instead of~$\widehat{T}_g(P_0,\ldots,P_g).$ Define the linear operator
\begin{align}\label{eq:HutOperator}
\widehat{\cdot}: &~\left(\mathbb{F}^{n_{1,1}\times m_{1,1}}\right)^{g_{1,1}+1}\times\cdots\times\left(\mathbb{F}^{n_{p,q}\times m_{p,q}}\right)^{g_{p,q}+1} \to\mathbb{F}[x]^{n\times m},\\
& P = \Big(\big(P_0^{1,1},\ldots,P^{1,1}_{g_{1,1}}\big),\ldots,\big(P_0^{p,q},\ldots,P^{p,q}_{g_{p,q}}\big)\Big)\mapsto \widehat{P} := \begin{bmatrix}P^{1,1}(x) & \cdots & P^{1,q}(x)\\\vdots & \ddots & \vdots\\ P^{p,1}(x) & \cdots & P^{p,q}(x)\end{bmatrix}.
\end{align}
As already discussed, the conditions (3.0a) and (3.0b) gurantee that the operators~$\widehat{T}_g$ and $\widehat{\cdot}$ are well-defined.
\end{Remark}

\begin{Remark}
The Remarks~\ref{Rem:IsomorphismPolBlockMatrix} and~\ref{Rem:IsomorphismBlockMatrix} imply the isomorphy
\begin{align*}
\left(\mathbb{F}^{n_{1,1}\times m_{1,1}}\right)^{g_{1,1}+1}\times\cdots\times\left(\mathbb{F}^{n_{p,q}\times m_{p,q}}\right)^{g_{p,q}+1} & \simeq\mathbb{F}^G
\end{align*}
and
\begin{align*}
 \mathbb{F}^G\simeq\set{\left.\begin{bmatrix}P^{1,1}(x) & \cdots & P^{1,q}(x)\\\vdots & \ddots & \vdots\\ P^{p,1}(x) & \cdots & P^{p,q}(x)\end{bmatrix}\,\right\vert\,\forall\, (i,j)\in\underline{p}\times\underline{q}:\max_{k\in n_{i,j},\ell\in m_{i,j}}\deg \big(P^{i,j}(x)\big)_{k,\ell}\leq g_{i,j}}.
\end{align*}
\end{Remark}

Throughout the following observations we will make use of the notation $\widehat{P}(x)$ as introduced in Remark~\ref{Rem:IsomorphismPolBlockMatrix}.

It is well-known that the coefficients of the determinant of a polynomial matrix~$P(x)\in\mathbb{F}[x]^{n\times n}$ are polynomials in the coefficients of the entries of~$P.$ This implies that the coefficients of any minor of a polynomial matrix are polynomials in their entries. 

\begin{Lemma}\label{lem:minors_are_polynomials}
Let
\begin{align*}
\mathfrak{F} := \left(\mathbb{F}^{n_{1,1}\times m_{1,1}}\right)^{g_{1,1}+1}\times\cdots\times\left(\mathbb{F}^{n_{p,q}\times m_{p,q}}\right)^{g_{p,q}+1},
\end{align*}
$\sigma:\underline{d}\to\underline{n}$ and~$\pi:\underline{d}\to\underline{m}$ be injective mappings and
\begin{align*}
M: &~\mathfrak{F} \to\mathbb{F}[x],\quad P \mapsto M_{\sigma,\pi}\widehat{P}(x).
\end{align*}
Then
\begin{align*}
\exists\,h\in\mathbb{N}~\exists\,m_0,\ldots, m_h\in\mathbb{F}\left[x_1,\ldots,x_G\right]~\forall\,P\in\mathfrak{F}: M(P) = \sum_{i = 0}^hm_i(P)x^i.
\end{align*}
\end{Lemma}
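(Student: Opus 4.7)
The plan is to directly expand $M_{\sigma,\pi}(\widehat{P}(x))$ via the Leibniz formula from Remark~\ref{rem:submatrix_rank}\,(ii) and to read off the coefficients. The whole statement is really a bookkeeping exercise: polynomiality in $x$ comes from the fact that the entries of $\widehat{P}(x)$ are polynomials of bounded degree, and polynomiality of the coefficients in $P$ comes from the fact that those entries depend linearly on the coordinates of $P$.

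First I would observe that, by construction of the operator $\widehat{\cdot}$ in~\eqref{eq:HutOperator}, each entry $\bigl(\widehat{P}(x)\bigr)_{k,\ell}$ lies inside exactly one block $P^{i,j}(x) = \sum_{s=0}^{g_{i,j}}P_s^{i,j}\,x^s$, hence is a univariate polynomial in $x$ of degree at most $g:=\max_{i,j}g_{i,j}$ whose $s$-th coefficient is a single entry of $P_s^{i,j}$, and therefore, under the isomorphism of Remark~\ref{Rem:IsomorphismBlockMatrix}, a single coordinate projection of $P\in\mathfrak F$. Consequently, the entries of the $d\times d$ submatrix $m_{\sigma,\pi}(\widehat{P}(x))$ may be written as
\[
a_{k,\ell}(x) \;=\; \sum_{s=0}^{g} c_{k,\ell,s}(P)\,x^{s}, \qquad k,\ell\in\underline{d},
\]
where each $c_{k,\ell,s}:\mathfrak F\to\mathbb F$ is a coordinate projection and hence a polynomial (of degree at most one) in $\mathbb F[x_1,\ldots,x_G]$.

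Next I would apply the Leibniz formula of Remark~\ref{rem:submatrix_rank}\,(ii) to the submatrix $m_{\sigma,\pi}(\widehat{P}(x))$, getting
\[
M(P) \;=\; M_{\sigma,\pi}(\widehat{P}(x)) \;=\; \sum_{\tau\in S_d}\sign\tau\,\prod_{k=1}^{d}a_{k,\tau(k)}(x).
\]
Substituting the expansions of the $a_{k,\tau(k)}(x)$ and expanding the product via distributivity yields
\[
M(P) \;=\; \sum_{\tau\in S_d}\sign\tau \sum_{s_1,\ldots,s_d=0}^{g}\left(\prod_{k=1}^{d}c_{k,\tau(k),s_k}(P)\right)x^{s_1+\cdots+s_d}.
\]
Setting $h:=dg$ and grouping terms of equal degree in $x$, the coefficient of $x^i$ for $i\in\{0,\ldots,h\}$ is
\[
m_i(P) \;:=\; \sum_{\tau\in S_d}\sign\tau\sum_{\substack{s_1+\cdots+s_d=i\\ 0\le s_k\le g}}\,\prod_{k=1}^{d}c_{k,\tau(k),s_k}(P),
\]
which is a finite $\mathbb F$-linear combination of finite products of the coordinate projections $c_{k,\ell,s}$, hence an element of $\mathbb F[x_1,\ldots,x_G]$. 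This gives $M(P)=\sum_{i=0}^{h}m_i(P)\,x^i$, as claimed.

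The only genuine obstacle is notational: one must keep straight the correspondence between the coordinates of $P\in\mathfrak F$ under the isomorphism of Remark~\ref{Rem:IsomorphismBlockMatrix} and the coefficients $c_{k,\ell,s}(P)$ of the entry polynomials $a_{k,\ell}(x)$ inside the various blocks $P^{i,j}(x)$. No idea beyond the Leibniz formula and distributivity of $\mathbb F[x]$ is needed.
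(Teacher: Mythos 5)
Your proof is correct and follows essentially the same route as the paper: expand $M_{\sigma,\pi}(\widehat{P}(x))$ via the Leibniz formula, observe that each entry of $\widehat{P}(x)$ is a polynomial in $x$ of bounded degree with coordinate-projection coefficients, and collect powers of $x$. The only inessential difference is the choice of degree bound $h$ (you use $dg$ with $g=\max g_{i,j}$, the paper uses $\sum_{i,j}\min\{n_{i,j},m_{i,j}\}g_{i,j}$), both of which are valid.
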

\begin{proof}
Let~$P=\Big(\big(P_0^{1,1},\ldots,P^{1,1}_{g_{1,1}}\big),\ldots,\big(P_0^{p,q},\ldots,P^{p,q}_{g_{p,q}}\big)\Big)\in\mathfrak{F}$ be arbitrary.
By the Leibniz formula we find
\begin{align*}
M(P) = \sum_{\tau\in S_d}\sign(\tau)\prod_{i = 1}^d\widehat{P}(x)_{\sigma(i),\pi(\tau(i))}.
\end{align*}
Then
\begin{align*}
\forall\, (i,j)\in\underline{n}\times\underline{m}: \deg\widehat{P}(x)_{i,j}\leq h' := \max_{(i,j)\in\underline{p}\times\underline{q}}g_{i,j}
\end{align*}
and
\begin{align*}
\deg M(P)\leq h := \sum_{(i,j)\in\underline{p}\times\underline{q}}\min\set{n_{i,j},m_{i,j}}g_{i,j}.
\end{align*}
Let~$\tau\in S_d$ and~$i\in\underline{d}$ be arbitrary. By definition of~$\widehat{P}(x)$ we find
\begin{align*}
\exists\,m_0^{i,\tau,\sigma,\pi},\ldots,m_h^{i,\tau,\sigma,\pi}\in\mathbb{F}[x_1,\ldots,x_G]~\forall\, P\in\mathfrak{F}: \widehat{P}(x)_{\sigma(i),\pi(\tau(i))} = \sum_{j = 0}^h m_j^{i,\tau,\sigma,\pi}(P)x^i.
\end{align*}
By multiplying we get
\begin{align*}
\exists\, \widehat{m}_0^{\tau,\sigma,\pi},\ldots,\widehat{m}_h^{\tau,\sigma,\pi}\in\mathbb{F}[x_1,\ldots,x_G]~\forall\,P\in\mathfrak{F}:\prod_{i = 1}^d\widehat{P}(x)_{\sigma(i),\pi(\tau(i))} = \sum_{j = 0}^h \widehat{m}_j^{\tau,\sigma,\pi}(P)x^j
\end{align*}
and thus the statement is proven.
\end{proof}

With this we show that Proposition~\ref{prop:full_rank_property_block} can be generalized to polynomial block matrices.

\begin{Proposition}[Generic full rank property for polynomial block matrices]\label{Prop:gen_full_rk_pol_block}
\textcolor{white}{Leer}
\newline Let
\begin{align*}
\mathfrak{F} := \left(\mathbb{F}^{n_{1,1}\times m_{1,1}}\right)^{g_{1,1}+1}\times\cdots\times\left(\mathbb{F}^{n_{p,q}\times m_{p,q}}\right)^{g_{p,q}+1}.
\end{align*}
The set
\begin{align*}
S^c = \set{P\in \mathfrak{F}\,\left|\,\rk_{\mathbb{F}(x)}\widehat{P}(x)<d\right.}
\end{align*}
is a proper algebraic variety if, and only if,~$d\leq\min\set{n,m}.$ Hence~$S$ is generic w.r.t.~$\mathbb{V} = S^c$ if, and only if,~$d\leq\min\set{n,m}.$
\end{Proposition}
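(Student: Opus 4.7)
The plan is to follow the same two-step strategy as in Proposition~\ref{prop:full_rank_property_block}, namely first handling the trivial direction and then exhibiting $S^c$ as a finite intersection of polynomial zero-loci via the minors, while using Lemma~\ref{lem:minors_are_polynomials} to translate the polynomial identity ``$M_{\sigma,\pi}\widehat P(x)\equiv 0$ in $\mathbb{F}[x]$'' into finitely many algebraic conditions on the coefficient data $P\in\mathfrak{F}$.

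First I would dispose of the direction ``$\Rightarrow$'': if $d>\min\{n,m\}$ then no $n\times m$ matrix over any extension field has rank $\geq d$, so in particular $\rk_{\mathbb{F}(x)}\widehat P(x)<d$ for every $P\in\mathfrak{F}$, whence $S^c=\mathfrak{F}$, which is an algebraic variety but not proper.

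For the direction ``$\Leftarrow$'' I would argue that $\rk_{\mathbb{F}(x)}\widehat P(x)<d$ if and only if every $d$-th order minor of $\widehat P(x)$ is the zero polynomial in $\mathbb{F}[x]$. Enumerating all pairs $(\sigma,\pi)$ of strictly increasing injections $\underline d\to\underline n$ and $\underline d\to\underline m$, Lemma~\ref{lem:minors_are_polynomials} provides, for each such minor $M^{\sigma,\pi}(P)=M_{\sigma,\pi}\widehat P(x)$, polynomials $m_0^{\sigma,\pi},\ldots,m_h^{\sigma,\pi}\in\mathbb{F}[x_1,\ldots,x_G]$ so that $M^{\sigma,\pi}(P)=\sum_{i=0}^h m_i^{\sigma,\pi}(P)\,x^i$. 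Since a polynomial in $\mathbb{F}[x]$ vanishes iff all its coefficients vanish, one obtains
\begin{align*}
S^c=\bigcap_{\sigma,\pi}\bigcap_{i=0}^h\bigl(m_i^{\sigma,\pi}\bigr)^{-1}(\{0\}),
\end{align*}
which is an algebraic variety by Lemma~\ref{Rem:Zariski}(ii).

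It remains to show that $S^c$ is proper, i.e.\ $S\neq\emptyset$, under the hypothesis $d\leq\min\{n,m\}$. The natural plan is to reduce to the constant case: set $P_k^{i,j}:=0$ for all $k\geq 1$ and choose $P_0^{i,j}\in\mathbb{F}^{n_{i,j}\times m_{i,j}}$ so that the resulting constant block matrix in $\mathbb{F}^{n\times m}$ has rank at least $d$. Such a choice exists by Proposition~\ref{prop:full_rank_property_block}, since $d\leq\min\{n,m\}$. For this $P$, $\widehat P(x)$ is a constant matrix with $\mathbb{F}(x)$-rank equal to its $\mathbb{F}$-rank, which is $\geq d$, so $P\in S$. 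Finally, the ``$S$ is generic w.r.t.\ $\mathbb{V}=S^c$'' assertion is then immediate from Definition~\ref{def:gen_set}.

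I do not expect a serious obstacle here; the main technical content has been outsourced to Lemma~\ref{lem:minors_are_polynomials} and to Proposition~\ref{prop:full_rank_property_block}. The one point that must not be glossed over is the equivalence between ``rank drop over $\mathbb{F}(x)$'' and ``simultaneous vanishing of the coefficients of all $d\times d$ minors'', which uses that vanishing of an element of $\mathbb{F}[x]$ is vanishing of each coefficient separately rather than mere pointwise vanishing on $\mathbb{F}$.
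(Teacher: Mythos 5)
Your proposal is correct and follows essentially the same route as the paper: both directions express $S^c$ as the common zero locus of the coefficient polynomials $m_i^{\sigma,\pi}$ supplied by Lemma~\ref{lem:minors_are_polynomials}, and both establish properness by reducing to the constant case via Proposition~\ref{prop:full_rank_property_block}. The only cosmetic difference is that you exhibit a direct witness $P\in S$ with vanishing higher-order blocks, whereas the paper observes that the constant coefficient of each $h_i$ is the corresponding minor of the constant block matrix and hence not identically zero, so that each $h_i^{-1}(\{0\})$ is already a proper variety; the underlying idea is the same.
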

\begin{proof}
\begin{enumerate}
\item[$\implies$] If~$S^c$ is a proper algebraic variety, then~$S\neq\emptyset$ or, equivalently, there exists some~$\Big(\big(P_0^{1,1},\ldots,P^{1,1}_{g_{1,1}}\big),\ldots,\big(P_0^{p,q},\ldots,P^{p,q}_{g_{p,q}}\big)\Big)\in\mathfrak{F}$ such that
\begin{align*}
d\leq \rk_{\mathbb{F}(x)}\widehat{P}(x)\leq \min\set{n,m}.
\end{align*}
\item[$\impliedby$] Let~$\widetilde{M}_1,\ldots,\widetilde{M}_r$ be all minors of order~$d$ w.r.t.~$\mathbb{F}[x]^{n\times m}$ and put
\begin{align*}
\forall\, i\in\underline{r}: M_i: \mathfrak{F}\to\mathbb{F}[x],\quad P\mapsto \widetilde{M}_i\widehat{P}(x).
\end{align*}
Lemma~\ref{lem:minors_are_polynomials} yields
\begin{align*}
\exists\,h\in\mathbb{N}~\forall\,P\in \mathfrak{F}~\forall\,i\in\underline{r}: \deg M_i(P)\leq h.
\end{align*}
Introducing the mapping
\begin{align*}
\varphi: \set{p\in\mathbb{F}[x]: \deg p\leq h}\to\mathbb{F}^{h+1},\quad p = \sum_{j=0}^{h}p_jx^j\mapsto (p_0,\ldots,p_{h}),
\end{align*}
the functions
\begin{align*}
h_i := \varphi\circ M_i: \mathfrak{F}\to\mathbb{F}^{h+1},\quad i\in\underline{r}
\end{align*}
are well defined polynomial vectors. Since by the Leibniz formula we find
\begin{align*}
\forall\,i\in\underline{r}~\forall\,P\in R: \big(M_i(P)\big)_0 = \det\widetilde{M}_i\begin{bmatrix}P^{1,1}_0 & \cdots & P^{1,q}_0\\\vdots & \ddots & \vdots\\ P^{p,1}_0 & \cdots & P^{p,q}_0\end{bmatrix},
\end{align*}
Proposition~\ref{prop:full_rank_property_block} yields that the entries of each~$h_i$,~$i\in\underline{r}$, are not all identical zero. Hence for any~$i\in\underline{r}$ the preimage of zero under~$h_i$ is a proper algebraic variety by Corollary~\ref{Cor:some_more_varieties} and Proposition~\ref{prop:full_rank_property_block} and thus
\begin{align*}
S = \bigcap_{i=1}^rh_i^{-1}(\set{0})
\end{align*}
is a proper algebraic variety by Lemma~\ref{Rem:Zariski}\,(ii).
\end{enumerate}
\end{proof}

We investigate the properties of the set of polynomial matrices which have full rank on the whole complex plane. For this we need a tool to characterize when polynomials are coprime. This tool is the resultant.

\begin{Definition}[Resultant,~{\cite[p.\,61]{Algebra}}]\label{Def:Resultant}
The \textit{resultant} of two polynomials~$p(x),q(x)\in\mathbb{F}[x]\setminus\set{0_{\mathbb{F}[x]}}$ with~$\deg p = n\geq 0$ and~$\deg q = m\geq 0$ is defined as
\begin{align*}
\Res(p,q) = \det\underbrace{\left[\begin{array}{ccccc|ccccc}
p_0 & & & & & q_0 & & &\\
p_1 & p_0 & & & & q_1 & \cdot &\\
\cdot & \cdot & & & & \cdot & \cdot & \cdot & \\
\cdot & \cdot & \cdot & & & \cdot & \cdot & \cdot & q_0 \\
p_n & p_{n-1} & \cdot & \cdot & & q_n & \cdot & \cdot & q_1 \\
 & p_n & \cdot & \cdot & & \cdot & \cdot & \cdot & \cdot \\
 & & \cdot & \cdot & p_0 & q_{m-1} & \cdot & \cdot & \cdot\\
& & \cdot & \cdot & \cdot & q_{m} & \cdot & \cdot & \cdot & \\
& & \cdot & \cdot & \cdot & & \cdot & \cdot & \cdot \\
& & & \cdot & \cdot & & & \cdot & \cdot\\
& & & & p_n & & & &  q_m
\end{array}\right]}_{\in\mathbb{F}^{(n+m)\times (m+n)}}
\end{align*}
The matrix above is called the \textit{Sylvester matrix} of~$p(x)$ and~$q(x).$ The Sylvester matrix contains~$m$ columns with the coefficients of~$p$ and~$n$ columns with the coefficients of~$q$, so that the Sylvester matrix is in~$\mathbb{F}^{(n+m)\times (m+n)}.$ The entries, which we did not wrote down, are zero. Note that the diagram shows the case~$n<m$.
\end{Definition}

We give some examples to illustrate the resultant of some polynomials.

\begin{Example}
\begin{enumerate}[(i)]
\item Consider the real polynomials
\begin{align*}
p(x) & = x^3 + 5x^2 - 6 & \text{with}~n=3~\text{and}\\
q(x) & = -2x^5 -2x^4 + 6x^3 + 9x\qquad & \text{with}~m=5.
\end{align*}
The Sylvester matrix of~$p(x)$ and~$q(x)$ is
\begin{align*}
\left[\begin{array}{ccccc|ccc}
-6 & 0 & 0 & 0 & 0 & 0 & 0 & 0\\ 
0 & -6 & 0 & 0 & 0 & 9 & 0 & 0\\
5 & 0 & -6 & 0 & 0 & 0 & 9 & 0\\
1 & 5 & 0 & -6 & 0 & 6 & 0 & 9\\
0 & 1 & 5 & 0 & -6 & -2 & 6 & 0\\
0 & 0 & 1 & 5 & 0 & -2 & -2 & 6\\
0 & 0 & 0 & 1 & 5 & 0 & -2 & -2\\
0 & 0 & 0 & 0 & 1 & 0 & 0 & -2
\end{array}\right]
\end{align*}
and the resultant of~$p(x)$ and~$q(x)$ is 
\begin{align*}
\text{Res}(p,q) = 750222.
\end{align*}
\item The Sylvester matrix of~$\widetilde{p}(x) = a\in\mathbb{R}\setminus\set{0}$ and~$q(x)$ as in (i)  is
\begin{align*}
a I_{5} = \begin{bmatrix}
a & 0 & 0 & 0 & 0\\
0 & a & 0 & 0 & 0\\
0 & 0 & a & 0 & 0\\
0 & 0 & 0 & a & 0\\
0 & 0 & 0 & 0 & a
\end{bmatrix}
\end{align*}
and thus~$\text{Res}(\widehat{p},q) = a^5.$
\item The Sylvester matrix of two constant nonzero polynomials~$\widehat{p}(x) = a\in\mathbb{R}\setminus\set{0}$ and~$\widehat{q}(x) = b\in\mathbb{R}\setminus\set{0}$ is the empty matrix. Since the determinant of the emtpy matrix is the empty product,~$\text{Res}(\widehat{p},\widehat{q}) = 1.$
\end{enumerate}
\end{Example}

\begin{Lemma}[Common zeros of polynomials]\label{Lem:commom_zeros}
Let~$p(x),q(x)\in\mathbb{F}[x]\setminus\set{0_{\mathbb{F}[x]}}\subseteq\mathbb{C}[x]\setminus\set{0_{\mathbb{C}[x]}}$ with~$\deg p = n$ and~$\deg q = m$. Then~$p(\cdot),q(\cdot)$ have a common zero, i.e. there is some~$z\in\mathbb{C}$ such that~$p(z) = q(z) = 0$, if, and only if,~$\Res(p,q) = 0.$
\end{Lemma}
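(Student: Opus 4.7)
The plan is to reduce the claim to linear algebra via two intermediate equivalences. First I would show that $p$ and $q$ have a common zero in $\mathbb{C}$ if and only if they share a nonconstant common factor in $\mathbb{C}[x]$. The forward direction is immediate from the division algorithm in $\mathbb{C}[x]$: if $p(z) = q(z) = 0$ then $(x - z)$ divides both. The reverse direction is the fundamental theorem of algebra: any nonconstant factor has a root in $\mathbb{C}$. The edge cases $n = 0$ or $m = 0$ force one of $p, q$ to be a nonzero constant, in which case neither side of the stated equivalence can hold and the Sylvester matrix becomes upper- or lower-triangular with the nonzero constant on its diagonal, so the resultant is $p_0^m$ or $q_0^n \neq 0$; the lemma is trivial in those cases, and I may henceforth assume $n, m \geq 1$.

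Next I would translate ``common nonconstant factor'' into a linear identity. Using unique factorization in $\mathbb{C}[x]$, I would prove that $p$ and $q$ share a nonconstant factor if and only if there exist nonzero $A, B \in \mathbb{C}[x]$ with $\deg A \le m - 1$, $\deg B \le n - 1$ and $Ap + Bq = 0$. For the forward direction, let $h = \gcd(p, q)$ and write $p = h p^{*}$, $q = h q^{*}$; then $(A, B) := (q^{*}, -p^{*})$ satisfies $Ap + Bq = 0$ with the correct degree bounds. For the reverse direction, the identity $Ap = -Bq$ combined with $\deg B < \deg p$ shows that $p$ cannot divide $B$; as $\mathbb{C}[x]$ is a unique factorization domain, $p$ and $q$ must then share a nontrivial irreducible factor.

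The heart of the argument is the linear algebra step. I would consider the linear map
\[
\Phi: \mathbb{C}[x]_{\leq m-1} \times \mathbb{C}[x]_{\leq n-1} \to \mathbb{C}[x]_{\leq m+n-1}, \qquad (A, B) \mapsto Ap + Bq,
\]
between vector spaces of equal dimension $m + n$. In the monomial bases $(1, x, \ldots, x^{m-1})$, $(1, x, \ldots, x^{n-1})$ on the domain and $(1, x, \ldots, x^{m+n-1})$ on the codomain, the $j$-th column of the representing matrix lists the coefficients of $x^{j-1} p(x)$ for $1 \le j \le m$ and of $x^{j-m-1} q(x)$ for $m < j \le m + n$. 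Comparing with the diagram in Definition~\ref{Def:Resultant} shows that this representing matrix is exactly the Sylvester matrix of $p$ and $q$. Hence $\Phi$ has nontrivial kernel if and only if $\Res(p, q) = 0$, and by the previous two steps this is equivalent to $p$ and $q$ having a common zero in $\mathbb{C}$. Since the Sylvester matrix has entries in $\mathbb{F} \subseteq \mathbb{C}$, its determinant is the same over either field, so the statement is unambiguous. The main obstacle I expect is the bookkeeping in this final step: one has to check carefully that the shifted coefficient columns of $x^{j-1}p$ and $x^{j-m-1}q$ line up entry-by-entry with the display in Definition~\ref{Def:Resultant}, paying attention to the leading and trailing zero entries, so that no transpose is hiding and the index ranges really match.
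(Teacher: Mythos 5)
The paper does not actually prove Lemma~\ref{Lem:commom_zeros}: its ``proof'' is a bare citation to two references. Your argument supplies what those sources contain, and it is the standard proof: common zero $\iff$ common nonconstant factor (fundamental theorem of algebra), $\iff$ a nontrivial syzygy $Ap+Bq=0$ with $\deg A\le m-1$, $\deg B\le n-1$ (gcd argument in the UFD $\mathbb{C}[x]$), $\iff$ nontrivial kernel of the linear map $\Phi(A,B)=Ap+Bq$ between two spaces of dimension $n+m$, whose matrix in the monomial bases is precisely the Sylvester matrix of Definition~\ref{Def:Resultant}, $\iff$ $\Res(p,q)=0$. The degree bookkeeping, the handling of the degenerate cases $n=0$ or $m=0$, and the identification of the column structure of the Sylvester matrix with the shifted coefficient vectors of $x^{j-1}p$ and $x^{j-m-1}q$ are all correct. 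One small point you leave implicit but which is needed to close the loop between ``nontrivial kernel of $\Phi$'' and ``there exist \emph{nonzero} $A,B$'': if $(A_0,B_0)\neq(0,0)$ lies in $\ker\Phi$ and, say, $A_0=0$, then $B_0q=0$ forces $B_0=0$ since $\mathbb{C}[x]$ is an integral domain and $q\neq 0$, a contradiction; so both components of any nonzero kernel element are automatically nonzero. With that one-line observation added, the proof is complete and correct.
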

\begin{proof}
See e.g. \cite[Theorem 9.3, p.\,24]{AlgCurv} or \cite[Theorem 3.3.1, pp.\,60]{Algebra}.
\end{proof}

\begin{Remark}
If two polynomials~$p,q\in\mathbb{F}[x]$ do not have common zeros, then we call~$p$ and~$q$~\textit{coprime}. The resultant gives a tool to check whether two nonzero polynomials~$p,q$ are coprime.
\end{Remark}


The following proposition is the main result of this section. It is evident that we could only proove this proposition and then immediately obtain Proposition~\ref{prop:full_rank_property_block} and Proposition~\ref{Prop:gen_full_rk_pol_block}. Indeed, if all $g_i$ are zero, then Proposition~\ref{prop:full_rank_property_block} and Proposition~\ref{prop:complex_full_rank_property_pol_mat} coincide. If the set of polynomial block matrices whose rank is ``full'' is generic, then the rank w.r.t. the corresponding field of rational functions is also ``full''. It remains in the case $d = n = m$ to prove that the set of matrices whose determinant does not disappear is generic, which is simple.

{}However, since Proposition~\ref{prop:full_rank_property_block} and Proposition~\ref{Prop:gen_full_rk_pol_block} are much simpler to prove, we showed them first.

\begin{Proposition}\label{prop:complex_full_rank_property_pol_mat}
Let
\begin{align*}
\mathfrak{F} := \left(\mathbb{F}^{n_{1,1}\times m_{1,1}}\right)^{g_{1,1}+1}\times\cdots\times\left(\mathbb{F}^{n_{p,q}\times m_{p,q}}\right)^{g_{p,q}+1}.
\end{align*}
The set
\begin{align*}
S = \set{P\in \mathfrak{F}\left|\forall\,\lambda\in\mathbb{C}:\rk_{\mathbb{C}}\widehat{P}(\lambda)\geq d\right.}
\end{align*}
is generic if, and only if,
\begin{align*}
d\leq\min\set{n,m}\quad\wedge\quad\Big[{}^\neg \big(d = n = m\big)\,\vee \big(\forall\, i\in\underline{p}~\forall\, j\in\underline{q}: g_{i,j} = 0\big)\Big]
\end{align*}
\end{Proposition}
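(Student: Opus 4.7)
The plan is to prove each implication of the equivalence separately, with the case analysis in each direction driven by whether some $g_{i,j}\geq 1$.

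For the necessity, two failure modes must be ruled out. If $d>\min\{n,m\}$, then no matrix in $\mathbb{F}^{n\times m}$ can have rank $\geq d$, so $S=\emptyset$ and $S^c=\mathfrak{F}$ cannot lie in a proper variety. If $d=n=m$ and some $g_{i_0,j_0}\geq 1$, then $P\in S$ forces $\det\widehat{P}(\lambda)\neq 0$ for every $\lambda\in\mathbb{C}$; by the fundamental theorem of algebra this means $\det\widehat{P}(x)\in\mathbb{F}[x]$ is a nonzero constant, so $S$ is contained in the algebraic variety $\mathbb{V}'$ cut out by the simultaneous vanishing of the coefficients of $x,x^2,\ldots,x^h$ of $\det\widehat{P}(x)$---these are polynomial in $P$ by Lemma~\ref{lem:minors_are_polynomials}. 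I would exhibit $\mathbb{V}'\subsetneq\mathfrak{F}$ by constructing $\widehat{P}(x)$ equal to the identity matrix with $x$ inserted at a position $(i_0,j_0)$ inside the block with $g_{i_0,j_0}\geq 1$ and $1$ inserted at the transpose position $(j_0,i_0)$ (or with $1+x$ on the diagonal if $(i_0,j_0)$ is itself diagonal): a $2\times 2$ Laplace expansion yields $\det\widehat{P}(x)=1-x$, which has positive degree. Hence $S\subseteq\mathbb{V}'\subsetneq\mathfrak{F}$; if $S$ were generic, Remark~\ref{Rem:InclusionOfGenericSets} would force $\mathbb{V}'$ to be generic too, contradicting Corollary~\ref{Cor:PartitionInGenericSets}.

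For the sufficiency, if all $g_{i,j}=0$ then $\widehat{P}(x)$ does not depend on $x$, the condition reduces to $\rk\widehat{P}\geq d$, and Proposition~\ref{prop:full_rank_property_block} applies. Otherwise some $g_{i_0,j_0}\geq 1$ combined with the hypothesis $\neg(d=n=m)$ and $d\leq\min\{n,m\}$ yields $d<n$ or $d<m$, whence the number of $d\times d$ minors is $r=\binom{n}{d}\binom{m}{d}\geq 2$. Let $M_1^P(x),\ldots,M_r^P(x)\in\mathbb{F}[x]$ denote these minors, of uniform degree bound $h$ and coefficients polynomial in $P$ by Lemma~\ref{lem:minors_are_polynomials}. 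Then $P\in S^c$ iff these polynomials share a common complex root. I would decompose $S^c=\mathbb{V}_1\cup\mathbb{V}_2$, where $\mathbb{V}_1:=\{P:\forall i,\,M_i^P\equiv 0\}=\{P:\rk_{\mathbb{F}(x)}\widehat{P}<d\}$ is proper algebraic by Proposition~\ref{Prop:gen_full_rk_pol_block}, and $\mathbb{V}_2=S^c\setminus\mathbb{V}_1$ collects the $P$ with at least one minor non-trivial and a root shared by all $M_i^P$.

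To encode $\mathbb{V}_2$ algebraically I would reduce to the binary case of Lemma~\ref{Lem:commom_zeros} via a generic linear combination. After relabelling so that $M_1^P\not\equiv 0$, introduce parameters $c=(c_3,\ldots,c_r)\in\mathbb{C}^{r-1}$ and set $M^{c,P}(x):=M_2^P(x)+\sum_{i=3}^{r}c_iM_i^P(x)$. The polynomials $M_1^P,\ldots,M_r^P$ share a common complex root iff $M_1^P$ and $M^{c,P}$ do so for every $c\in\mathbb{C}^{r-1}$; the forward direction is immediate, and conversely the set of $c$ for which a given root of $M_1^P$ is also a root of $M^{c,P}$ is an affine subspace of $\mathbb{C}^{r-1}$, so covering $\mathbb{C}^{r-1}$ by finitely many such proper subspaces is impossible (since $r-1\geq 1$), forcing one root of $M_1^P$ to annihilate every $M_i^P$. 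By Lemma~\ref{Lem:commom_zeros}, $\mathrm{Res}_x(M_1^P,M^{c,P})$ is a polynomial in $c$ whose coefficients are polynomial in $P$; its identical vanishing in $c$---equivalently the simultaneous vanishing of all its $c$-coefficients---cuts out an algebraic variety $\mathbb{V}_2'\supseteq\mathbb{V}_2$ in $\mathfrak{F}$, and the relabelling dependence is absorbed by intersecting the analogous varieties over every choice of distinguished index. Assembling, $\mathbb{V}:=\mathbb{V}_1\cup\mathbb{V}_2'$ is algebraic by Lemma~\ref{Rem:Zariski}(iii), contains $S^c$, and is proper because any constant matrix $P^*\in\mathfrak{F}$ of rank $\geq d$ (available since $d\leq\min\{n,m\}$) has scalar minors, not all zero, with no complex roots at all, giving $P^*\in S\subseteq\mathbb{V}^c$.

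The main obstacle is the algebraic encoding of ``all $r$ minors share a common complex root'' through Lemma~\ref{Lem:commom_zeros}, which only detects pairwise coincidences; the generic-linear-combination trick is the essential device that packs the multi-polynomial coincidence into a family of binary resultants whose coefficient-vanishing yields a Zariski-closed condition on $P$. A secondary subtlety is that Definition~\ref{Def:Resultant} requires both arguments to be nonzero polynomials, which forces the stratification along $\mathbb{V}_1$ and the careful choice of distinguished index so that one may always work with a nonzero $M_k^P$; these technicalities all reduce to polynomial identities in the entries of $P$, giving the desired proper algebraic variety containing $S^c$.
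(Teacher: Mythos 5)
Your necessity argument is sound (and is essentially a rephrasing of the paper's). The sufficiency argument is also structured differently from the paper's---you use a generic linear combination $M^{c,P}=M_2^P+\sum_{i\geq 3}c_iM_i^P$ and a covering argument over $c$-space, whereas the paper fixes a distinguished minor $M_k$ with maximal degree bound $\alpha_k$ and works with the finite family of pairwise resultants $\Res(M_i,M_k)$. That is a legitimately different encoding. However, your sufficiency proof contains a genuine gap at exactly the point where the paper does real work.

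The error is the final sentence: ``$\mathbb{V}$ \ldots is proper because any constant matrix $P^*$ of rank $\geq d$ \ldots gives $P^*\in S\subseteq\mathbb{V}^c$.'' The inclusion you invoke is backwards. You constructed $\mathbb{V}=\mathbb{V}_1\cup\mathbb{V}_2'$ with $\mathbb{V}_2'\supseteq\mathbb{V}_2$, so you know $S^c\subseteq\mathbb{V}$, i.e.\ $\mathbb{V}^c\subseteq S$; you do \emph{not} have $S\subseteq\mathbb{V}^c$. Thus $P^*\in S$ does not by itself put $P^*$ outside $\mathbb{V}$, and in fact it typically fails to. To obtain a polynomial condition in $P$ you must interpret $\Res_x(M_1^P,M^{c,P})$ as the formal Sylvester determinant built from fixed degree bounds $\alpha_1$ and $h$; Definition~\ref{Def:Resultant} is degree-dependent, so without fixing the bounds the resultant is not a single polynomial in $(P,c)$. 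With the bounds fixed, the formal Sylvester determinant of two \emph{constant} polynomials (nonzero or not) with $\alpha_1\geq 1$ and $h\geq 1$ is identically zero: the last $\min\{\alpha_1,h\}$ rows of the $(\alpha_1+h)\times(\alpha_1+h)$ matrix vanish. Now if every $d\times d$ minor necessarily meets an $x$-dependent block---for instance $p=q=1$ and $g_{1,1}\geq 1$, so the whole matrix is a single polynomial block---then $\alpha_k>0$ for \emph{every} $k$, and your constant witness $P^*$ lies inside every $\mathbb{V}_2^{(k)}$ and hence inside $\mathbb{V}_2'$. So $\mathbb{V}$ is not shown to be proper. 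This is precisely the obstruction the paper's proof overcomes by constructing a genuinely non-constant polynomial matrix (the bidiagonal $H(x)$ in step~(D)) for which the distinguished minor attains degree $\alpha_k$ and two specific minors factor into distinct linear forms, ensuring the relevant formal resultant is truly nonzero. Repairing your argument requires replacing the constant $P^*$ by such a witness (or by otherwise controlling the degeneracy of the formal resultant when leading coefficients vanish), which is the actual content of the proposition.
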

\begin{proof}
Recall that
\begin{align*}
\forall\,P\in\mathfrak{F}~\forall\,\lambda\in\mathbb{C}: \widehat{P}(\lambda) := \left(\widehat{P}(x)\right)(\lambda)\in\mathbb{C}^{n\times m}.
\end{align*}{}
\begin{enumerate}
\item[$\implies$]
Assume first~$d>\min\set{n,m}.$ Then~$S=\emptyset$ and hence not generic. Assume next that
$$d = n = m \quad\wedge\quad \exists\, i\in\underline{p}~\exists\, j\in\underline{q}: g_{i,j} \neq 0.$$
From Lemma~\ref{lem:minors_are_polynomials} we know that
\begin{align*}
\exists\,\alpha\geq 0~\exists\, p_0,\ldots,p_\alpha\in\mathbb{F}[x_1,\ldots,x_{G}]~\forall\, P\in \mathfrak{F}: \det \widehat{P}(x) = \sum_{i = 0}^\alpha p_i(P)x^i
\end{align*}
with~$p_\alpha\not\equiv 0.$ The Leibniz formula and the assumption 
\begin{align*}
\exists\, i\in\underline{p}~\exists\, j\in\underline{q}: g_{i,j} \neq 0
\end{align*}
yield that~$\alpha\geq 1.$ Since~$p_\alpha\not\equiv 0$ we obtain
\begin{align*}
p_\alpha^{-1}(\set{0})\in\mathcal{V}_n^{\text{prop}}(\mathbb{F})
\end{align*}
is a proper algebraic variety. With the fundamental theorem of algebra and Remark~\ref{Rem:InclusionOfGenericSets} we conclude that
\begin{align*}
S^c\supseteq \set{P\in \mathfrak{F}\left\vert \deg\det\widehat{P}(x)\geq 1\right.}\supseteq p_\alpha^{-1}(\set{0})^c
\end{align*}
is a generic set. Corollary~\ref{Cor:PartitionInGenericSets} implies that~$S$ is not generic.
\item[$\impliedby$]
If for any~$i\in\underline{p}$ and~$j\in\underline{q}$ the numbers~$g_{i,j}$ are zero, then genericity of~$S$ follows from Proposition~\ref{prop:full_rank_property_block}. Hence it remains to consider the case~$$\exists\, i\in\underline{p}~\exists\, j\in\underline{q}: g_{i,j} \neq 0\quad\wedge\quad{}^\neg \big(d = n = m\big)\quad\wedge\quad d\leq\min\set{n,m}.$$ We proceed in steps.
\begin{enumerate}[(A)]
\item  Let~$\widetilde{M}_1,\ldots,\widetilde{M}_r$ be all minors of order~$d$ w.r.t.~$\mathbb{F}[x]^{n\times m}$ and define
\begin{align*}
\forall i\in\underline{r}:M_i: \mathfrak{F}\to\mathbb{F}[x],\quad P\mapsto \widetilde{M}_i\left(\widehat{P}(x)\right).
\end{align*}
Note that~$r\geq 2$ since~$d<\max\set{n,m}.$
It is evident that, for any~$P\in\mathfrak{F}$ and any~$\lambda\in\mathbb{C}$, the following implications are true
\begin{align}
\label{property:no_commom_zeros}&~\exists\, i,j\in\underline r: M_i(P),M_j(P)~\text{are~coprime}\\
\implies & \exists\, i,j\in\underline r: \lambda~\text{is~not~a~common~zero~of}~M_i(P),M_j(P)\notag\\
\iff & \exists\, i\in\underline r: M_i(P)(\lambda)\neq 0\notag\\
\iff & \rk_\mathbb{C}\widehat{P}(\lambda)\geq d\notag
\end{align}
and therefore
\begin{align}\label{eq:some_never_again_used_equation}
\widetilde S:= \set{P\in \mathfrak{F}\,\big\vert\,\eqref{property:no_commom_zeros}~\text{holds}}\subseteq S.
\end{align}
\item Define
\begin{align*}
\forall\, i\in\underline{r}:\quad\alpha_i:=\max \set{\deg M_i(P)\,\left|P\in \mathfrak{F}\right.}.
\end{align*}
Then Lemma~\ref{lem:minors_are_polynomials} yields
\begin{align*}
\forall\, i\in\underline{r}~\exists\, M_i^0,\ldots,M_i^{\alpha_i}\in\mathbb{F}[x_1,\ldots,x_G]~\forall\, P\in \mathfrak{F}:\quad M_i(P) = \sum_{j=0}^{\alpha_i} M_i^j(P)x^j
\end{align*}
and, in view of the definition of~$\alpha_i$, we find that
\begin{align*}
\forall\, i\in\underline{r}:M_i^{\alpha_i}\not = 0.
\end{align*}
Since there is some~$(i,j)\in\underline{p}\times\underline{q}$ so that~$g_{i,j}> 0$, there exists some~$k\in\underline{r}$ so that
\begin{align*}
\alpha_k = \max\set{\alpha_i\,\big\vert\,i\in\underline{q}}>0.	
\end{align*}
\item Setting
\begin{align*}
\widehat{S} := \big(M_k^{\alpha_k}\big)^{-1}(\set{0})
\end{align*}
we show that~$\big(\widehat{S}\cap\widetilde{S}\big)^c\subseteq\mathbb{V}$ for some~$\mathbb{V}\in\mathcal{V}_n(\mathbb{F}).$ Define, for~$i\in\underline {r}\setminus\set{k}$, the functional~$q_i$ as the unique continuous extension of
\begin{align*}
\widehat{q}_i: \bigcap_{i = 1}^r \big(M_i^{\alpha_i}\big)^{-1}(\set{0})\to\mathbb{F},\qquad P\mapsto \text{Res}(M_i(P),M_k(P))
\end{align*}
on~$\mathfrak{F}$.
Then the~$q_i(\cdot)$ have the form
\begin{align*}
   q_i(\cdot) := \det\underbrace{\left[\begin{array}{ccccc|ccccc}
M_i^0 & & & & & M_k^0 & & &\\
M_i^1 & M_i^0 & & & & M_k^1 & \cdot &\\
\cdot & \cdot & & & & \cdot & \cdot & \cdot & \\
\cdot & \cdot & \cdot & & & \cdot & \cdot & \cdot & M_k^0 \\
M_i^{\alpha_i} & M_i^{{\alpha_i}-1} & \cdot & \cdot & & \cdot & \cdot & \cdot & M_k^1 \\\hline
 & M_i^{\alpha_i} & \cdot & \cdot & & \cdot & \cdot & \cdot & \cdot \\
 & & \cdot & \cdot & M_i^0 & M_k^{\alpha_k-1} & \cdot & \cdot & \cdot\\
& & \cdot & \cdot & \cdot & M_k^{\alpha_k} & \cdot & \cdot & \cdot & \\
& & \cdot & \cdot & \cdot & & \cdot & \cdot & \cdot \\
& & & \cdot & \cdot & & & \cdot & \cdot\\
& & & & M_i^{\alpha_i} & & & &  M_k^{\alpha_k}
\end{array}\right]}_{\in\mathbb{F}^{(\alpha_i+\alpha_k)\times (\alpha_i+\alpha_k)}}.
\end{align*}
If we expand the determinant with Laplace's formula by the last row, then we find
\begin{align*}
\forall P\in\mathfrak{F}~\text{with}~\deg M_i(P) = \alpha_i: q_i(P) = \pm \left(M_k^{\alpha_k}(P)\right)^{\alpha_k-\deg M_k(P)}\text{Res}(M_i(P),M_k(P))
\end{align*}
and
\begin{align*}
\forall P\in\widehat{S}: q_i(P) = \pm \left(M_i^{\alpha_i}(P)\right)^{\alpha_i-\deg M_i(P)}\text{Res}(M_i(P),M_k(P)).
\end{align*}
Hence we conclude with Lemma~\ref{Lem:commom_zeros}
\begin{align*}
\forall P\in\mathfrak{F}~\forall i\in\underline{r}\setminus\set{k}: q_i(P) =  0\iff\begin{array}{l}
M_i^{\alpha_i}(P) = M_k^{\alpha_k}(P) = 0\\
\vee M_i(P),M_k(P)~\text{not~coprime}.
\end{array}
\end{align*}
This shows that the inclusion
\begin{align}\label{eq:proof_gen_rank_complex}
\big(\widehat{S}\cap\widetilde{S}\big)^c\subseteq \bigcap_{i = 1, i\neq k}^r q_{i}^{-1}(\set{0})\in\mathcal{V}_n(\mathbb{F})
\end{align}
holds true.
\item We show that
\begin{align*}
\bigcap_{i = 1, i\neq k}^r q_{i}^{-1}(\set{0})\in\mathcal{V}^{\text{prop}}_n(\mathbb{F})
\end{align*}
or equivalentely that at least one~$q_{i}\not\equiv 0.$ To do so we construct a matrix~$\widehat H\in\mathbb{F}[x]^{n\times m}$ so that~$\widehat H = \widehat{P}(x)$ with~$q_{i}(P)\neq 0.$ First we note that there are transformations~$S\in\mathcal{GL}_n(\mathbb{F})$ and~$T\in\mathcal{GL}_m(\mathbb{F})$ so that
\begin{align}\label{eq:eine_wichtige_Trafo}
\exists\kappa\in\set{0,1}\forall P\in\mathfrak{F}: M_k(P) = (-1)^{\kappa}\det \big((S\widehat{P}(x)T)_{i,j}\big)_{i,j\in\underline{d}}.
\end{align}
We can choose~$S$ and~$T$ so that~$S$ is the product of finitely many column-switching matrices and~$T$ is the product of finitely many row-switching matrices (for a defintion see~\cite[pp.155]{LinAlg}).
We distinguish two cases.
\begin{enumerate}
\item[$n>m$:] Let, for any~$(i,j)\in\set{(i,j)\in\underline{n}\times\underline{m}\,\big\vert\, i = j\vee i = j+1}$,
\begin{align*}
a_{i,j}\in\mathbb{R}
\end{align*}
be pairwise different real numbers (e.g.~$a_{i,j} = i+j$). Further define
\begin{align*}
\widetilde{g}_{i,j} := \max\set{g\in\mathbb{N}\,\Bigg\vert\,\exists\, P\in\mathfrak{F}: \Big(S\widehat{P}(x)T\Big)_{i,j} = x^g}\in\mathbb{N},
\end{align*}
which is well defined since~$0\in\set{g\in\mathbb{N}\,\Bigg\vert\,\exists\, P\in\mathfrak{F}: \Big(S\widehat{P}(x)T\Big)_{i,j} = x^g}$ and this set is bounded by definition of~$\mathfrak{F}.$\\
Define~$H(x)\in\mathbb{F}[x]^{n\times m}$ via
\begin{align*}
{H(x)}_{i,j} = \begin{cases}
(x-a_{i,j})^{\widetilde{g}_{i,j}}, & i = j\vee i = j+1,\\
0, & \text{else}.
\end{cases}
\end{align*}
Then there exists some~$P'\in\mathfrak{F}$ so that~$H(x) = (S\widehat{P'}(x)T)$ and~$H(x)$ has the form
\begin{align*}
H(x) = \begin{bmatrix}
\star & 0 & 0 & \cdots & 0 & 0\\
\star & \star & 0 & \cdots & 0 & 0\\
0 & \star & \star & \cdots & 0 & 0\\
\vdots & \vdots & \ddots & \ddots & \ddots  & \vdots\\
0 & 0 & 0 & \cdots & \star & 0\\
0 & 0 & 0 & \cdots & \star & \star\\
0 & 0 & 0 & \cdots & 0 & \star\\
0 & 0 & 0 & \cdots & 0 & 0\\
\vdots & \vdots & \ddots & \ddots & \ddots & \vdots\\
0 & 0 & 0 & \cdots & 0 & 0\\
\end{bmatrix}
\end{align*}
We choose the mappings
\begin{align*}
\sigma: \underline{d} & \to\underline{n},\quad i\mapsto i+1~\text{and}\\
\pi: \underline{d} & \to\underline{m},\quad i\mapsto i.\\
\end{align*}
W.l.o.g. we may assume that~$k\neq 1$ and 
\begin{align*}
\widetilde{M}_1: \mathfrak{F}\to\mathbb{F},\quad P\mapsto M_{\sigma,\pi}(S\widehat{P}(x)T).
\end{align*}
Then we find with~\eqref{eq:eine_wichtige_Trafo}
\begin{align*}
M_k(P') = (-1)^{\kappa}\prod_{\ell = 1}^d (x-a_{\ell,\ell})^{\widetilde{g}_{\ell,\ell}}\quad\text{and}\quad M_1(P') = \prod_{\ell = 1}^d (x-a_{\ell+1,\ell})^{\widetilde{g}_{\ell+1,\ell}}
\end{align*}
and~$P'\in\widehat{S}$ by definition of the~$\widetilde{g}_{\ell,\ell}$. Hence~$q_{1}(P')\neq 0.$ 
\item[$n\leq m$:] Let, for any~$(i,j)\in\set{(i,j)\in\underline{n}\times\underline{m}\,\big\vert\, i = j\vee i+1 = j}$,
\begin{align*}
a_{i,j}\in\mathbb{R}
\end{align*}
be pairwise different real numbers (e.g.~$a_{i,j} = i+j$). Further define
\begin{align*}
\widetilde{g}_{i,j} := \max\set{g\in\mathbb{N}\,\Bigg\vert\,\exists\, P\in\mathfrak{F}: \Big(S\widehat{P}(x)T\Big)_{i,j} = x^g}\in\mathbb{N},
\end{align*}
which is well defined since~$0\in\set{g\in\mathbb{N}\,\Bigg\vert\,\exists\, P\in\mathfrak{F}: \Big(S\widehat{P}(x)T\Big)_{i,j} = x^g}$ and this set is bounded by definition of~$\mathfrak{F}.$\\
Define~$H'(x)\in\mathbb{F}[x]^{n\times m}$ via
\begin{align*}
{H'(x)}_{i,j} = \begin{cases}
(x-a_{i,j})^{\widetilde{g}_{i,j}}, & j = i\vee j = i+1,\\
0, & \text{else}.
\end{cases}
\end{align*}
It is evident that there is some~$P^*\in\mathfrak{F}$ so that~$H'(x) = S\widehat{P^*}(x)T$ and that~$H'(x)$ has the form
\begin{align*}
H'(x) = \begin{bmatrix}
\star & \star & 0 & \cdots & 0 & 0 & 0 & \cdots & 0\\
0 & \star & \star & \cdots & 0 & 0 & 0 & \cdots & 0\\
0 & 0 & \star & \cdots & 0 & 0 & 0 & \cdots & 0\\
\vdots & \vdots & \ddots & \ddots & \vdots  & \vdots & \vdots & \ddots & \vdots\\
0 & 0 & 0 & \cdots & \star & \star & 0 & \cdots & 0\\
0 & 0 & 0 & \cdots & 0 & \star & \star & \cdots & 0\\
\end{bmatrix}
\end{align*}
We choose the mappings
\begin{align*}
\sigma: \underline{d} & \to\underline{n},\quad i\mapsto i~\text{and}\\
\pi': \underline{d} & \to\underline{m},\quad i\mapsto i+1.
\end{align*}
W.l.o.g. we may assume that~$k\neq 1$ and 
\begin{align*}
M_1: \mathfrak{F}\to\mathbb{F},\quad P\mapsto M_{\sigma',\pi'}(S\widehat{P}(x)T).
\end{align*} 
We obtain
\begin{align*}
M_k(P^*) = (-1)^{\kappa}\prod_{\ell = 1}^d (x-a_{\ell,\ell})^{\widetilde{g}_{\ell,\ell}}\quad\text{and}\quad M_1(P^*) = \prod_{\ell = 1}^d (x-a_{\ell,\ell+1})^{\widetilde{g}_{\ell,\ell+1}}
\end{align*}
and~$P^*\in\widehat{S}$ by definition of the~$\widetilde{g}_{\ell,\ell}$. Hence~$q_{1}(P^*)\neq 0.$
\end{enumerate}
\item Since not all~$q_{i}$ are zero, the set
\begin{align*}
\bigcap_{i = 1, i\neq k}^r q_{i}^{-1}(\set{0})
\end{align*}
is a proper algebraic variety and hence its complement is a generic sets. By~\eqref{eq:proof_gen_rank_complex} the inclusions
\begin{align*}
S\supseteq\widetilde S\supseteq \widehat S\cap\widetilde S \supseteq \bigcup_{i = 1, i\neq k}^r q_{i}^{-1}(\set{0})^c
\end{align*}
hold true. Remark~\ref{Rem:InclusionOfGenericSets} implies that the set~$S$ is generic. This completes the proof of the proposition.
\end{enumerate}
\end{enumerate}
\end{proof}

We prove that for $\mathbb{F} = \mathbb{R}$ the set of polynomial matrices with full rank on the closed right halfplane is generic. For this we need the well-known Hurwitz criterion.

\begin{Lemma}[{Hurwitz criterion, see~\cite[pp.\,274]{Hurwitz}}]\label{lem:Hurwitz_criterion}
Let~$p(x) = \sum_{i = 0}^n p_ix^i\in\mathbb{R}[x]$ be some real polynomial of degree~$n\in\mathbb{N}^*.$ Then~$p^{-1}(\set{0})\subseteq\overset{\circ}{\mathbb{C}}_{-}$ if, and only if,
\begin{align*}
\forall\, i,j\in\set{0,\ldots,n}: \sign p_i = \sign p_j
\end{align*}
and
\begin{align*}
\forall\, M~\text{leading~minor~of~degree}~d\leq n: \sign M\begin{bmatrix}p_1 & p_3 & p_5 & p_7 & \ldots\\
p_0 & p_2 & p_4 & p_6 & \ldots\\ 0 & p_1 & p_3 & p_5 & \ldots\\ 0 & p_0 & p_2 & p_4 & \ldots\\0 & 0 & p_1 & p_3 & \ldots\\0 & 0 & p_0 & p_2 &  \ldots\\\vdots & \vdots & \vdots & \vdots & \ddots\end{bmatrix} = \sign p_0.
\end{align*}
\end{Lemma}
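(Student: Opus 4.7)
The Hurwitz criterion is a classical result of nineteenth-century algebra, and the cleanest approach is to cite Hurwitz's original paper (as the author does) together with a modern exposition. My plan, if I had to reconstruct the proof, would be to split the statement into the two separate necessary conditions and treat them with different tools.

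The plan for the sign condition on the coefficients is straightforward: factor $p(x)$ over $\mathbb{R}$ into linear factors $(x-a)$ with $a\in\mathbb{R}$ and irreducible quadratic factors $(x^2+bx+c)$ corresponding to conjugate complex root pairs. If all roots lie in $\overset{\circ}{\mathbb{C}}_-$, then $a<0$ in the linear factors and $b,c>0$ in the quadratic factors (since $b=-2\Re\lambda$ and $c=|\lambda|^2$). Thus each factor has coefficients of a single sign, and an induction on the number of factors shows that a product of polynomials with uniformly positive coefficients has uniformly positive coefficients. Multiplying by the sign of $p_n$ then yields the claim.

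For the Hurwitz determinant condition, the main idea is to separate $p$ into its even and odd parts, $p(x) = h(x^2) + x\,g(x^2)$, and apply the Euclidean algorithm to the pair $(h,g)$. The successive remainders form a Sturm-like chain whose leading coefficients are (up to explicit signs) precisely the leading principal minors of the Hurwitz matrix. The Routh–Hurwitz theorem then identifies the number of zeros of $p$ in the open right halfplane with the number of sign changes in this chain, which in turn corresponds to sign changes in the sequence of Hurwitz minors. Consequently, $p^{-1}(\set{0})\subseteq\overset{\circ}{\mathbb{C}}_-$ if and only if no sign change occurs, i.e.\ all leading principal minors share the sign of $p_0$.

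The genuinely hard step is the Routh–Hurwitz theorem itself: one must relate the Cauchy index of the rational function $g(x^2)/h(x^2)$ along the imaginary axis to the winding number of $p$ via the argument principle, and then identify this Cauchy index with the signature pattern of the Sturm chain. Since this is a well-documented classical theorem (see \cite{Hurwitz} and standard stability-theory texts), I would not reproduce this machinery in the thesis but simply invoke the criterion as a black box; the remaining use made of it in later sections only requires the statement, not the internal mechanics of the proof.
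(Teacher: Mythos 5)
Your proposal matches the paper's approach exactly at the bottom line: the paper's proof of this lemma is the single line ``See~\cite[pp.\,274]{Hurwitz}.'' You go further by giving a correct high-level sketch of what that cited proof contains (factoring $p$ into negative-root linear and left-halfplane quadratic factors to get the sign condition; even/odd decomposition, a Sturm chain via the Euclidean algorithm, and the Cauchy-index/argument-principle machinery of Routh--Hurwitz for the determinant condition), before concluding, as the paper does, that the criterion is best invoked as a black box.
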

\begin{proof}
See~\cite[pp.\,274]{Hurwitz}.
\end{proof}

\begin{Proposition}\label{prop:positive_complex_full_rank_property}
Let
\begin{align*}
\mathfrak{R} := \left(\mathbb{R}^{n_{1,1}\times m_{1,1}}\right)^{g_{1,1}+1}\times\cdots\times\left(\mathbb{R}^{n_{p,q}\times m_{p,q}}\right)^{g_{p,q}+1},
\end{align*}
The set
\begin{align*}
\set{P\in \mathfrak{R}\,\Big\vert\,\forall\,\lambda\in\overline{\mathbb{C}}_{+}:\rk_{\mathbb{C}}\widehat{P}(\lambda)\geq d}
\end{align*}
is generic if, and only if,
\begin{align*}
d\leq\min\set{n,m}\quad\wedge\quad\Big[{}^\neg \big(d = n = m\big)\,\vee \big(\forall\, i\in\underline{p}~\forall\, j\in\underline{q}: g_{i,j} = 0\big)\Big].
\end{align*}
\end{Proposition}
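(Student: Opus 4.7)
The statement is exactly Proposition~\ref{prop:complex_full_rank_property_pol_mat} with $\mathbb{C}$ replaced by $\overline{\mathbb{C}}_+$, and I follow the same case split; the genuinely new ingredient appears only in the necessity direction.

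\textbf{Sufficiency.} If every $g_{i,j}=0$, then $\widehat{P}(x)$ is a constant matrix and the condition collapses to $\rk_{\mathbb{R}}\widehat{P}(0)\geq d$, which is generic by Proposition~\ref{prop:full_rank_property_block}. Otherwise the hypotheses force $d\leq\min\set{n,m}$ together with $\neg(d=n=m)$, and Proposition~\ref{prop:complex_full_rank_property_pol_mat} already gives genericity of the strictly smaller set $\set{P\in\mathfrak{R}\,\big\vert\,\forall\lambda\in\mathbb{C}:\rk_{\mathbb{C}}\widehat{P}(\lambda)\geq d}$; from $\overline{\mathbb{C}}_+\subseteq\mathbb{C}$ and Remark~\ref{Rem:InclusionOfGenericSets} the larger set is generic as well. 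In this reduction the Hurwitz criterion (Lemma~\ref{lem:Hurwitz_criterion}) is not needed; it would become essential only if one redid the minor/resultant argument of Proposition~\ref{prop:complex_full_rank_property_pol_mat} directly, replacing ``coprimality'' by ``no common zero in $\overline{\mathbb{C}}_+$''.

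\textbf{Necessity.} The case $d>\min\set{n,m}$ is trivial: $S=\emptyset$. The genuinely new obstacle is the case $d=n=m$ with some $g_{i_0,j_0}\geq 1$, because the trick used in Proposition~\ref{prop:complex_full_rank_property_pol_mat}---``$\det\widehat{P}(x)$ is non-constant and therefore has a zero somewhere in $\mathbb{C}$''---no longer contradicts $P\in S$: all zeros might lie in $\overset{\circ}{\mathbb{C}}_-$. My strategy is to exhibit a non-empty \emph{Euclidean}-open subset of $S^c$. Because, by Lemma~\ref{lem:some_uninteresting_lemma}, every non-empty Zariski-open set is Euclidean-dense, such an open subset of $S^c$ forces $S$ to contain no non-empty Zariski-open set, ruling out genericity.

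\textbf{Construction of the open neighborhood.} Fix reals $0<x_0<x_1$ (both in $\overset{\circ}{\mathbb{C}}_+$), and pick $A,B\in\mathbb{R}^{n\times n}$ with $\det A<0<\det B$ that agree on every block with $g_{i,j}=0$: take $A$ with $\det A=-1$ whose cofactor $C_{b,a}(A)$ at some position $(a,b)$ inside the block $(i_0,j_0)$ is nonzero (a Zariski-generic condition on $A$ which is compatible with $\det A=-1$, since $C_{b,a}$ has degree $n-1$ and cannot vanish on the irreducible hypersurface $\set{\det=-1}$ of degree $n$), and set $B:=A+te_ae_b^\top$ with $t$ chosen so that $\det B=-1+tC_{b,a}(A)>0$; the perturbation $B-A$ is supported in block $(i_0,j_0)$, which has degree at least one. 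Now define $P^*\in\mathfrak{R}$ by two-point interpolation $\widehat{P^*}(x_0)=A$, $\widehat{P^*}(x_1)=B$: on each block with $g_{i,j}=0$ both conditions coincide and are solved by $P^{*,i,j}_0=A^{(i,j)}$, while on each block with $g_{i,j}\geq 1$ the $2\times (g_{i,j}+1)$ Vandermonde system in the unknowns $P^{*,i,j}_0,\ldots,P^{*,i,j}_{g_{i,j}}$ has rank two (since $x_0\neq x_1$) and is consistent. Then $\det\widehat{P^*}(x_0)<0<\det\widehat{P^*}(x_1)$, so the intermediate value theorem delivers a real zero $x^*\in(x_0,x_1)\subseteq\overset{\circ}{\mathbb{C}}_+$ of $\det\widehat{P^*}$, placing $P^*$ in $S^c$. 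Since $P\mapsto\det\widehat{P}(x_j)$ is polynomial, hence continuous, in $P$, the strict sign pattern persists on an entire Euclidean neighborhood of $P^*$, which therefore lies in $S^c$. This contradicts $S$ being generic and completes the necessity.
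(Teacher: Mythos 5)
Your proof is correct in outline and the necessity argument for the case $d=n=m$, $\exists\,g_{i_0,j_0}\geq 1$ takes a genuinely different route from the paper's. Both proofs exhibit a \emph{Euclidean-open} subset of $S^c$ and then invoke the fact that the complement of a generic set has empty interior (via Proposition~\ref{alg_var_nullset} or Lemma~\ref{lem:some_uninteresting_lemma}). The paper constructs a single $P$ for which $\det\widehat{P}(x)=\pm\,2(1-x^{g_{i,j}})$, so that the constant coefficient and the leading coefficient of $\det\widehat{P}$ have opposite nonzero signs, and then cites the Hurwitz criterion (Lemma~\ref{lem:Hurwitz_criterion}): a polynomial with coefficients of mixed sign has a root in $\overline{\mathbb{C}}_+$, and the strict sign condition is stable under small perturbations of $P$. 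You instead interpolate two real constant matrices $A,B$ with $\det A<0<\det B$ at two positive real nodes $x_0<x_1$ and apply the intermediate value theorem, getting a real zero of $\det\widehat{P^*}$ in $(x_0,x_1)\subset\overline{\mathbb{C}}_+$; again the strict inequalities $\det\widehat{P}(x_0)<0<\det\widehat{P}(x_1)$ are open in $P$. Your route is more elementary (no Hurwitz criterion), at the price of a somewhat more involved construction. Your sufficiency direction is the same as the paper's reduction to Proposition~\ref{prop:complex_full_rank_property_pol_mat} via Remark~\ref{Rem:InclusionOfGenericSets}; the explicit $g_{i,j}\equiv 0$ subcase you single out is already covered by Proposition~\ref{prop:complex_full_rank_property_pol_mat} and so is redundant, though harmless.

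One step needs tightening: the justification that there exists $A$ with $\det A=-1$ and a nonzero cofactor $C_{b,a}(A)$ at a position $(a,b)$ inside block $(i_0,j_0)$. You argue by irreducibility and degree of the real hypersurface $\set{\det=-1}$, but over $\mathbb{R}$ the ideal of an irreducible hypersurface need not be principal, so the degree comparison does not directly imply that $C_{b,a}$ cannot vanish on all of $\set{\det=-1}$. The cleanest fix is an explicit construction: fix $(a,b)$ in block $(i_0,j_0)$, take any $A'\in\mathbb{R}^{(n-1)\times(n-1)}$ invertible to serve as the submatrix obtained by deleting row $b$ and column $a$, fill in row $b$ and column $a$ arbitrarily except for entry $(b,a)$, and then solve the affine (in $A_{b,a}$) equation $\det A=-1$ for $A_{b,a}$; this is possible precisely because the coefficient of $A_{b,a}$ in the cofactor expansion is $\pm\det A'\neq 0$. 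With this patch the argument is complete.
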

\begin{proof}
\begin{enumerate}
\item[$\impliedby$] This follows from Proposition~\ref{prop:complex_full_rank_property_pol_mat} and Remark~\ref{Rem:InclusionOfGenericSets}.
\item[$\implies$] We will seek a contradiction. If~$d>\min\set{n,m}$, then~$\set{P\in \mathfrak{R}\left|\forall\,\lambda\in\overline{\mathbb{C}}_{+}:\rk_{\mathbb{C}}\widehat{P}(\lambda)\geq d\right.} = \emptyset.$ It remains to consider the case
\begin{align*}
d = n = m\quad\wedge\quad\exists\, i\in\underline{p}~\exists\, j\in\underline{q}: g_{i,j} \neq 0.
\end{align*}
From Lemma~\ref{lem:minors_are_polynomials} we know that
\begin{align*}
\exists\, \alpha\in\mathbb{N}~\exists\, p_1,\ldots,p_\alpha\in\mathbb{R}[x_1,\ldots,x_G]~\forall\, P\in \mathfrak{R}: \det\widehat{P}(x) = \sum_{i = 0}^\alpha p_i(P)x^i
\end{align*}
with~$\alpha\geq 0$ w.l.o.g. minimal. The Leibniz formula and the assumption 
\begin{align*}
\exists\, i\in\underline{p}~\exists\, j\in\underline{q}: g_{i,j} \neq 0
\end{align*}
yield that~$\alpha\geq 1.$

We construct some~$P=\Big(\big(P_0^{1,1},\ldots,P^{1,1}_{g_{1,1}}\big),\ldots,\big(P_0^{p,q},\ldots,P^{p,q}_{g_{p,q}}\big)\Big)\in\mathfrak{R}$ so that~$\sign p_0(P) = \sign \big(-p_\ell(P)\big)\neq 0$ for some~$\ell\in\underline{\alpha}.$ Recall that~$P$ is associated to~$\widehat{P}(x)$ in~\eqref{eq:HutOperator}.

Let~$(i,j)\in\underline{p}\times\underline{q}$ such that~$g_{i,j}>0$ and set
\begin{align*}
\forall\,(a,b)\in\underline{p}\times\underline{q}\setminus\set{(i,j)}~\forall\,c\in\underline{(g_{a,b}+1)}: P_c^{a,b} = 0_{n_{a,b}\times m_{a,b}}.
\end{align*}
This means that~$x$ only appears in the block~$(i,j)$ in~$\widehat{P}(x).$
Let~$(\iota,\kappa)\in\underline{\big(n_{i,j}\big)}\times\underline{\big(m_{i,j}\big)}$ be arbitrary and choose
\begin{align*}
\Big(P_{g_{i,j}}^{i,j}\Big)_{r,s} = \begin{cases}
-1, & (r,s) = (\iota,\kappa)\\
0, & else
\end{cases}
\end{align*}
and
\begin{align*}
P_1^{i,j} = \cdots = P_{g_{i,j}-1}^{i,j} = 0_{n_{i,j}\times m_{i,j}}.
\end{align*}
Further we choose
\begin{align*}
\forall\,(a,b)\in\underline{p}\times\underline{q}\setminus\set{(i,j)}: P_0^{a,b}\in\mathbb{R}^{n_{a,b}\times m_{a,b}}
\end{align*}
and~$P_0^{i,j}\in\mathbb{R}^{n_{i,j}\times m_{i,j}}$ such that
\begin{align*}
& \widehat{P}(x)_{k,m} = \begin{cases} 1, & k = m\quad\wedge\quad k\neq \sum\limits_{y = 1}^{i-1}n_{i,1} + \iota\quad\wedge\quad m\neq \sum\limits_{y = 1}^{j-1}m_{1,y} + \kappa,\\
2, & m = \sum\limits_{y = 1}^{i-1}n_{y,1} + \iota\quad\wedge\quad k = \sum\limits_{y = 1}^{j-1}m_{1,y} + \kappa \quad\wedge\quad k\neq m,\\
1-x^{g_{i,j}}, & k = \sum\limits_{y = 1}^{i-1}n_{y,1} + \iota\quad\wedge\quad m = \sum\limits_{y = 1}^{j-1}m_{1,y} + \kappa,\\
0, & \text{else}.\end{cases}
\end{align*}
Then~$\widehat{P}(x)$ has the structure
\begin{align*}
\widehat{P}(x) = \begin{bmatrix}
I\\
& 0 & & & &  1-x^{g_{i,j}}\\
& & 1 & &\\
& & & \ddots\\
& & & & 1\\
& 2 & & & & 0\\
& & & & & & I\\
\end{bmatrix}.
\end{align*}
With this struture it is evident that
\begin{align*}
\exists k\in\set{1,0}: \frac{1}{2}\det\widehat{P}(x) = (-1)^k(1-x^{g_{i,j}}) \implies p_0(P) = (-1)^{k}\cdot 2 = -p_{g_{i,j}}(P).
\end{align*}
Continuity of all~$p_i$ yields that
\begin{align*}
\exists\,\varepsilon>0~\forall\,P'\in\mathbb{B}(P,\varepsilon):p_0(P')\neq 0\quad\wedge\quad p_{g_{i,j}}(P')\neq 0\quad\wedge\quad \sign p_0(P') = \sign \big(-p_{g_{i,j}}(P')\big)
\end{align*}
where the ball is w.r.t. some norm on~$\mathfrak{R}$. By Lemma~\ref{lem:Hurwitz_criterion}, the complement of the set
\begin{align*}
S := \set{P\in \mathfrak{R}\left|\forall\,\lambda\in\overline{\mathbb{C}}_{+}:\rk_{\mathbb{C}}\widehat{P}(\lambda)\geq d\right.} = \set{P\in \mathfrak{R}\,\left|\,\left(\det\widehat{P}(x)\right)^{-1}(\set{0})\subseteq\overset{\circ}{\mathbb{C}}_-\right.}
\end{align*}
contains an inner point and hence $S$ is not generic.
\end{enumerate}
\end{proof}


\section{Controllability of ordinary differential systems}

As a motivation for the study of differential-algebraic systems we first study controllability of linear systems described by ordinary differential equations of the form
\begin{align}\label{Def_eq:ODE}
\dot{x} = Ax+Bu,\quad (A,B)\in\Sigma_{n,m},~u\in\mathcal{L}^1_{\text{loc}}(\mathbb{R},\mathbb{R}^m).
\end{align}
First we recall the definition and the existence and uniqueness of a solution.

\begin{Definition-Proposition}[Solution of~{\eqref{Def_eq:ODE}, see \cite[p.\,40]{Ryan}}]
For any control $u\in\mathcal{L}^1_{\text{loc}}(\mathbb{R},\mathbb{R}^m)$ and any initial value $x^0\in\mathbb{R}^n$ there exists a unique function $x(\cdot) = x(\cdot;x^0,u(\cdot))\in AC(\mathbb{R},\mathbb{R}^n)$ so that
\begin{align*}
\dot{x}(t) = Ax(t) + Bu(t)~\text{a.e.}
\end{align*}
This solution~$x(\cdot;x^0,u(\cdot))$ is given by the variation of constants formula (see~\cite[Theorem VII.1.17, p.\,137]{AE_II})
\begin{align}
x(\cdot;x^0,u(\cdot)): [0,\infty)\to\mathbb{F}^n, t\mapsto \exp{At}x^0 + \int_0^t\!\exp{A(t-\tau)}Bu(\tau)\,\mathrm{d}\tau.
\end{align}
\end{Definition-Proposition}

Now that we know what a solution is, we recall the definition of controllability of linear ODEs.

\begin{Definition}[Controllability~{\cite[Definition 3.3, p.\,67]{Ryan}}]
Let~$\widehat{x},x_0\in\mathbb{R}^n$ be arbitrary.~$\widehat{x}$ is called \textit{reachable from~$x_0$}, if
\begin{align*}
\exists\, T>0~\exists\, u\in\mathcal{L}^1_{\text{loc}}(\mathbb{R},\mathbb{R}^m): x(T;x_0,u) = \widehat{x}.
\end{align*}
The system~\eqref{Def_eq:ODE} is called \textit{controllable}, if
\begin{align*}
\forall\, \widehat{x},x_0\in\mathbb{R}^n~\exists\, T>0~\exists\, u\in\mathcal{L}^1_{\text{loc}}(\mathbb{R},\mathbb{R}^m): x(T;x_0,u) = \widehat{x}.
\end{align*}
The set of all controllable matrix pairs is called 
\begin{align*}
\Sigma_{n,m}^{\text{cont}}:=\set{(A,B)\in\Sigma_{n,m}\,\big\vert\,\eqref{Def_eq:ODE}~\text{is~controllable}}.
\end{align*}
\end{Definition}

There are two widely known criteria for $(A,B)$ to be in $\Sigma_{n,m}^{\text{cont}},$ the \textit{Kalman criterion} and the \textit{Hautus criterion}. We will make use of the Kalman criterion.

\begin{Lemma}[Kalman criterion,\,{\cite[Corollary 3.4,\,pp.\,40]{ContTheorLinSyst}}]~\label{lem:Kalmam_rank_condition}
The system~\eqref{Def_eq:ODE} is controllable if, and only if,
\begin{align*}
\rk[B,AB,A^2B,\ldots,A^{n-1}B] = n.
\end{align*}
\end{Lemma}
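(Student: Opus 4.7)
The plan is to prove both directions by analyzing the reachable set from the origin and showing it coincides with the image of the Kalman matrix $K := [B, AB, A^2B,\ldots,A^{n-1}B]$. By linearity of the variation of constants formula, every state $\widehat{x}$ is reachable from $x_0$ in time $T$ if and only if $\widehat{x} - \exp{AT}x_0$ is reachable from $0$ in time $T$; so \eqref{Def_eq:ODE} is controllable if and only if the reachable set
\[
\mathcal{R}_T := \set{\int_0^T\exp{A(T-\tau)}Bu(\tau)\,\mathrm{d}\tau\,\Big\vert\,u\in\mathcal{L}^1_{\text{loc}}(\mathbb{R},\mathbb{R}^m)}
\]
equals $\mathbb{R}^n$ for some (equivalently, all) $T>0$.

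For the necessity direction ($\Rightarrow$), I would argue by contraposition: if $\rk K < n$, then there exists $v\in\mathbb{R}^n\setminus\set{0}$ with $v^\top A^kB = 0$ for all $k\in\set{0,\ldots,n-1}$. By the Cayley--Hamilton theorem, $A^k$ is a linear combination of $I,A,\ldots,A^{n-1}$ for every $k\in\mathbb{N}$, hence $v^\top A^kB = 0$ for all $k\geq 0$, and therefore $v^\top\exp{At}B = 0$ for all $t\in\mathbb{R}$ by expanding the matrix exponential as a power series. Integrating against any control $u$, we find $v^\top y = 0$ for every $y\in\mathcal{R}_T$, so $\mathcal{R}_T\subseteq v^\bot\subsetneq\mathbb{R}^n$, contradicting controllability.

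For the sufficiency direction ($\Leftarrow$), I would use the controllability Gramian
\[
W(T) := \int_0^T\exp{A(T-\tau)}BB^\top\exp{A^\top(T-\tau)}\,\mathrm{d}\tau\in\mathbb{R}^{n\times n}.
\]
The key step is to show that $\ker W(T) = \ker K^\top$: the inclusion ``$\supseteq$'' follows from the argument in the first paragraph (if $v\in\ker K^\top$, then $v^\top\exp{A(T-\tau)}B\equiv 0$, and a Gramian computation gives $v^\top W(T)v = 0$, hence $W(T)v=0$ by symmetric positive semidefiniteness); the inclusion ``$\subseteq$'' follows by computing $v^\top W(T)v = \int_0^T\norm{B^\top\exp{A^\top(T-\tau)}v}_2^2\,\mathrm{d}\tau$, so $W(T)v=0$ forces $B^\top\exp{A^\top(T-\tau)}v=0$ for almost every $\tau$, and by analyticity for all $\tau$; differentiating at $\tau=T$ yields $v^\top A^kB=0$ for all $k$, hence $v\in\ker K^\top$. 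Assuming $\rk K = n$, we thus have $W(T)$ invertible, and for arbitrary $x_0,\widehat{x}\in\mathbb{R}^n$ the explicit control
\[
u(\tau) := B^\top\exp{A^\top(T-\tau)}W(T)^{-1}(\widehat{x}-\exp{AT}x_0)
\]
is locally integrable and steers $x_0$ to $\widehat{x}$ in time $T$, as a direct computation using the variation of constants formula shows.

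The main obstacle is the Gramian identity $\ker W(T) = \ker K^\top$, or equivalently the equality $\mathcal{R}_T = \im K$. The nontrivial direction requires combining the series expansion of $\exp{At}$ with the Cayley--Hamilton theorem to reduce an infinite-dimensional statement about controls to a finite-dimensional rank condition; all other steps are routine verifications.
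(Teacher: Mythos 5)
The paper offers no proof of this lemma; it simply cites~\cite[Corollary~3.4, pp.~40]{ContTheorLinSyst}, so there is no ``paper's own proof'' to compare against. Your argument is the standard controllability-Gramian proof and it is correct: the contrapositive via Cayley--Hamilton for necessity, and the explicit steering control $u(\tau)=B^\top\exp{A^\top(T-\tau)}W(T)^{-1}(\widehat{x}-\exp{AT}x_0)$ for sufficiency, with the Gramian identity $\ker W(T)=\ker K^\top$ established by the usual nonnegativity-plus-analyticity argument (differentiating $\tau\mapsto B^\top\exp{A^\top(T-\tau)}v$ at $\tau=T$ recovers $v^\top A^kB=0$ for all $k$). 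One small remark worth making explicit when you write this up: from $v^\top W(T)v=0$ and $W(T)\succeq 0$ you conclude $W(T)v=0$, which requires positive semidefiniteness rather than just symmetry --- you state this, but the reverse reasoning (from $v\in\ker K^\top$ to $W(T)v=0$) actually goes directly through $v^\top W(T)v=0$ and $W(T)\succeq 0$ as you indicate. The proof is sound; it is precisely the argument found in the reference the paper cites.
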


The following proposition is well known and is e.g. proven in~\cite[Theorem 1.3, p.\,44]{Wonham}. As an example we repeat the proof.

\begin{Proposition}\label{Prop:controllability}
$\Sigma_{n,m}^{\text{cont}}$ is the complement of a proper algebraic variety.
\end{Proposition}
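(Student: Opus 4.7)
The plan is to invoke the Kalman criterion (Lemma~\ref{lem:Kalmam_rank_condition}) to translate controllability into a polynomial rank condition on the entries of $(A,B)$, and then argue that the complement of $\Sigma_{n,m}^{\text{cont}}$ is the common zero set of finitely many real polynomials in those entries.

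First I would identify $\Sigma_{n,m} = \mathbb{R}^{n\times n}\times\mathbb{R}^{n\times m}$ with $\mathbb{R}^{n^2+nm}$ via the obvious coordinatisation, so that the entries of $(A,B)$ play the role of the indeterminates $x_1,\ldots,x_{n^2+nm}$. By Lemma~\ref{lem:Kalmam_rank_condition},
\[
(\Sigma_{n,m}^{\text{cont}})^c = \set{(A,B)\in\Sigma_{n,m}\,\big\vert\,\rk[B,AB,A^2B,\ldots,A^{n-1}B]\leq n-1}.
\]
Since matrix multiplication is bilinear, every entry of $A^iB$ is, for each fixed $i\in\set{0,\ldots,n-1}$, a polynomial of degree $i+1$ in the entries of $(A,B)$. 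Hence each entry of the Kalman matrix $K(A,B):=[B,AB,\ldots,A^{n-1}B]\in\mathbb{R}^{n\times nm}$ belongs to $\mathbb{R}[x_1,\ldots,x_{n^2+nm}]$.

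Next I would apply the Leibniz formula (as in Remark~\ref{rem:submatrix_rank}(ii)) to conclude that every $n\times n$ minor of $K(A,B)$, being a polynomial in the entries of $K(A,B)$, is in turn a polynomial in the entries of $(A,B)$. Listing all such minors as $M_1,\ldots,M_r\in\mathbb{R}[x_1,\ldots,x_{n^2+nm}]$, we have
\[
(\Sigma_{n,m}^{\text{cont}})^c = \bigcap_{i=1}^r M_i^{-1}(\set{0}),
\]
which is an algebraic variety by Lemma~\ref{Rem:Zariski}(ii) (or directly by Corollary~\ref{Cor:some_more_varieties}).

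Finally, to verify that this variety is proper, I exhibit one pair $(A_0,B_0)\in\Sigma_{n,m}$ that is controllable. For example, with $A_0$ the $n\times n$ cyclic shift matrix satisfying $A_0 e_i = e_{i+1}$ for $i<n$, $A_0 e_n = 0$, and $B_0\in\mathbb{R}^{n\times m}$ whose first column is $e_1$ (and remaining columns arbitrary, say zero), the submatrix $[B_{0,\cdot,1},A_0B_{0,\cdot,1},\ldots,A_0^{n-1}B_{0,\cdot,1}] = I_n$ has rank $n$, so $(A_0,B_0)\in\Sigma_{n,m}^{\text{cont}}$. Hence $(\Sigma_{n,m}^{\text{cont}})^c\subsetneq\Sigma_{n,m}$, which completes the proof. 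No step poses a real obstacle; the only thing to be careful about is the observation that entries of $A^iB$ are polynomial---rather than constant---functions of $(A,B)$, so the argument is a direct generalisation of Proposition~\ref{prop:full_rank_property_block} applied after the polynomial substitution $(A,B)\mapsto K(A,B)$.
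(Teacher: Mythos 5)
Your proof is correct and takes essentially the same route as the paper: Kalman criterion, polynomiality of the Kalman matrix entries, all $n\times n$ minors cut out the complement, and an explicit witness $(A_0,B_0)$ whose Kalman matrix has full rank. The only cosmetic difference is your choice of the nilpotent shift for $A_0$ versus the paper's cyclic permutation matrix; both yield $[e_1,\ldots,e_n]$ among the columns of the Kalman matrix.
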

\begin{proof}
From the definition of matrix multiplication we obtain
\begin{align}\label{eq:important_polys}
\forall\,i\in\underline{n}~\forall\,j\in\underline{nm}~\exists\, p_{i,j}\in\mathbb{R}[x_1,\ldots,x_{(n^2+nm)}]: [B,AB,A^2B,\ldots,A^{n-1}B]_{i,j} = p_{i,j}(A,B).
\end{align}
Let~$\widetilde{M}_1,\ldots,\widetilde{M}_q$ be all minors of order~$n$ w.r.t.~$\mathbb{R}^{n\times nm}$ and define the mappings
\begin{align*}
\forall i\in\underline{q}: M_i: \Sigma_{n,m}\to\mathbb{F},\quad (A,B)\mapsto \widetilde{M}_i[B,Ab,\ldots,A^{n-1}B]
\end{align*}
From Lemma~\ref{lem:minors_are_polynomials} and~\eqref{eq:important_polys} we obtain that each~$M_i$ is a polynomial in the entries of~$A$ and~$B$ and by Lemma~\ref{lem:Kalmam_rank_condition} the set
\begin{align*}
\left(\Sigma_{n,m}^{\text{cont}}\right)^c = \bigcap_{i=1}^q M_i^{-1}(\set{0})
\end{align*}
is an algebraic variety; it is proper if not all the mappings~$M_i$ are zero. Consider the fixed matrices
\begin{align*}
A := \begin{bmatrix}
0 & 0 & \ldots & 0 & 1\\
1 & 0 & \ldots & 0 & 0\\
0 & \ddots & \ddots & \vdots & \vdots\\
0 & \ldots & 1 & 0 & 0\\
0 & \ldots & 0 & 1 & 0
\end{bmatrix}\in\mathbb{R}^{n\times n},\quad B := [e_1,0_{n\times (m-1)}].
\end{align*}
Then~$[B,AB,\ldots,A^{n-1}B] = [e_1,0,e_2,0,\ldots,e_n,0]$ and
$$\exists\, i\in\underline{q}:M_i([B,AB,\ldots,A^{n-1}B]) = 1.$$
Hence not all the mappings~$M_i$ are zero and the constructed algebraic variety is proper. This completes the proof of the proposition.
\end{proof}

\begin{Remark}
\begin{enumerate}[(i)]
\item Proposition~\ref{Prop:controllability} and Proposition~\ref{alg_var_nullset} imply the well known fact that~$\Sigma_{n,m}^{\text{cont}}$ is open and dense (see~\cite[Theorem 2.6, p.\,37]{IntOptCont})
\item A similar result was shown by~\cite[Theorem 9.5.2, p.\,324]{Polderman} for the pole placement of controllable matrix pairs.
\item Proposition~\ref{Prop:controllability} can be proved by applying the proof of Proposition~\ref{prop:complex_full_rank_property_pol_mat} to the Hautus criterion (see \cite[Theorem 3.11, p.\,79]{Ryan}). However, since this method is more technical than the usage of the Kalman criterion, we don't write it down.
\item It is notable that the Kalman criterion can be applied to systems with complex coefficients (see\,\cite[Corollary 3.4,\,pp.\,40]{ContTheorLinSyst}). This implies that the restriction to systems with real coefficients was artificial and the set 
\begin{align*}
\set{(A,B)\in\mathbb{C}^{n\times n}\times\mathbb{C}^{n\times m}\,\big\vert\,\eqref{Def_eq:ODE}~\text{is~controllable}}\subseteq\mathbb{C}^{n\times n}\times\mathbb{C}^{n\times m}
\end{align*}
is also generic.
\end{enumerate}
\end{Remark}

\newpage
\section{Controllability of differential-algebraic systems}

Controllability and observabillity of a DAE system of the form
\begin{align}\label{Def_eq:DAE}
\tfrac{\mathrm{d}}{\mathrm{d}t}(Ex) = Ax + Bu,\quad (E,A,B)\in\Sigma_{\ell,n,m}.
\end{align}
can be characterised by algebraic properties of the matrix triple~$(E,A,B)\in\Sigma_{\ell,n,m}.$ We investigate whether these properties are generic and start with controllability. We first define what a solution of the system~\eqref{Def_eq:DAE} is:

\begin{Definition}[Solution of~\eqref{Def_eq:DAE}]
A mapping~$(x,u): \mathbb{R}\to\mathbb{R}^n\times\mathbb{R}^m$ is called a \textit{solution}, if
\begin{align*}
& x \in\mathcal{L}^1_{\text{loc}}(\mathbb{R},\mathbb{R}^\ell ),\,Ex\in\mathcal{AC}(\mathbb{R},\mathbb{R}^n),\,u \in\mathcal{L}^1_{\text{loc}}(\mathbb{R},\mathbb{R}^m),\\
& \exists\, N\subseteq\mathbb{R}~\text{nullset}~\forall\, t\in\mathbb{R}\setminus N: \diff{}{t}(Ex)(t) = Ax(t) + Bu(t).
\end{align*}
The set of all solutions is called the \textit{behaviour} of~\eqref{Def_eq:DAE} and denoted by~$\mathfrak{B}_{[E,A,B]}.$
\end{Definition}

If we turn our attention to linear DAEs, we find several controllability concepts:
\begin{itemize}
\item freely initializability,
\item impulse controllability,
\item completely controllability,
\item strongly controllability and
\item controllability in the behavioural sense.
\end{itemize}
We want to investigate whether and in which cases regarding~$\ell,n,m$ these controllability concepts are generic on~$\Sigma_{\ell,n,m}.$ Since we are solely interested in DAEs with real coefficients, let~$\mathbb{F} = \mathbb{R}$ for the remainder of this chapter.

Our restriction to $\mathbb{F} = \mathbb{R}$ origins in the restriciton of Berger and Reis to such systems in~\cite{DAEs}. In this survey they collect algebraic criteria for all of the controllability concepts which we consider. 

\subsection{Freely initializability}

The first controllability concept is the concept of freely initializable systems. First we recall the definition.

\begin{Definition}[Freely initializability~{\cite[Definition 2.1(a)]{DAEs}}]\label{Def:freely_initializable}
$(E,A,B)\in\Sigma_{\ell,n,m}$ is \textit{freely initializable}, if
\begin{align*}
\forall\, x_0\in\mathbb{R}^n~\exists\, (x,u)\in\mathfrak{B}_{[E,A,B]}: x(0) = x_0.
\end{align*}
\end{Definition}

We briefly discuss the terminology.

\begin{Remark}
In many books freely initializable systems are called \textit{controllable at infinity} (see e.g.~\cite{DAEs}). However, the terminology controllable at infinity is confusing. Some authors (e.g.~\cite{MatPenc}) suggest the more intuitive term \textit{freely initializable}. We adopt the latter.
\end{Remark}

To prove that the set of freely initializable systems is generic, we need an algebraic criterion.

\begin{Lemma}[Algebraic criterion for freely initializability~{\cite[p.\,32]{DAEs}}]\label{lem:alg_crit_free_init}
The system~\eqref{Def_eq:DAE} is freely initializable if, and only if,
\begin{align*}
\rk[E,B] = \rk[E,A,B].
\end{align*}
\end{Lemma}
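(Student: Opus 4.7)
The equality $\rk[E,B] = \rk[E,A,B]$ is equivalent to the inclusion $\im A \subseteq \im E + \im B$, since $\im[E,A,B] = \im[E,B] + \im A$ and ranks count dimensions of images. The latter inclusion is in turn equivalent to the existence of matrices $V \in \mathbb{R}^{n\times n}$ and $W \in \mathbb{R}^{m\times n}$ such that $A = EV + BW$ (each column of $A$ is a combination of the columns of $E$ and $B$). I work with this factorized form throughout.

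For sufficiency, assume $A = EV + BW$ and fix $x_0 \in \mathbb{R}^n$. Define the analytic functions
\[
x(t) := e^{tV} x_0, \qquad u(t) := -W e^{tV} x_0.
\]
Then $x \in \mathcal{L}^1_{\text{loc}}(\mathbb{R},\mathbb{R}^n)$, $u \in \mathcal{L}^1_{\text{loc}}(\mathbb{R},\mathbb{R}^m)$, and $Ex$ is smooth. Using $A - BW = EV$ one computes
\[
\tfrac{\mathrm{d}}{\mathrm{d}t}(Ex)(t) \;=\; E V e^{tV} x_0 \;=\; (A - BW) e^{tV} x_0 \;=\; A x(t) + B u(t),
\]
so $(x, u) \in \mathfrak{B}_{[E,A,B]}$ with $x(0) = x_0$, and hence the system is freely initializable.

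For necessity I argue by contrapositive. Suppose $\rk[E,B] < \rk[E,A,B]$; then there exist $\phi \in \mathbb{R}^\ell$ with $\phi^T E = 0$ and $\phi^T B = 0$, and $x_0 \in \mathbb{R}^n$ with $\phi^T A x_0 \neq 0$. Assume for a contradiction that a solution $(x,u) \in \mathfrak{B}_{[E,A,B]}$ with $x(0) = x_0$ exists. Left-multiplying the DAE by $\phi^T$ and using $\phi^T B = 0$ yields $\tfrac{\mathrm{d}}{\mathrm{d}t}(\phi^T Ex)(t) = \phi^T A x(t)$ almost everywhere, while $\phi^T E = 0$ forces $\phi^T E x \equiv 0$, so $\phi^T A x(t) = 0$ a.e. Evaluating at $t=0$ then contradicts $\phi^T A x_0 \neq 0$.

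The main obstacle is the regularity at $t=0$ in the necessity direction: the identity $\phi^T A x = 0$ holds only almost everywhere, and so does not literally pin down the pointwise value $\phi^T A x(0)$ unless $x$ is sufficiently regular near $0$. I would resolve this by invoking the standard fact that in the behaviour of a linear time-invariant DAE the smooth trajectories are dense and every initial value attainable by some $\mathcal{L}^1_{\text{loc}}$-solution is already attainable by a $C^\infty$-solution, for which $\phi^T A x(\cdot)$ is continuous and pointwise evaluation at $0$ is legitimate; alternatively, one may transform $(E,A,B)$ into its Kronecker--Weierstrass canonical form and check the criterion block by block, which is the route taken in~\cite{DAEs}.
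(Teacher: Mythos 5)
The thesis gives no proof here: it cites \cite[Corollary 4.3]{DAEs}, where the criterion is obtained from a quasi-Kronecker/Weierstrass decomposition of $(E,A,B)$. Your sufficiency half is correct and, for this single criterion, more direct than the canonical-form route: $\rk[E,B]=\rk[E,A,B]$ is indeed equivalent to $\im A\subseteq\im E+\im B$, hence to the existence of $V,W$ with $A=EV+BW$, and the smooth trajectory $\big(e^{tV}x_0,\,-We^{tV}x_0\big)$ lies in $\mathfrak{B}_{[E,A,B]}$ with the prescribed value at $t=0$.

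The gap you flag in the necessity half is real, but it sits in the solution concept rather than merely in regularity at $0$. With $x\in\mathcal{L}^1_{\text{loc}}$, the pointwise value $x(0)$ of a solution is only determined modulo $\ker E$: changing $x$ at the single instant $t=0$ by any $v\in\ker E$ keeps $Ex$ absolutely continuous and the equation holding a.e., so it produces another element of $\mathfrak{B}_{[E,A,B]}$. Under a literal pointwise reading the rank criterion does not even characterise the condition $\forall\, x_0~\exists\,(x,u)\in\mathfrak{B}_{[E,A,B]}: x(0)=x_0$; take $E=0$, $A=I_n$, $B=0$ with $n\geq 1$, where the only constraint is $x=0$ a.e., so every pointwise $x(0)$ is attainable, yet $\rk[E,B]=0<n=\rk[E,A,B]$. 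The convention implicit in \cite{DAEs}, which you should state, is that $x(0)$ refers to a representative of $x$ continuous near $0$. Under that convention your contradiction closes at once: $\phi^\top Ax\equiv 0$ a.e., so by continuity $\phi^\top Ax(0)=\phi^\top Ax_0=0$. Your first proposed patch is not safe as formulated: density of smooth trajectories in the behaviour does not yield that every pointwise-attainable initial value is smoothly attainable (the $E=0$ example already refutes it under the literal reading). Your second suggestion, the Kronecker/Weierstrass reduction, is the fallback that actually works and is the route taken in \cite{DAEs}.
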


We can use this criterion and the results of Section 3 to prove that the set of freely initializable systems is generic.

\begin{Proposition}\label{prop:freely_initializable}
The set
\begin{align*}
S = \set{(E,A,B)\in\Sigma_{\ell,n,m}\,\big\vert\,\eqref{Def_eq:DAE}~\text{freely~initializable}}
\end{align*}
is generic if, and only if,~$\ell\leq n+m.$
\end{Proposition}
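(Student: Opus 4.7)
The plan is to translate the statement, via the algebraic criterion of Lemma~\ref{lem:alg_crit_free_init}, into a rank condition on the block matrices $[E,B]\in\mathbb{R}^{\ell\times(n+m)}$ and $[E,A,B]\in\mathbb{R}^{\ell\times(2n+m)}$: freely initializability is equivalent to $\rk[E,B]=\rk[E,A,B]$. Since we always have $\rk[E,B]\le\rk[E,A,B]\le\ell$ and $\rk[E,B]\le n+m$, the equality is forced as soon as $[E,B]$ has full row rank $\ell$, which is possible precisely when $\ell\le n+m$. This drives the two directions of the proof.

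For the sufficient direction, assume $\ell\le n+m$. Apply Proposition~\ref{prop:full_rank_property_block} to the single-row block matrix $[E,B]$ to see that
\[
\widetilde{S}:=\set{(E,B)\in\mathbb{R}^{\ell\times n}\times\mathbb{R}^{\ell\times m}\,\big\vert\,\rk[E,B]=\ell}
\]
is generic. Using Corollary~\ref{lem:product_of_generic_sets} to attach the free $A$-coordinate (and rearranging coordinates by a permutation, which preserves genericity via Lemma~\ref{Lem:transformgenericsets}), I obtain that $\set{(E,A,B)\in\Sigma_{\ell,n,m}\,\big\vert\,\rk[E,B]=\ell}$ is generic in $\Sigma_{\ell,n,m}$. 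On this set, $\ell=\rk[E,B]\le\rk[E,A,B]\le\ell$ forces equality, so every triple is freely initializable. Genericity of $S$ follows from Remark~\ref{Rem:InclusionOfGenericSets}.

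For the necessary direction, assume $\ell>n+m$. I will exhibit a nonempty Euclidean open ball contained in $S^c$; together with Corollary~\ref{cor:intersection_union1}, which says that the complement of any real generic set is a Lebesgue nullset, this rules out genericity of $S$. Because $n\ge 1$, we have $\min\set{\ell,2n+m}>n+m$, so a triple $(E_0,A_0,B_0)$ exists with $\rk[E_0,B_0]=n+m$ (maximal) and $\rk[E_0,A_0,B_0]=\min\set{\ell,2n+m}>n+m$. Explicitly, I place the identity block of $[E_0,B_0]$ in the first $n+m$ rows, and an additional identity block of $A_0$ in rows $n+m+1,\ldots,\min\set{\ell,2n+m}$, using different columns of $A_0$; the two subcases $\ell\ge 2n+m$ and $n+m<\ell<2n+m$ only differ in how many extra columns of $A_0$ are required, and in both cases the block pattern fits into $\mathbb{R}^{\ell\times n}$. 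Since $\set{M\,|\,\rk M\ge k}$ is open, both ranks are locally constant at $(E_0,A_0,B_0)$ (each sitting at its attained maximum), so the strict inequality $\rk[E,B]<\rk[E,A,B]$ persists on a Euclidean ball around $(E_0,A_0,B_0)$, which therefore lies in $S^c$ and has positive Lebesgue measure.

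The main obstacle is the necessary direction: while the idea (find a robust violation of the rank equality) is straightforward, care is needed in the case $n+m<\ell<2n+m$ to ensure that the identity block of $A_0$ can actually be placed in rows disjoint from those used by $[E_0,B_0]$ while using only the $n$ available columns of $A_0$; this is exactly the bookkeeping $\ell-(n+m)\le n$, guaranteed by $\ell\le 2n+m$, and in the regime $\ell>2n+m$ we simply pick $A_0$ with $\rk[E_0,A_0,B_0]=2n+m$. Everything else reduces to already-proved generic full-rank and measure-theoretic facts.
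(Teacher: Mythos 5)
Your proof is correct, and the skeleton matches the paper's: translate freely initializability into the rank equality $\rk[E,B]=\rk[E,A,B]$ via the algebraic criterion, establish genericity for $\ell\le n+m$ via the generic full-rank proposition for block matrices, and rule out genericity for $\ell>n+m$ via measure theory. Two tactical differences are worth noting. In the sufficient direction you need only the single generic set $\set{\rk[E,B]=\ell}$: the chain $\ell=\rk[E,B]\le\rk[E,A,B]\le\ell$ forces the other rank, whereas the paper separately intersects with $\set{\rk[E,A,B]=\ell}$. In the necessary direction the paper places $S$ inside the complement of the generic set where $\rk[E,B]=n+m$ and $\rk[E,A,B]=\min\set{\ell,2n+m}$, concluding that $S$ is itself a Lebesgue nullset; you instead build one explicit witness triple with those ranks and invoke lower semicontinuity of rank to obtain a Euclidean ball inside $S^c$, which contradicts the nullset property of a generic set's complement. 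Your construction is a shade more elementary and avoids re-establishing genericity of the auxiliary sets; the paper's version records the slightly stronger fact that $S$ itself has measure zero. Both arguments are sound and rest on the same underlying rank-genericity and measure-theoretic facts, and your bookkeeping for the case $n+m<\ell<2n+m$ (that $\ell-(n+m)\le n$ columns of $A$ suffice) is correct.
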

\begin{proof}
Lemma~\ref{lem:alg_crit_free_init} yields 
\begin{align*}
S = \set{(E,A,B)\in\Sigma_{\ell,n,m}\,\big\vert\,\rk[E,A,B] = \rk [E,B]}
\end{align*}
\begin{itemize}
\item[$\implies$] Let~$\ell>n+m.$ We show that~$S$ is a Lebesgue nullset and thus by Corollary~\ref{Cor:GenSetsInfiniteMeasure} not generic.

Define the sets
\begin{align*}
S_1 := \set{(E,A,B)\in\Sigma_{\ell,n,m}\,\big\vert\,\rk[E,B] = n+m}
\end{align*}
and
\begin{align*}
S_2 := \set{(E,A,B)\in\Sigma_{\ell,n,m}\,\big\vert\,\rk[E,A,B] \geq \min\set{\ell, 2n+m}}.
\end{align*}
With Proposition~\ref{prop:full_rank_property_block} it is clear that~$S_2$ is generic. With~$\ell>n+m$ we find the isomorphy 
\begin{align*}
S_1 \cong \set{(E,B)\in\mathbb{R}^{\ell\times n}\times\mathbb{R}^{\ell\times m}\,\big\vert\,\rk[E,B] \geq n+m}\times\mathbb{R}^{\ell\times n}
\end{align*}
and Proposition~\ref{prop:full_rank_property_block}, Lemma~\ref{lem:product_of_generic_sets} and Lemma~\ref{Lem:transformgenericsets} yield that~$S_1$ is also a generic set. Hence~$S_1\cap S_2$ is a generic set by Lemma~\ref{cor:intersection_union}. Since~$\ell>n+m$ we conclude
\begin{align*}
\forall (E,A,B)\in S_1\cap S_2: \rk[E,A,B]\neq\rk[E,B].
\end{align*}
Corollary~\ref{cor:intersection_union1} yields that~$(S_1\cap S_2)^c$ is a Lebesgue nullset. Since the Lebesgue measure is complete,~$S\subseteq(S_1\cap S_2)^c$ implies that~$S$ is also a Lebesgue nullset.
\item[$\impliedby$] Let~$\ell\leq n+m.$ Then for any~$(E,A,B)\in\Sigma_{\ell,n,m}$ we have~$\rk[E,A,B]\leq\ell$ as well as~$\rk[E,B]\leq\ell$ and equality can be achieved. By Proposition~\ref{prop:full_rank_property_block}, the sets~$$\widetilde S_1 := \set{(E,A,B)\in\Sigma_{\ell,n,m}\,\big\vert\,\rk[E,B] = \ell}$$ and~$$\widetilde S_2 := \set{(E,A,B)\in\Sigma_{\ell,n,m}\,\big\vert\,\rk[E,A,B] = \ell}$$ are generic sets. By Lemma~\ref{cor:intersection_union},
\begin{align*}
\widetilde{S}_1\cap\widetilde{S}_2
\end{align*}
is a generic set. We conclude with Remark~\ref{Rem:InclusionOfGenericSets} that
\begin{align*}
\set{(E,A,B)\in\Sigma_{\ell,n,m}\,\big\vert\,\rk[E,A,B] = \rk[E,B]}\supset \widetilde{S}_1\cap\widetilde{S}_2
\end{align*}
is a generic set.
\end{itemize}
\end{proof}

\subsection{Impulse controllability}

The second controllability concept that we investigate is the concept of impulse controllable systems. This was also investigated by Belur and Shankar in~\cite{Belur}. They consider the class of polynomial DAEs
\begin{align}\label{eq:intro_2}
M(\tfrac{\mathrm{d}}{\mathrm{d}t}) y = 0\qquad\text{for}~M(\tfrac{\mathrm{d}}{\mathrm{d}t})\in\mathbb{C}\left[\tfrac{\mathrm{d}}{\mathrm{d}t}\right]^{\ell\times q}~\text{and}~\ell\leq q.
\end{align}
Note that~\eqref{Def_eq:DAE} is equivalent to
\begin{align*}
[\tfrac{\mathrm{d}}{\mathrm{d}t}E-A,-B]\begin{bmatrix}
x\\u
\end{bmatrix} = 0
\end{align*}
and hence the systems of the form~\eqref{Def_eq:DAE} form a subclass of~\eqref{eq:intro_2}. If we restrict our attention to the special case
\begin{align}\label{eq:intro_6}
M(\tfrac{\mathrm{d}}{\mathrm{d}t}) = \left[\tfrac{\mathrm{d}}{\mathrm{d}t}E-A,-B\right],\quad (E,A,B)\in\Sigma_{\ell,n,m}, \ell\leq n+m,
\end{align}
then we can compare our results with those of Belur and Shankar. First we give the definition of impulse controllable systems. 

\begin{Definition}[Impulse controllability~{\cite[Definition 2.1(b), p.\,9]{DAEs}}]\label{Def:Impulse_controllable}
The system~\eqref{Def_eq:DAE} is \textit{impulse controllable} if, and only if,
\begin{align*}
\forall\, x_0\in\mathbb{R}^n~\exists\, (x,u)\in\mathfrak{B}_{[E,A,B]}: Ex_0 = Ex(0).
\end{align*}
\end{Definition}

For our next investigations we need an algebraic criterion for impulse controllability.

\begin{Lemma}[Algebraic criterion for impulse controllability~{\cite[Corollary 4.3, p.\,32]{DAEs}}]\label{lem:impulse_controllability}
The system~\eqref{Def_eq:DAE} is impulse controllability if, and only if,
\begin{align}\label{Def:impulse_controllability}
\forall\, Z\in\mathbb{R}^{n\times n-\rk E}~\text{with}~\im_{\mathbb{R}} Z = \ker_{\mathbb{R}}E: \rk[E,A,B] = \rk[E,AZ,B].
\end{align}
\end{Lemma}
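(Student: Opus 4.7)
The plan is to reformulate both sides of the equivalence in a coordinate-free way and then reduce to a canonical form in which the content becomes transparent.

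First I would rewrite the rank condition. Since the columns of $Z\in\mathbb{R}^{n\times(n-\rk E)}$ span $\ker_{\mathbb{R}} E$, the columns of $AZ$ span $A(\ker_{\mathbb{R}} E)$, and hence
\[
\rk[E,A,B]=\rk[E,AZ,B]\ \Longleftrightarrow\ \im A\subseteq\im E+A(\ker_{\mathbb{R}} E)+\im B;
\]
this shows in particular that the stated condition is independent of the choice of $Z$, which is implicit in the statement but worth verifying first. Both this image inclusion and impulse controllability are invariant under the system equivalence $(E,A,B)\mapsto(SET,SAT,SB)$ with $S\in\mathcal{GL}(\mathbb{R}^\ell)$ and $T\in\mathcal{GL}(\mathbb{R}^n)$: the rank condition transforms by elementary linear algebra, and for impulse controllability the coordinate change $\tilde x=T^{-1}x$ maps $\mathfrak{B}_{[E,A,B]}$ bijectively onto $\mathfrak{B}_{[SET,SAT,SB]}$ while preserving the initial-value constraint $Ex_0=Ex(0)$.

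Second, I would invoke a quasi-Kronecker-type decomposition (such as the Berger--Trenn form used in~\cite{DAEs}) to bring $(E,A,B)$, after system equivalence, into block-diagonal form with four standard blocks: a regular ODE block, a nilpotent algebraic block $(N,I_k,B_N)$ with $N^\nu=0$, an under-determined singular block, and an over-determined singular block. Both the image inclusion and impulse controllability decouple across blocks, and a direct check shows the ODE and under-determined blocks trivially satisfy both conditions simultaneously (for the ODE block because $\ker E=\{0\}$, and for the under-determined block because of the excess input freedom), while the over-determined block forces $Ex(0)=0$ componentwise in a way that is consistent with both sides. The entire content of the lemma therefore sits in the nilpotent block.

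For $(N,I_k,B_N)$ the image inclusion reduces to $\mathbb{R}^k=\im N+\ker N+\im B_N$. Using a polynomial ansatz $x(t)=\sum_{j=0}^{\nu-1}x_j t^j/j!$, $u(t)=\sum_{j=0}^{\nu-1}u_j t^j/j!$, membership in $\mathfrak{B}_{[N,I_k,B_N]}$ is equivalent to the finite recursion $Nx_{j+1}=x_j+B_Nu_j$, which terminates by nilpotency of $N$; one then verifies by induction on~$\nu$ that this recursion admits a solution with $Nx(0)$ prescribed arbitrarily in $\im N$ if and only if the above image equality holds. The principal obstacle is not the final blockwise computation but the careful handling of the decomposition for arbitrary $(\ell,n,m)$ (including non-square pencils and singular Kronecker parts) together with the verification that both sides of the equivalence really do split across blocks; a self-contained proof would either cite the quasi-Kronecker decomposition theorem or, equivalently, carry out a Wong-sequence argument directly on $(E,A,B)$ to isolate the nilpotent subspace.
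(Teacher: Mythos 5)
The paper does not prove Lemma~\ref{lem:impulse_controllability}; it cites it verbatim as~\cite[Corollary 4.3]{DAEs} and uses it as a black box. So your proposal has to be judged against the cited source, not against a proof in this thesis. Your overall strategy — rewrite the rank condition as the image inclusion $\im A\subseteq\im E+A(\ker_{\mathbb{R}} E)+\im B$, check coordinate invariance, pass to a canonical decomposition, and reduce the content to the nilpotent part — is indeed the route Berger and Reis take, and the first two steps of your sketch (the image reformulation, and the bijection of behaviours under $(E,A,B)\mapsto(SET,SAT,SB)$) are correct as stated.

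There are, however, three concrete gaps. First, system equivalence alone does not produce the block decomposition you appeal to: the quasi-Kronecker form normalises only the pencil $sE-A$, not the input matrix, and after $(E,A,B)\mapsto(SET,SAT,SB)$ the matrix $SB$ is unstructured. What you need is the Berger--Reis feedback form, obtained under the larger group $(E,A,B)\mapsto\bigl(SET,\,S(A+BF)T,\,SBV\bigr)$, and you must additionally verify feedback invariance of both sides of the equivalence (this does hold: $BFZ$ has image inside $\im B$, so the rank condition is unchanged, and the map $(x,u)\mapsto(x,u-Fx)$ is a behaviour bijection that fixes the $x$-component, hence fixes impulse controllability). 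Second, your list of four blocks conflates the actual block types: the feedback form separates controllable from uncontrollable ODE blocks and controllable from uncontrollable nilpotent blocks, and it is precisely the \emph{input-free} nilpotent blocks of size $\geq 2$ together with the overdetermined blocks that falsify both sides — so the "entire content" does not sit in a single nilpotent block, and the overdetermined block is not merely "consistent with both sides" but actively falsifies both. Third, for the necessity direction on the nilpotent block, the polynomial ansatz only establishes sufficiency; to show a system is \emph{not} impulse controllable you must argue that no $\mathcal{L}^1_{\mathrm{loc}}$ control and trajectory in $\mathfrak{B}_{[N,I,B_N]}$ can realise a prescribed $Nx(0)\in\im N$, which requires the explicit solution formula $x=-\sum_{j\geq 0}(N\tfrac{\mathrm{d}}{\mathrm{d}t})^{j}B_Nu$ in a distributional or weak sense rather than a purely polynomial recursion. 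With these three points repaired, your plan becomes essentially the proof given in~\cite{DAEs}.
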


Belur and Shankar use a different criterion for impulse controllability (see~\cite[Theorem 2.1, p.\,492]{Belur}) than we do. Their criterion can be applied to real systems (see~\cite[Theorem 5.5, p.\,2468]{Belur2}) and hence their results regarding genericity hold true, if we consider systems with real coefficients instead of complex coefficients. 

In~\cite[Theorem 4.1, p.\,496]{Belur} Belur and Shankar proved that the set of impulse controllable systems of the form~\eqref{eq:intro_2} is generic. This result can not be reached by studying the systems of the form~\eqref{eq:intro_1} and their corresponding polynomial representation~\eqref{eq:intro_6}.

With the help of Lemma~\ref{lem:impulse_controllability} we formulate our main result regarding the case of systems of the form~\eqref{Def_eq:DAE}.

\begin{Proposition}\label{prop:impulse_controllable}
The set
$S = \set{(E,A,B)\in\Sigma_{\ell,n,m}\,\big\vert\,\eqref{Def_eq:DAE}~\text{impulse~controllable}}$
is generic if, and only if,~$\ell\leq n+m.$
\end{Proposition}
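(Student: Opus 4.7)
I would mirror the strategy of Proposition~\ref{prop:freely_initializable}, relying on two observations: first, every freely initializable system is impulse controllable, so the sufficient direction is essentially inherited; second, on the generic locus $\rk E = n$ the auxiliary matrix $Z$ in Lemma~\ref{lem:impulse_controllability} has zero columns, and the impulse-controllability criterion collapses to the free-initializability criterion.

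\textbf{Sufficiency $(\ell\leq n+m)$.} For any admissible $Z$ the columns of $AZ$ lie in the column span of $A$, so
\[
\rk[E,B] \;\leq\; \rk[E,AZ,B] \;\leq\; \rk[E,A,B].
\]
Thus $\rk[E,A,B] = \rk[E,B]$ forces $\rk[E,AZ,B] = \rk[E,A,B]$ for every admissible $Z$, and by Lemmas~\ref{lem:alg_crit_free_init} and~\ref{lem:impulse_controllability} the set of freely initializable triples is contained in~$S$. Proposition~\ref{prop:freely_initializable} yields genericity of the former when $\ell\leq n+m$, and Remark~\ref{Rem:InclusionOfGenericSets} transfers genericity to~$S$.

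\textbf{Necessity $(\ell> n+m)$.} From $\ell>n+m\geq n+1$ and $n\geq 1$, Proposition~\ref{prop:full_rank_property_block} shows that the three sets $G_1 := \set{(E,A,B)\in\Sigma_{\ell,n,m} : \rk E \geq n}$, $G_2 := \set{(E,A,B)\in\Sigma_{\ell,n,m} : \rk[E,B] \geq n+m}$ and $G_3 := \set{(E,A,B)\in\Sigma_{\ell,n,m} : \rk[E,A,B] \geq n+m+1}$ are each generic (using $n \leq \min\set{\ell,n}$, $n+m \leq \min\set{\ell,n+m}$ and $n+m+1 \leq \min\set{\ell,2n+m}$ respectively). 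By Lemma~\ref{cor:intersection_union1}, $G := G_1\cap G_2\cap G_3$ is generic. On $G$ the matrix $Z\in\mathbb{R}^{n\times(n-\rk E)} = \mathbb{R}^{n\times 0}$ has no columns, so $\rk[E,AZ,B] = \rk[E,B] = n+m < n+m+1 \leq \rk[E,A,B]$, and impulse controllability fails by Lemma~\ref{lem:impulse_controllability}. Hence $S\cap G = \emptyset = \emptyset\cap G$, and Lemma~\ref{lem:generic_simplification} applied with generic set $G$ forces $S$ to share genericity with~$\emptyset$; since $\emptyset$ is not generic, neither is~$S$.

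\textbf{Main obstacle.} The one real subtlety is the $E$-dependence of $Z$, which a priori blocks any attempt to write the criterion of Lemma~\ref{lem:impulse_controllability} as a closed polynomial condition on $\Sigma_{\ell,n,m}$. Both directions sidestep this: in the sufficient direction the sandwich inequality makes the particular choice of $Z$ irrelevant, and in the necessary direction the generic full column rank of $E$ makes $Z$ empty. Once this is observed, what remains is precisely the same rank counting already used in Propositions~\ref{prop:freely_initializable} and~\ref{prop:full_rank_property_block}.
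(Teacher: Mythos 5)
Your proof is correct, and the sufficiency direction takes a genuinely different and cleaner route than the paper. The paper splits into two cases: for $\ell\geq n$ it intersects $S$ with the generic sets $\{\rk[E,A,B]=\min\{\ell,2n+m\}\}$ and $\{\rk E=n\}$ (on which $Z$ is empty), reducing the criterion to $\rk[E,B]=\rk[E,A,B]$ and then grinding through rank counting; for $\ell<n$ it exhibits the specific generic subset $\{\rk[E,B]=\ell\}$ contained in $S$. You instead observe the rank sandwich $\rk[E,B]\leq\rk[E,AZ,B]\leq\rk[E,A,B]$, valid for every admissible $Z$, which shows immediately via Lemmas~\ref{lem:alg_crit_free_init} and \ref{lem:impulse_controllability} that every freely initializable triple is impulse controllable; Proposition~\ref{prop:freely_initializable} and Remark~\ref{Rem:InclusionOfGenericSets} then give sufficiency in one stroke with no case split. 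The paper's $\ell<n$ branch is in fact the same sandwich in disguise, but spelled out only at the maximal rank value $\ell$, so your version makes the structural inclusion transparent and uniform. Your necessity argument mirrors the paper's $\ell\geq n$ branch (the key point in both being that $\ell>n+m$ forces $\rk E=n$ and $\rk[E,B]<\rk[E,A,B]$ generically, so the empty-$Z$ criterion fails). The only cosmetic omission is that $G_1$ and $G_2$ impose rank conditions on only part of the triple; to recognize them as generic subsets of $\Sigma_{\ell,n,m}$ one should invoke Corollary~\ref{lem:product_of_generic_sets} (to pad with the unused factor) and Lemma~\ref{Lem:transformgenericsets} (to reorder the blocks for $[E,B]$), exactly as the paper does in the analogous steps of Proposition~\ref{prop:freely_initializable}; this is routine and does not affect correctness.
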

\begin{proof}
We consider the two cases~$\ell\geq n$ and~$\ell<n.$
\begin{enumerate}
\item[$\ell\geq n$:] Let~$d:=\min\set{\ell,2n+m}.$ By Proposition~\ref{prop:full_rank_property_block} the set 
\begin{align*}
S_1 := \set{(E,A,B)\in\Sigma_{\ell,n,m}\,\big\vert\,\rk [E,A,B] = d}
\end{align*}
is generic. Applying Proposition~\ref{prop:full_rank_property_block} again yields that~$\set{E\in\mathbb{R}^{\ell\times n}\,\big\vert\,\rk E = n}$ is generic and therefore Lemma~\ref{lem:product_of_generic_sets} gives that the set
\begin{align*}
S_2 := \set{(E,A,B)\in\Sigma_{\ell,n,m}\,\big\vert\,\rk E = n} = \set{E\in\mathbb{R}^{\ell\times n}\,\big\vert\,\rk E\geq n}\times \Sigma_{n,m}
\end{align*}
is generic. With Corollary~\ref{Cor:PartitionInGenericSets}, Lemma~\ref{cor:intersection_union} and Remark~\ref{Rem:InclusionOfGenericSets} we find that~$S$ is a generic set if, and only if,~$S\cap S_1\cap S_2$ is generic. For any~$(E,A,B)\in S_2$ we find by the rank-nullity Theorem that~$\ker E = \set{0}.$ Hence the condition~\eqref{Def:impulse_controllability} is, for any~$(E,A,B)\in S_2$, equivalent to 
\begin{align*}
\rk[E,A,B] = \rk[E,B]
\end{align*} 
and by Lemma~\ref{lem:impulse_controllability}, we find
\begin{align*}
S\cap S_1\cap S_2 = \set{(E,A,B)\in\Sigma_{n,n,m}\,\big\vert\,\rk [E,B] = d}.
\end{align*}
It is evident that~$S\cap S_1\cap S_2 = \emptyset$, if~$d = 2n+m.$ If~$d = \ell$, then Proposition~\ref{prop:full_rank_property_block} and Lemma~\ref{lem:product_of_generic_sets} imply that~$S\cap S_1\cap S_2$ is generic if, and only if,~$\ell\leq n+m.$ This proves the first case.
\item[$\ell < n$:] Put~$\widetilde{S} := \set{(E,A,B)\in\Sigma_{n,n,m}\,\big\vert\,\rk[E,B] = \ell = \min\set{\ell,n+m}}.$ With Proposition~\ref{prop:full_rank_property_block}, Lemma~\ref{lem:product_of_generic_sets}, Lemma~\ref{Lem:transformgenericsets} and the congruence 
\begin{align*}
\widetilde{S}\cong \set{(E,B)\in\Sigma_{n,m}\,\big\vert\,\rk[E,B] = \ell}\times \mathbb{R}^{n\times n}
\end{align*}
we find that~$\widetilde{S}$ is a generic set. It is evident that the properties
\begin{align*}
\forall\,(E,A,B)\in\Sigma_{\ell,n,m}~\forall\, Z\in\mathbb{R}^{n\times n-\rk E}~\text{with}~\im_{\mathbb{R}} Z = \ker_{\mathbb{R}}E: \rk[E,AZ,B]\leq \ell
\end{align*}
and
\begin{align*}
\forall\,(E,A,B)\in\widetilde{S}~\forall\, Z\in\mathbb{R}^{n\times n-\rk E}~\text{with}~\im_{\mathbb{R}} Z = \ker_{\mathbb{R}}E: \rk[E,AZ,B]=\ell
\end{align*}
hold true. By Lemma~\ref{lem:impulse_controllability}, we find that~$\widetilde{S}\subseteq S$ and hence, by Remark~\ref{Rem:InclusionOfGenericSets},~$S$ is generic.
\end{enumerate}
\end{proof}


The class of systems of the form~\eqref{eq:intro_6} is treated in~\cite[Theorem 4.2, p.\,499]{Belur}. Belur and Shankar proved that the set of impulse controllable systems~\eqref{eq:intro_6} is generic. Moreover, they restrict the degree of the polynomials or put certain entries of~$E,\,A$ and~$B$ to zero resp., and find a criterion on the degrees of the entries so that the set of impulse controllable restricted systems is generic. We do not consider such a restriction of the entries of~$E,\,A$ and~$B$ and hence the result of Belur and Shankar is more general. Instead we consider systems of the form~\eqref{eq:intro_6} with real coefficients and~$\ell>n+m$ and their matrix representation~$(E,A,B)\in\Sigma_{\ell,n,m}$. We find that the set of such impulse controllable systems is not generic. This is a corollary of~\cite[Theorem 4.2]{Belur}, if we add~$\ell-(n+m)$ rows of zeros to~$E,\,A$ and~$B$.

It is noteable that Proposition~\ref{prop:impulse_controllable} can be derived from~\cite{Belur} while a generalization to the case of systems with complex coefficients might require some more work.

Belur and Shankar also consider a generalization of~\cite[Theorem 4.2]{Belur} to systems of the form~\eqref{eq:intro_2} with
\begin{align*}
\forall\, i\in\underline{\ell}~\forall\,j\in\underline{q}: \deg M(\tfrac{\mathrm{d}}{\mathrm{d}t})\leq c_{i,j},\quad c_{i,j}\in\mathbb{N}
\end{align*}
in~\cite[Theorem 4.3, pp.\,499]{Belur}. We do not consider such systems.

\subsection{Completely controllable}

The third controllability concept is the concept of completely controllable systems. This concept is defined as follows.

\begin{Definition}[Completely controllable~{\cite[Definition 2.1(g), p.\,9]{DAEs}}]\label{Def:Completely_controllable}
The system~\eqref{Def_eq:DAE} is \textit{completely controllable} if, and only if,
\begin{align*}
\exists\, T>0~\forall\, x_0,x_T\in\mathbb{R}^n~\exists\, (x,u)\in\mathfrak{B}_{[E,A,B]}:x(0) = x_0\wedge x(T) = x_T
\end{align*}
\end{Definition}

As for all of the controllability concepts considered in this section, there is an algebraic criterion for completely controllable systems.

\begin{Lemma}[Algebraic criterion for completely controllability~{\cite[Corollary 4.3, p.\,32]{DAEs}}]\label{lem:compStab}
The system~\eqref{Def_eq:DAE} is completely controllable if, and only if,
\begin{align*}
\forall\,\lambda\in\mathbb{C}: \rk [E,A,B] = \rk [E,B] = \rk[\lambda E-A,B].
\end{align*}
\end{Lemma}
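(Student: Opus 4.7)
This lemma is cited from the Berger--Reis survey, so my plan is to outline the standard proof, which proceeds by reducing $(E,A,B)$ to a canonical form and then checking both directions block by block. The key observation is that the property of being completely controllable as well as each of the three ranks $\rk[E,A,B]$, $\rk[E,B]$, $\rk[\lambda E-A,B]$ is invariant under the equivalence $(E,A,B)\sim(SET,SAT,SB)$ for $S\in\mathcal{GL}(\mathbb{R}^\ell)$ and $T\in\mathcal{GL}(\mathbb{R}^n)$, so I may work in any convenient normal form.

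First I would bring $(E,A,B)$ into Kronecker canonical form, obtaining a block-diagonal decomposition into an ODE block $(I_r,J,B_1)$, a nilpotent DAE block $(N,I_s,B_2)$ with $N$ nilpotent, under-determined singular $L$-blocks, and possibly over-determined $L^\top$-blocks. The behaviour $\mathfrak{B}_{[E,A,B]}$ then splits as a direct product of the block-wise behaviours, and complete controllability reduces to complete controllability of each block. The three rank expressions decompose additively over the blocks as well, so the equivalence can be checked one block at a time.

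For the forward direction, an over-determined $L^\top$-block forces an algebraic constraint on the trajectories that no input can overcome, so no such block occurs; this removes the potential rank drop of $[\lambda E-A,B]$ at ``$\lambda=\infty$'' and contributes to $\rk[E,A,B]=\rk[E,B]$. For the ODE block, complete controllability is equivalent to the Kalman condition by Lemma~\ref{lem:Kalmam_rank_condition}, and hence by the Hautus test to $\rk[\lambda I_r-J,B_1]=r$ for every $\lambda\in\mathbb{C}$. For the nilpotent block, the explicit expression $x_2=-\sum_{k\geq 0}N^kB_2u^{(k)}$ shows that reachability forces $[N,B_2]$ to have full row rank, which is precisely the block-wise identity $\rk[E,A,B]=\rk[E,B]$. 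Summing the block-wise conclusions gives the three global rank identities.

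For the converse I would assemble a steering control by superposition on the canonical blocks: a Gramian-based control on the ODE block, a polynomial-in-$t$ control on the nilpotent block, and an arbitrary input on the under-determined block. The main obstacle is the nilpotent block, where the required smoothness of $u$ must be reconciled with the boundary matching of $x_2(0)$ and $x_2(T)$; the standard remedy is to use a smooth compactly supported control on $(0,T)$ for the ODE part and then superimpose a polynomial correction near the endpoints whose finitely many derivatives realise the prescribed $x_2$-values without disturbing the ODE trajectory. Since the full technical execution is carried out in~\cite{DAEs}, I would refer the reader there for the detailed argument rather than reproduce it.
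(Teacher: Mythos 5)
The paper does not prove Lemma~\ref{lem:compStab} at all: it is stated as a black-box citation of~\cite[Corollary 4.3, p.\,32]{DAEs}, exactly as you ultimately do when you write ``I would refer the reader there for the detailed argument rather than reproduce it.'' So in terms of what the thesis actually contains, your disposition matches the paper.

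Your preceding sketch, however, contains a genuine imprecision worth flagging. You invoke the Kronecker canonical form of the pencil $(E,A)$ and then assert that ``the behaviour $\mathfrak{B}_{[E,A,B]}$ then splits as a direct product of the block-wise behaviours.'' That is not what the Kronecker form buys you: it block-diagonalises $E$ and $A$ only, and after the transformation $(E,A,B)\mapsto(SET,SAT,SB)$ the input matrix $SB$ is an arbitrary matrix whose rows are partitioned across the blocks, so all blocks remain coupled through the shared input $u$. The behaviour therefore does not factor as a product of the individual block behaviours under Kronecker equivalence alone. The Berger--Reis argument you are sketching in fact uses their \emph{feedback canonical form} (which additionally normalises $B$ via input transformations $u\mapsto Fx+Gu$, so that distinct blocks receive disjoint input channels); only then does the claimed product decomposition of $\mathfrak{B}_{[E,A,B]}$ hold. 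Note also that the feedback transformation $A\mapsto A+BF$ does change $\rk[E,A,B]$ and $\rk[\lambda E-A,B]$ block-wise, so the rank bookkeeping in the forward direction is more delicate than a purely additive sum over Kronecker blocks. None of this affects the thesis, since the lemma is cited rather than proved, but if you were to fill in the sketch you would need to replace ``Kronecker canonical form'' by ``feedback canonical form'' and redo the rank accounting accordingly.
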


We can use this criterion to prove genericity.

\begin{Proposition}\label{prop:comp_cont} The set
$S = \set{(E,A,B)\in\Sigma_{\ell,n,m}\,\big\vert\, \eqref{Def_eq:DAE}~\text{completely~controllable}}$ is generic if, and only if,~$\ell<n+m.$
\end{Proposition}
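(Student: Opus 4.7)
The plan is to apply the algebraic criterion in Lemma~\ref{lem:compStab}: complete controllability of~\eqref{Def_eq:DAE} amounts to the three-way equality $\rk[E,A,B] = \rk[E,B] = \rk[\lambda E-A,B]$ for every $\lambda\in\mathbb{C}$. The sufficiency direction $(\ell<n+m)$ will be obtained by exhibiting a generic subset on which all three ranks equal $\ell$, using the block-matrix full-rank result of Proposition~\ref{prop:full_rank_property_block} together with the pencil full-rank result of Proposition~\ref{prop:complex_full_rank_property_pol_mat}. The necessity direction will split into the subcases $\ell>n+m$, where the two constant ranks $\rk[E,A,B]$ and $\rk[E,B]$ cannot generically coincide, and $\ell=n+m$, where they can coincide but the pencil condition cannot generically hold.

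For the sufficiency direction assume $\ell<n+m$ and consider
\begin{align*}
S_1 &:= \set{(E,A,B)\in\Sigma_{\ell,n,m}\,\big\vert\,\rk[E,B] = \ell},\\
S_2 &:= \set{(E,A,B)\in\Sigma_{\ell,n,m}\,\big\vert\,\rk[E,A,B] = \ell},\\
S_3 &:= \set{(E,A,B)\in\Sigma_{\ell,n,m}\,\big\vert\,\forall\,\lambda\in\mathbb{C}:\rk_{\mathbb{C}}[\lambda E-A,B]\geq \ell}.
\end{align*}
The sets $S_1$ and $S_2$ are generic by Proposition~\ref{prop:full_rank_property_block}, since $\ell\leq\min\set{\ell,n+m}$ and $\ell\leq\min\set{\ell,2n+m}$. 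For $S_3$ the plan is to apply Proposition~\ref{prop:complex_full_rank_property_pol_mat} to the $\ell\times(n+m)$ polynomial block matrix with a degree-one block $\lambda E-A$ of size $\ell\times n$ and a degree-zero block $B$ of size $\ell\times m$ (this polynomial-matrix space is identified with $\Sigma_{\ell,n,m}$ via the affine parameterisation $(E,A,B)\mapsto(-A,E,B)$). Here $d=\ell\leq\min\set{\ell,n+m}$, and $\ell<n+m$ rules out the forbidden case $d=n=m$; hence $S_3$ is generic. On $S_1\cap S_2\cap S_3$ all three ranks equal $\ell$, so Lemma~\ref{lem:compStab} yields $S_1\cap S_2\cap S_3\subseteq S$, and genericity of $S$ follows from Lemma~\ref{cor:intersection_union1} and Remark~\ref{Rem:InclusionOfGenericSets}.

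For necessity the plan is to argue by contradiction. If $\ell>n+m$, set $T_1:=\set{(E,A,B)\,\big\vert\,\rk[E,B]=n+m}$ and $T_2:=\set{(E,A,B)\,\big\vert\,\rk[E,A,B]\geq n+m+1}$; both are generic by Proposition~\ref{prop:full_rank_property_block}. On $T_1\cap T_2$ one has $\rk[E,A,B]>\rk[E,B]$, so Lemma~\ref{lem:compStab} forces $S\cap T_1\cap T_2=\emptyset$. Were $S$ generic, this triple intersection would be generic by Lemma~\ref{cor:intersection_union1}, hence nonempty by Corollary~\ref{cor:some_even_more_boring_corollary}, a contradiction. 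If $\ell=n+m$, let $T_1:=\set{(E,A,B)\,\big\vert\,\rk[E,B]=\ell}$, again generic. On $S\cap T_1$ the second clause of Lemma~\ref{lem:compStab} forces $\rk[\lambda E-A,B]\geq\ell$ for every $\lambda\in\mathbb{C}$, so
\begin{align*}
S\cap T_1\subseteq T_3:=\set{(E,A,B)\in\Sigma_{\ell,n,m}\,\big\vert\,\forall\,\lambda\in\mathbb{C}:\rk_{\mathbb{C}}[\lambda E-A,B]\geq\ell}.
\end{align*}
Were $S$ generic, then $S\cap T_1$ would be generic by Lemma~\ref{cor:intersection_union1} and hence so would $T_3$ by Remark~\ref{Rem:InclusionOfGenericSets}, contradicting Proposition~\ref{prop:complex_full_rank_property_pol_mat}: for $\ell=n+m$ the pencil $[\lambda E-A,B]$ is square of size $\ell\times\ell$ with $d=n=m=\ell$ and a positive-degree block, which is precisely the forbidden configuration.

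The main obstacle is the subcase $\ell=n+m$ of necessity: this is where the transition from generic to non-generic occurs, and it is detected only by the exceptional clause of Proposition~\ref{prop:complex_full_rank_property_pol_mat} — no comparison of constant-matrix ranks can rule out complete controllability here, because generically $\rk[E,A,B]=\rk[E,B]=\ell$ in this range. The routine bookkeeping consists in verifying that the polynomial-matrix framework of Proposition~\ref{prop:complex_full_rank_property_pol_mat}, with block degree bounds $g_{1,1}=1$ and $g_{1,2}=0$, transports along the linear isomorphism $(E,A,B)\mapsto(-A,E,B)$ so that the genericity statements genuinely concern subsets of $\Sigma_{\ell,n,m}$.
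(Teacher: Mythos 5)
Your proof is correct and follows essentially the same route as the paper: sufficiency via Proposition~\ref{prop:full_rank_property_block}, Proposition~\ref{prop:complex_full_rank_property_pol_mat} and intersection closure, and necessity by forcing, generically, either a rank mismatch between $[E,A,B]$ and $[E,B]$ (for $\ell>n+m$) or a contradiction with the exceptional $d=n=m$ case of Proposition~\ref{prop:complex_full_rank_property_pol_mat} (for $\ell=n+m$). The only packaging difference is that the paper dispatches the subcase $\ell>n+m$ by noting $S$ is contained in the set of freely initializable systems and invoking Proposition~\ref{prop:freely_initializable}, whereas you re-derive that argument inline with your $T_1\cap T_2$ construction — mathematically the same content.
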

\begin{proof}
\begin{enumerate}
\item[~$\impliedby$] By Proposition~\ref{prop:full_rank_property_block} and Corollary~\ref{cor:intersection_union1}, the set
\begin{align*}
S_1 := \set{(E,A,B)\in\Sigma_{\ell,n,m}\,\big\vert\,\rk[E,A,B] = \rk[E,B] = \ell}
\end{align*}
is generic. From Proposition~\ref{prop:complex_full_rank_property_pol_mat} we know that
\begin{align*}
S_2 := \set{(E,A,B)\in\Sigma_{\ell,n,m}\,\big\vert\,\forall\lambda\in\mathbb{C}:\rk[\lambda E-A,B] = \ell}
\end{align*}
is generic. Corollary~\ref{cor:intersection_union1} yields that~$S_1\cap S_2$ is generic and with~$S_1\cap S_2\subseteq S$ we find that~$S$ is generic.
\item[$\implies$] 
By Lemma~\ref{lem:compStab} and Lemma~\ref{lem:alg_crit_free_init}, the inclusion
\begin{align*}
S\subseteq\set{(E,A,B)\in\Sigma_{\ell,n,m}\,\big\vert\, (E,A,B)~\text{freely~initializable}}=:S'
\end{align*}
holds true. Hence genericity of~$S$ implies with Remark~\ref{Rem:InclusionOfGenericSets} that~$S'	$ is generic. Thus Proposition~\ref{prop:freely_initializable} yields that~$\ell\leq n+m.$

Let~$\ell = n+m.$ We show that~$S$ is not generic. In Proposition~\ref{prop:freely_initializable} we have shown that the set 
\begin{align*}
\set{(E,A,B)\in\Sigma_{\ell,n,m}\,\big\vert\,\rk[E,A,B] = \rk[E,B] = n+m}
\end{align*}
is generic. Thus Corollary~\ref{Cor:PartitionInGenericSets} yields that a necessary condition for genericity of~$S$ is genericity of
\begin{align*}
\set{(E,A,B)\in\Sigma_{\ell,n,m}\,\big\vert\,\forall\lambda\in\mathbb{C}:\rk[\lambda E-A,B] = n+m}.
\end{align*}
By Proposition~\ref{prop:complex_full_rank_property_pol_mat}, this is not the case. Hence~$S$ is not generic if~$\ell = n+m.$
\end{enumerate}
\end{proof}

\subsection{Controllability in the behavioural sense}

The fourth controllability concept is the concept of controllability in the behavioural sense. 

\begin{Definition}[Controllability in the behavioural sense~{\cite[Definition 2.1(b), p.\,9]{DAEs}}]\label{Def:cont_in_the_beh_sense}
The system~\eqref{Def_eq:DAE} is called \textit{controllable in the behavioural sense} if, and only if,
\begin{align*}
\forall\, (x_1,u_1),(x_2,u_2)\in\mathfrak{B}_{[E,A,B]}~\exists\, T>0~\exists\, (x,u)\in\mathfrak{B}_{[E,A,B]}: (x,u)(t) = \begin{cases}(x_1,u_1)(t), & t<0\\(x_2,u_2)(t), & t>T\end{cases}.
\end{align*}
\end{Definition}

We need an algebraic criterion to proceed.

\begin{Lemma}[Algebraic criterion for controllability in the behavioural sense~{\cite[Corollary 4.3, p.\,32]{DAEs}}]\label{lem:AlgCritContBehavSense}
The system~\eqref{Def_eq:DAE} is controllable in the behavioural sense if, and only if,
\begin{align*}
\forall\, \lambda\in\mathbb{C}: \rk_{\mathbb{R}(x)}[xE-A,B] = \rk_{\mathbb{C}}[\lambda E-A,B].
\end{align*}
\end{Lemma}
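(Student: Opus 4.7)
The plan is to recognize that the DAE~\eqref{Def_eq:DAE} is a linear differential behavior given by a polynomial kernel representation: $(x,u) \in \mathfrak{B}_{[E,A,B]}$ if and only if $R(\tfrac{\mathrm{d}}{\mathrm{d}t})\binom{x}{u} = 0$ for the polynomial matrix $R(s) = [sE - A,\,-B] \in \mathbb{R}[s]^{\ell \times (n+m)}$. Under this identification, controllability in the behavioural sense as specified in Definition~\ref{Def:cont_in_the_beh_sense} is precisely behavioural controllability in the Polderman--Willems sense, and the lemma becomes the standard characterization: $\ker R(\tfrac{\mathrm{d}}{\mathrm{d}t})$ is controllable if and only if all invariant factors of the Smith form of $R(s)$ over $\mathbb{R}[s]$ equal $1$, which is in turn equivalent to the rank condition in the lemma.

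First I would reduce to the Smith normal form. There exist unimodular $U(s) \in \mathbb{R}[s]^{\ell \times \ell}$ and $V(s) \in \mathbb{R}[s]^{(n+m) \times (n+m)}$ so that
\begin{align*}
U(s)\,R(s)\,V(s) = [\mathrm{diag}(d_1(s),\ldots,d_r(s)),\,0],\qquad r = \rk_{\mathbb{R}(s)} R(s).
\end{align*}
Since unimodular matrices have constant nonzero determinant, both sides of the claimed equality, $\rk_{\mathbb{R}(x)}[xE-A,B]$ and $\rk_{\mathbb{C}}[\lambda E-A,B]$, are preserved by left/right unimodular multiplication. Therefore the rank equality in the lemma is equivalent to the condition that every $d_i(s)$ is a nonzero constant, i.e.\ $d_i \equiv 1$. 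On the behavioural side, left multiplication by $U(\tfrac{\mathrm{d}}{\mathrm{d}t})$ leaves the kernel behavior unchanged, and the variable substitution induced by $V(\tfrac{\mathrm{d}}{\mathrm{d}t})$ puts $\mathfrak{B}_{[E,A,B]}$ in bijection with the behavior of the decoupled system $d_i(\tfrac{\mathrm{d}}{\mathrm{d}t}) y_i = 0$ for $i \in \underline{r}$, with the components $y_{r+1},\ldots,y_{n+m}$ free; this bijection, being given by local differential operators with local inverses, preserves the concatenation property defining behavioural controllability.

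It then remains to verify the lemma for the decoupled diagonal system. If $d_1 = \cdots = d_r = 1$, every solution has its first $r$ components identically zero while the others are arbitrary $\mathcal{L}^1_{\text{loc}}$-functions, so given two trajectories one can simply glue the free components on a compact interval by a smooth cutoff; behavioural controllability is immediate. Conversely, if some $d_i \not\equiv 1$, then $d_i$ has a root $\lambda_0 \in \mathbb{C}$ and the scalar equation $d_i(\tfrac{\mathrm{d}}{\mathrm{d}t}) y_i = 0$ admits a nontrivial (generalized) exponential solution. The main obstacle is to show that such a solution genuinely blocks concatenation with $0$: one needs the unique-continuation statement that any $y \in \mathcal{L}^1_{\text{loc}}(\mathbb{R},\mathbb{R})$ satisfying $d_i(\tfrac{\mathrm{d}}{\mathrm{d}t}) y = 0$ almost everywhere and vanishing on $(-\infty,0)$ must vanish identically. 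This is handled by rewriting the scalar ODE as a first-order companion system and invoking classical uniqueness for linear ODEs with absolutely continuous right-hand side, which produces the required contradiction and completes the proof.
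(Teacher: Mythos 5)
The paper itself does not prove this lemma --- it is stated as a citation to \cite[Corollary 4.3]{DAEs} (Berger and Reis), so there is no in-text argument to compare against. Your Smith-form route is the classical Polderman--Willems approach to behavioural controllability: reduce $R(s)=[sE-A,-B]$ to Smith form via unimodular $U(s)$, $V(s)$; observe that both ranks appearing in the statement are invariant under unimodular multiplication, so the rank condition is equivalent to all invariant factors $d_i(s)$ being nonzero constants; and then argue directly on the decoupled diagonal system, gluing the free components by a cutoff in one direction and, in the other, using the fundamental theorem of algebra plus unique continuation for scalar constant-coefficient ODEs to exhibit a trajectory that cannot be concatenated with zero. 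As a self-contained proof of the cited criterion, this skeleton is standard and correct in outline, and it is a genuinely different route from the paper, which simply defers to the reference.

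There is, however, a genuine gap you gloss over on the behavioural side. The paper's solution class requires only $(x,u)\in\mathcal{L}^1_{\text{loc}}\times\mathcal{L}^1_{\text{loc}}$ with $Ex\in\mathcal{AC}$; in particular $x$ itself need not be weakly differentiable and $u$ is a priori merely locally integrable. Your claimed bijection, ``the variable substitution induced by $V(\tfrac{\mathrm{d}}{\mathrm{d}t})$ puts $\mathfrak{B}_{[E,A,B]}$ in bijection with the decoupled behaviour,'' applies a polynomial differential operator of degree up to $\deg V$ to $(x,u)$, which does not map this solution class to itself and need not make pointwise sense; similarly, ``left multiplication by $U(\tfrac{\mathrm{d}}{\mathrm{d}t})$ leaves the kernel behaviour unchanged'' is clear for smooth or distributional trajectories but requires justification in this mixed-regularity class. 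To close the gap one must either pass to distributional solutions and prove that concatenability in the mild and distributional senses coincide for this class of systems, or insert a mollification step, or avoid the Smith form of $R$ entirely and work with the quasi-Kronecker / feedback decomposition of the matrix triple $(E,A,B)$ directly on the DAE, which is the machinery that Berger and Reis develop in the cited survey. Once the function-space bookkeeping is in place, the remaining steps of your argument are correct.
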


With this criterion we can prove genericity of the set of in the behavioural sense controllable systems with real coefficients.

\begin{Proposition}\label{Prop:cont_im_the_beh_sense_is_generic}
The set~$S = \set{(E,A,B)\in\Sigma_{\ell,n,m}\,\big\vert\,\eqref{Def_eq:DAE}~\text{controllable~in~the~behavioural~sense}}$
is generic if, and only if,~$\ell\neq n+m.$
\end{Proposition}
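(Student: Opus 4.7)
The plan is to translate the criterion of Lemma \ref{lem:AlgCritContBehavSense} into a rank condition on the polynomial block matrix
\[
\widehat{P}(x) := [xE - A,\, B] \in \mathbb{R}[x]^{\ell\times(n+m)}
\]
and then to apply Proposition \ref{prop:complex_full_rank_property_pol_mat}. Via the canonical isomorphism $(E,A,B)\mapsto((-A,E),B)$, the space $\Sigma_{\ell,n,m}$ identifies with $(\mathbb{R}^{\ell\times n})^{2}\times\mathbb{R}^{\ell\times m}$, fitting the setup of that proposition with $p=1$, $q=2$, block sizes $\ell\times n$, $\ell\times m$, and degree bounds $g_{1,1}=1$, $g_{1,2}=0$. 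Since $\rk_{\mathbb{R}(x)}\widehat{P}(x)$ equals the maximum of $\rk_{\mathbb{C}}\widehat{P}(\lambda)$ over $\lambda\in\mathbb{C}$, behavioural controllability amounts to the pointwise complex rank of $\widehat{P}$ being constantly equal to this generic rank.

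In the case $\ell\neq n+m$, I would set $d:=\min\set{\ell,n+m}$. The two conditions of Proposition \ref{prop:complex_full_rank_property_pol_mat} are satisfied: $d\leq\min\set{\ell,n+m}$ is trivial, and $\neg(d=\ell=n+m)$ holds because $\ell\neq n+m$. Hence
\[
T:=\set{(E,A,B)\in\Sigma_{\ell,n,m}\,\big\vert\,\forall\lambda\in\mathbb{C}: \rk_{\mathbb{C}}\widehat{P}(\lambda)\geq d}
\]
is generic. On $T$ the upper bound $\min\set{\ell,n+m}=d$ forces $\rk_{\mathbb{C}}\widehat{P}(\lambda)=d$ for every $\lambda$, and a $d$-minor of $\widehat{P}$ that is nonzero at some $\lambda$ is \emph{a fortiori} a nonzero polynomial in $x$, yielding $\rk_{\mathbb{R}(x)}\widehat{P}(x)=d$. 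Thus $T\subseteq S$, and by Remark \ref{Rem:InclusionOfGenericSets} the set $S$ is generic.

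In the case $\ell=n+m$, the matrix $\widehat{P}(x)$ is square of order $\ell$. Expanding multilinearly in the first $n$ columns, $\det\widehat{P}(x)$ is a polynomial in $x$ of degree at most $n$ whose coefficient of $x^n$ is $\det[E,B]$. By Proposition \ref{prop:full_rank_property_block} applied to the square matrix $[E,B]$, the set
\[
T_3:=\set{(E,A,B)\in\Sigma_{\ell,n,m}\,\big\vert\,\det[E,B]\neq 0}
\]
is generic. On $T_3$ the polynomial $\det\widehat{P}(x)$ has degree exactly $n\geq 1$ and therefore, by the fundamental theorem of algebra, admits a complex root $\lambda_0$, at which $\rk_{\mathbb{C}}\widehat{P}(\lambda_0)<\ell=\rk_{\mathbb{R}(x)}\widehat{P}(x)$. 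The criterion of Lemma \ref{lem:AlgCritContBehavSense} is violated on all of $T_3$, so $S\cap T_3=\emptyset$. Were $S$ generic, Lemma \ref{cor:intersection_union} would make $S\cap T_3$ generic too, contradicting the fact that $\emptyset$ is not generic.

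The main obstacle is the borderline case $\ell=n+m$, precisely the configuration excluded in Proposition \ref{prop:complex_full_rank_property_pol_mat}. The essential extra observation there is that the leading coefficient in $x$ of $\det\widehat{P}(x)$ is the polynomial $\det[E,B]$, which is generically nonzero; this forces $\det\widehat{P}(x)$ to be a nonconstant complex polynomial, and hence to possess a root, systematically destroying behavioural controllability on a generic set.
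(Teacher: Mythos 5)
Your proof is correct, and it takes a somewhat different route from the paper for the necessity direction. The paper's proof is quite compact: it invokes Proposition~\ref{Prop:gen_full_rk_pol_block} to show $S_1 := \set{(E,A,B)\,\vert\,\rk_{\mathbb{R}(x)}[xE-A,B]=d}$ (with $d=\min\set{\ell,n+m}$) is generic, then uses Lemma~\ref{lem:generic_simplification} to reduce genericity of $S$ to genericity of $S\cap S_1 = \set{(E,A,B)\,\vert\,\forall\lambda\in\mathbb{C}:\rk_{\mathbb{C}}[\lambda E-A,B]=d}$, and finally cites Proposition~\ref{prop:complex_full_rank_property_pol_mat} once to close both directions. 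You instead split the two directions: for $\ell\neq n+m$ you apply Proposition~\ref{prop:complex_full_rank_property_pol_mat} directly and show the resulting generic set is contained in $S$, avoiding Lemma~\ref{lem:generic_simplification} and Proposition~\ref{Prop:gen_full_rk_pol_block} entirely; for $\ell=n+m$ you re-derive by hand the key insight hidden inside the ``only if'' direction of Proposition~\ref{prop:complex_full_rank_property_pol_mat}, namely that the leading coefficient of $\det[xE-A,B]$ is $\det[E,B]$, generically nonzero, forcing a root by the fundamental theorem of algebra. Your self-contained version for the square case is pedagogically more transparent, at the small cost of duplicating work already packaged in the paper's rank proposition. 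Two trivial nits: in the final contradiction step you should cite Corollary~\ref{cor:intersection_union1} (the $\mathbb{R}$ version) rather than Lemma~\ref{cor:intersection_union} (the $\mathbb{C}$ version), since $\Sigma_{\ell,n,m}$ is a real space; and passing from genericity of $\set{(E,B)\,\vert\,\det[E,B]\neq 0}$ to genericity of $T_3$ in $\Sigma_{\ell,n,m}$ should really invoke Corollary~\ref{lem:product_of_generic_sets} and Lemma~\ref{Lem:transformgenericsets}, though this is a standard step used throughout the paper.
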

\begin{proof}
Set~$d:=\min\set{\ell,n+m}.$ From Proposition~\ref{Prop:gen_full_rk_pol_block} we find that the set
\begin{align*}
S_1 := \set{(E,A,B)\in\Sigma_{\ell,n,m}\,\big\vert\,\rk_{\mathbb{R}(x)}[xE-A,B] = d}
\end{align*}
is a generic set. From Lemma~\ref{lem:AlgCritContBehavSense} we find the equality
\begin{align*}
S\cap S_1 = \set{(E,A,B)\in\Sigma_{\ell,n,m}\,\big\vert\,\forall\lambda\in\mathbb{C}:\rk_{\mathbb{C}}[\lambda E-A,B] = d}.
\end{align*}
By Lemma~\ref{lem:generic_simplification}, genericity of~$S\cap S_1$ is necessary and sufficient for~$S$ being generic. As shown in Proposition~\ref{prop:complex_full_rank_property_pol_mat},~$S\cap S_1$ is generic if, and only if,~$\ell\neq n+m.$ This completes the proof of the proposition.
\end{proof}

\subsection{Strongly controllable}

At last, we consider strongly controllable systems which are defined as follows.

\begin{Definition}[Strongly controllable~{\cite[Definition 2.1(j), p.\,10]{DAEs}}]\label{Def:strong_cont}
The system~\eqref{Def_eq:DAE} is \textit{strongly controllable} if, and only if,
\begin{align*}
\exists\, T>0~\forall\, x_0,x_T\in\mathbb{R}^n~\exists\, (x,u)\in\mathfrak{B}_{[E,A,B]}:Ex(0) = Ex_0\quad\wedge\quad Ex(T) = Ex_T.
\end{align*}
\end{Definition}

There is an algebraic criterion which we give next.

\begin{Lemma}[Algebraic criterion for strongly controllability~{\cite[Corollary 4.3, p.\,32]{DAEs}}]\label{lem:AlgCritStrCont}
The system~\eqref{Def_eq:DAE} is strongly controllable if, and only if,
\begin{align*}
\forall\,\lambda\in\mathbb{C}~\forall\, Z\in\mathbb{R}^{n\times n-\rk E}~\text{with}~\im_{\mathbb{R}} Z = \ker_{\mathbb{R}}E: \rk[E,A,B] = \rk[AZ,B] = \rk [\lambda E-A,B].
\end{align*}
\end{Lemma}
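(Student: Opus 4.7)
My plan is to reduce $(E,A,B)$ to a canonical form and then read off the equivalence block by block. The first step is to fix the class of equivalence transformations: replacing $(E,A,B)$ by $(TES, TAS, TB)$ with $T \in \mathcal{GL}(\mathbb{R}^\ell)$, $S \in \mathcal{GL}(\mathbb{R}^n)$ and by $(E, A+BF, B)$ with $F \in \mathbb{R}^{m \times n}$. A routine computation shows that Definition~\ref{Def:strong_cont} and the three rank quantities on the right-hand side of the lemma are invariant under these transformations (for the $\rk[AZ,B]$ condition one replaces $Z$ by $S^{-1}Z$, whose image is the kernel of the transformed $E$).

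Next I would invoke a canonical form for the triple $(E,A,B)$ under these transformations, which decomposes the system into a direct sum of blocks of five types: controllable ODE blocks, uncontrollable ODE eigenvalue blocks, nilpotent blocks (with $E$ nilpotent and $A$ invertible), under-determined rectangular blocks, and over-determined rectangular blocks. On each block type the three rank quantities can be computed in closed form, and one checks that the equality $\rk[\lambda E - A, B] = \rk[E,A,B]$ for every $\lambda \in \mathbb{C}$ excludes the uncontrollable eigenvalue blocks, while the equality $\rk[AZ, B] = \rk[E,A,B]$ for every $Z$ spanning $\ker E$ excludes the over-determined blocks of positive size.

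Finally I would verify that the remaining block types (controllable ODE, nilpotent, under-determined) each admit, for any prescribed endpoints $Ex_0$ and $Ex_T$ and some common $T > 0$, a solution $(x,u) \in \mathfrak{B}_{[E,A,B]}$ with $Ex(0) = Ex_0$ and $Ex(T) = Ex_T$. For the controllable ODE block this reduces to the Kalman criterion of Lemma~\ref{lem:Kalmam_rank_condition}. For the nilpotent and under-determined blocks it is a direct construction using a smooth input $u$ and the algebraic inversion of the nilpotent part. Conversely, in the presence of any excluded block the trajectory $Ex(\cdot)$ inherits an invariant that cannot be freely prescribed, so strong controllability fails.

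The main obstacle will be the trajectorial construction on the nilpotent and under-determined blocks, where one must produce $x \in \mathcal{L}^1_{\text{loc}}(\mathbb{R}, \mathbb{R}^n)$ with $Ex \in \mathcal{AC}(\mathbb{R}, \mathbb{R}^\ell)$ realizing the prescribed boundary data while satisfying the DAE almost everywhere, and the bookkeeping needed to glue the blocks and preserve absolute continuity is delicate. This is precisely what is carried out in the proof of Corollary~4.3 in~\cite{DAEs}, to which the statement of the lemma refers.
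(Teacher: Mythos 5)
The paper supplies no proof of this lemma: it is stated as a cited result from Berger and Reis, \cite[Corollary~4.3, p.\,32]{DAEs}, and is used as a black box in the subsequent genericity argument (Proposition~\ref{prop:StrongContGen}). There is therefore no in-paper proof to compare your sketch against. Your outline via feedback-equivalence transformations and a block canonical form is indeed the strategy of the cited reference, and you correctly flag the delicate part (the trajectory construction on nilpotent and underdetermined blocks while preserving the regularity $Ex\in\mathcal{AC}$). One small caution: the invariance of the $\rk[AZ,B]$ condition under the state transformation $S$ requires a moment of care, since $Z$ spans $\ker E$ and after replacing $E$ by $TES$ the kernel is $S^{-1}\ker E$, so the correct substitute is $S^{-1}Z$ and the quantity becomes $\rk[TAS\,S^{-1}Z, TB]=\rk\bigl(T[AZ,B]\bigr)=\rk[AZ,B]$; also under the feedback $A\mapsto A+BF$ one has $[(A+BF)Z,B]$ with the same column space as $[AZ,B]$ since $BFZ$ lies in the span of the $B$-columns. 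These checks are straightforward but should be recorded; within the present thesis, however, they are outside scope since the result is simply quoted.
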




From this lemma we see that the results of Section 3 are not sufficient to prove that the set of strongly controllable systems is generic. We need some access to the kernel of $E$. The best way to do this is the well-known Gaussian elimination. We will use this method in the proof of the following proposition.

\begin{Proposition}\label{prop:StrongContGen} The set~$S = \set {(E,A,B)\in\Sigma_{\ell,n,m}\left\vert (E,A,B)~\text{strongly~controllable}\right.}$ is generic if, and only if,
\begin{align}\label{eq:useless}
\Big[n\leq\ell\leq m\quad\vee\quad \big(\ell<n\quad\wedge\quad 2\ell\leq n+m\big)\Big]\quad\wedge\quad \ell\neq n+m
\end{align}
\end{Proposition}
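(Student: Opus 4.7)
By Lemma~\ref{lem:AlgCritStrCont}, $(E,A,B)\in S$ iff $\rk[E,A,B]=\rk[AZ,B]=\rk[\lambda E-A,B]$ for all $\lambda\in\mathbb{C}$ and every $Z\in\mathbb{R}^{n\times(n-\rk E)}$ with $\im Z=\ker E$. The awkward ingredient is $\rk[AZ,B]$, which a priori is not a polynomial function of $(E,A,B)$. The plan hinges on the identity
\[
\rk[AZ,B]\;=\;\rk\begin{bmatrix}E & 0\\ A & B\end{bmatrix}-\rk E,
\]
which converts this rank into a polynomial rank condition. I would establish it first by observing that $(\gamma,\beta)\mapsto(Z\gamma,\beta)$ is a bijection between $\ker[AZ,B]$ and $\ker\begin{bmatrix}E&0\\ A&B\end{bmatrix}$, and then applying rank--nullity. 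Once this is available, both directions reduce to identifying the generic values of the three ranks in the criterion and checking when they coincide.

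For the direction $\impliedby$, I split along the two disjuncts of \eqref{eq:useless}. If $n\leq\ell\leq m$, I take the intersection of $\{\rk E=n\}$, $\{\rk B=\ell\}$, $\{\rk[E,A,B]=\ell\}$ and $\{\forall\,\lambda\in\mathbb{C}:\rk[\lambda E-A,B]=\ell\}$. Each factor is generic by Propositions~\ref{prop:full_rank_property_block} and~\ref{prop:complex_full_rank_property_pol_mat} (the last uses $\ell\leq m<n+m$), and on the intersection $\ker E=\{0\}$, so $[AZ,B]=B$ and all three criterion ranks equal $\ell$. If $\ell<n$ and $2\ell\leq n+m$, I intersect instead $\{\rk E=\ell\}$, $\{\rk[E,A,B]=\ell\}$, $\{\rk\begin{bmatrix}E&0\\ A&B\end{bmatrix}=2\ell\}$ and $\{\forall\,\lambda\in\mathbb{C}:\rk[\lambda E-A,B]=\ell\}$; by the key identity, $\rk[AZ,B]=\ell$ on this intersection. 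In either case the intersection is contained in $S$ and is generic by Corollary~\ref{cor:intersection_union1}, so Remark~\ref{Rem:InclusionOfGenericSets} yields that $S$ is generic.

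For the direction $\implies$, assume \eqref{eq:useless} fails. The failure splits into two cases (the case $\ell=n+m$ being absorbed into the first), each of which exhibits $S$ as disjoint from a generic set; then Corollaries~\ref{cor:intersection_union1} and~\ref{Cor:GenSetsInfiniteMeasure} imply that $S$ is a Lebesgue nullset and hence not generic. If $\ell\geq n$ and $\ell>m$, then on the generic set $\{\rk E=n\}\cap\{\rk B=m\}\cap\{\rk[E,A,B]=\min\{\ell,2n+m\}\}$ one has $[AZ,B]=B$ of rank $m$, whereas $\rk[E,A,B]=\min\{\ell,2n+m\}>m$, so the criterion fails throughout. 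If $\ell<n$ and $2\ell>n+m$, then on $\{\rk E=\ell\}\cap\{\rk[E,A,B]=\ell\}\cap\{\rk\begin{bmatrix}E&0\\ A&B\end{bmatrix}=n+m\}$, the key identity gives $\rk[AZ,B]=n+m-\ell<\ell=\rk[E,A,B]$, so again the criterion fails.

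The main obstacle is the genericity of $\{\rk\begin{bmatrix}E&0\\ A&B\end{bmatrix}=\min\{2\ell,n+m\}\}$. Proposition~\ref{prop:full_rank_property_block} does not apply directly, since the $(1,2)$ block is fixed to zero. I would redo the relevant portion of that proof: the order-$\min\{2\ell,n+m\}$ minors of this matrix are polynomials in the entries of $(E,A,B)$, and I would exhibit a concrete witness making one such minor nonzero, namely $E=[I_\ell,0]$ together with $A=[0,A_2]$ and $B$ chosen so that the columns of $[A_2,B]$ contain an identity block of the right size (possible thanks to the dimension inequality in force in each subcase). Given such a witness, Lemma~\ref{Cor:some_more_varieties} and Lemma~\ref{Rem:Zariski}(ii) finish the argument.
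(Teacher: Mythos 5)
Your proof is correct, and it takes a genuinely different and considerably cleaner route than the paper. The paper factors $S=S'\cap S''$ (the $[AZ,B]$ condition and the Hautus-type condition) and the bulk of its effort goes into the regime $\ell<n$, where it constructs an explicit rational parametrization $Z(E)$ of a basis of $\ker E$ via Gaussian elimination --- defining chain operators $T_k$, a restricted domain $\dom T'$, rational kernel vectors $z^i(\cdot)\in\mathbb{R}(x_1,\dots,x_{n^2})^n$ --- and then works with rational-function minors via Corollary~\ref{cor:rational_functions}. Your key identity
\[
\rk[AZ,B]=\rk\begin{bmatrix}E & 0\\ A & B\end{bmatrix}-\rk E,
\]
which you establish correctly through the kernel bijection $(\gamma,\beta)\mapsto(Z\gamma,\beta)$ and rank--nullity, sidesteps all of this: it converts $\rk[AZ,B]$ --- a priori not a polynomial function of $(E,A,B)$ since $Z$ varies with $E$ --- into a bona fide polynomial rank condition. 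The obstacle you flag yourself, namely that Proposition~\ref{prop:full_rank_property_block} does not apply verbatim to $\bigl[\begin{smallmatrix}E&0\\A&B\end{smallmatrix}\bigr]$ because the $(1,2)$ block is pinned to zero, is filled correctly by your witness: the minors remain polynomials in $(E,A,B)$ since the map $(E,A,B)\mapsto\bigl[\begin{smallmatrix}E&0\\A&B\end{smallmatrix}\bigr]$ is linear, and a single witness with a nonvanishing minor together with Lemma~\ref{Cor:some_more_varieties} and Lemma~\ref{Rem:Zariski}(ii) yields a proper variety. Your case split in the $\implies$ direction and the absorption of $\ell=n+m$ into the subcase $\ell\geq n,\ \ell>m$ are also correct. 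What the paper's route provides, and yours does not, is the explicit rational kernel parametrization (touched upon again in Remark~\ref{rem:stuff}); what yours provides is a substantially shorter argument that transfers without change to an arbitrary field.
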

\begin{proof}
\begin{enumerate}[(A)]
\item We prove in a first step that the set
\begin{align*}
S' := \set{(E,A,B)\in\Sigma_{\ell,n,m}\,\big\vert\,\forall\, Z\in\mathbb{R}^{n\times n-\rk E}~\text{with}~\im_{\mathbb{R}} Z = \ker_{\mathbb{R}}E: \rk[E,A,B] = \rk[AZ,B]}
\end{align*}
is generic if, and only if,
\begin{align*}
\big(n\leq\ell~\wedge~\ell\leq m\big)\quad\vee\quad \big(\ell<n~\wedge~2\ell\leq n+m\big).
\end{align*}
With truth tables we find
\begin{align*}
A\longleftrightarrow \Big[\big(B\wedge C\big)\vee\big(\neg B\wedge D\big)\Big] \equiv \Big[B\longrightarrow\big(A\longleftrightarrow C\big)\Big]\wedge \Big[\neg B\longrightarrow\big(A\longleftrightarrow D\big)\Big].
\end{align*}
Hence we can equivalentely prove the statement: If~$n\leq\ell$, then~$S'$ is generic if, and only if,~$\ell\leq n$; and if~$\ell <n$, then~$S'$ is generic if, and only if,~$2\ell\leq n+m$.
We proceed in steps.
\item Let~$n\leq\ell.$ Then Proposition~\ref{prop:full_rank_property_block} implies that~$\set{(E,A,B)\in\Sigma_{\ell,n,m}\,\big\vert\,\rk E=n}$ is a generic set. The rank-nullity Theorem implies that
\begin{align*}
\forall\, (E,A,B)\in\Sigma_{\ell,n,m}: \ker E = \set{0} \iff \rk E = n.
\end{align*}
Thus~$S'$ is by Remark~\ref{Rem:InclusionOfGenericSets} and Corollary~\ref{cor:intersection_union1} generic if, and only if, the set
\begin{align*}
S_A := S'\cap \set{(E,A,B)\in\Sigma_{\ell,n,m}\,\big\vert\, \rk E = n}
\end{align*}
is generic. By Proposition~\ref{prop:full_rank_property_block}, the set
\begin{align*}
\set{(E,A,B)\in\Sigma_{\ell,n,m}\,\big\vert\,\rk[E,A,B] = \min\set{\ell,2n+m}}
\end{align*}
is generic. This yields with Remark~\ref{Rem:InclusionOfGenericSets} and Corollary~\ref{cor:intersection_union1} that~$S_A$ is generic if, and only if,
\begin{align*}
\set{(E,A,B)\in\Sigma_{\ell,n,m}\,\big\vert\,\rk E = n~\wedge~\rk B = \min\set{\ell,2n+m}} 
\end{align*}
is generic. The necessary and sufficient condition for this is genericity of 
\begin{align*}
\set{(E,A,B)\in\Sigma_{\ell,n,m}\,\big\vert\,\rk B = \min\set{\ell,2n+m}},
\end{align*}
which is, in view of Proposition~\ref{prop:full_rank_property_block}, the case if, and only if,
\begin{align*}
\min\set{\ell,2n+m} = \min\set{\ell,m}.
\end{align*}
This equality holds if, and only if,~$\ell\leq m$. This chain of equivalences proves the first part.
\item Let~$\ell < n.$ Then for any~$E\in\mathbb{R}^{\ell\times n}$ the kernel~$\ker E\neq\set{0}.$ In order to get some informations about~$\ker E$ we use Gaussian elimination.
\begin{enumerate}[(C1)]
\item We construct an operator~$T\in\mathbb{R}(x_1,\ldots,x_{\ell n})^{\ell\times n}$ for Gaussian elimination. Define 
\begin{align*}
T_k: \set{E\in\mathbb{R}^{\ell\times n}\,\big\vert\, E_{ii}\neq 0}\to\mathbb{R}^{\ell\times n},\quad \forall\,{i\in\underline \ell}~\forall\,{j\in\underline n}: (T_kE)_{i,j} = \begin{cases} E_{i,j}, & i\leq k,\\
E_{i,j}-\frac{E_{i,k}}{E_{k,k}}E_{k,j}, & i>k \end{cases}
\end{align*}
for~$k \in\underline {\ell-1}$. The operator~$T_k$ represents the~$k$-th step of Gaussian elimination without row-switching and fullfills for the canonical unit vectors~$e_1,\ldots,e_\ell\in\mathbb{R}^\ell$
\begin{align*}
\forall E\in\dom T_k: \big(T_kE)_{\cdot,k} \in\text{span}\set{e_1,\ldots,e_k}.
\end{align*} 
and
\begin{align}\label{eq:row_preserving}
\forall\, E\in\dom T_k~\forall\,i\in\underline{k}: \big(T_kE)_{i,\cdot} = E_{i,\cdot}
\end{align}
The operator~$T_k$ is an ``elimination operator'' and may be written as matrix multiplication of an~$E$-dependend transformation matrix and~$E$, namely
\begin{align*}
& T_k: \set{E\in\mathbb{R}^{\ell\times n}\,\big\vert\, E_{k,k}\neq 0}\to\mathbb{R}^{\ell\times n}, \\
& T\mapsto T_kE = \begin{bmatrix}
1\\
& \ddots\\
& & 1\\
& & -\frac{E_{k+1,k}}{E_{k,k}} & 1\\
& & -\frac{E_{k+2,k}}{E_{k,k}} & & 1\\
& & \vdots & & & \ddots\\
& & -\frac{E_{k+1,k}}{E_{k,k}} & & & & 1\\
\end{bmatrix}\cdot E = \begin{bmatrix}
E_{1,1} & & \cdot & \cdot & \cdot & & E_{1,n}\\
\vdots & & & & & & \vdots\\
E_{k,1} & & \cdot & \cdot & \cdot & & E_{k,n}\\
\star & \cdot & \star & 0 & \star & \cdot & \star\\
\vdots & & \vdots & \vdots & \vdots & & \vdots\\
\star & \cdot & \star & 0 & \star & \cdot & \star\\
\end{bmatrix}
\end{align*}
In passing, note that~$T_k\in\mathbb{R}(x_1.\ldots.x_{n^2})^{\ell\times n}$, i.e. for any~$i\in\underline{\ell},\,j\in\underline{n}$ and mapping 
\begin{align*}
\psi_{i,j,k}:\dom T_k\to\mathbb{R},\qquad E\mapsto (T_kE)_{i,j}
\end{align*}
there exists some~$\varphi_{i,j,k}(\cdot)\in\mathbb{R}(x_1,\ldots,x_{n^2})$ so that 
\begin{align*}
\psi_{i,j,k}(\cdot) = \varphi_{i,j,k}(\cdot)|_{\hspace*{-1.33ex}~_{| \dom T_k}}.
\end{align*}
Let $T_0$ be the identity operator and define
\begin{align}\label{eq:TheTOperator}
T := T_{\ell-1}\circ\ldots\circ T_0\in\mathbb{R}(x_1.\ldots.x_{n^2})^{\ell\times n}.
\end{align}
The domain of~$T$ is
\begin{align*}
\dom T = \set{E\in\mathbb{R}^{\ell\times n}\,\big\vert\,E_{1,1}\neq 0,\forall\,{i\in\set{1,\ldots \ell-2}}:\big((T_{i}\circ T_{i-1}\ldots\circ T_0)E\big)_{i,i}\neq 0}
\end{align*}
and for any~$E\in\dom T$ the matrix~$TE$ has the structure
\begin{align}\label{eq:structure_T_mapping}
TE = \begin{bmatrix}
\varepsilon_1 & \star & \star & \cdots & \star & \star & \cdots & \star\\
0 & \varepsilon_2 & \star & \cdots & \star & \star & \cdots & \star\\
0 & 0 & \varepsilon_3 & \cdots & \star & \star & \cdots & \star\\
\vdots & \vdots & \vdots & \ddots & \vdots & \vdots & \ddots & \vdots\\
0 & 0 & 0 & \cdots & \varepsilon_{\ell -1} & \star & \cdots & \star\\
0 & 0 & 0 & \cdots & 0 & \star & \cdots & \star
\end{bmatrix}\quad\text{for~some}~\varepsilon_1,\ldots,\varepsilon_{\ell-1}\in\mathbb{R}\setminus\set{0}.
\end{align}
Note that~$\dom T$ is the complement of an algebraic variety and this variety is proper since~$E_I := [I_\ell,0_{\ell\times(n-\ell)}]\in\dom T.$ Moreover we have
\begin{align}\label{eq:Eigenvalue_of_T}
TE_I = E_I.
\end{align}
\item We restrict the domain of~$T$ and get the operator
\begin{align*}
T': \dom T' := \set{E\in\dom T\,\big\vert\,(TE)_{\ell,\ell} \neq 0}\to\mathbb{R}^{\ell\times n},\quad E\mapsto
 TE.
\end{align*}
Then~$\dom T'$ is still the complement of a proper algebraic variety, since~$E_I\in\dom T'.$ By~\eqref{eq:row_preserving} and~\eqref{eq:structure_T_mapping}, the inclusions
\begin{align}\label{eq:inclusion_1}
\dom T'\subseteq\set{E\in\mathbb{R}^{\ell\times n}\,\big\vert\,\rk E = n}
\end{align}
holds true.
\item Let~$E\in\dom T'$. We construct a basis of~$\ker E.$ From Gaussian elimination we know
\begin{align*}
\ker E = \set{z\in\mathbb{R}^{n}\,\big\vert\,(T'E)z = 0}.
\end{align*}
Let~$i\in\underline{n-\ell}$ be arbitrary and define
\begin{align*}
z^i_{\ell+i}: \dom T'\to\mathbb{R},\quad E\mapsto 1
\end{align*}
and, for~$j\in\underline{n-\ell}\setminus\set{i}$
\begin{align*}
z^i_{\ell+j}: \dom T'\to\mathbb{R},\quad E\mapsto 0.
\end{align*}
Define now, for~$E\in\dom T',$ recursively~$z^i_{k}(E)$ for~$k = \ell,\ell-1,\ldots,1$ as the unique solution of
\begin{align}\label{eq:the_z_mappings}
(T'E)_{\cdot,k}^\top \begin{bmatrix}
0_{1\times (k-1)}\\z^i_k(E)\\\vdots\\z^i_n(E)
\end{bmatrix} = 0.
\end{align}
This leads, for any~$i\in\underline {n-\ell}$ and~$j\in\underline n,$ to the rational functions~$z_j^i\in\mathbb{R}(x_1,\ldots,x_{n^2})$ with
\begin{align*}
z^i_j: \dom T'\to\mathbb{R},\quad E\mapsto\begin{cases}
-\frac{1}{(T'E)_{j,j}}\Big((T'E)(j,\ell+i)+\sum_{k=j+1}^{\ell}(T'E)_{j,k}z^i_k(E)\Big), & j\leq \ell,\\
1, & j = \ell + i,\\
0, & \text{else}.
\end{cases}
\end{align*}
Define the mappings~$z^i(\cdot) := (z^i_1(\cdot),\ldots,z^i_n(\cdot))\in\mathbb{R}(x_1,\ldots,x_{n^2})^n$. From the structure~\eqref{eq:structure_T_mapping} and the definition of~$z^i(\cdot)$ in~\eqref{eq:the_z_mappings} we find that
\begin{align*}
\forall E\in\dom T'~\forall\,i\in\underline{n-\ell}: (T'E) z^i(E) = 0.
\end{align*}
The set~$\set{z^1(E),\ldots,z^{n-\ell}(E)}$ is linear independent and has cardinality~$n-\ell$. Since~$\rk E = \ell$ for any~$E\in\dom T'$, we have~$\dim (\ker E) = n-\ell$ and thus~$\set{z^1(E),\ldots,z^{n-\ell}(E)}$ is a basis of~$\ker E$. Define the mapping
\begin{align*}
Z: \dom T'\to\mathbb{R}^{n\times (n-\ell)},\quad E\mapsto [z^1(E),\ldots,z^{n-\ell}(E)].
\end{align*}
Then we find
\begin{align*}
\forall E\in\dom T': \im Z(E) = \ker E.
\end{align*}
\item 
Let~$\widehat{M}_1,\ldots,\widehat{M}_q$ be all minors of order~$\min\set{\ell,n+m-\ell}$ w.r.t.~$\mathbb{R}^{n\times (n+m-\ell)}$, define the mappings
\begin{align*}
\mathbb{R}(x_1,\ldots,x_{\ell(2n+m)})\ni M_i: \dom T'\times\mathbb{R}^{\ell\times n}\times\mathbb{R}^{\ell\times m},\quad (E,A,B)\mapsto\widehat{M}_i[AZ(E),B].
\end{align*}
and consider
\begin{align*}
S_1 & := \left(\bigcap_{i=1}^q M_i^{-1}(\set{0})\right)^c\cap\Big(\dom T'\times\mathbb{R}^{\ell\times n}\times\mathbb{R}^{\ell\times m}\Big).
\end{align*}
By Remark~\ref{rem:submatrix_rank}\,(i) and
\begin{align*}
\forall\, (E,A,B)\in \dom T'\times\mathbb{R}^{\ell\times n}\times\mathbb{R}^{\ell\times m}: \rk[AZ(E),B] \leq\set{\ell,n+m-\ell},
\end{align*}
the equality
\begin{align*}
S_1 = \set{(E,A,B)\in\dom T'\times\mathbb{R}^{\ell\times n}\times\mathbb{R}^{\ell\times m}\,\big\vert\,\rk[AZ(E),B] = \min\set{\ell,n+m-\ell}}
\end{align*}
holds true. Corollary~\ref{cor:rational_functions} yields that~$\dom T'$ is the complement of a proper algebraic variety. Thus Lemma~\ref{lem:product_of_generic_sets} yields that~$\dom T'\times\mathbb{R}^{\ell\times n}\times\mathbb{R}^{\ell\times m}$ is the complement of a proper algebraic variety. By Corollary~\ref{cor:rational_functions}, the set
\begin{align*}
\bigcap_{i=1}^q M_i^{-1}(\set{0})
\end{align*}
is an algebraic variety and thus Corollary~\ref{cor:intersection_union1} yields that~$S_1$ is the complement of an algebraic variety. Equation~\eqref{eq:Eigenvalue_of_T} yields
\begin{align*}
Z(E_I) = \begin{bmatrix} 0_{\ell\times(n-\ell)}\\I_{n-\ell} \end{bmatrix}~\text{for}~E_I = [I_\ell,0_{\ell\times(n-\ell)}].
\end{align*}
Choose~$A'_I\in\mathbb{R}^{\ell\times (n-\ell)}$ and~$B_I\in\mathbb{R}^{\ell\times m}$ such that~$\rk [A'_I,B_I] = \min\set{\ell,n+m-\ell}$ and set~$A_I := \left[0_{\ell\times \ell},A'_I\right]\in\mathbb{R}^{\ell\times n}.$ Then we get
\begin{align*}
\rk[A_IZ(E_I),B_I] = \rk[A_I',B_I] = \min\set{\ell,n+m-\ell}
\end{align*}
and hence~$(E_I,A_I,B_I)\in S_1$. This shows that~$S_1$ is the complement of a proper algebraic variety and thus generic.
\item In view of Proposition~\ref{prop:full_rank_property_block}, the set
\begin{align*}
S_2 := \set{(E,A,B)\in\Sigma_{\ell,n,m}\,\big\vert\,\rk[E,A,B] = \min\set{\ell,2n+m}}
\end{align*}
is generic and in step (C4) we have proven that~$S_1$ is generic. Thus Corollary~\ref{Cor:PartitionInGenericSets} implies that genericity of~$(S_1\cap S')$ and~$(S_2\cap S')$ is necessary for genericity of~$S'.$ By 
\begin{align*}
S_1\cap S_2\cap S' = (S_1\cap S')\cap (S_2\cap S')
\end{align*}
genericity of~$S_1\cap S_2\cap S'$ is necessary for genericity of~$S'$. By Remark~\ref{Rem:InclusionOfGenericSets}, genericity of~$S_1\cap S_2\cap S'$ is sufficient. It is evident that the equivalences
\begin{align*}
S_1\cap S_2\cap S'\neq\emptyset & \iff \min\set{\ell,2n+m} = \min\set{\ell,n-\ell+m}\\
& \iff 2\ell\leq n+m
\end{align*}
hold true. Thus we get
\begin{align*}
2\ell\leq n+m & \iff \min\set{\ell,2n+m} = \min\set{\ell,n-\ell+m}\\
& \iff S_1\cap S_2\subseteq S\\
& \implies S'~\text{is~generic}
\end{align*}
and
\begin{align*}
S'~\text{is~generic} & \implies S_1\cap S_2\cap S'\neq\emptyset\iff 2\ell\leq n+m.
\end{align*}
This proves that~$S'$ is generic if, and only if,~$2\ell\leq n+m$.
\end{enumerate}
\item From Proposition~\ref{Prop:cont_im_the_beh_sense_is_generic} we obtain that 
\begin{align*}
S'' := \set{(E,A,B)\in\Sigma_{\ell,n,m}\,\big\vert\,\forall\,\lambda\in\mathbb{C}:\rk[E,A,B] = \rk[\lambda E-A,B]}
\end{align*}
is a generic set if, and only if,~$\ell\neq n+m$. The equality~$S = S'\cap S''$ holds true by Lemma~\ref{lem:AlgCritStrCont} and Remark~\ref{Rem:InclusionOfGenericSets} yields that genericity of~$S$ is sufficient for genericity of~$S'$ and~$S''$. Conversely, by Corollary~\ref{cor:intersection_union1} genericity of~$S'$ and~$S''$ is sufficient for genericity of~$S$. This completes the proof of the Proposition.
\end{enumerate}
\end{proof}

The operators $T_k$ which we used to investigate gaussian elimination have some nice properties. Although these are not important for the proof (except the fact that $T_k$ is a matrix of rational functions), we write them down.

\begin{Remark}\label{rem:stuff}
Consider the operator~$T_k$ for some~$k\in\underline{\ell-1}$ from step (C1) in the proof of Proposition~\ref{prop:StrongContGen}. It is evident that~$T_k$ has a matrix representation which depends on~$E$. If we define the operator
\begin{align*}
M_k: \dom T_k\to\mathbb{R}^{\ell\times \ell},\qquad \forall i,j\in\ell: (M_kE)_{i,j} = \begin{cases} 1, & i = j,\\
-\frac{E_{i,j}}{E_{j,j}}, & i>k~~\wedge~~j = k,\\
0, & \text{else}, \end{cases}
\end{align*}
then we find that
\begin{align*}
\forall E\in\dom T_k: T_k E = (M_kE)\cdot E.
\end{align*}
Define for an arbitrary operator norm on~$\mathbb{R}^{\ell\times \ell}$ the mapping
\begin{align*}
c_k: \dom T_k\to\mathbb{R}>0,\qquad E\mapsto\norm{M_kE}.
\end{align*}
Then we find
\begin{align*}
\forall E\in\dom T_k: \norm{T_kE}\leq c_k(E)\norm{E}.
\end{align*}
For any~$E\in\dom T_k$, the matrix~$M_k E$ is invertible since it has determinant 1. Defining the mapping
\begin{align*}
C_k: \dom T_k\to\mathbb{R}>0,\qquad E\mapsto\norm{(M_kE)^{-1}}.
\end{align*}
 we find
\begin{align*}
\forall E\in\dom T: \norm{E} = \norm{(M_kE)^{-1}T_kE}\leq C_k(E)\norm{T_kE}.
\end{align*}
\end{Remark}

\newpage
\section{Stabilizability of differential-algebraic equations}
In this section we investigate stabilizability of DAEs. We distingush the following concepts:
\begin{itemize}
\item completely stabilizability,
\item strongly stabilizability and
\item stabilizable in the behavioural sense.
\end{itemize}
As for controllability, there are algebraic characterizations for these stabilizability concepts. Thus we are able to observe whether and in which cases they are generic.

\subsection{Completely stabilizability}

First we consider completely stabilizable systems.

\begin{Definition}[Completely stabilizable~{\cite[Definition 2.1(h), p.\,9]{DAEs}}]
The system~\eqref{Def_eq:DAE} is \textit{completely stabilizable} if, and only if:
\begin{align*}
\forall\, x_0\in\mathbb{R}^n~\exists\, (x,u)\in\mathfrak{B}_{[E,A,B]}: x(0) = x_0\quad\wedge\quad \lim_{t\to\infty}\text{ess\,sup}\, x|_{\hspace*{-1.33ex}~_{| (t,\infty)}} = 0
\end{align*}
\end{Definition}

This Definition and the Definitions~\ref{Def:freely_initializable} and~\ref{Def:Completely_controllable} yield immideately the following inclusions.

\begin{Lemma}\label{Cor:criteria_for_compl_stab}
\begin{align*}
\set{(E,A,B)\in\Sigma_{\ell,n,m}\,\big\vert\,\eqref{Def_eq:DAE}~\text{compl. cont.}}& \subseteq\set{(E,A,B)\in\Sigma_{\ell,n,m}\,\big\vert\,\eqref{Def_eq:DAE}~\text{compl. stab.}}\\&\subseteq\set{(E,A,B)\in\Sigma_{\ell,n,m}\,\big\vert\,\eqref{Def_eq:DAE}~\text{freely init.}}.
\end{align*}
\end{Lemma}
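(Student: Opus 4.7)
The plan is to handle the two inclusions separately; neither demands a deep argument.

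For the second inclusion, I would simply observe that the defining condition of complete stabilizability requires, for each $x_0\in\mathbb{R}^n$, a behaviour element $(x,u)\in\mathfrak{B}_{[E,A,B]}$ with $x(0)=x_0$ \emph{and} a vanishing essential supremum tail. The first of these properties is precisely what Definition~\ref{Def:freely_initializable} demands, so the very same witness $(x,u)$ certifies free initializability. No further work is needed here.

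For the first inclusion, my approach is a standard concatenation argument. Given $(E,A,B)$ completely controllable, fix the $T>0$ supplied by Definition~\ref{Def:Completely_controllable}. For each $x_0\in\mathbb{R}^n$, I apply complete controllability with terminal state $x_T:=0$ to obtain $(x,u)\in\mathfrak{B}_{[E,A,B]}$ satisfying $x(0)=x_0$ and $x(T)=0$. I would then truncate at $T$ by defining
\[
\tilde x(t) := \begin{cases} x(t), & t\leq T,\\ 0, & t>T,\end{cases}\qquad \tilde u(t) := \begin{cases} u(t), & t\leq T,\\ 0, & t>T.\end{cases}
\]
Clearly $\tilde x(0)=x_0$ and $\tilde x$ vanishes on $(T,\infty)$, so the limit condition on $\text{ess\,sup}\,\tilde x|_{(t,\infty)}$ is trivially met.

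The only nontrivial point, and the main (minor) obstacle, is verifying $(\tilde x,\tilde u)\in\mathfrak{B}_{[E,A,B]}$. Local integrability of $\tilde x$ and $\tilde u$ is immediate. For absolute continuity of $E\tilde x$ on $\mathbb{R}$: on $(-\infty,T]$ it coincides with $Ex$, which is AC by assumption, while on $[T,\infty)$ it is identically zero. The two pieces fit continuously at $t=T$ precisely because $x(T)=0$ forces $Ex(T)=0$, which is exactly why I steered to $0$ rather than to some other terminal state. Finally, the DAE is satisfied almost everywhere on $(-\infty,T]$ by the hypothesis that $(x,u)$ is a solution, and holds trivially on $(T,\infty)$ since both sides vanish. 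This confirms $(\tilde x,\tilde u)\in\mathfrak{B}_{[E,A,B]}$ and completes the argument.
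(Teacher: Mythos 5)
The paper gives no explicit proof of this lemma; it merely asserts that the inclusions ``yield immediately'' from Definitions~\ref{Def:freely_initializable}, \ref{Def:Completely_controllable}, and the definition of complete stabilizability. Your proof is correct and fills in exactly what that remark sweeps under the rug. The second inclusion really is immediate (just drop the decay condition), as you note. The first inclusion, however, is not purely formal: complete controllability only guarantees you can \emph{hit} $0$ at time $T$, not that you can \emph{stay} at $0$ afterwards, so one genuinely needs the concatenation you carry out --- steer to $x_T=0$, then switch to $(\tilde x,\tilde u)\equiv(0,0)$ on $(T,\infty)$. You correctly identify why $x_T=0$ (rather than an arbitrary terminal state) is the right target: it forces $Ex(T)=0$, which is what makes $E\tilde x$ absolutely continuous across the junction and what makes the DAE hold trivially on $(T,\infty)$. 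You also correctly verify all the membership requirements for $\mathfrak{B}_{[E,A,B]}$, and the essential supremum condition is satisfied because $\tilde x$ vanishes identically beyond $T$. In short, your argument is the honest version of what the paper declares immediate; there is no gap, and arguably the paper should have included a sentence along these lines for the first inclusion.
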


Berger and Reis give an algebraic criterion for completely stabilizable systems.

\begin{Lemma}[{Algebraic criterion for completely stabilizablility~\cite[Corollary 4.3, p.\,32]{DAEs}}]
The system~\eqref{Def_eq:DAE} is completely stabilizable if, and only if,
\begin{align*}
\forall\,\lambda\in\overline{\mathbb{C}}_{+}: \rk[E,A,B] = \rk[E,B] =\rk[\lambda E-A,B].
\end{align*}
\end{Lemma}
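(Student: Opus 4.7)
The plan is to invoke Corollary~4.3 in~\cite{DAEs} directly, following the pattern set by Lemma~\ref{lem:AlgCritContBehavSense} and Lemma~\ref{lem:AlgCritStrCont}, but I would sketch briefly where the criterion comes from. First I would use feedback and strict equivalence to bring $(E,A,B)$ into a quasi-Weierstrass / Kronecker decomposition consisting of an ODE-block $(I_r, J, B_1)$, a nilpotent DAE-block $(N, I_s, B_2)$ with $N$ nilpotent, and surplus rectangular pencils corresponding to over- and underdetermined equations. Both the defining property of complete stabilizability and the two rank conditions are invariant under this equivalence, so it is enough to verify the criterion on each block.

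Next I would match the two rank conditions to the block structure. The equality $\rk[E,A,B] = \rk[E,B]$ is exactly the freely initializability criterion from Lemma~\ref{lem:alg_crit_free_init}; it rules out overdetermined rectangular blocks and guarantees that every $x_0\in\mathbb{R}^n$ admits at least one solution in $\mathfrak{B}_{[E,A,B]}$. The equality $\rk[\lambda E - A, B] = \rk[E,A,B]$ for every $\lambda\in\overline{\mathbb{C}}_+$ translates on the ODE-block into the classical Hautus stabilizability criterion $\rk[\lambda I_r - J, B_1] = r$ for all $\lambda\in\overline{\mathbb{C}}_+$, see for instance~\cite[Theorem 3.11, p.\,79]{Ryan}, while on the nilpotent DAE-block the condition is automatic because $\lambda N - I_s$ is invertible for every $\lambda\in\mathbb{C}$.

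The proof would conclude by combining the two conditions: given $x_0\in\mathbb{R}^n$, decompose it along the quasi-Weierstrass splitting, use Hautus-stabilizability on the ODE-component to construct a control that drives the corresponding trajectory asymptotically to zero, and rely on freely initializability to realise the algebraic component consistently at $t=0$. The main obstacle, and the reason I would defer to~\cite{DAEs} for the full argument rather than reproduce it, is the careful bookkeeping in the canonical-form reduction together with verifying that the asymptotic essential-supremum decay in the definition of complete stabilizability is preserved under feedback and strict equivalence.
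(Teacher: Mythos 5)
Your proposal takes the same route as the paper: the paper states this Lemma purely as a citation of Corollary~4.3 in~\cite{DAEs} and gives no proof of its own, exactly as it does for the other algebraic-criterion lemmas in Sections~5 and~6. Your added sketch of the quasi-Weierstrass reduction, the Hautus criterion on the ODE block, and the automatic invertibility of $\lambda N - I_s$ on the nilpotent block is a reasonable and correct outline of the background argument, but it is extra context rather than a deviation from the paper's approach, which is simply to defer to the reference.
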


We can now formulate a result regarding genericity.

\begin{Proposition}\label{prop:comp_stab}
$S = \set{(E,A,B)\in\Sigma_{\ell,n,m}\,\big\vert\,\eqref{Def_eq:DAE}~\text{completely stabilizable}}$ is generic if, and only if~$\ell < n + m.$
\end{Proposition}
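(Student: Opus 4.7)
The plan is to sandwich $S$ between the sets of completely controllable systems and freely initializable systems (Lemma~\ref{Cor:criteria_for_compl_stab}) and then handle the boundary case $\ell=n+m$ separately using Proposition~\ref{prop:positive_complex_full_rank_property}.

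First I would dispatch the easy direction. If $\ell<n+m$, then by Proposition~\ref{prop:comp_cont} the set of completely controllable systems is generic. Since by Lemma~\ref{Cor:criteria_for_compl_stab} this set is contained in $S$, Remark~\ref{Rem:InclusionOfGenericSets} yields that $S$ is generic.

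For the converse, I would argue by contraposition and split into two subcases. If $\ell>n+m$, then by Lemma~\ref{Cor:criteria_for_compl_stab} $S$ is contained in the set of freely initializable systems, which by Proposition~\ref{prop:freely_initializable} is not generic for $\ell>n+m$; hence $S$ is not generic. The genuine work lies in the boundary case $\ell=n+m$. Here I would introduce the auxiliary set
\[
T := \set{(E,A,B)\in\Sigma_{\ell,n,m}\,\big\vert\,\rk[E,A,B]=\rk[E,B]=\ell},
\]
which is generic by two applications of Proposition~\ref{prop:full_rank_property_block} (noting $\ell\le 2n+m$ and $\ell=n+m$) together with Lemma~\ref{cor:intersection_union}. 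Using the algebraic criterion, when restricted to $T$ the condition defining $S$ collapses to the polynomial full-rank condition on the closed right halfplane, so
\[
S\cap T \;=\; T\cap S'',\qquad S'':=\set{(E,A,B)\in\Sigma_{\ell,n,m}\,\big\vert\,\forall\lambda\in\overline{\mathbb{C}}_+:\rk_{\mathbb{C}}[\lambda E-A,B]\ge\ell}.
\]
By Lemma~\ref{lem:generic_simplification}, $S$ is generic if and only if $S''$ is generic.

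The main obstacle is now to see that $S''$ fails to be generic when $\ell=n+m$. I would view $[\lambda E-A,B]$ as a polynomial block matrix with block structure $p=1$, $q=2$, the first block of size $\ell\times n$ and degree $g_{1,1}=1$, the second of size $\ell\times m$ and degree $g_{1,2}=0$; the parameter space is in bijection with $\Sigma_{\ell,n,m}$. Then Proposition~\ref{prop:positive_complex_full_rank_property} applies with the matrix dimensions $n'=\ell$, $m'=n+m$ and $d=\ell$. Since $\ell=n+m$ gives $d=n'=m'$ while $g_{1,1}=1\neq 0$, the exceptional case of that proposition occurs and $S''$ is not generic. Therefore $S$ is not generic either, completing the proof.
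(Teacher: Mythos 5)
Your proof is correct and follows essentially the same line as the paper: the easy direction via Proposition~\ref{prop:comp_cont} and Lemma~\ref{Cor:criteria_for_compl_stab}, and the necessity direction by intersecting with the generic full-rank set $T$ so that the stabilizability criterion collapses to the closed-right-halfplane rank condition, then invoking the boundary case of the polynomial full-rank result. Two small remarks. First, you cite Proposition~\ref{prop:positive_complex_full_rank_property} (the $\overline{\mathbb{C}}_+$ version), which is indeed the one needed: the paper's own proof at this step cites Proposition~\ref{prop:complex_full_rank_property_pol_mat} (the all-of-$\mathbb{C}$ version), but genericity of the $\overline{\mathbb{C}}_+$ set is a weaker hypothesis than genericity of the all-of-$\mathbb{C}$ set, so the conclusion $\ell\neq n+m$ really does require the $\overline{\mathbb{C}}_+$ result; your citation is the correct one. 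Second, you use Lemma~\ref{lem:generic_simplification} to pass between $S$ and $S''$, while the paper argues via Corollary~\ref{cor:intersection_union1}; these are equivalent bookkeeping steps. Everything else matches.
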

\begin{proof}
\begin{enumerate}
\item[$\impliedby$] If~$\ell<n+m$, then by Proposition~\ref{prop:comp_cont}~$\set{(E,A,B)\in\Sigma_{\ell,n,m}\,\big\vert\,(E,A,B)~\text{compl. cont.}}$ is generic and hence the statement holds by Remark~\ref{Rem:InclusionOfGenericSets} and Lemma~\ref{Cor:criteria_for_compl_stab}.
\item[$\implies$]
By Lemma~\ref{Cor:criteria_for_compl_stab} and Proposition~\ref{prop:freely_initializable}, we find~$\ell\leq n+m$. In Proposition~\ref{prop:freely_initializable} we have shown that
\begin{align*}
S' = \set{(E,A,B)\in\Sigma_{\ell,n,m}\,\big\vert\,\rk[E,A,B] = \rk[E,B] = \ell}
\end{align*}
is generic. By Corollary~\ref{cor:intersection_union1}, this yields that
\begin{align*}
S\cap S' = \set{(E,A,B)\in\Sigma_{\ell,n,m}\,\big\vert\,\forall\lambda\in\overline{\mathbb{C}}_+: \rk[\lambda E-A,B] = \ell}
\end{align*}
is generic. By Proposition~\ref{prop:complex_full_rank_property_pol_mat}, this yields~$\ell\neq n+m$ and hence we get~$\ell <n+m$.
\end{enumerate}
\end{proof}

\subsection{Strong stabilizability}

Next we consider strongly stabilizable systems

\begin{Definition}[Strongly stabilizable~{\cite[Definition 2.1(k), p.\,10]{DAEs}}]
The system~\eqref{Def_eq:DAE} is \textit{strongly stabilizable} if, and only if,
\begin{align*}
\forall\, x_0\in\mathbb{R}^n~\exists\, (x,u)\in\mathfrak{B}_{[E,A,B]}: Ex(0) = Ex_0\quad\wedge\quad \lim_{t\to\infty}Ex(t) = 0
\end{align*}
\end{Definition}

With this Definition we find the following inclusions.

\begin{Lemma}[{see~\cite[Proposition 2.4, p.\,14]{DAEs}}]\label{Cor:criteria_for_str_stab}
\begin{align*}
\set{(E,A,B)\in\Sigma_{\ell,n,m}\,\big\vert\,\eqref{Def_eq:DAE}~\text{compl. stab.}}& \subseteq\set{(E,A,B)\in\Sigma_{\ell,n,m}\,\big\vert\,\eqref{Def_eq:DAE}~\text{strong. stab.}}\\&\subseteq\set{(E,A,B)\in\Sigma_{\ell,n,m}\,\big\vert\,\eqref{Def_eq:DAE}~\text{impulse cont.}}.
\end{align*}
\end{Lemma}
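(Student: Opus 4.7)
The plan is to prove the two inclusions separately, working directly from the definitions of completely stabilizable, strongly stabilizable and impulse controllable systems. Both inclusions should reduce to straightforward logical relations between the defining properties, so no algebraic criterion (in the style of the Hautus rank tests) will be needed.

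For the second inclusion, strong stabilizability $\subseteq$ impulse controllability, I would simply observe that the defining property of strong stabilizability requires, for every $x_0\in\mathbb{R}^n$, a behaviour element $(x,u)\in\mathfrak{B}_{[E,A,B]}$ with $Ex(0)=Ex_0$ \emph{and} $\lim_{t\to\infty}Ex(t)=0$. Dropping the asymptotic condition leaves exactly the requirement in Definition~\ref{Def:Impulse_controllable}, so the inclusion is immediate.

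For the first inclusion, completely stabilizable $\subseteq$ strongly stabilizable, fix $(E,A,B)$ completely stabilizable and $x_0\in\mathbb{R}^n$. By Definition there is a solution $(x,u)\in\mathfrak{B}_{[E,A,B]}$ with $x(0)=x_0$ and $\lim_{t\to\infty}\text{ess\,sup}\,x|_{(t,\infty)}=0$. Then $Ex(0)=Ex_0$ is trivial, so it remains to pass from the essential-supremum decay of $x$ to the pointwise decay $\lim_{t\to\infty}Ex(t)=0$. For any $\varepsilon>0$, the decay condition yields some $T>0$ and a nullset $N\subseteq(T,\infty)$ with $\|x(s)\|_2<\varepsilon$ for all $s\in(T,\infty)\setminus N$, hence $\|Ex(s)\|_2\leq\|E\|_2\,\varepsilon$ for almost every $s>T$. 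Since $Ex\in\mathcal{AC}(\mathbb{R},\mathbb{R}^\ell)$ by the definition of a solution, $Ex$ is in particular continuous and this almost-everywhere bound promotes to a pointwise bound for every $s>T$. Letting $\varepsilon\searrow 0$ gives $\lim_{t\to\infty}Ex(t)=0$, and thus $(E,A,B)$ is strongly stabilizable.

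The only technical point is this last promotion from an essential bound on $x$ to a pointwise bound on $Ex$; that is the main (and only real) obstacle, and it is handled cleanly by the absolute continuity of $Ex$ that is built into the definition of solution. Everything else is just unwinding of definitions and does not require any of the rank machinery developed in Section~3.
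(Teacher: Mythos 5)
The paper does not actually prove this lemma; it simply cites Berger and Reis, \cite[Proposition 2.4, p.\,14]{DAEs}, and moves on. Your proposal supplies the argument directly from Definitions~\ref{Def:Impulse_controllable}, \ref{Def:Completely_controllable} (well, the completely stabilizable variant) and~\ref{Def:strong_cont}, and it is correct. The second inclusion is, as you say, immediate: the defining property of strong stabilizability is the conjunction of the impulse-controllability requirement $Ex(0)=Ex_0$ with the extra decay $\lim_{t\to\infty}Ex(t)=0$, so a witness for the former is \emph{a fortiori} a witness for the latter. The first inclusion is the one with real content, and you isolate the right technical point: the completely stabilizing solution gives $x(0)=x_0$ and hence $Ex(0)=Ex_0$ for free, but the decay is stated as an essential-supremum condition on $x$ while strong stabilizability asks for pointwise decay of $Ex$. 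Your bridge — for each $\varepsilon>0$ find $T$ with $\|x(s)\|_2<\varepsilon$ off a nullset $N\subseteq(T,\infty)$, deduce $\|Ex(s)\|_2\le\|E\|\,\varepsilon$ a.e.\ on $(T,\infty)$, then upgrade to a pointwise bound on all of $(T,\infty)$ using continuity of $Ex\in\mathcal{AC}(\mathbb{R},\mathbb{R}^\ell)$ together with the fact that the complement of a Lebesgue nullset is dense in every interval — is exactly the step where the regularity of $Ex$ built into the notion of solution is used, and it is sound. The only cosmetic remark is that you might state the density fact explicitly, since that is precisely where ``a.e.\ bound'' becomes ``everywhere bound.'' Compared with the paper's deferral to the reference, your version has the advantage of being self-contained and of making visible which piece of the solution concept (absolute continuity of $Ex$, not of $x$) actually carries the argument.
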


There is an algebraic criterion, namely the following.

\begin{Lemma}[{Algebraic criterion for strongly stabilizable~\cite[]{DAEs}}]
The system~\eqref{Def_eq:DAE} is strongly stabilizable if, and only if,
\begin{align*}
\forall\,\lambda\in\overline{\mathbb{C}}_+~\forall\,Z~\text{with}~\im Z = \ker E:\quad\rk[E,A,B] = \rk[E,AZ,B] = \rk[\lambda E-A,B].
\end{align*}
\end{Lemma}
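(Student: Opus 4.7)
The statement is a standard result recorded in the survey \cite{DAEs}, and the natural approach is to split strong stabilizability into an impulse-type reachability condition and a Hautus-type stabilizability condition. I would first observe that the equality $\rk[E,A,B]=\rk[E,AZ,B]$ for every $Z$ with $\im Z=\ker E$ is precisely the algebraic criterion for impulse controllability from Lemma~\ref{lem:impulse_controllability}; this is the algebraic content required so that the constraint $Ex(0)=Ex_{0}$ can be satisfied by some trajectory in $\mathfrak{B}_{[E,A,B]}$ for every initial value $x_{0}\in\mathbb{R}^{n}$. The equality $\rk[E,A,B]=\rk[\lambda E-A,B]$ for every $\lambda\in\overline{\mathbb{C}}_{+}$ is a Hautus-type criterion, restricted to the closed right half-plane, which rules out unstable uncontrollable modes.

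\textbf{Approach.} I would reduce to the feedback-equivalent quasi-Kronecker form described in \cite{DAEs}: there exist invertible $S\in\mathbb{R}^{\ell\times\ell}$, $T\in\mathcal{GL}(\mathbb{R}^{n})$ and a feedback $F\in\mathbb{R}^{m\times n}$ such that $(SET,\,SAT+SBFT,\,SB)$ decomposes as a direct sum of canonical blocks of standard types: a slow (ODE) block, a nilpotent (algebraic) block, and singular over- and under-determined blocks. All three ranks appearing in the statement are invariant under this transformation (with $Z$ replaced by $T^{-1}Z$ and $F$ not affecting $\rk[\lambda E-A,B]$ because of the elementary column operation $[\lambda E-A,B]\mapsto[\lambda E-A-BF,B]$). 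One then verifies block by block that the two rank equalities are satisfied if and only if (i)~no over-determined singular blocks occur and (ii)~the slow block has no uncontrollable modes in $\overline{\mathbb{C}}_{+}$.

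\textbf{Key steps.} For ``$\Longleftarrow$'' I would use the canonical form to construct a feedback $u=Fx+v$ making the closed-loop pencil $\lambda E-(A+BF)$ regular with spectrum in $\overset{\circ}{\mathbb{C}}_{-}$, then apply the variation-of-constants formula on the slow block and pointwise solvability on the nilpotent block to produce, given any $x_{0}$, a trajectory with $Ex(0)=Ex_{0}$ and $\lim_{t\to\infty}Ex(t)=0$. For ``$\Longrightarrow$'', the contrapositive is the most transparent route. If impulse controllability fails, then already some $Ex_{0}\in\im E$ is unreachable in finite time by any trajectory, violating the initial condition in the definition of strong stabilizability. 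If instead $\rk[\lambda_{0}E-A,B]<\rk[E,A,B]$ for some $\lambda_{0}\in\overline{\mathbb{C}}_{+}$, then there exists a nonzero left annihilator $w^{\top}[\lambda_{0}E-A,B]=0$, and Laplace-transforming the behavioural equation forces the scalar trajectory $t\mapsto w^{\top}Ex(t)$ to contain a mode at $\lambda_{0}$ for a suitable initial datum, which obstructs $\lim_{t\to\infty}Ex(t)=0$.

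\textbf{Principal obstacle.} The technical heart is precisely this last translation: showing that a pointwise rank drop at $\lambda_{0}\in\overline{\mathbb{C}}_{+}$ obstructs the asymptotic decay of $Ex$. This requires a careful analysis of the Smith form of the pencil $\lambda E-A$ (or equivalently of the Wong sequences), because the fast subsystem can absorb arbitrary non-decaying components without affecting the full state $x$ but \emph{does} affect $Ex$ through the projection onto $\im E$. This is the step where a direct argument is considerably more delicate than the algebraic manipulation of the forward direction, and is the reason it is cleanest here to appeal to the structural classification in \cite{DAEs} rather than to reprove the decomposition from scratch.
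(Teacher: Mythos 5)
The paper does not prove this lemma: like every other algebraic criterion in Sections~5 and~6, it is simply quoted from the Berger--Reis survey~\cite{DAEs} (Corollary 4.3 there), and no argument is given in the text. There is therefore no in-paper proof to compare against; the relevant comparison is to the cited source. Your outline --- pass to a feedback-equivalent quasi-Kronecker canonical form, check the two rank equalities block by block, identify $\rk[E,A,B]=\rk[E,AZ,B]$ as exactly the impulse-controllability criterion of Lemma~\ref{lem:impulse_controllability}, and treat $\rk[E,A,B]=\rk[\lambda E-A,B]$ on $\overline{\mathbb{C}}_+$ as a Hautus-type stabilizability condition --- is indeed the route taken in that survey, and the inclusion you invoke (strong stabilizability $\Rightarrow$ impulse controllability) is the one the paper itself records in Lemma~\ref{Cor:criteria_for_str_stab}. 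A couple of small points: (i)~your phrase that if impulse controllability fails then ``some $Ex_0\in\im E$ is unreachable in finite time'' slightly misstates the obstruction --- what fails is that no $(x,u)\in\mathfrak{B}_{[E,A,B]}$ has $Ex(0)=Ex_0$, i.e.\ the \emph{initial} constraint is unattainable, not a terminal one; (ii)~your ``principal obstacle'' paragraph correctly identifies that ruling out decay of $Ex$ when a rank drop occurs at $\lambda_0\in\overline{\mathbb{C}}_+$ requires the structural decomposition (Wong sequences / Smith form) rather than a bare Laplace-transform argument. As a blind reconstruction of a cited-but-unproved lemma, the sketch is sound but incomplete, which you acknowledge.
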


We can use both Lemma 6.6 and Corollary 6.7 to prove our result regarding genericity.

\begin{Proposition}
$S = \set{(E,A,B)\in\Sigma_{\ell,n,m}\,\big\vert\,\eqref{Def_eq:DAE}~\text{strongly stabilizable}}$ is generic if, and only if~$\ell < n + m.$
\end{Proposition}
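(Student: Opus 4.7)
The plan is to imitate the proof of Proposition~\ref{prop:comp_stab}, using the algebraic criterion just stated together with the inclusions collected in Lemma~\ref{Cor:criteria_for_str_stab}. The decomposition will be $S = S'\cap S''$ where
\[
S' := \set{(E,A,B)\in\Sigma_{\ell,n,m}\,\big\vert\,\forall\,Z~\text{with}~\im Z=\ker E:\rk[E,A,B] = \rk[E,AZ,B]}
\]
is the impulse controllability set (handled by Proposition~\ref{prop:impulse_controllable}) and
\[
S'' := \set{(E,A,B)\in\Sigma_{\ell,n,m}\,\big\vert\,\forall\,\lambda\in\overline{\mathbb{C}}_+:\rk[E,A,B] = \rk[\lambda E-A,B]}.
\]

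For the sufficiency direction ($\ell<n+m\Rightarrow S$ generic), I would simply observe that Proposition~\ref{prop:comp_stab} says the set of completely stabilizable systems is generic whenever $\ell<n+m$, and Lemma~\ref{Cor:criteria_for_str_stab} gives the inclusion of completely stabilizable systems into $S$. Genericity of $S$ then follows from Remark~\ref{Rem:InclusionOfGenericSets}. This half of the argument is routine.

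For the necessary direction, the easy step is $\ell\leq n+m$: since $S\subseteq S'$ by Lemma~\ref{Cor:criteria_for_str_stab}, genericity of $S$ forces genericity of $S'$ by Remark~\ref{Rem:InclusionOfGenericSets}, and Proposition~\ref{prop:impulse_controllable} then gives $\ell\leq n+m$. The main obstacle is to rule out the boundary case $\ell = n+m$. Suppose, for a contradiction, that $\ell=n+m$ and $S$ is generic. By Proposition~\ref{prop:full_rank_property_block}, the set $S_1:=\{(E,A,B)\in\Sigma_{\ell,n,m}:\rk[E,A,B]=\ell\}$ is generic, and so by Lemma~\ref{cor:intersection_union} the intersection $S\cap S_1$ is generic. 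On $S\cap S_1$, however, the strong stabilizability criterion forces $\rk[\lambda E-A,B]=\ell$ for every $\lambda\in\overline{\mathbb{C}}_+$, so
\[
S\cap S_1\subseteq T:=\set{(E,A,B)\in\Sigma_{\ell,n,m}\,\big\vert\,\forall\,\lambda\in\overline{\mathbb{C}}_+:\rk_{\mathbb{C}}[\lambda E-A,B]\geq \ell},
\]
and by Remark~\ref{Rem:InclusionOfGenericSets} the set $T$ would have to be generic.

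The contradiction comes from Proposition~\ref{prop:positive_complex_full_rank_property}: viewing $[\lambda E-A,B]$ as a polynomial block matrix with block structure $p=1$, $q=2$, $n_{1,1}=\ell$, $m_{1,1}=n$, $g_{1,1}=1$, $n_{1,2}=\ell$, $m_{1,2}=m$, $g_{1,2}=0$, the overall row/column counts are $\ell$ and $n+m$, and we are requesting rank $d=\ell$. With $\ell=n+m$ we are precisely in the forbidden regime $d=n_{\mathrm{mat}}=m_{\mathrm{mat}}$ with $g_{1,1}=1\neq 0$, so Proposition~\ref{prop:positive_complex_full_rank_property} tells us $T$ is \emph{not} generic. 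This contradiction forces $\ell<n+m$, completing the proof.
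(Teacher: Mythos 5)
Your proof is correct, and the sufficiency direction is essentially identical to the paper's; the necessity direction takes a genuinely different (and arguably cleaner) route. For $\ell>n+m$ the paper directly shows that the set $S':=\set{(E,A,B)\,\big\vert\,\forall\,Z~\text{with}~\im Z=\ker E:\rk[E,A,B]\neq\rk[E,AZ,B]}$ is generic and contained in $S^c$, whereas you route through the inclusion of strongly stabilizable systems into impulse controllable systems (Lemma~\ref{Cor:criteria_for_str_stab}) and then invoke Proposition~\ref{prop:impulse_controllable}. For the boundary case $\ell=n+m$ the paper invokes a fact that lives in the \emph{proof} of Proposition~\ref{prop:positive_complex_full_rank_property} (namely that the complement of $T$ contains an open ball, hence is not a nullset) together with Corollary~\ref{cor:intersection_union1}, whereas you rely only on the \emph{statement} of Proposition~\ref{prop:positive_complex_full_rank_property} via Remark~\ref{Rem:InclusionOfGenericSets}: if $S$ were generic, then so would $S\cap S_1$, hence so would its superset $T$, contradicting the proposition. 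Your argument is the more modular one. One small remark: for real coefficient spaces such as $\Sigma_{\ell,n,m}$ the intersection-of-generic-sets statement is Corollary~\ref{cor:intersection_union1}, not Lemma~\ref{cor:intersection_union} (which is the complex version); and the decomposition $S=S'\cap S''$ announced at the start is never actually used. Neither affects correctness.
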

\begin{proof}
\begin{enumerate}
\item[$\impliedby$] If~$\ell<n+m$, then by Proposition~\ref{prop:comp_stab} the set~$\set{(E,A,B)\in\Sigma_{\ell,n,m}\,\big\vert\,\eqref{Def_eq:DAE}~\text{compl. stab.}}$ is generic. The statement holds by Remark~\ref{Rem:InclusionOfGenericSets} and Lemma~\ref{Cor:criteria_for_str_stab}.
\item[$\implies$] If~$\ell = n+m$, then by Corollary~\ref{prop:positive_complex_full_rank_property} we find that
\begin{align*}
\set{(E,A,B)\in\Sigma_{\ell,n,m}\,\big\vert\,\exists\,\lambda\in\overline{\mathbb{C}}_+: \rk[\lambda E-A,B]<n}
\end{align*}
is not a nullset. The set
\begin{align*}
\set{(E,A,B)\in\Sigma_{\ell,n,m}\,\big\vert\,\rk[E,A,B] = n}
\end{align*}
is by Proposition~\ref{prop:full_rank_property_block} generic. This yields that~$S^c$ is not a nullset and Corollary~\ref{cor:intersection_union1} yields that~$S$ is not generic.

If~$\ell>n+m$, then we find~$\ell>n$ and thus the set
\begin{align*}
\set{(E,A,B)\in\Sigma_{\ell,n,m}\,\big\vert\,\ker E = \set{0}}
\end{align*}
is generic by Proposition~\ref{prop:full_rank_property_block} and Lemma~\ref{lem:product_of_generic_sets}. Furthermore the sets
\begin{align*}
\set{(E,A,B)\in\Sigma_{\ell,n,m}\,\big\vert\,\rk[E,A,B] > n+m}
\end{align*}
and
\begin{align*}
\set{(E,A,B)\in\Sigma_{\ell,n,m}\,\big\vert\,\rk[E,B] = n+m}
\end{align*}
are generic. By Corollary~\ref{cor:intersection_union1}, we find that
\begin{align*}
S' := \set{(E,A,B)\in\Sigma_{\ell,n,m}\,\big\vert\,\forall\,Z~\text{with}~\im Z = \ker E:\rk[E,A,B]\neq \rk[E,AZ,B]}
\end{align*}
is generic. Remark~\ref{Rem:InclusionOfGenericSets} and the inclusion~$S'\subseteq S^c$ yield that~$S^c$ is generic. By Corollary~\ref{Cor:PartitionInGenericSets},~$S$ is not generic.
\end{enumerate}
\end{proof}

\subsection{Stabilizability in the behavioural sense}

At last we consider stabilizability in the behavioural sense. Our observations in this case are as straight forward as for the other stabilizability concepts.

\begin{Definition}[Stabilizable in the behavioural sense~{\cite[Definition 2.1(k), p.\,10]{DAEs}}]
The system~\eqref{Def_eq:DAE} is \textit{stabilizable in the behavioural sense} if, and only if,
\begin{align*}
\forall\, (x,u)\in\mathfrak{B}_{[E,A,B]}~\exists\, (x_0,u_0)\in\mathfrak{B}_{[E,A,B]}\cap\left(\mathcal{W}^{1,1}_{\text{loc}}(\mathbb{R},\mathbb{R}^n)\times\mathcal{W}^{1,1}_{\text{loc}}(\mathbb{R},\mathbb{R}^m)\right):\\
\left(\forall\, t<0:(x(t),u(t)) = (x_0(t),u_0(t))\right)\quad\wedge\quad\lim_{t\to\infty}(x_0(t),u_0(t)) = 0.
\end{align*}
\end{Definition}

We find immideately the following inclusions.

\begin{Lemma}\label{Cor:Criteria_behaviouralSense}
\begin{align*}
\set{(E,A,B)\in\Sigma_{\ell,n,m}\,\big\vert\,\eqref{Def_eq:DAE}~\text{cont. in beh. s.}}& \subseteq\set{(E,A,B)\in\Sigma_{\ell,n,m}\,\big\vert\,\eqref{Def_eq:DAE}~\text{stab. in beh. s.}}
\end{align*}
\end{Lemma}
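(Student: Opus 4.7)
The plan is a straightforward definition chase, using the zero trajectory as the ``future target'' in the definition of controllability in the behavioural sense. First I would observe that $(0,0)\in\mathfrak{B}_{[E,A,B]}$ trivially, since the DAE $\tfrac{\mathrm{d}}{\mathrm{d}t}(Ex) = Ax + Bu$ is satisfied identically when both $x$ and $u$ are zero.

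Now let $(E,A,B)\in\Sigma_{\ell,n,m}$ be controllable in the behavioural sense, and let $(x,u)\in\mathfrak{B}_{[E,A,B]}$ be arbitrary. I would apply Definition~\ref{Def:cont_in_the_beh_sense} with $(x_1,u_1) := (x,u)$ and $(x_2,u_2) := (0,0)$ to obtain some $T>0$ and some $(x_0,u_0)\in\mathfrak{B}_{[E,A,B]}$ satisfying
\[
(x_0,u_0)(t) = (x,u)(t)\text{ for } t<0 \qquad\text{and}\qquad (x_0,u_0)(t) = 0 \text{ for } t>T.
\]
Since $(x_0,u_0)$ is identically zero on $(T,\infty)$, the limit $\lim_{t\to\infty}(x_0(t),u_0(t)) = 0$ is automatic, and the required matching of $(x,u)$ and $(x_0,u_0)$ on $(-\infty,0)$ is built into the construction.

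The only non-trivial point, and the main obstacle, is the regularity requirement $(x_0,u_0)\in\mathcal{W}^{1,1}_{\text{loc}}(\mathbb{R},\mathbb{R}^n)\times\mathcal{W}^{1,1}_{\text{loc}}(\mathbb{R},\mathbb{R}^m)$, since elements of $\mathfrak{B}_{[E,A,B]}$ are a priori only $\mathcal{L}^1_{\text{loc}}$ (with $Ex$ absolutely continuous). I would handle this by interpreting trajectory equality on $(-\infty,0)$ in the almost-everywhere sense natural for $\mathcal{L}^1_{\text{loc}}$ equivalence classes, noting that on $(T,\infty)$ the trajectory $(x_0,u_0)$ vanishes and is therefore trivially weakly differentiable, and exploiting the freedom in the compact gluing interval $[0,T]$ to select a representative with the required weak differentiability. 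The remaining delicate point is that the controllability-in-behavioural-sense property a priori only produces \emph{some} behaviour element, and one must argue that this element can be chosen with $\mathcal{W}^{1,1}_{\text{loc}}$-regularity; this is a standard density/approximation fact for linear DAEs with constant coefficients, and I would invoke it rather than reprove it.
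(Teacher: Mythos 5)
The paper gives no proof of this lemma; the intended argument is almost certainly the comparison of the two algebraic criteria. Lemma~\ref{lem:AlgCritContBehavSense} characterises controllability in the behavioural sense by $\rk_{\mathbb{R}(x)}[xE-A,B] = \rk_{\mathbb{C}}[\lambda E-A,B]$ for all $\lambda\in\mathbb{C}$, and the lemma immediately following the statement in question characterises stabilizability in the behavioural sense by the same rank equality for all $\lambda\in\overline{\mathbb{C}}_+$. Since $\overline{\mathbb{C}}_+\subseteq\mathbb{C}$, the inclusion is immediate, with no need to touch trajectories at all.

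Your proof instead chases the definitions directly, which is a genuinely different route, and it has a gap that you correctly flag but do not close. The definition of stabilizability in the behavioural sense requires the stabilizing trajectory $(x_0,u_0)$ to lie in $\mathcal{W}^{1,1}_{\text{loc}}(\mathbb{R},\mathbb{R}^n)\times\mathcal{W}^{1,1}_{\text{loc}}(\mathbb{R},\mathbb{R}^m)$, whereas the glued trajectory produced by Definition~\ref{Def:cont_in_the_beh_sense} is only known to lie in $\mathfrak{B}_{[E,A,B]}$, i.e.\ to be $\mathcal{L}^1_{\text{loc}}$ with $Ex$ absolutely continuous. The constraint $\forall\, t<0: (x(t),u(t)) = (x_0(t),u_0(t))$ forces $(x_0,u_0)$ to coincide on $(-\infty,0)$ with the arbitrary element $(x,u)\in\mathfrak{B}_{[E,A,B]}$ that was given at the start. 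Weak differentiability is a property of the $\mathcal{L}^1_{\text{loc}}$ equivalence class, not of a chosen representative, so if $(x,u)$ fails to be weakly differentiable on some subinterval of $(-\infty,0)$ (which it may, e.g.\ when $E$ has a nontrivial kernel), then so does every function agreeing with it there almost everywhere; no choice of representative and no modification confined to the compact gluing interval $[0,T]$ or to $(T,\infty)$ can repair this. The appeal to an unnamed ``standard density/approximation fact'' therefore does not address the obstruction. The route via the algebraic criteria sidesteps the regularity question entirely and is the one to use here.
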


\begin{Lemma}[{Algebraic criterion for stabilizable in the behavioural sense~\cite[Corollary 4.3, p.\,32]{DAEs}}]
The system~\eqref{Def_eq:DAE} is stabilizable in the behavioural sense if, and only if, 
\begin{align*}
\forall\,\lambda\in\overline{\mathbb{C}}_+: \rk_{\mathbb{R}(x)}[xE-A,B] = \rk_{\mathbb{C}}[\lambda E-A,B].
\end{align*}
\end{Lemma}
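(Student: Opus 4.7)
The plan is to reduce the problem to a canonical form of the matrix pencil $[sE-A,B]\in\mathbb{R}[s]^{\ell\times(n+m)}$ and then read off the stabilizability condition from it. Concretely, I would invoke the \emph{quasi-Kronecker form} of the pencil: there exist invertible real matrices $S\in\mathcal{GL}(\mathbb{R}^\ell)$, $T\in\mathcal{GL}(\mathbb{R}^n)$ and $V\in\mathcal{GL}(\mathbb{R}^m)$ such that $S(sE-A)T$ and $SBV$ are simultaneously block-diagonal, with blocks of four standard types: (i) an underdetermined ``singular column'' part (full row rank over $\mathbb{R}[s]$, contributing free trajectories), (ii) an overdetermined ``singular row'' part (only the trivial trajectory), (iii) a regular nilpotent part $sN-I$ corresponding to the algebraic constraints at infinity, and (iv) a regular slow part $sI_k-J$ with $J\in\mathbb{R}^{k\times k}$ governing the autonomous dynamics.

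Since the three transformations $S,T,V$ act as invertible changes of equation, state and input, they map $\mathfrak{B}_{[E,A,B]}$ bijectively onto the behaviour of the transformed system and preserve the asymptotic decay condition in Definition~6.10. Hence stabilizability in the behavioural sense is invariant under this transformation, and the behaviour of the transformed system splits as a direct sum of the behaviours of the four blocks. Blocks of type~(i)--(iii) are immediately seen to be behaviourally stabilizable (type (i) admits arbitrary continuations, type~(ii) has only the zero solution, and type~(iii) has purely algebraic solutions that vanish in finite time after suitable distributional smoothing). The entire system is therefore stabilizable in the behavioural sense if and only if the ODE system $\dot z = Jz$ arising from block~(iv) is; by the classical Lyapunov theory this holds if and only if $\sigma(J)\subseteq\overset{\circ}{\mathbb{C}}_{-}$.

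It remains to identify $\sigma(J)$ spectrally in terms of the original pencil. A standard calculation using the block decomposition shows
\[
\rk_{\mathbb{R}(x)}[xE-A,B] \;=\; \ell \;-\; (\text{height of type (ii) blocks}),
\]
and that for each $\lambda\in\mathbb{C}$ the drop $\rk_{\mathbb{R}(x)}[xE-A,B]-\rk_\mathbb{C}[\lambda E-A,B]$ equals the dimension of $\ker(\lambda I_k-J)$. Consequently
\[
\sigma(J) \;=\; \set{\lambda\in\mathbb{C}\,\big\vert\,\rk_\mathbb{C}[\lambda E-A,B]<\rk_{\mathbb{R}(x)}[xE-A,B]},
\]
so $\sigma(J)\subseteq\overset{\circ}{\mathbb{C}}_{-}$ is exactly the claimed rank equality on $\overline{\mathbb{C}}_{+}$. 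Chaining the two equivalences yields the lemma.

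The main obstacle is not the linear-algebra part, which is essentially bookkeeping once the quasi-Kronecker form is available, but rather the verification that the equivalence between behavioural stabilizability and $\sigma(J)\subseteq\overset{\circ}{\mathbb{C}}_{-}$ is correct in the weak sense relevant here (i.e.\ with $x\in L^1_{\text{loc}}$, $Ex\in AC$, and continuation only past some transition time $T$). This necessitates a careful distributional treatment of the nilpotent block~(iii) and of the ``stitching'' operation between the prescribed past trajectory and the stable future trajectory; this is exactly the technical core carried out in Berger--Reis~\cite{DAEs}, which I would cite rather than reprove.
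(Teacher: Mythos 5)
The paper does not prove this lemma: it is imported verbatim as a citation to Berger--Reis~\cite[Corollary 4.3]{DAEs}, exactly like the other algebraic criteria in Sections~5 and~6, so there is no ``paper's own proof'' to compare against. Your outline therefore has to be judged on its own terms.

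As a high-level sketch your plan is in the right spirit --- reduce to a canonical form, observe that three of the four block types are trivially stabilizable, and read off the eigenvalues of the remaining regular slow block as the points where the pointwise rank of $[\lambda E-A,B]$ drops below the generic rank. This is essentially how Berger--Reis argue, and you correctly flag that the distributional/$L^1_{\mathrm{loc}}$ bookkeeping is the real content and defer to them. There is, however, a genuine imprecision in the opening step: for a matrix triple $(E,A,B)$ there are in general no invertible $S,T,V$ making $S(sE-A)T$ \emph{and} $SBV$ simultaneously block-diagonal. If you take the quasi-Kronecker form of the single pencil $[sE-A,-B]\in\mathbb{R}[s]^{\ell\times(n+m)}$, the right transformation is a single $W\in\mathcal{GL}(\mathbb{R}^{n+m})$ that mixes state and input coordinates (equivalently, a state feedback $u\mapsto Fx+v$ plus coordinate changes); the form Berger--Reis actually use is the feedback canonical form of Loiseau et al., which has more than four block types precisely because $B$ has to be accommodated. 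This mixing is harmless for the behavioural notions (which do not distinguish $x$- from $u$-components), so the argument survives, but stating the decomposition the way you did would not go through as written. Also, your claim that type~(iii) solutions ``vanish in finite time after suitable distributional smoothing'' is loose: in the $L^1_{\mathrm{loc}}$/$\mathcal{AC}$ solution concept of Definition~5.1 the nilpotent block admits only the zero solution when no input reaches it, and with input the argument is a concatenation/consistency statement, not a smoothing one; this is exactly the step you would have to reprove carefully if you did not cite \cite{DAEs}.
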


\begin{Proposition}
$S = \set{(E,A,B)\in\Sigma_{\ell,n,m}\,\big\vert\,\eqref{Def_eq:DAE}~\text{stabilizable in the behavioural sense}}$ is generic if, and only if~$\ell \neq n + m.$
\end{Proposition}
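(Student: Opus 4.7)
The plan is to mirror the proof of Proposition~\ref{Prop:cont_im_the_beh_sense_is_generic}, replacing the pointwise rank condition on all of $\mathbb{C}$ by the corresponding condition on $\overline{\mathbb{C}}_+$, which is precisely the setting of Proposition~\ref{prop:positive_complex_full_rank_property}. The algebraic criterion stated just before the proposition asserts that $(E,A,B)\in S$ iff $\rk_{\mathbb{C}}[\lambda E-A,B]=\rk_{\mathbb{R}(x)}[xE-A,B]$ for every $\lambda\in\overline{\mathbb{C}}_+$.

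For the direction $\ell\neq n+m\Longrightarrow S$ generic, the quickest route is via the inclusion given by Lemma~\ref{Cor:Criteria_behaviouralSense}, namely that every system controllable in the behavioural sense lies in $S$. By Proposition~\ref{Prop:cont_im_the_beh_sense_is_generic} this smaller set is generic whenever $\ell\neq n+m$, and Remark~\ref{Rem:InclusionOfGenericSets} then yields that $S$ is generic as well.

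For the converse, set $d:=\min\{\ell,n+m\}$. First apply Proposition~\ref{Prop:gen_full_rk_pol_block} to the $\ell\times(n+m)$ polynomial block matrix $[xE-A,B]$ (with block degrees $g_{1,1}=1$ and $g_{1,2}=0$, and the parametrization $(-A,E,B)$ of $\Sigma_{\ell,n,m}$) to conclude that $S_1:=\{(E,A,B)\in\Sigma_{\ell,n,m}\mid\rk_{\mathbb{R}(x)}[xE-A,B]=d\}$ is generic. On $S_1$ the stabilizability criterion becomes $\rk_{\mathbb{C}}[\lambda E-A,B]\geq d$ for every $\lambda\in\overline{\mathbb{C}}_+$, since the reverse inequality is automatic. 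Hence $S\cap S_1=S_2\cap S_1$, where $S_2:=\{(E,A,B)\in\Sigma_{\ell,n,m}\mid\forall\lambda\in\overline{\mathbb{C}}_+:\rk_{\mathbb{C}}[\lambda E-A,B]\geq d\}$, and Lemma~\ref{lem:generic_simplification} reduces genericity of $S$ to that of $S_2$.

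Finally, Proposition~\ref{prop:positive_complex_full_rank_property} applied to $S_2$ gives the exact equivalence: $S_2$ is generic iff $d\leq\min\{\ell,n+m\}$ (automatic by the choice of $d$) and either $\neg(d=\ell=n+m)$ or all block degrees vanish. The second alternative fails because $g_{1,1}=1$, so genericity of $S_2$ reduces to $\neg(d=\ell=n+m)$, which in turn is equivalent to $\ell\neq n+m$ since $d=\min\{\ell,n+m\}$ coincides with both $\ell$ and $n+m$ exactly in the case $\ell=n+m$. The main obstacle is purely bookkeeping: matching the block-matrix format of Section~3 to the specific pencil $[xE-A,B]$ and verifying that the parameter $d$ meets the thresholds in both Proposition~\ref{Prop:gen_full_rk_pol_block} and Proposition~\ref{prop:positive_complex_full_rank_property}; no substantially new argument beyond those used for controllability in the behavioural sense is required.
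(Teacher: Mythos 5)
Your proof is correct and, in the converse direction, follows exactly the paper's argument: reduce via Lemma~\ref{lem:generic_simplification} on the generic set $S_1$ of triples where the pencil has full rank over $\mathbb{R}(x)$ (Proposition~\ref{Prop:gen_full_rk_pol_block}), and then apply Proposition~\ref{prop:positive_complex_full_rank_property} to the set of triples with full pointwise rank on $\overline{\mathbb{C}}_+$. The only divergence is that you treat the direction $\ell\neq n+m\Rightarrow S$ generic separately via Lemma~\ref{Cor:Criteria_behaviouralSense} and Proposition~\ref{Prop:cont_im_the_beh_sense_is_generic}; this is valid but redundant, since Lemma~\ref{lem:generic_simplification} already yields the biconditional in one stroke, as the paper's shorter proof exploits.
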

\begin{proof}
Proposition~\ref{Prop:gen_full_rk_pol_block} yields that the set
\begin{align*}
S_1 := \set{(E,A,B)\in\Sigma_{\ell,n,m}\,\big\vert\,\rk_{\mathbb{R}(x)}[xE-A,B] = \min\set{\ell,n+m}}
\end{align*}
is generic. By Lemma~\ref{lem:generic_simplification}, genericity of
\begin{align*}
S_2 := \set{(E,A,B)\in\Sigma_{\ell,n,m}\,\big\vert\,\forall\,\lambda\in\overline{\mathbb{C}}_+: \rk_{\mathbb{C}}[\lambda E-A,B] = \min\set{\ell,n+m}}
\end{align*}
is a necessary and sufficient condition for genericity of~$S.$ By Proposition~\ref{prop:positive_complex_full_rank_property},~$S_2$ is generic if, and only if,~$\ell\neq n+m$.
\end{proof}

\newpage

\section{Outlook}

In the case~$\ell = n$ we find with Proposition~\ref{prop:full_rank_property_block} and Lemma~\ref{lem:product_of_generic_sets} that
\begin{align*}
S = \mathcal{GL}_n(\mathbb{R})\times\mathbb{R}^{n\times n}\times\mathbb{R}^{n\times m}
\end{align*}
is generic. For any~$(E,A,B)\in S$ the DAE
\begin{align}\label{eq:eine_DAE}
\tfrac{\mathrm{d}}{\mathrm{d}t}(Ex) = Ax+Bu
\end{align}
is equivalent to the ODE
\begin{align}\label{eq:eine_ODE}
\tfrac{\mathrm{d}}{\mathrm{d}t}x = E^{-1}Ax + E^{-1}Bu
\end{align}
and hence~\eqref{eq:eine_DAE} is controllable in any sense, if~\eqref{eq:eine_ODE} is controllable. From the Kalman-criterion (Lemma~\ref{lem:Kalmam_rank_condition}) we obtain that the latter is the case if, and only if
\begin{align*}
n = \rk[E^{-1}B,E^{-1}AE^{-1}B,(E^{-1}A)^2E^{-1}B,\ldots,(E^{-1}A)^{n-1}E^{-1}B].
\end{align*}
Let~$\widehat{M}$ be a minor of order~$n$ w.r.t.~$\mathbb{R}^{n\times nm}$ and
\begin{align*}
M: S\to\mathbb{R},\quad (E,A,B)\mapsto \widehat{M}([E^{-1}B,E^{-1}AE^{-1}B,(E^{-1}A)^2E^{-1}B,\ldots,(E^{-1}A)^{n-1}E^{-1}B]).
\end{align*}
Then we find that~$M^{-1}(\set{0})$ is a algebraic variety, which is proper since~$\set{I_n}\times\Sigma_{n,m}^{\text{cont}}\subseteq\left(M^{-1}(\set{0})\right)^c$. Since 
\begin{align*}
\left(M^{-1}(\set{0})\right)^c\subseteq\set{(E,A,B)\in\Sigma_{n,n,m}\,\big\vert\,~\eqref{eq:eine_DAE}~\text{``controllable''}},
\end{align*}
the latter set is generic for any controllability concept from Section 5 and we have no information about the DAEs with singular~$E$. Hence we would rather consider the smaller set~$S^c$ and investigate the properties of
\begin{align}\label{eq:eine_Menge}
\set{(E,A,B)\in S^c\,\big\vert\,~\eqref{eq:eine_DAE}~\text{``controllable''}}.
\end{align}
Since~$S^c\not\cong\mathbb{R}^{k}$ for any~$k\in\mathbb{N}$, the concept of generic sets can not be applied. In Remark~\ref{rem:Zariski_topology} we have given an alternative definition of generic sets as sets which contain some non-empty Zariski-open set. We could introduce the notion of ``relative generic'' sets of some set~$V\subseteq\mathbb{F}^n$ as sets which contain some nonempty set that is open in the relative topology on~$V$ induced by the Zariski-topology (i.e. some relative Zariski-open set). But this concept would contain no information for many sets, e.g. if~$V$ is a countable set. Then any nonempty subset of~$V$ is some nonempty relative Zariski-open set. So we should add one more condition for a set in order to be relative generic in~$V$. In Lemma~\ref{lem:some_uninteresting_lemma} we have seen that Zariski open sets are dense w.r.t. the Euclidean topology. Thus we could say a set~$S'\subseteq V$ is \textit{relative generic} in~$V$, if~$S'$ contains some relative Zariski-open set which is dense w.r.t.~the Euclidean topology.

The next step is to verify that this concept of relative genericity is useful to describe the relation of~$S'\subseteq\mathbb{F}^n$ and~$V\subseteq\mathbb{F}^n$, if~$V\not\cong\mathbb{F}^k$ for any~$k\in\mathbb{N}$. Then we check whether the set from~\eqref{eq:eine_Menge} is relative generic in~$S^c$, if we replace the term ``controllable'' with the controllability concepts from Section 5 and the stabilizability concepts from Section 6.

Unfortunately, this is beyond the scope of this thesis.

\newpage

\bibliography{Literatur}

\bibliographystyle{alpha}

\end{document}